\newtheorem{thm}{Theorem}[section]
\newtheorem{prop}[thm]{Proposition}
\newtheorem{lem}[thm]{Lemma}
\newtheorem{cor}[thm]{Corollary}
\theoremstyle{definition}
\newtheorem{ques}[thm]{Question}
\newtheorem{defn}[thm]{Definition}
\theoremstyle{remark}
\newtheorem{remk}[thm]{Remark}
\newtheorem{remks}[thm]{Remarks}
\newtheorem{exm}[thm]{Example}
\newtheorem{exms}[thm]{Examples}
\newtheorem{notat}[thm]{Notation}
\numberwithin{equation}{section}
\newcommand{\thmref}{Theorem~\ref}
\newcommand{\propref}{Proposition~\ref}
\newcommand{\corref}{Corollary~\ref}
\newcommand{\lemref}{Lemma~\ref}
\newcommand{\sD}{{\mathcal D}}
\newcommand{\sF}{{\mathcal F}}
\newcommand{\sI}{{\mathcal I}}
\newcommand{\sK}{{\mathcal K}}
\newcommand{\sO}{{\mathcal O}}
\newcommand{\sP}{{\mathcal P}}
\newcommand{\sQ}{{\mathcal Q}}
\newcommand{\sR}{{\mathcal R}}
\newcommand{\sU}{{\mathcal U}}
\newcommand{\sV}{{\mathcal V}}
\newcommand{\Q}{{\mathbb Q}}
\newcommand{\Z}{{\mathbb Z}}
\newcommand{\fm}{{\mathfrak m}}
\newcommand{\fp}{{\mathfrak p}}
\newcommand{\fq}{{\mathfrak q}}
\newcommand{\fk}{{\mathfrak k}}
\newcommand{\fl}{{\mathfrak l}}
\newcommand{\ff}{{\mathfrak f}}
\newcommand{\Irr}{{\rm Irr}}
\newcommand{\Ker}{{\rm Ker}}
\newcommand{\gr}{{\rm gr}}
\newcommand{\CH}{{\rm CH}}
\newcommand{\surj}{\twoheadrightarrow}
\newcommand{\inj}{\hookrightarrow}
\newcommand{\red}{{\rm red}}
\newcommand{\Pic}{{\rm Pic}}
\newcommand{\Div}{{\rm Div}}
\newcommand{\Hom}{{\rm Hom}}
\newcommand{\Spec}{{\rm Spec \,}}
\newcommand{\as}{{\rm as}}
\newcommand{\ms}{{\rm ms}}
\newcommand{\bk}{{\rm bk}}
\newcommand{\ab}{\rm ab}
\newcommand{\nr}{\rm nr}
\newcommand{\Gal}{{\rm Gal}}
\newcommand{\divf}{{\rm div}}
\newcommand{\sHom}{{\mathcal{H}{om}}}
\newcommand{\pfd}{{\operatorname{\mathbf{Pfd}}}} 
\newcommand{\Sch}{{\operatorname{\mathbf{Sch}}}}
\newcommand{\Sets}{{\mathbf{Sets}}}
\renewcommand{\max}{{\operatorname{\rm max}}}
\newcommand{\Ab}{{\mathbf{Ab}}}
\newcommand{\et}{{\text{\'et}}}
\newcommand{\ds}{{/\kern-3pt/}}
\renewcommand{\log}{{\operatorname{log}}}
\newcommand{\Br}{{\operatorname{Br}}}
\newcommand{\tr}{{\operatorname{tr}}}
\newcommand{\un}{\underline}
\newcommand{\ov}{\overline}
\renewcommand{\dim}{\text{\rm dim}}
\newcommand{\tuborg}{\left\{\begin{array}{ll}}
\newcommand{\sluttuborg}{\end{array}\right.}
\newcommand{\nis}{{\rm nis}}
\newcommand{\ord}{{\rm ord}}
\newcommand{\reg}{{\rm reg}}
\newcommand{\local}{{\rm loc}}
\newcommand{\Sw}{{\rm sw}}
\newcommand{\Ar}{{\rm ar}}
\newcommand{\adiv}{{\rm adiv}}
\newcommand{\wt}{\widetilde}
\newcommand{\wh}{\widehat}
\newcommand{\cont}{{\rm cont}}
\newcommand{\Aut}{{\rm Aut}}
\newcommand{\coker}{{\rm Coker}}
\newcommand{\Fil}{{\rm fil}}
\newcounter{elno}
\newcounter{elno-abc}   
\newcounter{elno-abc-prime}
\begin{document}
\title{Reciprocity for Kato-Saito idele class group with modulus}
\author{Rahul Gupta, Amalendu Krishna}
\address{Fakult\"at f\"ur Mathematik, Universit\"at Regensburg, 
93040, Regensburg, Germany.}
\email{Rahul.Gupta@mathematik.uni-regensburg.de}
\address{Department of Mathematics, Indian Institute of Science, Bangalore, 560012, India.}
\email{amalenduk@iisc.ac.in}

\keywords{Milnor $K$-groups, Class field theory}        

\subjclass[2010]{Primary 14G15; Secondary 14F35, 19F05}

\maketitle

\begin{quote}\emph{Abstract.}  
We introduce an {\'e}tale fundamental group with modulus and construct a
reciprocity homomorphism from the Kato-Saito idele class group with
modulus to this fundamental group. This is the $K$-theoretic
analogue of the reciprocity for the cycle-theoretic
idele class group with modulus due to Kerz-Saito. It plays a central role in 
showing the isomorphism between the two idele class groups and in proving
Bloch's formula for the Chow group of 0-cycles with modulus.
\end{quote}

\setcounter{tocdepth}{1}

\tableofcontents

\section{Introduction}\label{sec:Intro}
\subsection{Background}\label{sec:Background}
The aim of the class field theory in the geometric case is the description of 
the abelian {\'e}tale coverings of a finite type scheme over a finite field
in terms of some arithmetic or geometric invariants (such as the Chow groups of 
0-cycles) intrinsically associated to the scheme.
For smooth projective schemes, this was achieved by Kato and Saito 
\cite{Kato-Saito-1} who showed that for such a scheme 
$X$, there is an injective reciprocity map
$\CH_0(X) \to \pi^{\ab}_1(X)$ which has dense image.
If $X$ is smooth but not projective over a finite field, the 
class field theory is a much more intricate problem.  

Let $X$ be an integral normal projective scheme of dimension
$d \ge 1$ over a finite field $k$ and let $U \subset X$ be a regular dense 
open subscheme.
In order to establish the class field theory for $U$, Kato and Saito 
\cite{Kato-Saito-2} used a $K$-theoretic
invariant $C(U)$ which is the limit of the 
Nisnevich cohomology groups $H^d_\nis(X, \sK^M_{d, (X,D)})$ as $D$ runs through
all closed subschemes of $X$ which are supported on $X \setminus U$.  
The main result of \cite{Kato-Saito-2} is that there is a reciprocity
homomorphism $\rho \colon C(U) \to \pi^{\ab}_1(U)$ which is an isomorphism upon the
profinite completion of $C(U)$. In particular, $\rho$ induces an isomorphism between
the degree zero subgroups $C(U)_0$ and $\pi^{\ab}_1(U)_0$ (see sections
~\ref{sec:Notns} and ~\ref{sec:ICGC} for definitions of degree zero subgroups).
The cohomology $H^d_\nis(X, \sK^M_{d, (X,D)})$ has been known as the
`Kato-Saito idele class group with modulus' of the pair $(X,D)$.
 
Unfortunately, $C(U)$ and the Kato-Saito idele class group
are difficult objects to understand and compute as
they are  made up of cohomology of $K$-theory sheaves.
Until now, there was no version of the fundamental group with modulus which
could be directly related to the Kato-Saito idele class group.

Few years ago,
Kerz and Saito \cite{Kerz-Saito-2} made a new breakthrough
in the class field theory for $U$. They
discovered the notion of Chow group with
modulus $\CH_0(X|D)$ and constructed a reciprocity homomorphism
from this group to a quotient $\pi^{\ab}_1(X,D)$ of $\pi^{\ab}_1(U)$ which 
classifies abelian covers of $U$ with ramification bounded by $D$ in the
sense of Deligne and Laumon \cite{Laumon}. The main result of 
\cite{Kerz-Saito-2} is that there is an injective reciprocity homomorphism
$\rho'_{X|D} \colon \CH_0(X|D) \to \pi^{\ab}_1(X,D)$ with dense image
if ${\rm Char}(k) \neq 2$. A much simplified proof 
was recently found by Binda-Krishna-Saito \cite{BKS} without any
restriction on ${\rm Char}(k)$.
Taking the limit over $D$, this provides a cycle-theoretic (analogous
to \cite{Kato-Saito-1}) solution to the class field theory for $U$.

Now, one of the main questions in the ramified class field theory that 
remains to be settled is whether the solutions 
provided by Kato-Saito \cite{Kato-Saito-2} and Kerz-Saito \cite{Kerz-Saito-2}
are equivalent when $X$ is regular.
It is worth noting here that the Kato-Saito idele class group
is a much more subtle invariant 
than the cycle-theoretic idele class group because its study involves 
understanding the ramification theory of Galois 
extensions of discrete valuation fields with imperfect residue   
fields.

In order to answer the above question,
one needs to address the following precise question.

\begin{ques}\label{ques:Main-0}
Given an effective Cartier divisor $D \subset X$, is there an isomorphism
\[
cyc_{X|D} \colon \CH_0(X|D) \xrightarrow{\cong} H^d_\nis(X, \sK^M_{d, (X,D)})
\]
such that upon taking the limit over $D$ supported away from $U$, one has 
that $\varprojlim \ \rho'_{X|D} = \rho \circ (\varprojlim \ cyc_{X|D})$?
\end{ques}

A positive answer to the above question will not only make the
class field theories of Kato-Saito and Kerz-Saito equivalent, 
it will simultaneously establish
Bloch's formula for the Chow group of 0-cycles with modulus. The latter is
an essential
tool for connecting cycles with modulus to algebraic $K$-theory. 
The present work is the first of the two papers (see \cite{Gupta-Krishna-CFT})
which together positively settle this question.

\subsection{Main result}\label{sec:Result}
The main obstacle in solving Question~\ref{ques:Main-0} is that
unlike $\rho'$, the reciprocity map $\rho$ does not a priori
descend to a reciprocity map $\rho_{X|D} \colon  H^d_\nis(X, \sK^M_{d, (X,D)}) \to
\pi^{\ab}_1(X,D)$ for a fixed divisor $D$.
In order to get around this problem, we introduce a new {\'e}tale 
fundamental group with modulus called $\pi^\adiv_1(X,D)$ in this paper. 
The major contribution of this paper is to show that $\rho$ descends to a reciprocity homomorphism
\[
\rho_{X|D} \colon  H^d_\nis(X, \sK^M_{d, (X,D)}) \to
\pi^{\adiv}_1(X,D).
\]

In \cite{Gupta-Krishna-CFT}, we shall show that $\rho_{X|D}$ is injective with
dense image. We shall also show that there is an
isomorphism $\pi^{\adiv}_1(X,D) \cong \pi^{\ab}_1(X,D)$ which is compatible
with $\rho_{X|D}$ and $\rho'_{X|D}$. This solves
Question~\ref{ques:Main-0}. This also provides a new
interpretation of the already known group $\pi^{\ab}_1(X,D)$, and is
expected to play a key role in ramification theory.

\vskip .3cm

In precise terms, the main result of this paper is the following.

\begin{thm}\label{thm:Main-1}
Let $X$ be an integral normal scheme which is finite
type over a finite field and has dimension $d \ge 1$. 
Let $D \subset X$ be a closed subscheme
of pure codimension one such that $U= X \setminus D$ is regular.
Then
 there exists  a continuous reciprocity homomorphism 
\[
\rho_{X|D} \colon H^d_\nis(X, \sK^M_{d, (X,D)})
\to \pi^{\adiv}_1(X,D)
\]
whose image is dense.
Moreover, upon taking the limit over $D$ supported on $X\setminus U$, we have 
 $\varprojlim \ \rho_{X|D} = \rho \colon C(U) \to \pi_1^{\ab}(U)$. 
\end{thm}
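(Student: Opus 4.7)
The plan is to construct $\rho_{X|D}$ by refining the local class field theory construction behind the Kato-Saito reciprocity $\rho\colon C(U) \to \pi_1^{\ab}(U)$ so that it respects the modulus $D$. Recall that $\rho$ is built by pairing Milnor $K$-theory classes with abelian covers of $U$ via local reciprocity at every codimension one point of $X$, and that $\sK^M_{d,(X,D)}$ is the subsheaf of $j_{*}\sK^M_{d,U}$ cut out by the modulus condition along $D$. The task therefore reduces to showing that the pairing of a class in $H^d_\nis(X,\sK^M_{d,(X,D)})$ with an abelian cover of $U$ descends to a quotient $\pi_1^{\adiv}(X,D)$ of $\pi_1^{\ab}(U)$ engineered to encode precisely this modulus compatibility.

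Concretely, I would proceed in three steps. First, define $\pi_1^{\adiv}(X,D)$ as the quotient of $\pi_1^{\ab}(U)$ by the closed subgroup characterized by local conductor bounds at each maximal point $\eta$ of $D$, the bound being dictated by the multiplicity of $D$ at $\eta$ using the Brylinski--Kato filtration (or its Matsuda refinement for the imperfect residue field case). Second, construct the local version of $\rho_{X|D}$ at each codimension one point $\eta \in D$: the modulus condition defining $\sK^M_{d,(X,D)}$ should force the local reciprocity image to satisfy exactly the conductor bound cutting out $\pi_1^{\adiv}(X,D)$. Third, globalize: using a Gersten-style resolution of the Milnor $K$-sheaf on $U$ together with the spectral sequence for the pair $(X,D)$, piece the local maps into a well-defined map on $H^d_\nis(X,\sK^M_{d,(X,D)})$, and verify that the resulting map agrees with the composition of $\rho$ and the projection $\pi_1^{\ab}(U) \twoheadrightarrow \pi_1^{\adiv}(X,D)$ on the image of $C(U) \twoheadrightarrow H^d_\nis(X,\sK^M_{d,(X,D)})$.

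The main obstacle is the filtration duality underlying the second step: proving that the modulus filtration on the Milnor $K$-theory sheaf is dual, under local class field theory, to the conductor filtration defining $\pi_1^{\adiv}(X,D)$. This is delicate because $X$ is only assumed normal, so the residue fields at divisorial points of $D$ can be imperfect and the standard upper-numbering ramification theory must be replaced by the Abbes--Saito or Matsuda filtration. I expect that the very definition of $\pi_1^{\adiv}(X,D)$ must be calibrated so that this duality holds on the nose; this is where the novelty of the new fundamental group enters the argument, and it is the technical heart of the proof.

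Once the factorization is established, continuity of $\rho_{X|D}$ is immediate from the profinite nature of the target and the construction through local reciprocities. Density of the image follows from the Kato--Saito theorem that $\rho$ has dense image, combined with the surjectivity $\pi_1^{\ab}(U) \twoheadrightarrow \pi_1^{\adiv}(X,D)$. Finally, the compatibility $\varprojlim \rho_{X|D} = \rho$ is automatic by construction, since $\pi_1^{\ab}(U) = \varprojlim_D \pi_1^{\adiv}(X,D)$ and $C(U) = \varprojlim_D H^d_\nis(X,\sK^M_{d,(X,D)})$ under the transition maps used to define both systems.
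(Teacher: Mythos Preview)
Your proposal is on the right track: the definition of $\pi^{\adiv}_1(X,D)$ via conductor bounds (the paper uses the Abbes--Saito/Matsuda filtration, which by \thmref{thm:Fil-main} coincides with what you suggest), the identification of the filtration duality as the technical heart (this is \thmref{thm:Filt-Milnor-*}), the density argument via Chebotarev, and the limit compatibility are all correct.

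The substantive difference is in your Step~3. The paper does \emph{not} globalize by working directly with $H^d_\nis(X,\sK^M_{d,(X,D)})$ through a Gersten resolution or spectral sequence for the pair. Instead, it replaces the Kato--Saito cohomology group by Kerz's explicit idele class group $C(X,D)$, built from Milnor $K$-theory of Parshin chains, and invokes Kerz's isomorphism $C(X,D)\cong H^d_\nis(X,\sK^M_{d,(X,D)})$ (\thmref{thm:Kerz-global}, which needs $U$ regular). The reciprocity is then constructed on the larger group $C_{U/X}$ by applying Kato's reciprocity for $d$-dimensional Henselian local fields to each maximal Parshin chain (\thmref{thm:Rec-main}), and one checks that it descends modulo the relative Milnor $K$-groups $K^M_d(\sO^h_{X,P'},I_D)$ (\thmref{thm:Rec-D-map}). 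This last check is precisely where the filtration duality enters, applied to the $d$-dimensional local field $Q(V^h)$ attached to a $d$-DV dominating the chain.

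This also corrects a slight imprecision in your Step~2: the local reciprocity is not applied ``at each codimension one point $\eta\in D$''---the henselized local field $K_\eta$ is only one-dimensional and there is no direct reciprocity $K^M_d(K_\eta)\to G_{K_\eta}^{\ab}$ for $d>1$. One needs the full Parshin chain to reach a $d$-dimensional Henselian local field; the modulus condition is then checked at the level of the last discrete valuation in the chain, which does sit over a generic point of $D$. Your Gersten-style approach could in principle be made to work (indeed Kerz's isomorphism is proved via the coniveau spectral sequence), but the Parshin-chain formulation is what makes the descent step transparent and allows the paper to prove the factorization (\thmref{thm:Rec-D-map}) even without assuming $U$ regular.
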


\subsection{Comparison of \thmref{thm:Main-1} with class field
  theory of Kerz-Zhao}\label{sec:KZ}
In \cite{Kerz-Zhao},
Kerz and Zhao consider a variant of the Kato-Saito idele class group with modulus
(the source of $\rho_{X|D}$ in \thmref{thm:Main-1}) when $X$ is smooth and
the support of $D$ is a strict normal crossing divisor. They denote this
variant by $H^d_{\nis}(X, \sK^M_{d, X|D})$. Their main result is that
there is a canonical isomorphism ${H^d_{\nis}(X, \sK^M_{d, X|D})}/m \xrightarrow{\cong}
{H^d_{\et}(X, \sK^M_{d, X|D})}/m$ (see \cite[Corollary~3.3.4]{Kerz-Zhao}).
  Kerz and Zhao define their {\'e}tale fundamental group with modulus
  with finite coefficients ${C(X,D)}/m$ as ${H^d_{\et}(X, \sK^M_{d, X|D})}/m$
  so that one gets ${H^d_{\nis}(X, \sK^M_{d, X|D})}/m \cong {C(X,D)}/m$
  (see \cite[Definition~3.3.7, Corollary~3.3.8]{Kerz-Zhao}). This isomorphism
  appears similar to \thmref{thm:Main-1}, but it is actually different
  as we explain below.

First, we should note that the sheaf $\sK^M_{d, X|D}$ is different from
    $\sK^M_{d, (X,D)}$, is usually much simpler to deal with, and is useful
    when $X$ is smooth and the support of $D$ is a strict normal crossing divisor.
    Hence, the class group
    with modulus used by Kerz-Zhao is in general different from the Kato-Saito idele
    class group. Second, Kerz and Zhao define their
    {\'e}tale fundamental group with modulus as a sheaf cohomology and they do not
    explain if it describes finite {\'e}tale coverings of $X \setminus D$ with
    certain ramifications along $D$. On the other hand, the group $\pi^{\adiv}_1(X,D)$
    used in \thmref{thm:Main-1} is tailor-made to describe such coverings.
    Third, Kerz and Zhao work with finite coefficients unlike \thmref{thm:Main-1}, which
    is an integral statement. Finally, Kerz and Zhao work under a strong assumption
    that $X$ is smooth and the support of $D$ is a strict normal crossing divisor.
    \thmref{thm:Main-1} makes none of these assumptions.

\subsection{Organization of the paper}\label{sec:Outline}
We give a brief outline of this paper.
Since the cohomology group $H^d_\nis(X, \sK^M_{d, (X,D)})$ is very difficult
to understand, we use a recent result of Kerz \cite{Kerz11} in which he
defines a more explicit idele class group $C(X,D)$ and shows that it
coincides with $H^d_\nis(X, \sK^M_{d, (X,D)})$ under the hypothesis of
\thmref{thm:Main-1}. That one could use $C(X,D)$ instead of
the Kato-Saito idele class group is a key observation in our construction.
Kato's theory of reciprocity for higher local fields and the technique of Parshin chains
allow one us to define a reciprocity map for $C(X,D)$ under a weaker hypothesis than
in \thmref{thm:Main-1}. Its construction goes into the
following steps.

In \S~\ref{sec:PChain*}, we recall the theory of Parshin chains and their
Milnor $K$-theory. In the next two sections, we recall the definition of
$C(X,D)$ from \cite{Kerz11} and prove several properties such as 
functoriality and continuity. In \S~\ref{sec:Rec}, we construct the
reciprocity for the limit group $C_{U/X}$ using Kato's reciprocity for
higher local fields. In \S~\ref{sec:EFG-div}, we introduce our
new {\'e}tale fundamental group with modulus called $\pi^\adiv_1(X,D)$ and
prove several properties. In \S~\ref{sec:R-fixed-D}, we prove our main
result using the ramification theories of Abbes-Saito \cite{Abbes-Saito},
Kato \cite{Kato89} and Matsuda \cite{Matsuda}. The required results from
ramification theory are proven in \S~\ref{sec:Filt*}.

\vskip .3cm

\subsection{Notation}\label{sec:Notns}
In this paper, $k$ will be the base field of characteristic
$p \ge 0$. For most parts, this
will be a finite field in which case we shall let $q = p^s$ be the
order of $k$ for some integer $s \ge 1$. 
A {\sl scheme} will usually
mean a separated and essentially of finite type $k$-scheme.
We shall denote the category of such schemes by $\Sch_k$.
If $k \subset k'$ is an extension of fields and
$X \in \Sch_k$, we shall let $X_{k'}$ denote $X \times \Spec(k')$. 
For a subscheme $D \subset X$, we shall let $|D|$ denote the set of points
lying on $D$.

For a morphism $f \colon X' \to X$ of schemes and
$D \subset X$ a subscheme, we shall write $D \times_X X'$ as $f^*(D)$.
For a point $x \in X$,
we shall let $\fm_x$ denote  the maximal ideal of $\sO_{X,x}$ and
$k(x)$ the residue field of $\sO_{X,x}$. We shall let $\sO^h_{X,x}$ (resp.
$\sO^{sh}_{X,x}$) denote the henselization (resp. strict henselization)
of $\sO_{X,x}$. We shall let $\ov{\{x\}}$ denote
the closure of $\{x\}$ with its integral subscheme structure.  

If $X$ is a Noetherian scheme, we shall consider sheaves on $X$
with respect to its Nisnevich topology unless mentioned
otherwise. In particular, all sheaf cohomology groups will be considered
with respect to the Nisnevich topology. We shall say that $X$ is
local (resp. semilocal) if it is the spectrum of a local (resp. semilocal)
ring. We shall let $X_{(q)}$ (resp. $X^{(q)}$) denote the set of points on 
$X$ of dimension (resp. codimension) $q$.

For a field $K$ of characteristic $p > 0$ 
with a discrete valuation $\lambda$, we shall let
$K_\lambda$ denote the fraction field of the henselization of the
ring of integers associated to $\lambda$. 
For a field $K$, we shall let  $\ov{K}$ denote a fixed separable closure of 
$K$ and we shall assume all separable algebraic extensions of $K$ to be 
inside $\ov{K}$.

For a Galois extension (possibly infinite) $K \subset L$ of fields,
we shall let $G(L/K)$ denote the Galois group of $L$ over $K$.
We shall let $G_K$ denote the absolute Galois group of $K$.
All Galois groups will be considered as topological abelian groups 
with their profinite topology.
For a Noetherian scheme $X$, its abelianized {\'e}tale fundamental
group will be denoted by $\pi^{\ab}_1(X)$. We shall consider
$\pi^{\ab}_1(X)$ as a topological abelian group with its profinite
topology. If $X \in \Sch_k$, we shall let $\pi^{\ab}_1(X)_0$
be the kernel of the natural map $\pi^{\ab}_1(X) \to \pi^{\ab}_1(k)
\cong G_k$ with the subspace topology. 

A ring will always mean a unital commutative ring. If $R$ is a reduced ring,
we shall let $R_n$ denote the normalization of $R$. 
We shall let $Q(R)$ denote the total ring of quotients of $R$.
If $A$ is an abelian group, we shall let $A\{p'\}$ denote the
subgroup of elements of $A$ which are torsion of order prime to $p$.
We let $A\{p\}$ denote the $p$-primary torsion subgroup of $A$.

\section{Parshin chains and their Milnor $K$-theory}
\label{sec:PChain*}
As mentioned previously, the key idea for constructing the reciprocity
map for the Kato-Saito idele class group is to replace it with $C(X,D)$. 
We shall therefore work with the latter group in most parts of this paper. 
In this section, we recall Parshin chains and their Milnor $K$-groups
from \cite[\S~1]{Kato-Saito-2}. We also recall the topologies
on these Milnor $K$-groups which will play a very useful role in this
paper. Ideally, this section should have been shorter as it is
mostly recollection from \cite{Kato-Saito-2}. But we noticed that apart from the
new lemmas that we prove in this section, many of the
terms of this section are technical for a general reader
and are very heavily used throughout this paper as
well as in our follow-up papers \cite{Gupta-Krishna-CFT}, \cite{Gupta-Krishna-Duality}
and \cite{GKR}.
We therefore decided to write these down in one section instead of
referring the reader to \cite{Kato-Saito-2} too many times, for sake of clarity of our
presentation, without claiming any originality.

\subsection{The dimension function}\label{sec:Dim}
Let $X$ be an excellent integral scheme of Krull dimension $d(X)$.
A dimension function on $X$ is an assignment
$d_X \colon X \to \Z_+$ which satisfies the catenary condition:
if $x \in \ov{\{y\}}$ is a point of codimension one 
($\dim(\sO_{\ov{\{y\}}, x}) = 1$), then $d_X(y) = d_X(x) +1$.
We let $d_m(X) = {\rm min}\{d_X(x)| x \in X\}$
and $X_i = \{x \in X| d_X(x) = i + d_m(X)\}$ for $i \ge 0$.
Note that an excellent scheme is necessarily Noetherian and
a Noetherian scheme with a dimension function is
necessarily catenary.

We let $\sD_\dim$ be the category of pairs $(X,d_X)$, where $X$ is an
excellent integral scheme endowed with a dimension function $d_X$.
A morphism in $\sD_\dim$ is a quasi-finite morphism of
schemes $f \colon X' \to X$ such that $d_{X'} = d_X \circ f$. In this case,
we say that $d_{X'}$ is a pull-back of (or induced by) $d_X$.

\begin{exm}\label{exm:Dim-func}
We give an example which we shall use very often in this paper.
Let $X$ be an excellent integral scheme. We let
$d_X(x)$ be the Krull-dimension of $\ov{\{x\}}$.
Since $X$ is catenary (because it is excellent),
it follows that $d_X$ is a dimension function on $X$.
We call it the canonical dimension function on $X$.
Here, $d_m(X) = 0$. 

We now let $P \in X$ be a point of codimension $r \ge 0$
and let $X_P = \Spec(\sO^h_{X,P})$. We let $x_P$ denote the closed
point of $X_P$. 
We have the canonical morphisms of schemes
$X_P \setminus \{x_P\} \inj X_P \to X$. We endow $X_P$ and 
$X_P \setminus \{x_P\}$ with the
dimension function induced by that of $X$.
Then we have
\[
d_m(X_P) = d(X) - r = d_X(P), \ d(X_P) = r, 
\]
\[\
d_m(X_P \setminus \{x_P\}) = d_m(X_P) + 1, \ d(X_P \setminus \{x_P\}) = 
d(X_P) - 1 = r-1.
\]
\end{exm}

\subsection{Parshin chains}\label{sec:Pchain}
Let $X$ be an excellent integral scheme with a dimension
function $d_X$. 
Let $C \subset X$ be a reduced nowhere dense 
closed subscheme of $X$ and let $U = 
X \setminus C$. Let $K$ denote the function field of $X$.
We recall the following from \cite[\S~1.6]{Kato-Saito-2}
(or \cite[Definition~3.1]{Kerz11}).

\begin{defn}\label{defn:Chain}
\begin{enumerate}
\item
An $s$-chain on $X$ is a sequence of points $P = (p_0, \ldots, p_s)$
on $X$ such that $\ov{\{p_i\}} \subset \ov{\{p_{i+1}\}}$ for
$0 \le i \le  s-1$. 
\item
We say that $P$ is a Parshin chain ($P$-chain) if $d_X({p_i}) = i + d_m(X)$ 
(equivalently, $p_i \in X_i$) for $0 \le i \le s$.
\item
A Parshin chain on the pair $(U \subset X)$ is a Parshin chain
$P = (p_0, \ldots , p_s)$ on $X$ with $s \ge 0$ 
such that $p_i \in C$ for $0 \le i < s$ and $p_s \in U$. 
\item
The dimension of a chain $P = (p_0, \ldots , p_s)$ on $X$ is $d_X(p_s)$,
and it is denoted by $d_X(P)$.
\item
We say that $P = (p_0, \ldots , p_s)$ is a maximal Parshin chain if
it is a Parshin chain such that
$p_s = \eta$, where $\eta$ is the generic point of $X$.
\item 
If $P$ is a maximal Parshin chain and $D \subset X$
is an effective Weil divisor with support $C$,
we let $m_P(D)$ (often written only as $\fm_P$) denote the multiplicity of
$\ov{\{p_{s-1}\}}$ in $D$. Note that $m_P(D) = 0$ if $p_{s-1} \notin C$.
\end{enumerate}
\end{defn}

If $P = (p_0, \ldots , p_s)$ is an $s$-chain, we shall write
$P' = (p_0, \ldots , p_{s-1})$. This convention will be followed
throughout the paper.

Note that a Parshin $0$-chain on the pair $(U \subset X)$ is a point 
$p_0 \in U$ such that $d_X(p_0) = d_m(X)$.
In particular, no such chain exists if $X$ is local.
If $P_1 = (p_0, \ldots , p_s)$ is an $s$-chain and
$P_2 = (p_0, \ldots , p_s, p_{s+1}, \ldots , p_r)$ is an $r$-chain, 
then we say that $P_2$ dominates $P_1$ and write $P_1 \le P_2$. 

Let $P = (p_0, \ldots , p_s)$ be an $s$-chain on $X$. We define the 
ring $\sO^h_{X,P}$ as follows.
We let $\sO^h_{X,p_0}$ be the henselization of $\sO_{X,p_0}$.
Suppose now that $s \ge 1$ and we have defined $\sO^h_{X,P'}$. 
We let $R = \sO^h_{X,P'}$ and let $T$ be the set of all 
prime ideals of $R$ lying over $p_s$ under the canonical map
$\sO_{X,p_{s-1}} \to R$. We let $\sO^h_{X,P} = {\underset{\fq \in T}\prod}
R^h_{\fq}$. It is easily seen by induction on $s$ that $T$ is a finite set
(e.g., see \cite[Lemme~18.6.9.2]{EGA-IV}).
We let $J^h_{X,P}$ be the Jacobson radical of $\sO^h_{X,P}$ and
$k(P) = {\sO^h_{X,P}}/{J^h_{X,P}}$. 
Then $k(P)$ is the product of the residue fields of $R^h_{\fq}$
(equivalently, of $R_{\fq}$), where 
$\fq$ runs through $T$. Note that if $X$ is regular at $p_i$ for some
$i \ge 0$, then $\sO^h_{X,P}$ is a regular semilocal ring. 
We call $k(P)$ the residue ring of the chain $P$. We write
$X_P = \Spec(\sO^h_{X,P})$.

\begin{lem}\label{lem:Inv-ambient}
Let $P = (p_0, \ldots , p_s)$ be an $s$-chain on $X$ and let 
$Y = \ov{\{p_s\}}$. 
Then $P$ is an $s$-chain on $Y$ and there are canonical isomorphisms
${\sO^h_{X,P}}/{J^h_{X,P}} \xrightarrow{\cong} \sO^h_{Y,P}
\xrightarrow{\cong} k(P)$.
\end{lem}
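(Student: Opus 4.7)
First, since $\ov{\{p_i\}} \subset \ov{\{p_s\}} = Y$ for every $i$, each $p_i$ lies in $Y$ and the closure chain persists there, so $P$ is an $s$-chain on $Y$. The identification $\sO^h_{X,P}/J^h_{X,P} = k(P)$ is built into the very definition of $k(P)$, so the substance of the lemma is to produce a canonical isomorphism $\sO^h_{Y,P} \xrightarrow{\cong} k(P)$.

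The key auxiliary claim, which I would establish by induction on $r$ for $0 \le r \le s-1$, is that for the truncation $P_r := (p_0, \ldots, p_r)$ the canonical surjection $\sO_{X,p_r} \surj \sO_{X,p_r}/\fp_s = \sO_{Y,p_r}$ induces a canonical isomorphism
\[
\sO^h_{X,P_r}/\fp_s\sO^h_{X,P_r} \xrightarrow{\cong} \sO^h_{Y,P_r}.
\]
The base case $r = 0$ is the standard fact that henselization of a Noetherian local ring commutes with quotients, $(A/I)^h \cong A^h/IA^h$. For the inductive step, the same commutativity identifies, for each prime $\fq$ of $\sO^h_{X,P_r}$ lying over $\fp_{r+1}$, the ring $(\sO^h_{X,P_r})^h_\fq/\fp_s(\sO^h_{X,P_r})^h_\fq$ with $(\sO^h_{Y,P_r})^h_{\fq'}$ for the image prime $\fq' = \fq/\fp_s\sO^h_{X,P_r}$; taking the finite product over these primes yields the identity for $P_{r+1}$.

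Applying the auxiliary claim at $r = s-1$ gives $R/\fp_s R \cong S$, where $R = \sO^h_{X,P'}$ and $S = \sO^h_{Y,P'}$. Because $p_s$ is the generic point of $Y$, the prime $\fp_s$ becomes the zero ideal in $\sO_{Y,p_{s-1}}$, so the primes $\fq \in T$ of $R$ lying over $\fp_s$ correspond bijectively under $R \surj S$ to the minimal primes $\fq' = \fq/\fp_s R$ of $S$, with matching residue fields. Since $X$, and hence $Y$, is excellent and integral, $S$ is reduced; therefore for each minimal prime $\fq'$ of $S$ the localization $S_{\fq'}$ is the residue field $k(\fq')$, and being a field it is already henselian, so $S^h_{\fq'} = S_{\fq'} = k(\fq') = k(\fq)$. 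Taking the finite product yields
\[
\sO^h_{Y,P} \;=\; \prod_{\fq'} S^h_{\fq'} \;=\; \prod_{\fq'} k(\fq') \;\cong\; \prod_{\fq \in T} k(\fq) \;=\; k(P),
\]
and composing with $\sO^h_{X,P}/J^h_{X,P} = k(P)$ gives the two canonical isomorphisms of the lemma.

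The main obstacle is the inductive step propagating the quotient identity $A/\fp_s A \cong B$ through the iterated semilocal henselization, which requires careful bookkeeping of how primes above each successive $\fp_{r+1}$ on the $X$-side match the primes above $\fp_{r+1}/\fp_s$ on the $Y$-side. A secondary technical point is that $S$ is reduced; this follows from excellence of $X$ (hence $Y$) together with the preservation of reducedness under henselization for excellent rings.
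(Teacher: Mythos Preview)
Your argument is correct and reaches the same conclusion, but it is organized differently from the paper's proof. The paper does not carry an induction along the chain; instead it observes that $\sO^h_{X,P}$, $\sO^h_{Y,P}$ and $k(P)$ all split as products over the local factors of $\sO^h_{X,P'}$, so one may replace $X$ by $\Spec$ of a single such factor. This reduces everything to the case where $X = \Spec(A)$ is Henselian local and $P = (p_0,p_1)$, whereupon the statement becomes the elementary identity $(A_\fp)^h/J^h_\fp \cong Q(A/\fp)$ for a prime $\fp \subset A$, which follows since $A_\fp/\fp A_\fp \cong Q(A/\fp)$ and henselization preserves residue fields. Your explicit induction $\sO^h_{X,P_r}/\fp_s\sO^h_{X,P_r} \cong \sO^h_{Y,P_r}$ is a perfectly good alternative and has the virtue of tracking the full semilocal structure rather than reducing to a single factor; the paper's reduction is shorter but hides the same bookkeeping in the phrase ``by replacing $p_s$ by one of the points in the inverse image''.

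One step in your write-up is asserted rather than argued: that the primes of $S = \sO^h_{Y,P'}$ lying over the zero ideal of $\sO_{Y,p_{s-1}}$ are exactly the \emph{minimal} primes of $S$. This is what makes each $S_{\fq'}$ a field. It holds because the canonical map $\sO_{Y,p_{s-1}} \to S$ is flat with zero-dimensional fibers (each step in the iterated construction is a localization followed by a henselization, both of which are flat, and henselization is ind-{\'e}tale so has discrete fibers); hence any prime contracting to $(0)$ has height zero. In the paper's reduction this issue disappears because after reducing to the Henselian local case $A/\fp$ is already a Henselian local domain, so the only prime over $(0)$ is $(0)$ itself. It would strengthen your version to add a line making this flatness/zero-dimensional-fiber observation explicit.
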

\begin{proof}
There is nothing to prove if $s = 0$, so we can assume that $s \ge 1$.
We write $P = (P',p_s)$. By replacing $p_s$ by one of the 
points in the inverse image of $p_s$ under the canonical map $X_{P'} \to X$,
we see that it suffices to prove the lemma when $X$ is a Henselian
local scheme, $P = (p_0, p_1)$, where $p_0$ is the closed point of $X$
and $Y = \ov{\{p_1\}}$. 

If $Y = X$, there is nothing to prove, so we can assume that
$Y$ has codimension at least one in $X$. In this case, the lemma
is equivalent to the statement that if $A$ is a Henselian local ring
and $\fp \subset A$ is a prime ideal of height at least one with
$B = {A}/{\fp}$, then there is an isomorphism
${(A_{\fp})^h}/{J^h_\fp} \cong Q(B)$, where $J^h_\fp$ is the maximal 
ideal of $(A_{\fp})^h$.
However, this is clear using two simple observations.
First, the quotient map
$A \surj B$ induces an isomorphism  ${A_\fp}/{J_\fp} \xrightarrow{\cong} Q(B)$,
where $J_\fp$ is the maximal ideal of $A_\fp$. Second,
the henselization map $A_{\fp} \to (A_\fp)^h$ induces an isomorphism
on the quotients ${A_\fp}/{J_\fp} \xrightarrow{\cong} 
{(A_{\fp})^h}/{J^h_\fp}$.
\end{proof} 

Suppose that $P = (p_0, \ldots , p_s)$ is a maximal Parshin chain
and $R = \sO^h_{X, P'}$.
Then $R$ is a finite product of one-dimensional reduced local rings
such that the canonical injection $R \inj R_n$ between reduced rings
induces a one-to-one correspondence between the minimal primes.
Furthermore, $\sO^h_{X,P}$ is the total quotient ring of $R$
(and of $R_n$). In particular, the map $\sO^h_{X,P} \surj k(P)$ is a 
bijection and every factor of $k(P)$ has a unique discrete valuation
whose ring of integers is the normalization of the quotient of $R$ by
a unique minimal prime.
If $\sI_D \subset \sO_X$ is the ideal sheaf of a closed subscheme
defining an effective Weil divisor $D \subset X$ with support $C$,
then it is easily seen that $\sI_DR_n = (\sI_{\ov{\{p_{s-1}\}}}R_n)^{m_P}$.

\subsection{Residue fields of Parshin chains via valuation rings}
\label{sec:valuation}
In order to study the reciprocity for the $K$-theoretic idele class groups, we
need to express the residue fields of Parshin chains in terms of
the fraction fields of 
valuations rings so that Kato's theory of reciprocity maps for
higher local fields can be utilized.
We recall the machinery of doing this from \cite[\S~3.1]{Kato-Saito-2}

Let $V$ be a valuation ring with quotient field $K$ whose value
group is $\Z^r$ with the lexicographic order. Let $v \colon 
K^{\times} \to \Z^r$ be the valuation. In this case, one says
(in the terminology of \cite{Kato-Saito-2}) that
$V$ is an $r$-DV.
If we let $V_i = \{0\} \cup v^{-1}((\Z_{\ge 0})^{r-i} \times \Z^i)$ for 
$1 \le i \le r$,
then each $V_i$ is a valuation ring with quotient field $K$ and there
are inclusions $V = V_0 \subset \cdots \subset V_{r-1} \subset V_r = K$. 
There is a saturated chain of prime ideals $\fp_0 \supsetneq \fp_1 
\supsetneq \cdots \supsetneq \fp_r = 0$ in $V$ such that $V_i = V_{\fp_i}$.
If $k(\fp_i)$ denotes the residue field of $V_i$, then
the image of the composite map $V_i \inj V_{i+1} \surj k(\fp_{i+1})$
is a discrete valuation ring with quotient field $k(\fp_{i+1})$ and
residue field $k(\fp_i)$ for each $0 \le i \le r-1$.
In particular, $V_{r-1}$ is a discrete valuation ring with quotient field $K$.

Suppose $V$ is an $r$-DV. Assume $r \ge 1$ and let $V'$ be the image of $V$
in $k(\fp_{r-1})$. Then $V'$ is an $(r-1)$-DV with quotient field $k(\fp_{r-1})$.
Let $E$ be the quotient field of $V'^h$ so that there is an
extension of fields $k(\fp_{r-1}) \inj E$.
Let $\wt{V}_{r-1}$ be the inductive limit of a system of {\'e}tale 
local rings over $V_{r-1}$ such that $\wt{V}_{r-1}$ is a Henselian 
discrete valuation ring with residue field $E$.
It is then known (e.g., see \cite{Ribenboim}) 
that $V^h$ is the inverse image of $V'^h$ under
the quotient $\wt{V}_{r-1} \surj E$. 
Furthermore, the following elementary lemma says that $V^h$ is
a Henselian $r$-DV in $Q(V^h)$ such that $\wt{V}_{r-1} = (V^h)_{\fp_{r-1}}$.

\begin{lem}\label{lem:r-dv}
Let 
\[
\xymatrix@C.8pc{
A \ar@{->>}[r]^-{\alpha} \ar@{^{(}->}[d] & B \ar@{^{(}->}[d] \\
A' \ar@{->>}[r]^-{\alpha'} & B'}
\]
be a Cartesian square of Noetherian local integral domains such that
$B' = Q(B)$, and let $\fp = \ker(\alpha)$. Then $A' = A_{\fp}$.
\end{lem}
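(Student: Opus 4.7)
The plan is to establish $A' = A_{\fp}$ by constructing a ring map $A_{\fp} \to A'$ extending the inclusion $A \hookrightarrow A'$, and then proving it is bijective. The setup hinges on identifying $\ker(\alpha')$ with the maximal ideal of $A'$: since $B' = Q(B)$ is a field, the kernel $\fp'$ of the surjection $\alpha'$ is a maximal ideal of $A'$, and because $A'$ is local we must have $\fp' = \fm_{A'}$. The Cartesian property of the square, combined with the injectivity of $B \hookrightarrow B'$, identifies $A$ with $\alpha'^{-1}(B) \subseteq A'$, and under this identification $\fp = \fp' \cap A$.

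With this in hand, the construction of the map is automatic. For any $s \in A \setminus \fp$ one has $s \notin \fp' = \fm_{A'}$, so $s$ is a unit in the local ring $A'$. The universal property of localization then produces the desired ring homomorphism $A_{\fp} \to A'$. Injectivity is immediate from the fact that $A \hookrightarrow A'$ and $A'$ is an integral domain: if $a/s$ maps to $0$ in $A'$ then $a = 0$, since $s$ is a unit.

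The substance of the proof lies in surjectivity, which I would handle by a clearing-denominators argument. Given $x \in A'$, its image $\alpha'(x) \in B' = Q(B)$ can be written as $\alpha(a_1)/\alpha(t)$ for some $a_1 \in A$ and $t \in A \setminus \fp$, using the surjectivity of $\alpha$ and the description of $B'$ as a fraction field. Set $y := tx - a_1 \in A'$. Its image under $\alpha'$ equals $0$, which lies in $B \subseteq B'$, so the Cartesian property places $y$ inside $A$; moreover $\alpha(y) = 0$, so $y \in \fp$. Thus $tx = a_1 + y$ in $A'$, and since $t$ is a unit in $A'$ we conclude that $x = (a_1 + y)/t$ lies in the image of $A_{\fp} \to A'$.

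The only nontrivial ingredient that must be exploited is the combination of the Cartesian description $A = \alpha'^{-1}(B)$ with the localness of $A'$ (so $\fp'$ is automatically its maximal ideal). Once these are in place, neither the existence of the map $A_{\fp} \to A'$ nor the clearing-denominators step presents any genuine obstacle; the rest is bookkeeping.
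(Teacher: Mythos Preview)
Your proof is correct. The paper does not give a proof of this lemma, merely flagging it as ``elementary'' in the sentence preceding its statement; your argument supplies precisely the kind of direct verification the authors had in mind, exploiting the Cartesian description $A = \alpha'^{-1}(B)$ together with locality of $A'$ to build the map $A_{\fp} \to A'$ and then clearing denominators for surjectivity.
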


We now keep the notations of \S~\ref{sec:Pchain}. In particular,
$X$ is an excellent integral scheme with function field $K$ and dimension function $d_X$.
Let $P = (p_0, \ldots , p_d)$ be a maximal Parshin chain on $X$. 
We let $V \subset K$ be a $d$-DV.
We say that $V$ dominates $P$ if the local ring $V_i$ dominates the local 
ring $\sO_{X,p_i}$
for each $0 \le i \le d$. Let $\sV(P)$ be the set of $d$-DV's in $K$
which dominate $P$.
Let $\nu \colon X_n \to X$ be the normalization map and let 
$\nu^{-1}(P)$ be the set of all Parshin chains
$P_1= (p'_0, \ldots , p'_d)$ on $X_n$ such that $\nu(P_1) = P$.
Equivalently, $\nu(p'_i) = p_i$ for each $0 \le i \le d$.

\begin{lem}\label{lem:P-valuation}
There are canonical isomorphisms
\begin{equation}\label{eqn:P-valuation-0}
k(P) \xrightarrow{\cong} {\underset{V \in \sV(P)}\prod} Q(V^h)
\xleftarrow{\cong} {\underset{P_1 \in \nu^{-1}(P)}\prod} k(P_1).
\end{equation}
\end{lem}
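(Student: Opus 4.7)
The plan is to reduce the lemma, via the iterative definition of $\sO^h_{X,P}$, to a statement about one-dimensional reduced Henselian semilocal excellent rings, and then exploit that henselization commutes with normalization for such rings. The paragraph preceding the lemma already identifies $\sO^h_{X, P}$ with the total quotient ring $Q(R)$ of $R = \sO^h_{X, P'}$, where $R$ is a finite product of one-dimensional reduced Henselian local rings. Since $X$ is excellent, so is $R$, and the normalization $R_n$ is again a Henselian semilocal one-dimensional normal ring, hence decomposes as a finite product
\[
R_n \;=\; \prod_\alpha W_\alpha
\]
of Henselian DVRs $W_\alpha$. Using $Q(R) = Q(R_n)$ for reduced Noetherian $R$ yields $k(P) \cong \prod_\alpha Q(W_\alpha)$ with no reference yet to $\sV(P)$ or $\nu^{-1}(P)$.

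It remains to identify the indexing set $\{\alpha\}$ with both $\sV(P)$ and $\nu^{-1}(P)$. The second bijection is functoriality of the construction along $\nu \colon X_n \to X$: a maximal Parshin chain $P_1$ on $X_n$ lying over $P$ corresponds precisely to a consistent choice, at each step of the iteration, of a prime above $p_i$ in the henselization built so far; since $X_n$ is already normal, tracking this through the definition shows that each $P_1$ contributes exactly one factor $W_\alpha$, and $k(P_1) = Q(W_\alpha)$.

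For the bijection with $\sV(P)$, I would induct on $d$. The base case $d = 1$ is direct: DVRs of $K$ dominating $\sO_{X, p_0}$ correspond to the height-one primes of $(\sO_{X, p_0})_n$, and after henselization these yield exactly the factors of $R_n = (\sO^h_{X, p_0})_n$. In the inductive step, \lemref{lem:r-dv} furnishes the description of $V^h$ as a fiber product built from the henselization $V'^h$ of the induced $(d-1)$-DV $V'$ in the residue field of the ambient Henselian DVR $\wt{V}_{d-1}$; matching this recursive pullback description with the recursive product structure of $R$ one level at a time produces the bijection $V \leftrightarrow \alpha$ together with $Q(V^h) = Q(W_\alpha)$. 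The main obstacle is precisely this bookkeeping: one must show that each Henselian DVR factor of $R_n$ reconstructs a full chain of valuations $V_0 \subset \cdots \subset V_{d-1} \subset K$ dominating $P$ — not merely its top DVR — and conversely that every such chain gives a unique factor, which is where the Cartesian-square description in \lemref{lem:r-dv} is indispensable.
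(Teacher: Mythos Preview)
Your inductive attack on the first isomorphism is essentially a reconstruction of \cite[Proposition~3.3]{Kato-Saito-2}, which the paper simply cites; the sketch via \lemref{lem:r-dv} is on the right track and can be made to work with enough bookkeeping.

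The argument for the second isomorphism, however, has a genuine gap. You claim that because $X_n$ is normal, each $P_1 \in \nu^{-1}(P)$ contributes \emph{exactly one} Henselian DVR factor $W_\alpha$ and that $k(P_1) = Q(W_\alpha)$ is a field. This is false. Even on a normal scheme, the ring $\sO^h_{X_n, P'_1}$ can be a nontrivial product: at each step of the iterative construction one passes to a Henselian local ring and then takes \emph{all} primes lying over the next point of the chain, and there may be several such primes (think of a normal surface where the curve $\ov{\{p'_1\}}$ has several analytic branches at $p'_0$). So $k(P_1)$ is in general a product of fields, not a single field, and there is no bijection between $\nu^{-1}(P)$ and the set of factors $\{W_\alpha\}$. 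You are conflating points $p'_i$ of the chain on $X_n$ with the successive choices of primes in the henselizations; these are different indexing sets.

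The paper's route avoids this trap entirely. Having the first isomorphism in hand for \emph{any} integral excellent scheme, one applies it to $X_n$ as well to get $k(P_1) \cong \prod_{V \in \sV(P_1)} Q(V^h)$ for each $P_1$. Then the second isomorphism follows from the purely set-theoretic identity $\sV(P) = \coprod_{P_1 \in \nu^{-1}(P)} \sV(P_1)$, which holds because valuation rings are integrally closed: any $d$-DV $V \subset K$ dominating $\sO_{X,p_i}$ automatically dominates some $\sO_{X_n, p'_i}$ above it, and this determines a unique $P_1$. This sidesteps any need to match up $\nu^{-1}(P)$ directly with factors of $R_n$.
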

\begin{proof}
The first isomorphism is \cite[Proposition~3.3]{Kato-Saito-2}.
The second isomorphism follows by
combining the first isomorphism for $X$ and $X_n$ and using the 
fact that a valuation ring is integrally closed, and hence
$\sV(P) =  {\underset{P_1 \in \nu^{-1}(P)}\bigcup} \sV(P_1)$.
\end{proof}

\subsection{Milnor $K$-theory of $s$-chains}\label{sec:MPchain}
For a commutative ring $R$, we let $K^M_*(R)$ be the quotient of the
tensor algebra $T^*(R^{\times})$ by the two-sided ideal generated by
the elements $a_1 \otimes \cdots \otimes a_n$ such that $a_i +a_j =1$
for some $i \neq j$. 
We let the image
of $a_1 \otimes \cdots \otimes a_n$ in $K^M_n(R)$ be denoted by
$\{a_1, \ldots , a_n\}$.
If $(U_i)_{1 \le i \le n}$ are subgroups of $R^\times$, we shall
let $\{U_1, \ldots , U_n\}$ be the subgroup of $K^M_n(R)$ generated by
$\{a_1, \ldots , a_n\}$ such that $a_i \in U_i$. 

If $I \subset R$ is an ideal, we let $K^M_*(R, I)$ be the
kernel of the canonical map $K^M_*(R) \to K^M_*(R/I)$.
If $R$ is either a local ring (see \cite[Lemma~1.3.1]{Kato-Saito-2})
or a semilocal ring which contains a field of cardinality at least three 
(see \cite[Lemma~3.2]{Gupta-Krishna-2}), then the canonical maps
$K^M_n(R) \to K^M_n(R/I)$ and 
 \begin{equation} \label{eqn:MPchain*-1}
 \bigoplus_{1\leq i \leq n} \left( K^M_{i-1}(R) \otimes K^M_1(R, I) 
\otimes K^M_{n-i}(R) \right) \to K^M_n(R, I)
 \end{equation}
 are surjective. We shall
denote by $K^M_*(R|I)$ the image of $K^M_*(R, I)$ under the 
canonical map $K^M_*(R) \to K^M_*(F)$, where $F$ is the ring of total 
fractions of $R$.

If $R$ is a one-dimensional 
excellent regular semilocal integral domain with
Jacobson radical $\fm$ and quotient field $F$, we shall write 
$K^M_n(R|\fm^r)$ also as $K^M_n(F,r)$. 
If $n =1$, we shall also use the alternative notation $U_r(F)$ for
$K^M_1(F,r)$.
If $X$ is an excellent integral scheme with a dimension
function $d_X$ and if $P = (p_0, \ldots , p_s)$ is an $s$-chain on $X$,
we define the Milnor $K$-theory of $P$ to be the Milnor $K$-theory
of its residue field $k(P)$.

\subsection{Topologies on Milnor $K$-groups}\label{sec:Top}
Let us first assume that $R$ is a one-dimensional 
excellent regular semilocal integral domain with
Jacobson radical $\fm$ and quotient field $F$.
We consider $K^M_n(F)$ a 
topological group whose topology is the smallest
such that the subgroups $\{K^M_n(F,r)\}_{r \ge 0}$ form a basis of
open neighborhoods of 0. This is called the canonical
topology of $K^M_n(F)$.

Let $R$ now be a one-dimensional semilocal excellent integral domain with 
fraction field $F$ and normalization $R'$
(we have digressed from our notation $R_n$ for normalization to
avoid a conflict).  
Let $\fm$ and $\fm'$ denote the Jacobson radicals of 
$R$ and $R'$, respectively. The $\fm$-adic topology of $R$
induces a topology on $R^{\times}$ generated by subgroups
$U_r(R) := (1 + \fm^r)$ with $r \ge 0$, where we let $1 + \fm^0 = R^{\times}$.
The same holds for $(R')^{\times}$ too.
Let us consider following topologies $\tau_i$ on $K^M_n(F)$ for $1 \le i \le 3$.
We let $\tau_1$ be the smallest topology 
for which the fundamental
system of neighborhoods of 0 is given by the subgroups
$\{U_1, \ldots , U_n\} \subset K^M_n(F)$, where each $U_i$
is open in $R^\times$. 

We let $\tau_2$ be the smallest topology 
for which the fundamental
system of neighborhoods of 0 is given by the subgroups
$\{1 + I^r, R^\times , \ldots , R^{\times}\} \subset K^M_n(F)$, where $I \subset 
\fm$ is an ideal such that $\sqrt{I} = \fm, \ r \ge 0$ and 
$1 + I^0 = R^{\times}$.
If $R$ is either local (see \cite[Lemma~1.3.1]{Kato-Saito-2})
or contains a field of cardinality at least three 
(see \cite[Lemma~3.2]{Gupta-Krishna-2}), then
$\{1 + I^r, R^\times , \ldots , R^{\times}\}= K^M_n(R| I^r) \subset K^M_n(F)$.
We let $\tau_3$ be the smallest topology 
for which the fundamental
system of neighborhoods of 0 is given by the subgroups
$\{U_r(R), F^\times , \ldots , F^{\times}\} \subset K^M_n(F)$, where $r \ge 0$.

\begin{lem}\label{lem:Kato-Kerz}
Each of the topologies $\tau_1, \tau_2$ and $\tau_3$ coincides with
the canonical topology of $K^M_n(F)$.
\end{lem}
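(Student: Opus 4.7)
The plan is to establish the three coincidences by a chain of cofinality arguments between the defining filtrations, ordered by increasing difficulty. I would first dispose of $\tau_2 = \tau_3$, then $\tau_1 = \tau_3$, and finally tackle the comparison with the canonical topology on the normalization, which is the real content of the lemma.

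The equivalence $\tau_2 = \tau_3$ is an easy Noetherian exercise: since $\sqrt{I} = \fm$, there is some $N$ with $\fm^N \subseteq I \subseteq \fm$, so $\fm^{Nr} \subseteq I^r \subseteq \fm^r$ and the filtrations $\{1+\fm^r\}$ and $\{1+I^r\}$ are mutually cofinal as subgroups of $R^\times$. For $\tau_3 \subseteq \tau_1$ it suffices to observe that each $\{U_r(R), F^\times, \ldots, F^\times\}$ contains the $\tau_1$-basic open $\{U_r(R), R^\times, \ldots, R^\times\}$. The reverse inclusion $\tau_1 \subseteq \tau_3$ is less trivial: every open subgroup $U_i$ of $R^\times$ contains some $U_{r_i}(R)$, so one needs an integer $r$ with $\{U_r(R), F^\times, \ldots, F^\times\} \subseteq \{U_{r_1}(R), \ldots, U_{r_n}(R)\}$. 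Here I would use bi-multiplicativity of symbols and standard identities derived from the Steinberg relation, most importantly the congruence $\{1+\fm^a, 1+\fm^b\} \equiv 0$ modulo $\{1+\fm^{a+b}, R^\times, \ldots, R^\times\}$, to collapse all "small" factors into a single slot.

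The heart of the proof is the comparison of $\tau_3$ with the canonical topology, which is defined via the normalization $R'$ (a one-dimensional regular semilocal ring, since $R$ is excellent). One direction is easy: because $R'$ is integral over $R$, the Jacobson radical $\fm'$ contains $\fm R'$, so $U_r(R) \subseteq U_r(R')$ and hence $\tau_3$ is finer than the canonical topology. For the reverse I would invoke two classical estimates, using the conductor $\mathfrak{c} = \mathrm{Ann}_R(R'/R)$, which is a common nonzero ideal of $R$ and $R'$. Since $R'/R$ is a finite $R$-module supported at the maximal ideals of $R$, one has $\fm^M \subseteq \mathfrak{c}$ for some $M$, and Artin--Rees applied to $R' $ as a Noetherian $R$-module gives $\fm'^{M'} \subseteq \fm R'$ for some $M'$. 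Combining these, for each $r$ one produces $s$ with $1 + \fm'^s \subseteq (1+\fm^r)(1+\mathfrak{c} \fm'^{s-M'})$. A routine manipulation of symbols, together with the surjectivity \eqref{eqn:MPchain*-1}, then converts $\{1+a', b_2, \ldots, b_n\}$ with $a' \in \fm'^s$ into a sum of symbols $\{1+a, b'_2, \ldots, b'_n\}$ with $a \in \fm^r$ and $b'_i \in F^\times$, giving $\{U_s(R'), F^\times, \ldots\} \subseteq \{U_r(R), F^\times, \ldots\}$.

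The main obstacle will be the last step: the elements of $1 + \fm'^s$ need not lie in $R$, and so the cofinality of the two filtrations cannot be established pointwise but only at the level of the Milnor symbols. The argument is essentially local at each prime of $R'$ above $\fm$, reducing to a computation in a complete discrete valuation ring, but the bookkeeping across the finitely many primes of $R'$ and the use of the conductor require care.
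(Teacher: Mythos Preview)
Your chain of inclusions $\tau_3\subseteq\tau_2\subseteq\tau_1$ is correct and matches the paper. The real difficulty is the reverse inclusion $\tau_1\subseteq\tau_3$, which is precisely \cite[Proposition~2~(2)]{Kato86} and is quoted as a black box in the paper. Your proposed tool for this, the congruence $\{1+\fm^a,1+\fm^b\}\subseteq\{1+\fm^{a+b},R^\times,\ldots\}$, runs in the wrong direction: it shows that $\{U_{r_1}(R),\ldots,U_{r_n}(R)\}$ sits \emph{inside} a group with one deep slot and the rest arbitrary units, whereas you need to show it \emph{contains} a group of the form $\{U_r(R),F^\times,\ldots,F^\times\}$. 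The hard point is to absorb a genuine non-unit $b_i\in F^\times\setminus R^\times$ into slots constrained to proper open subgroups $U_{r_i}(R)\subsetneq R^\times$, and your highlighted identity does not touch this.

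This gap also undermines your comparison with the canonical topology. Your ``easy'' direction (that canonical opens contain $\tau_3$-opens) does not follow from $U_r(R)\subseteq U_r(R')$: the canonical basic open $K^M_n(F,r)=\{U_r(R'),R'^\times,\ldots,R'^\times\}$ has $R'^\times$ in slots $2,\ldots,n$, while a $\tau_3$-basic open has all of $F^\times$ there, so the inclusion fails on the nose and again requires Kato's result. Conversely, your ``hard'' direction is in fact immediate: from $\fm'^N\subseteq\mathfrak{c}\subseteq\fm$ one gets $\fm'^{rN}\subseteq\mathfrak{c}^r\subseteq\fm^r$ and hence $U_{rN}(R')\subseteq U_r(R)$, giving $\{U_{rN}(R'),R'^\times,\ldots\}\subseteq\{U_r(R),F^\times,\ldots\}$ directly; no Artin--Rees, no decomposition of $1+\fm'^s$, and no symbol manipulation is needed. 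The paper avoids this pitfall by comparing the canonical topology with $\tau_1$ rather than $\tau_3$, where one direction is genuinely trivial and the other combines Kato's result with the conductor argument.
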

\begin{proof}
The assertion that $\tau_1$ and $\tau_3$ coincide
follows from \cite[Proposition~2 (2)]{Kato86}.
It is clear that every fundamental neighborhood of 0 in the
canonical topology (see \S~\ref{sec:MPchain}) 
contains a fundamental neighborhood in the
$\tau_1$-topology. We prove the reverse inclusion.

Let $G$ be the subgroup $\{U_1, \ldots , U_n\} \subset K^M_n(F)$, 
where each $U_i$ is open in $R^\times$. It follows from 
\cite[Proposition~2 (2)]{Kato86} that
$G$ contains an open subgroup of the form $G_1 = 
\{U_1, F^{\times}, \ldots, F^{\times}\}$, where $U_1 \subset R^{\times}$ is open.
Since the conductor ideal of the normalization map $R \to R'$
contains an $\fm'$-primary ideal, it follows that  $U_1$ contains an open 
subgroup $U_2$ of
the form $U_r(R')$ for some $r \ge 1$. We are already done if $n = 1$.
Otherwise, as $\{U_2, F^\times, \ldots ,
F^{\times}\}$ contains $\{U_2, (R')^{\times}, \ldots ,
(R')^{\times}\} = K^M_n(F,r)$, we conclude that $G$ contains $K^M_n(F,r)$ for
some $r \ge 1$. In particular, $G$ is open in the canonical topology of
$K^M_n(F)$. This proves the agreement between $\tau_1$ and the canonical
topology.

Note now that $\tau_2$ does not change if we replace $I$ by $\fm$.
It is now clear that $\tau_3 \subset \tau_2 \subset \tau_1$ and we have already
 shown above that
$\tau_3 = \tau_1$. The lemma is proven.
\end{proof}

\subsection{The direct sum topology}\label{sec:DST}
Recall from \cite[\S~6]{Kerz-Schmidt-JNT} that if 
$G = {\underset{i \in I}\bigoplus} G_i$ is the direct sum of 
abelian groups $\{G_i\}_{i \in I}$ such that each $G_i$ is a topological abelian group,
then the direct sum topology on $G$
is the inductive limit of the product topologies on finite direct sums.
It is easy to see that if each $G_i$ is generated by a family of open subgroups
$\{G_{i, \lambda_i}\}_{\lambda_i \in I_i}$
(in which case one says that $G_i$ has the subgroup topology), then
the family of direct sums ${\underset{i \in I}\bigoplus} G_{i, \lambda_i}$,
where $\lambda_i$ runs through $I_i$ for each $i \in I$,
generates the direct sum topology of $G$.
The direct sum topology has another property which we shall use often.

\begin{lem}\label{lem:DST-univ}
Let $G$ be the direct sum of abelian groups 
$\{G_{i}\}_{i \in I}$. Assume that each $G_i$ is
a topological abelian group with subgroup topology.
Then $G$, with its direct sum topology, is  
a coproduct in the category of topological abelian groups with 
the subgroup topology.
\end{lem}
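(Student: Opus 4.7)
The plan is to verify the universal property of coproducts directly. Given a family of continuous homomorphisms $\{f_i \colon G_i \to H\}_{i \in I}$ where $H$ is a topological abelian group with subgroup topology, I need to produce a unique continuous homomorphism $f \colon G \to H$ with $f \circ \iota_i = f_i$, where $\iota_i \colon G_i \to G$ is the canonical inclusion. As abelian groups, existence and uniqueness of $f$ is the standard universal property of the direct sum, and continuity of each $\iota_i$ is immediate from the definition of the direct sum topology as the colimit of the product topologies on finite partial sums. So the only real content is continuity of $f$.

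To handle this, I would exploit the hypothesis that $H$ has subgroup topology: it suffices to check that $f^{-1}(V)$ is open in $G$ for every open subgroup $V \subset H$. Set $U_i := f_i^{-1}(V)$, which is an open subgroup of $G_i$ by continuity of $f_i$. The key claim is that $\bigoplus_{i \in I} U_i$ is an open subgroup of $G$ contained in $f^{-1}(V)$. Containment is clear because any element $(g_i)_{i \in I} \in \bigoplus U_i$ has only finitely many nonzero components, so $f((g_i)) = \sum_i f_i(g_i)$ is a finite sum of elements of $V$, hence lies in $V$ since $V$ is a subgroup.

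Openness of $\bigoplus_i U_i$ is exactly the point at which the subgroup topology hypothesis on each $G_i$ is invoked: by the description of the direct sum topology recalled immediately before the lemma, direct sums of the form $\bigoplus_i G_{i,\lambda_i}$ with each $G_{i,\lambda_i}$ an open subgroup of $G_i$ generate the direct sum topology on $G$. Applying this to the open subgroups $U_i \subset G_i$ exhibits $\bigoplus_i U_i$ as open in $G$. Then $f^{-1}(V)$, containing the open subgroup $\bigoplus_i U_i$, is a union of its cosets and is therefore open.

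The main obstacle, and essentially the only one, is the identification of an open subgroup of $G$ inside $f^{-1}(V)$. The assumption that both $H$ and each $G_i$ carry the subgroup topology is what allows the entire argument to be conducted with open subgroups rather than with arbitrary open neighborhoods; without this restriction, controlling the image of the direct sum topology under $f$ in a general topological abelian group $H$ would be considerably more delicate, and the universal property could in fact fail.
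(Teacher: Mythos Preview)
Your proof is correct and follows essentially the same approach as the paper: both arguments check continuity by showing that the preimage of an open subgroup $V \subset H$ contains the open subgroup $\bigoplus_i f_i^{-1}(V)$ of $G$. The only cosmetic difference is that the paper first reduces to finite $I$ via the inductive limit description of the direct sum topology, whereas you work directly with arbitrary $I$ using the basis of open subgroups described just before the lemma; both routes arrive at the same key observation.
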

\begin{proof}
Let 
$\{f_i \colon G_i \to H\}$ be a family of continuous homomorphisms
to a topological abelian group $H$. 
Then the lemma asserts that there is a unique
continuous homomorphism $f \colon G \to H$ such that $f|_{G_i} = f_i$
for all $i \in I$. 

Now, it is clear that a homomorphism of groups $f$ exists with 
the above uniqueness property as a group homomorphism. 
We need to show that $f$ is continuous.
Since $G$ has the inductive limit topology, it suffices to prove
the case when $I$ is a finite set.
We can therefore let $I = \{1, \ldots , n\}$.

We now let $H' \subset H$ be an open subgroup.
By \lemref{lem:Cont-tp-grp}, it suffices to show that $f^{-1}(H')$ is open. 
We let $B_i = f^{-1}_i(H')$ and let $B = {\underset{1 \le i \le n}\prod} B_i
\subset G$. It is clear that $B \subset G$ is open.
We are left with showing that $f(B) \subset H'$.
But this is clear because $f^{-1}(H')$ is a subgroup of $G$ and
$f^{-1}_i(H') = f^{-1}(H') \cap G_i$ via the inclusions $G_i \subset G$.
\end{proof}

The following useful result may be well known, but we write down a proof
as we could not find a suitable reference.

\begin{lem}\label{lem:Cont-tp-grp}
Let $f \colon G \to H$ be a group homomorphism between two
topological abelian groups with subgroup topologies. Then $f$ is continuous
if and only if the inverse image of every generating open subgroup via $f$ is
open. 
\end{lem}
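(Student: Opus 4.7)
The plan is to reduce continuity of $f$ to continuity at the identity and then exploit the structure of the subgroup topology on $H$. The forward implication is automatic, since in that case $f^{-1}$ of every open set (in particular every generating open subgroup) is open, so I will concentrate on the reverse direction.

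First I would recall the standard fact about topological group homomorphisms: since $G$ and $H$ are topological abelian groups and $f$ is a homomorphism, $f$ is continuous if and only if it is continuous at $0 \in G$. Indeed, for any $g \in G$ and any open neighborhood $V$ of $f(g)$ in $H$, the translate $V - f(g)$ is an open neighborhood of $0$; if $U$ is an open neighborhood of $0$ with $f(U) \subset V - f(g)$, then $g + U$ is an open neighborhood of $g$ with $f(g + U) \subset V$. So it suffices to show that the preimage under $f$ of every open neighborhood of $0$ in $H$ is open.

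Next I would use the hypothesis that $H$ carries a subgroup topology. By definition (see the paragraph preceding \lemref{lem:DST-univ}), this means that $H$ admits a family $\{H_\lambda\}_{\lambda \in \Lambda}$ of open subgroups that generates its topology. Consequently, every open neighborhood of $0 \in H$ contains a finite intersection $H_{\lambda_1} \cap \cdots \cap H_{\lambda_n}$. Now
\[
f^{-1}(H_{\lambda_1} \cap \cdots \cap H_{\lambda_n}) \;=\; f^{-1}(H_{\lambda_1}) \cap \cdots \cap f^{-1}(H_{\lambda_n}),
\]
and by the assumption each $f^{-1}(H_{\lambda_i})$ is an open subgroup of $G$. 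A finite intersection of open subgroups is open, so $f^{-1}$ of any basic open neighborhood of $0$ in $H$ is open in $G$.

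Combining these two observations, $f$ is continuous at $0$ and hence continuous everywhere, which completes the proof. There is no real obstacle here; the only subtlety worth flagging is that ``subgroup topology'' is being used in the sense introduced just before \lemref{lem:DST-univ}, namely that a generating family of open subgroups exists, so that finite intersections of such subgroups form a neighborhood basis of $0$, making the argument a one-line diagram-chase.
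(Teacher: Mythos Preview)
Your proof is correct and follows essentially the same idea as the paper's: both exploit the group structure to reduce continuity to a condition at the identity. The only cosmetic difference is that you make the reduction to continuity at $0$ explicit first and then handle basic neighborhoods of $0$, whereas the paper checks directly that the preimage of an arbitrary coset $a+B$ of a generating open subgroup is open (by translating back to $0$ inside the argument). Your inclusion of finite intersections is harmless and in fact slightly more careful than the paper, which tacitly assumes the generating family already forms a neighborhood base at $0$.
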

\begin{proof}
Since $G$ and $H$ have subgroup topologies,
$f$ will be continuous if the inverse image of every coset of
a generating open subgroup is open. So we let $a \in H$ be an arbitrary 
element and $B \subset H$ a generating open subgroup.
If $f^{-1}(a + B)$ is empty, we are done. Otherwise, we let 
$a' \in G$ be such that $f(a') = a + b$ for some $b \in B$.
By our hypothesis, there is an open subgroup $B' \subset f^{-1}(B)$.
If $b' \in B'$, then $f(a' + b') = a + (b + b') \in a + B$.
This implies that $f(a' + B') \subset a + B$. Since $a' + B'$ is open
in $G$, the lemma follows.
\end{proof}

\section{The idele class group {\`a} la Kerz}
\label{sec:Kerz}
In the construction of the reciprocity homomorphism, 
the idele class groups defined by Kato-Saito
\cite{Kato-Saito-2} and Kerz \cite{Kerz11} will play the key roles.
The goal of this section is to recall the idele class group defined
by Kerz and study some properties which we shall use in the proof of
the main results.

Let $X$ be an excellent integral scheme with a dimension
function $d_X$. We let $U \subset X$ be an open immersion 
whose complement $C$ is a nowhere dense
closed subset  with the reduced subscheme structure.
We let $\sP_{U/X}$ denote the set of Parshin chains on the pair $(U \subset X)$
and let $\sP^{\max}_{U/X}$ be the subset of $\sP_{U/X}$ consisting of
maximal Parshin chains. 
If $P$ is a Parshin chain on $X$, then we shall consider 
$K^M_{d_X(P)}(k(P))$ as a topological abelian group with its canonical
topology if $P$ is maximal. Otherwise, we shall consider
$K^M_{d_X(P)}(k(P))$ as a topological abelian group with its discrete topology.

\subsection{The idele groups}\label{sec:Idele*}
We let
\begin{equation}\label{eqn:Idele-U-0}
I_{U/X} = {\underset{P \in \sP_{U/X}}\bigoplus} K^M_{d_X(P)}(k(P)),
\end{equation}
and call it the ($K$-theoretic) idele group.
We consider $I_{U/X}$ as a topological group with its direct sum topology.
It is clear that the topology of $I_{U/X}$ is generated by the open 
subgroups ${\underset{P \in \sP^{\max}_{U/X}}\bigoplus} 
K^M_{d_X(P)}(k(P), m_P(D(P)))$,
where $D(P)$ runs through the set of all effective Weil divisors whose 
supports coincide with $C$.
Note that $K^M_{d_X(P)}(k(P), m_P(D(P)))$ is defined
since each factor $k(P)_i$ of $k(P)$ is the quotient field of the unique 
discrete valuation ring $R$ whose maximal ideal is the radical of the
extension of $\sI_{\ov{\{p_{s-1}\}}}$ in $R \subset k(P)_i$ under the canonical 
map $\sO_{X} \to k(P)_i$.
It also follows that $I_{U/X}$ is discrete if $\dim(C) \le \dim(X) -2$. 

If $D \subset X$ is a closed subscheme with support $C$, we let 
\begin{equation}\label{eqn:Idele-D}
I(X,D) = {\rm Coker}\left({\underset{P \in \sP^{\max}_{U/X}}\bigoplus}
K^M_{d_X(P)}(\sO^h_{X, P'}, I_D) \to I_{U/X}\right),
\end{equation}
where $I_D$ is the extension of the ideal sheaf $\sI_D \subset \sO_X$
to $\sO^h_{X,P'}$ and the map on the right is induced by the composition
$K^M_{d_X(P)}(\sO^h_{X, P'}, I_D) \to K^M_{d_X(P)}(k(P)) \to I_{U/X}$ for
$P \in \sP^{\max}_{U/X}$.
We call this the ($K$-theoretic) idele group of $X$ with modulus $D$.
We consider $I(X,D)$ a topological group with its quotient topology.

\subsection{The $C$-topology on idele groups}
\label{sec:D-top}
Let $U \subset X$ be as above. 
The $C$-topology on $I_{U/X}$ is the smallest topology generated by the
open subgroups each of which is of the form
$I_{U/X}(D) := {\underset{P \in \sP^{\max}_{U/X}}\bigoplus}
K^M_{d_X(P)}(\sO^h_{X, P'}| I_D)$, where $D \subset X$ is
a closed subscheme with support $C$. 
The direct sum topology on $I_{U/X}$ is finer than the $C$-topology.
To see this, it suffices to show that if $D \subset X$ is a closed
subscheme with support $C$ and if $C$ has codimension one in $X$, then
for every maximal Parshin chain $P$ on $X$, the
subgroup $K^M_{d_X(P)}(\sO^h_{X, P'}| I_D)$ is open in $K^M_{d_X(P)}(k(P))$.
But this follows from \lemref{lem:Kato-Kerz}.
We shall use the comparison between the direct sum and $C$-topology
in some of our proofs.

It is easy to see that the direct sum topology and the $C$-topology
on $I_{U/X}$ coincide if there are only finitely many maximal Parshin chains on
$(U \subset X)$. However, one can check that this is not the case 
otherwise. In this paper, we shall consider $I_{U/X}$ as a topological
group with its direct sum topology unless we specify something else.

\subsection{The idele class groups}\label{sec:ICG}
Let $U \subset X$ be as above with dimension function $d_X$.
A $Q$-chain on $(U \subset X)$ is a chain 
$(p_0, \ldots , p_{s-2}, p_s)$ with $s \ge 1$ such that the following hold:
\begin{enumerate}
\item
$p_s \in U$;
\item 
$p_i \in C$ for $0 \le i \le s-2$;
\item
$P = (p_0, \ldots , p_{s-2}, p_{s-1}, p_s)$ is a Parshin chain on $X$
for some $p_{s-1} \in X$;
\end{enumerate}
We let $\sQ_{U/X}$ denote the set of all $Q$-chains on $(U \subset X)$.

Let $Q = (p_0, \ldots , p_{s-2}, p_s)$ be a $Q$-chain on $(U \subset X)$
and let $P = (p_0, \ldots , p_{s-2}, p_{s-1}, p_s)$  be a Parshin chain
on $X$. If $p_{s-1} \in C$, then we have the canonical extension
of rings $k(Q) \inj k(P)$ and this yields a map
$\partial_{Q \to P} \colon K^M_{d_X(Q)}(k(Q)) \to K^M_{d_X(Q)}(k(P)) = 
K^M_{d_X(P)}(k(P))$.
If $p_{s-1} \in U$, then $P' = (p_0, \ldots , p_{s-1})$ is a
Parshin chain on $(U \subset X)$. We therefore have the 
residue map $\partial_{Q \to P} \colon K^M_{d_X(Q)}(k(Q)) \to
K^M_{d_X(P')}(k(P'))$. We thus get a map
$\partial_{Q} \colon K^M_{d_X(Q)}(k(Q)) \to I_{U/X}$.
We let $\partial_{U/X}$ denote the sum of the maps $\partial_Q$ where
$Q$ runs through  $\sQ_{U/X}$.
We let 
\begin{equation}\label{eqn:IC-0}
C_{U/X} = {\rm Coker}\left({\underset{Q \in \sQ_{U/X}}\bigoplus}
K^M_{d_X(Q)}(k(Q)) \xrightarrow{\partial_{U/X}} I_{U/X}\right)
\end{equation}
and call it the ($K$-theoretic) 
{\sl idele class group} of the pair $(U \subset X)$.
This is a topological group with its quotient topology. 
We shall see in \propref{prop:Degree} that $C_{X/X}$ coincides with
$\CH^F_0(X)$ (Chow group in the sense of \cite{Fulton})
when $d_X$ is the canonical dimension function.

If $D \subset X$ is a closed subscheme with support $C$, we let
\begin{equation}\label{eqn:IC-1}
C(X,D) = {\rm Coker}\left({\underset{Q \in \sQ_{U/X}}\bigoplus}
K^M_{d_X(Q)}(k(Q)) \to I(X,D)\right)
\end{equation}
and call it the {\sl idele class group} of $X$ with modulus $D$.
This is a topological group with its quotient topology.

If $d_X$ is the canonical dimension function on $X$, then 
for a closed point $x \in U$, we let $[x]$ denote the image of $1$ 
under the homomorphism $K_0^M(k(x)) \to C(X, D)$, and call it the
`cycle class' of $x$. 
It is clear that for every $D \subset X$ as above, there is a
canonical surjection $C_{U/X} \surj C(X,D)$ which is continuous.
The following result says more about the topology of $C(X,D)$.

\begin{lem}\label{lem:Disc}
The topologies of $I(X,D)$ and $C(X,D)$ are discrete. 
If the codimension of $C$ is
more than one in $X$, then $I_{U/X}$ and $C_{U/X}$ are also discrete. 
\end{lem}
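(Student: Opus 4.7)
The plan is to identify the subgroup of $I_{U/X}$ by which we form $I(X,D)$ with the already-studied open subgroup $I_{U/X}(D)$ of \secref{sec:D-top}, and then invoke the openness result recorded just before this lemma. By the very definition of $K^M_*(R|I)$ in \secref{sec:MPchain} as the image of $K^M_*(R, I)$ in the Milnor $K$-theory of the total quotient ring, the image of the structure map
\[
\bigoplus_{P \in \sP^{\max}_{U/X}} K^M_{d_X(P)}(\sO^h_{X,P'}, I_D) \longrightarrow I_{U/X}
\]
defining $I(X,D)$ is precisely
\[
I_{U/X}(D) = \bigoplus_{P \in \sP^{\max}_{U/X}} K^M_{d_X(P)}(\sO^h_{X,P'}|I_D).
\]
(Note that for maximal $P$, $k(P)$ is indeed the total quotient ring of $\sO^h_{X,P'}$, as recorded in \secref{sec:Pchain}.) The discussion at the end of \secref{sec:D-top} has already used \lemref{lem:Kato-Kerz} to show that each summand $K^M_{d_X(P)}(\sO^h_{X,P'}|I_D)$ is open in the canonical topology of $K^M_{d_X(P)}(k(P))$. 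Putting $V_P := K^M_{d_X(P)}(\sO^h_{X,P'}|I_D)$ for maximal $P$ and $V_P := 0$ (open in the discrete summand) for non-maximal $P$, we exhibit $I_{U/X}(D) = \bigoplus_P V_P$ as an open subgroup of the direct sum topology on $I_{U/X}$. Hence $I(X,D)$ is discrete, and so is its quotient $C(X,D)$.

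The codimension assertion is more elementary. If $\codim_X(C) > 1$, then $C$ contains no codimension-one point of $X$; but any maximal Parshin chain $(p_0,\ldots,p_s)$ on $(U \subset X)$ must satisfy $p_{s-1} \in C$ with $d_X(p_{s-1}) = d(X)-1$, i.e., of codimension one in $X$. Hence $\sP^{\max}_{U/X} = \emptyset$, every remaining summand of $I_{U/X}$ is indexed by a non-maximal Parshin chain and so carries the discrete topology by the convention adopted in \secref{sec:Kerz}, and consequently $I_{U/X}$ and its quotient $C_{U/X}$ are discrete.

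There is no serious obstacle beyond a careful unwinding of definitions: the only nontrivial input is the openness of $K^M_{d_X(P)}(\sO^h_{X,P'}|I_D)$ in the canonical topology, and this was already handled in the discussion preceding the lemma via \lemref{lem:Kato-Kerz} (together with a conductor comparison between the semilocal ring $\sO^h_{X,P'}$ and its normalization). Once that input is in hand, both halves of the lemma fall out mechanically.
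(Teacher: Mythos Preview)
Your proof is correct and follows essentially the same approach as the paper. The only cosmetic difference is that the paper routes the argument through the $C$-topology (noting $I(X,D)$ is discrete for the $C$-topology by definition, then invoking that the direct sum topology is finer), whereas you argue directly that $I_{U/X}(D)$ is open in the direct sum topology; both rest on the same openness of each summand established in \S\ref{sec:D-top} via \lemref{lem:Kato-Kerz}.
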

\begin{proof}
If the codimension of $C$ is more than one in $X$, then there are 
no maximal Parshin chains on $(U \subset X)$
whose Milnor $K$-groups define the topology of all idele and idele
class groups. Hence, all these topological groups are discrete.

Suppose now that $C$ has an irreducible component of codimension
one in $X$. It is clear from its  definition that $I(X,D)$
is discrete with respect to the $C$-topology. The discreteness of 
$I(X,D)$ now follows because the direct sum topology of $I_{U/X}$ is finer 
than its $C$-topology (see \S~\ref{sec:D-top}).
Since $C(X,D)$ is a quotient of $I(X,D)$ with its quotient topology,
it must also be discrete.
\end{proof}

The $Q$-chains have the following property that will be often useful
to us.
Let $A$ be a Henselian equidimensional reduced local ring of Krull 
dimension $d$.
Let $\fp$ be a minimal prime of $A$ and let $X = \Spec(A/{\fp})$.
Note that $X$ is a Henselian local integral scheme of Krull dimension $d$.
Let $p_0$ and $p_d$ denote the closed point and the generic point of
$X$, respectively. Let $Q = (p_0, \ldots , p_{d-2}, p_d)$ be a  
$Q$-chain on $X$.
Let $\nu \colon X_n \to X$ be the normalization map.
Then $X_n$ is also a local integral scheme with the unique point $p'_0$ 
lying over $p_0$. Let $\sV(Q)$ be the set of $Q$-chains on $X_n$ lying over 
$Q$.

Let $B(Q)$ denote the set of all Parshin chains 
$P = (p_0, \ldots, p_{d-2}, p_{d-1}, p_d)$
on $X$. For every Parshin chain $P$ on $X$, let $\nu^{-1}(P)$ denote the
set of all Parshin chains $P_1$ on $X_n$ such that $\nu(P_1) = P$.
For any Parshin chain $P \in B(Q)$, we have the inclusion of
fields $\iota_P \colon k(Q) \inj k(P)$.
Let $\iota^Q \colon k(Q) \to {\underset{P \in B(Q)}\prod} k(P)$ be the
induced map to the product.
\begin{lem}\label{lem:Q-chain-nor}
There is a commutative diagram
\begin{equation}\label{eqn:Q-chain-nor-0}
\xymatrix@C.8pc{
k(Q) \ar[d]_-{\cong} \ar[r]^-{\iota^Q} & {\underset{P \in B(Q)}\prod} k(P)
\ar[d]^-{\cong} \\
{\underset{Q' \in \sV(Q)}\prod} k(Q') 
\ar[r]^-{\iota^{Q'}} & {\underset{P \in B(Q)}\prod} 
{\underset{P_1 \in \sV(P)}\prod} k(P_1)}
\end{equation} 
such that the vertical arrows are isomorphisms.
\end{lem}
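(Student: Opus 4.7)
The plan is to establish the two vertical arrows as isomorphisms and then verify commutativity by hand. For the right vertical, every $P \in B(Q)$ is a maximal Parshin chain on $X$ (since it terminates at the generic point $p_d$), so \lemref{lem:P-valuation} applies individually to give $k(P) \cong \prod_{P_1 \in \nu^{-1}(P)} k(P_1)$, where $\nu^{-1}(P)$ is identified with the set $\sV(P)$ of $d$-DV's dominating $P$ via the second isomorphism in that lemma. Taking the product over $P \in B(Q)$ produces the right vertical arrow and shows it is an isomorphism.

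For the left vertical, I would unpack the residue ring of $Q$ as follows. Let $R = \sO^h_{X, (p_0, \ldots, p_{d-2})}$, a reduced excellent semilocal Henselian ring of Krull dimension $2$. Since $p_d$ is the generic point of $X$ and $X$ is integral, primes of $R$ lying over $p_d$ are precisely the minimal primes of $R$, the localization $R_\fq$ at each such minimal prime is already Artinian (hence its own henselization), and so by the construction in \S\ref{sec:Pchain} one has $k(Q) = \sO^h_{X,Q} = Q(R)$, the total quotient ring. Since passing to the normalization does not change the total ring of fractions of a reduced Noetherian ring, $Q(R) = Q(R_n)$. The key identification I need is
\[
R_n \;\cong\; \prod_{Q' \in \sV(Q)} \sO^h_{X_n, (p_0', \ldots, p_{d-2}')},
\]
which asserts that normalization commutes with the iterated henselization defining $R$. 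Because $X$ (and hence each intermediate ring) is excellent, each henselization step commutes with normalization, and the factors of $R_n$ are indexed exactly by the chains $(p_0', \ldots, p_{d-2}')$ on $X_n$ lying over $(p_0, \ldots, p_{d-2})$, i.e.\ by $\sV(Q)$ after appending the unique generic point $p_d'$. Taking total quotient rings of both sides then gives $k(Q) \cong \prod_{Q' \in \sV(Q)} k(Q')$, which is the left vertical.

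For commutativity, I would observe that under the bijection $\bigsqcup_{P \in B(Q)} \nu^{-1}(P) \leftrightarrow \bigsqcup_{Q' \in \sV(Q)} B(Q')$ that sends a Parshin chain $P_1 = (p_0', \ldots, p_{d-1}', p_d')$ on $X_n$ simultaneously to $(P, P_1) = (\nu(P_1), P_1)$ and to $(Q', P_1)$ with $Q'$ obtained by deleting $p_{d-1}'$ from $P_1$, both composites in the square send $a \in k(Q)$ into the factor $k(P_1)$ via the same inclusion of fields $k(Q) \hookrightarrow k(Q') \hookrightarrow k(P_1)$, which equals $k(Q) \hookrightarrow k(P) \hookrightarrow k(P_1)$ inside $k(P_1)$.

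I expect the main obstacle to be the identification of $R_n$ with $\prod_{Q' \in \sV(Q)} \sO^h_{X_n, (p_0', \ldots, p_{d-2}')}$: one must track the recursive construction $\sO^h_{X, (p_0, \ldots, p_i)} \leadsto \sO^h_{X, (p_0, \ldots, p_{i+1})}$ and check at each level that normalization is exchanged with localization plus henselization, and that the combinatorial indexing of the resulting factors matches the set of chains $(p_0', \ldots, p_{d-2}')$ on $X_n$ above $(p_0, \ldots, p_{d-2})$. This is the same mechanism driving \lemref{lem:P-valuation}, adapted from maximal Parshin chains to $Q$-chains, and excellence of $X$ is what makes it go through cleanly.
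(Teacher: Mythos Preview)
Your proof is correct and follows essentially the same route as the paper. The paper's argument is terse: the right vertical is \lemref{lem:P-valuation} applied to each maximal Parshin chain $P \in B(Q)$, and the left vertical is obtained by the same mechanism underlying the proof of \lemref{lem:P-valuation} (ultimately \cite[Lemma~3.3.1]{Kato-Saito-2}, the compatibility of normalization with iterated henselization for excellent rings), while commutativity is declared clear. Your write-up simply unpacks that mechanism for the left vertical---identifying $k(Q)$ with the total quotient ring $Q(R)$ of $R = \sO^h_{X,(p_0,\ldots,p_{d-2})}$, passing to $Q(R_n)$, and tracking the factors of $R_n$ via chains on $X_n$---which is exactly what the cited Kato--Saito lemma does.
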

\begin{proof}
The commutativity of the diagram is clear and the isomorphism of the
right vertical arrow follows from ~\lemref{lem:P-valuation}, whose
proof also shows that the left vertical arrow is an isomorphism too.
\end{proof}

\subsection{Functoriality of the idele class group}
\label{sec:Functorial}
Let $X$ be an excellent integral scheme of dimension $d = d(X)$ endowed with a 
dimension function $d_X$. Let $C \subset X$ be a reduced nowhere dense
closed subscheme of $X$ with $U = X \setminus C$.
Let $f \colon X' \to X$ be a finite morphism from an integral scheme
such that the support of $f^{-1}(C)$ is contained in a 
nowhere dense reduced closed subscheme $C' \subset X'$ with
complement $U'$. We endow $X'$ with the dimension function induced by $d_X$. 
If these properties are satisfied, we shall say that
$f \colon (X', U') \to (X,U)$ is an admissible morphism.

\begin{prop}\label{prop:Funtorial*}
Assume that $X$ is essentially of finite type over a field.
Then the map $f$ induces a push-forward map
\[
f _* \colon C_{{U'}/{X'}} \to C_{U/X}.
\]
\end{prop}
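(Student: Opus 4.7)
The plan is to construct $f_*$ first at the level of idele groups and then verify that it is compatible with the $Q$-chain boundary relations, so that it descends to a map on the class groups $C_{U'/X'} \to C_{U/X}$. Since $f$ is finite and $d_{X'} = d_X \circ f$, for any Parshin chain $P' = (p_0', \ldots, p_s')$ on $(U' \subset X')$ its image $f(P') = (f(p_0'), \ldots, f(p_s'))$ is a chain on $X$ with $d_X(f(p_i')) = i + d_m(X')$. The condition $|f^{-1}(C)| \subseteq C'$ forces $f(p_s') \in U$, and after reducing to the case $C' = |f^{-1}(C)|_{\mathrm{red}}$ (handled separately by naturality with respect to the different admissible choices of $C'$) we also have $f(p_i') \in C$ for $i < s$, so $f(P') \in \sP_{U/X}$.

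Next I would define the $P'$-component of $f_*$ via a Milnor $K$-theoretic norm. Because $f$ is finite, the ring map $\sO^h_{X, f(P')} \to \sO^h_{X', P'}$ is finite, and \lemref{lem:P-valuation} applied to both $X$ and $X'$ identifies $k(P')$ with a finite product of finite field extensions of the corresponding factors of $k(f(P'))$. The Milnor norm on each factor sums to a homomorphism
\[
N_{P'/f(P')} : K^M_{d_{X'}(P')}(k(P')) \to K^M_{d_X(f(P'))}(k(f(P'))),
\]
which is continuous for the canonical topologies (the Milnor norm carries $U_r(F')$ into $U_{\lceil r/e \rceil}(F)$ on each discrete valuation factor, and this controls the $\tau_3$-topology of \lemref{lem:Kato-Kerz}). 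Taking direct sums over $\sP_{U'/X'}$ and invoking \lemref{lem:DST-univ}, we obtain a continuous homomorphism $f_* : I_{U'/X'} \to I_{U/X}$.

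The main technical obstacle is to verify that $f_*$ maps the image of $\partial_{U'/X'}$ into the image of $\partial_{U/X}$. For a $Q$-chain $Q' = (p_0', \ldots, p_{s-2}', p_s')$ on $(U' \subset X')$, one expands $f_* \circ \partial_{Q'}$ as a sum over the intermediate codimension-one points $p_{s-1}'$, and then applies the Weil-type reciprocity identity
\[
\sum_{w \mid v} N_{k(w)/k(v)} \circ \partial_w \;=\; \partial_v \circ N_{L/K},
\]
valid for a finite extension $L/K$ of quotient fields of one-dimensional excellent semilocal integral domains, to rewrite the sum in terms of residues of norms. Using \lemref{lem:Q-chain-nor} and \lemref{lem:Inv-ambient} to match up the product decompositions, this identifies $f_* \circ \partial_{Q'}$ with $\partial_{f_*(Q')} \circ N_{k(Q')/k(f_*(Q'))}$, where $f_*(Q') = (f(p_0'), \ldots, f(p_{s-2}'), f(p_s'))$ is a $Q$-chain on $(U \subset X)$. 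The real difficulty here is the combinatorial bookkeeping: many Parshin chains on $X'$ can lie over the same chain on $X$, and the residue rings split into multiple factors via \lemref{lem:P-valuation}, so one must verify the reciprocity component by component. Once compatibility is established, the descent to $f_* : C_{U'/X'} \to C_{U/X}$ and the continuity of the induced map are automatic as quotients of the continuous map at the idele level.
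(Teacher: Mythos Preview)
Your construction has a genuine gap in the reduction step. You write that the case $C' \supsetneq |f^{-1}(C)|$ is ``handled separately by naturality with respect to the different admissible choices of $C'$,'' and you then proceed assuming $f(P') \in \sP_{U/X}$ so that $f_*$ is just the Milnor norm $N_{P'/f(P')}$. But that reduction is itself an instance of the proposition: to pass from $(X',U')$ to $(X',f^{-1}(U))$ you must already produce a push-forward $C_{U'/X'} \to C_{f^{-1}(U)/X'}$ along the identity map of $X'$, and this is precisely the situation where a Parshin chain $P' = (p_0', \ldots, p_s')$ on $(U' \subset X')$ may have $f(p_i') \in U$ for some $i < s$, so its image is \emph{not} a Parshin chain on the target pair. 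No amount of ``naturality'' furnishes this map; one has to build it, and the norm alone cannot do so because the chain lands in the wrong stratum.

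The paper's approach avoids this circularity by defining $f_*$ on each Parshin chain $P = (p_0, \ldots, p_s)$ of $(U' \subset X')$ as a composite \emph{residue-then-norm}: one first finds the largest $s' \le s$ such that $P'' = (f(p_0), \ldots, f(p_{s'}))$ is a Parshin chain on $(U \subset X)$, applies the iterated residue $\partial^{P_1 \le P} : K^M_{d_{X'}(P)}(k(P)) \to K^M_{d_{X'}(P_1)}(k(P_1))$ with $P_1 = (p_0, \ldots, p_{s'})$, and only then applies the norm $N^{P_1 \to P''}$. Your Weil-type reciprocity identity is exactly the key compatibility needed (it is \lemref{lem:Norm-residue} in the paper, proved there via Bloch's higher Chow groups, which is where the hypothesis that $X$ is essentially of finite type over a field enters), but you have to apply it to this residue-plus-norm construction rather than to the pure norm. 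Once you incorporate the residue step, your verification that $f_*$ respects $Q$-chain relations goes through, though the combinatorics become a bit more involved since a single $Q$-chain relation on $(U' \subset X')$ can contribute both to the ``henselization'' and to the ``residue'' components on the target side, depending on whether the intermediate point $p_{s-1}'$ maps into $C$ or into $U$.

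As a minor side remark, the proposition does not assert continuity of $f_*$; that is established later (and with considerably more effort) in \propref{prop:Cont-main}.
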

\begin{proof}
Let $P  = (p_0, \ldots , p_s)$ be a Parshin chain on $(U' \subset X')$.
Since $f$ is finite, $f(P) = (f(p_0), \ldots , f(p_s))$ 
is a Parshin chain on $X$. But it may not be a Parshin chain
on $(U \subset X)$ unless $C' = f^{-1}(C)$. We let $s' \le s$ be the
largest integer such that $P'' := (f(p_0), \ldots , f(p_{s'}))$ is a 
Parshin chain on $(U \subset X)$.  We let $P_1 = (p_0, \ldots , p_{s'})$.

Since $P_1 \le P$, there is a composition of residue homomorphisms
$\partial^{P_1 \le P} \colon K^M_{d_{X'}(P)}(k(P)) \to K^M_{d_{X'}(P_1)}(k(P_1))$.
We also have the finite map between products of fields
$k(P'') \to k(P_1)$ and this yields the norm map
$N^{P_1 \to P''} \colon K^M_{d_{X'}(P_1)}(k(P_1)) \to K^M_{d_X(P'')}(k(P''))$.
We let $f^{P \to P''} = N^{P_1 \to P''} \circ \partial^{P_1 \le P}
\colon  K^M_{d_{X'}(P)}(k(P)) \to K^M_{d_X(P'')}(k(P''))$.
Composing this with the canonical inclusion $K^M_{d_X(P'')}(k(P'')) \inj
I_{U/X}$, we get a map
$f^{P}_* \colon  K^M_{d_{X'}(P)}(k(P)) \to I_{U/X}$.
Taking the direct sum of these maps over all Parshin chains
on $(U' \subset X')$, we get our push-forward map
\[
f_* \colon I_{U'/X'} \to I_{U/X}.
\]

We now show that $f_*$ preserves the relations that define the
idele class groups.
If $Q = (p_0, \ldots , p_{s-2}, p_s)$ is a $Q$-chain on $(U' \subset X')$,
then $\partial (K^M_{d_{X'}(Q)}(k(Q)))$ is the direct sum of
images under the henselization along the discrete valuations rings
coming from the irreducible components of $C'$ (if the latter is a
divisor) 
and the residue maps along points on $U'$ (see \S~\ref{sec:ICG}).
Since $f_*$ is defined by composing residue and the norm maps,
the assertion that it preserves the relations will follow if we 
show that the residue maps commute with henselization and the
norm maps commute with henselization as well as the residue maps.

Since a residue map between the Milnor $K$-groups of fields commutes
with field extensions (see \cite{Kato86}), 
it is clear that it commutes with henselization.
Similarly, since the norm map between Milnor $K$-groups commutes with
field extensions by the standard properties of norm (see \cite{Bass-Tate}), 
it commutes with henselization along discrete valuations.
The only nontrivial thing therefore is to check that the norm
and the residue maps between Milnor $K$-groups of fields commute.
But this follows from \lemref{lem:Norm-residue}.   
\end{proof}

\begin{lem}\label{lem:Norm-residue}
Let $A$ be discrete valuation ring with quotient field $E$ and 
residue field $k$. Let $F$ be a finite field extension of $E$ and $B$ the 
integral closure of $A$ in $F$. Let $k_1, \ldots , k_r$ be the
residue fields of $F$. Assume that $A$ is essentially of finite type
over a field. Then the diagram
\begin{equation}\label{eqn:Norm-residue-0}
\xymatrix@C.8pc{
K^M_n(F) \ar[d]_-{N_{F/E}} \ar[r]^-{\sum_i \partial_{F/{k_i}}} &
{\underset{1 \le i \le r}\bigoplus} K^M_{n-1}(k_i) \ar[d]^-{\sum_i N_{{k_i}/k}} \\
K^M_n(E) \ar[r]^-{\partial_{E/k}} & K^M_{n-1}(k)}
\end{equation}
is commutative.
\end{lem}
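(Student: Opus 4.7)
The plan is to reduce the diagram to a classical compatibility between norm and residue on the Milnor $K$-theory of a simple extension of henselian discretely valued fields, which is treated either by a Bass--Tate symbol computation or by a geometric Weil reciprocity argument on a curve.

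First I would replace $A$ by its henselization $A^h$. Both the norm and the residue commute with henselization: the residue is determined entirely by the DVR structure, which is preserved under henselization, while the norm is functorial for finite field extensions and compatible with the flat base change $E \to E^h$. After this replacement, $B \otimes_A A^h$ decomposes as a product $\prod_{i=1}^r B_i^h$ of henselian DVRs corresponding to the residue fields $k_i$, the norm $N_{F/E}$ decomposes as $\sum_i N_{F_i^h/E^h}$ where $F_i^h = Q(B_i^h)$, and the residues $\partial_{F/k_i}$ split componentwise. This reduces the lemma to the case where both $A$ and $B$ are henselian DVRs and $F/E$ is a finite extension with a single residue field extension $k \subset k_1$.

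Second, I would reduce to the case $F = E(\alpha)$. Since norms compose for towers of fields, $N_{F/E} = N_{E_1/E}\circ N_{F/E_1}$, and since the residue from $K^M_n(F)$ to $K^M_{n-1}(k_1)$ factors through the residue to the unique intermediate residue field (uniqueness being forced by the henselian hypothesis), it suffices to handle one simple step at a time. For any intermediate field $E \subset E_1 \subset F$, the integral closure of $A$ in $E_1$ is finite over the henselian local ring $A$, hence is itself a henselian local domain, hence a DVR. Therefore any finite extension can be factored into a tower of simple extensions with the data preserved at each stage, and I may assume $F = E(\alpha)$.

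Third, in the simple-extension case the compatibility is classical. One route uses the Bass--Tate presentation: $K^M_n(F)$ is generated by symbols of the form $\{\alpha - a_1, \ldots, \alpha - a_m, b_{m+1}, \ldots, b_n\}$ with $a_i, b_j \in E$, on which both sides of the diagram admit an explicit evaluation that one compares term by term. Alternatively, since $A$ is essentially of finite type over a field $k_0$, one may realize $A$ as the local ring at a closed point $x$ of a smooth curve $C$ over $k_0$, so that $F/E$ corresponds to a finite cover $C' \to C$ and the residues of the lemma are the local contributions to the Gersten differential; the desired identity then becomes the Weil reciprocity law on the proper smooth completion of $C'$, stating that the sum of all residues of a symbol along a proper smooth curve vanishes. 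This is precisely where the hypothesis that $A$ is essentially of finite type over a field is used. The main obstacle is this final step, since either the Bass--Tate symbol manipulation or the Weil-reciprocity argument is the substantive input (see Kato, \emph{A generalization of local class field theory}, or Rost, \emph{Chow groups with coefficients}); the earlier reductions are formal, relying only on functoriality of norm and residue together with their interaction with henselization and tower composition.
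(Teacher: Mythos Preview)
Your reductions (henselize, then factor into a tower of simple extensions) are sound and constitute a classical strategy, but step~3 has real gaps. The Bass--Tate generation you invoke---that $K^M_n(F)$ is spanned by symbols $\{\alpha - a_1, \ldots, \alpha - a_m, b_{m+1}, \ldots, b_n\}$ with $a_i, b_j \in E$---is false: an element of $F^\times$ such as $\alpha^2 + 1$ need not split into linear factors over $E$. The correct Bass--Tate machinery reduces further, to prime-degree extensions over $p$-special base fields, and even there the explicit comparison is nontrivial; this is essentially Kato's argument, which the paper cites as handling only the complete, prime-degree, normal case. Your Weil-reciprocity alternative is also off. A DVR essentially of finite type over $k_0$ need not be the local ring at a closed point of a smooth curve over $k_0$: take $A = k_0[x,y]_{(y)}$, whose residue field $k_0(x)$ is transcendental over $k_0$. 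And even granting such a realization, Weil reciprocity on the smooth completion of $C'$ yields a relation among residues at \emph{all} closed points, not the local identity $\partial_{E/k}\circ N_{F/E} = \sum_i N_{k_i/k}\circ \partial_{F/k_i}$ over a \emph{single} point of $C$; the latter is precisely push-forward compatibility of the Gersten differential, which is the lemma itself.

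The paper sidesteps all of this with a genuinely different argument: it transports the square to Bloch's higher Chow groups via the Nesterenko--Suslin/Totaro isomorphisms $\theta_K \colon K^M_n(K) \xrightarrow{\cong} \CH^n(K,n)$. On the Chow side, norm becomes proper push-forward and residue becomes the boundary map in the localization sequence; their commutation is a standard functoriality of higher Chow groups. The remaining point is that $\theta_{(-)}$ intertwines the Milnor residue with the Chow boundary, and by $K^M_*(R)$-linearity of both (for $R$ the DVR) this reduces to $n=1$, where each map is simply the valuation. This is where the hypothesis that $A$ is essentially of finite type over a field actually enters---to make higher Chow groups and the Nesterenko--Suslin comparison available---not to realize $A$ geometrically as a curve.
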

\begin{proof}
This result was proven by Kato \cite[\S~1, Lemma~16]{Kato80} when
$E$ is a complete discrete valuation field and $F/E$ is a normal
extension whose degree is a prime number.
We shall prove the general case using Bloch's higher Chow groups.

For an equidimensional scheme $X$ which is essentially of finite type over a 
field, let $\CH^p(X,q)$ denote Bloch's higher Chow 
groups of $X$ \cite{Bloch86}. These groups are equipped with proper
push-forward, flat pull-back and localization sequence.
By Totaro \cite{Totaro} and Nesterenko-Suslin \cite{Nesterenko-Suslin}, 
there are isomorphisms $\theta_K \colon K^M_n(K) \to \CH^n(K, n)$ for a field 
$K$. These isomorphisms satisfy some nice properties.

We now consider the diagram
\begin{equation}\label{eqn:Norm-residue-1}
\xymatrix@C.8pc{
K^M_n(F) \ar[rr]^-{\sum_i \partial_{F/{k_i}}} \ar[dd]_-{N_{F/E}} 
\ar[dr]_-{\theta_F} & & {\underset{1 \le i \le r}\bigoplus} K^M_{n-1}(k_i) 
\ar[dd]^>>>>>>>{\sum_i N_{{k_i}/k}} \ar[dr]^-{\sum_i \theta_{k_i}} & & \\
& \CH^n(F,n) \ar[rr]^-{\sum_i \partial'_{F/{k_i}}} 
\ar[dd]_>>>>>>>>>>>>>>>>{N'_{F/E}} & &
{\underset{1 \le i \le r}\bigoplus} \CH^{n-1}(k_i, n-1) 
\ar[dd]^-{\sum_i N'_{{k_i}/k}} & \\
K^M_n(E) \ar[rr]_>>>>>>>{\partial_{E/k}}  \ar[dr]_-{\theta_E} & & 
K^M_{n-1}(k) \ar[dr]^-{\theta_k} & & \\
& \CH^n(E,n) \ar[rr]^-{\partial'_{E/k}} & & \CH^{n-1}(k, n-1),}
\end{equation}
where $N'$ denotes the push-forward and
$\partial'$ the boundary map in the localization sequence
for higher Chow groups.
Since all diagonal arrows are isomorphisms, it suffices to show that
all faces (except possibly the back face) of ~\eqref{eqn:Norm-residue-1}
commute to prove the lemma.

The front face commutes because of the known property of 
higher Chow groups that the localization sequences
are compatible with the push-forward maps \cite{Bloch86}.
The two side faces commute by \cite[Lemma~4.7]{Nesterenko-Suslin}.
To show that the top and the bottom faces commute, it suffices to
show that if $R$ is an equicharacteristic discrete valuation ring with 
quotient field $K$ and residue field $\mathfrak{f}$, then
the diagram
\begin{equation}\label{eqn:Norm-residue-2}
\xymatrix@C.8pc{
K^M_n(K) \ar[r]^-{\partial_{K/\mathfrak{f}}} \ar[d]_-{\theta_K} & 
K^M_{n-1}(\mathfrak{f}) \ar[d]^-{\theta_{\mathfrak{f}}} \\
\CH^n(K,n) \ar[r]^-{\partial'_{K/{\mathfrak{f}}}} & \CH^{n-1}(\mathfrak{f},n-1)}
\end{equation} 
is commutative.

Now, it is well known and elementary to see 
(using the Steinberg relations) that $K^M_*(K)$ is generated by $K^M_1(K)$
 as a 
 $K^M_*(R)$-module. Furthermore, $\partial_{K/\mathfrak{f}}$ is 
$K^M_*(R)$-linear (see \cite[Chapter~1, Proposition~4.5]{Bass-Tate}).
Since the localization sequence 
\[
\CH^n(R,n) \to \CH^n(K,n) \xrightarrow{\partial'_{K/{\mathfrak{f}}}} 
\CH^{n-1}(\mathfrak{f},n-1) \to 0
\]
is $\CH^*(R,*)$-linear, it is also $K^M_*(R)$-linear via $\theta_R$.  
Here, the action of $\CH^*(R,*)$ on $\CH^{\bullet-1}(\mathfrak{f},\bullet-1)$ is given 
by the pull-back ring homomorphism $\CH^*(R,*) \to \CH^*(\mathfrak{f},*)$.
Since the map $\theta_{(-)}$ is multiplicative, 
it follows that all arrows in 
~\eqref{eqn:Norm-residue-2} are $K^M_*(R)$-linear. 
It therefore suffices to prove the commutativity of
~\eqref{eqn:Norm-residue-2} for $n =1$. But in this case, both 
$\partial_{K/\mathfrak{f}}$ and $\partial'_{K/\mathfrak{f}}$ are simply the 
valuation map of $K$ corresponding to $R$.
The proof of the lemma is now complete.
\end{proof}

We now study the functoriality of the idele class groups with modulus.
Existence of push-forward map for the idele class group with modulus is a
very nontrivial problem in general. We shall prove this here only for 
finite dominant maps. The existence of push-forward in general will appear
in \cite{Gupta-Krishna-CFT}.

\begin{prop}\label{prop:PF-mod}
Assume that $X$ is essentially of finite type over a field and is regular
at the generic points of $C$.
Let $f \colon (X', U') \to (X,U)$ be a dominant admissible morphism.
Let $D \subset X$ (resp. $D' \subset X'$) be a closed subscheme with support 
$C$ (resp. $C'$) such that $f^*(D) \subset D'$.
Then $f$ induces a push-forward map
\[
f _* \colon C(X',D') \to C(X,D).
\]
\end{prop}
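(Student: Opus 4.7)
The plan is to lift the pushforward $f_* \colon C_{U'/X'} \to C_{U/X}$ from \propref{prop:Funtorial*} to the modulus quotients. By \eqref{eqn:IC-1}, $C(X,D)$ is the cokernel of the $Q$-chain boundary into $I(X,D) = I_{U/X}/\mathrm{Mod}(X,D)$, where $\mathrm{Mod}(X,D)$ denotes the image of $\bigoplus_{P \in \sP^{\max}_{U/X}} K^M_d(\sO^h_{X,P'}, I_D)$ in $I_{U/X}$, and similarly for $(X', D')$. It therefore suffices to show that for each maximal Parshin chain $P = (p_0, \ldots, p_d)$ on $(U' \subset X')$ and each $\alpha \in K^M_d(\sO^h_{X',P'}, I_{D'})$, the image $f_*(\alpha)$ lies in $\mathrm{Mod}(X, D)$ plus the image of the $Q$-chain residue map $\partial_{U/X}$.

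Following the construction in \propref{prop:Funtorial*}, let $s' \le d$ be the largest integer such that $P'' := (f(p_0), \ldots, f(p_{s'}))$ is a Parshin chain on $(U \subset X)$, set $P_1 := (p_0, \ldots, p_{s'})$, so that $f_*^P(\alpha) = N^{P_1 \to P''}(\partial^{P_1 \le P}(\alpha))$. Since $f$ is finite and dominant, $f(p_d) = \eta_X \in U$. In the generic case $s' = d$, the chain $f(P)$ is itself maximal on $(U \subset X)$ and $f_*^P$ reduces to the norm $N = N_{k(P)/k(f(P))}$. The regularity of $X$ at the generic points of $C$ makes $R := \sO^h_{X, f(P)'}$ a one-dimensional regular semilocal domain, and the hypothesis $f^*(D) \subset D'$ translates to $I_{D'} \subset I_D \cdot R'$ with $R' := \sO^h_{X', P'}$. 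The key technical input is the norm-modulus compatibility
\[
N_{R'/R}\bigl(K^M_d(R', I_{D'})\bigr) \subset K^M_d(R, I_D).
\]
This follows from the surjection \eqref{eqn:MPchain*-1}, which presents the modulus subgroup by symbols $\{1+a, u_2, \ldots, u_d\}$ with $a \in I_{D'}$, combined with the projection formula for the norm and the elementary inclusion $N_{R'/R}(1 + I_{D'}) \subset 1 + I_D$ (which rests on $\operatorname{Tr}_{R'/R}(I_D \cdot R') \subset I_D$ and the expansion $N(1+a) = 1 + \operatorname{Tr}(a) + \cdots$). Hence $f_*^P(\alpha) \in \mathrm{Mod}(X, D)$ and vanishes in $C(X, D)$.

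The exceptional case $s' < d$ occurs when some $p_i$ with $i < d$ lies in $C' \setminus f^{-1}(C)$, so that $f_*^P(\alpha)$ lands in the discrete summand $K^M_{s'}(k(P''))$ for the non-maximal chain $P''$. A direct calculation of the iterated tame symbol $\partial^{P_1 \le P}$ on the generators $\{1+a, u_2, \ldots, u_d\}$ of $K^M_d(R', I_{D'})$, combined with the commutativity of norm and residue from \lemref{lem:Norm-residue}, identifies $f_*^P(\alpha)$ with $\partial_Q(\beta)$ for a suitable $\beta \in K^M_{s'}(k(Q))$, where $Q$ is the $Q$-chain on $(U \subset X)$ obtained from $P''$ by deleting $f(p_{s'-1})$. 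Thus $f_*^P(\alpha)$ is killed in $C(X, D)$. Combining the two cases yields the desired factorization, and continuity of the induced $f_* \colon C(X', D') \to C(X, D)$ is inherited from \propref{prop:Funtorial*} together with the quotient topologies. The main obstacle is the symbolwise verification of the norm-modulus compatibility in the generic case, which may require a limit or transfer argument when the semilocal ring $R$ has residue fields too small for \eqref{eqn:MPchain*-1} to apply directly.
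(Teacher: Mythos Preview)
Your overall strategy matches the paper: show that $f_*\colon I_{U'/X'}\to I_{U/X}$ carries the modulus subgroup for $D'$ into that for $D$, treating maximal Parshin chains $P$ on $(U'\subset X')$ according to whether $f(P)$ is itself a Parshin chain on $(U\subset X)$ or not. But both of your case analyses have gaps.

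In the case $s'<d$, you work much too hard and your sketch does not obviously terminate. The point is far simpler: since $s'\le d-1$, the map $f^P_*$ factors through the \emph{first} residue
\[
K^M_d\bigl(\sO^h_{X',P'},I_{D'}\bigr)\to K^M_d(k(P))\xrightarrow{\ \partial\ }K^M_{d-1}(k(P')),
\]
and $\partial$ already annihilates the image of $K^M_d(\sO^h_{X',P'})$ (hence of its relative subgroup): for a one-dimensional local domain $A$ with total quotient ring $F$ and residue field $\ff$, the composite $K^M_n(A)\to K^M_n(F)\xrightarrow{\partial}K^M_{n-1}(\ff)$ is zero, which one checks after passing to the normalization. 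So $f^P_*(\alpha)=0$ in $I_{U/X}$ on the nose; no $Q$-chain identification is needed.

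In the case $s'=d$, your projection-formula argument does not go through as stated. The generators $\{1+a,u_2,\ldots,u_d\}$ of $K^M_d(R',I_{D'})$ have $u_i\in (R')^\times$, not $R^\times$, so the projection formula $N(\{x,i^*(y)\})=\{N(x),y\}$ is not directly available; your inclusion $N_{R'/R}(1+I_{D'})\subset 1+I_D$ handles only the first slot. The paper uses the regularity hypothesis at generic points of $C$ to reduce (after replacing $D'$ by $f^*(D)$) to the situation where $\sO^h_{X,P'_2}$ is a Henselian discrete valuation ring and $I_{D'}=I_D\sO^h_{X',P'_1}$, and then invokes \cite[Lemma~4.3]{Kato-Saito-2}, a nontrivial statement asserting exactly that the norm for a finite extension over a Henselian DVR preserves the relative Milnor $K$-groups. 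That lemma is the substantive input here; your symbolwise sketch does not reproduce it, and you yourself flag the difficulty at the end.
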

\begin{proof}
In view of \propref{prop:Funtorial*}, we only need to show that
$f_* \colon I_{U'/X'} \to I_{U/X}$ preserves the relative Milnor $K$-groups of
maximal Parshin chains. Since $f$ is finite and dominant, we have
$\dim(X') = \dim(X) = d$.
Let $P_1 = (p'_0, \ldots , p'_d)$ be a maximal Parshin
chain on $(U' \subset X')$ and let $P_2 = f(P_1) = (p_0, \ldots , p_d)$,
where $p_i = f(p'_i)$. Let $0 \le s \le d$ be the largest integer such that 
$(p_0, \ldots , p_s)$ is a Parshin chain on $(U,X)$.

Consider first the case that $s \le d-1$. In this case, the map
$K^M_d(\sO^h_{X',P'_1}, I_{D'}) \to I_{U/X}$ has a factorization
\[
K^M_d(\sO^h_{X',P'_1}, I_{D'}) \to K^M_d(k(P_1)) \xrightarrow{\partial}
K^M_{d-1}(k(P'_1)) \xrightarrow{f_*} I_{U/X}.
\]
Hence, it will suffice to show that if $A$ is a one-dimensional
local integral domain with quotient field $F$ and residue field $\ff$, then
$K^M_{n}(A) \to K^M_n(F) \xrightarrow{\partial} K^M_{n-1}(\ff)$ is a complex
for all $n \ge 1$.
However, by the construction of the residue map $\partial$, it suffices to
prove this assertion when $A$ is normal in which case this is well known.

We now consider the remaining case $s = d$. In this case, we need to show
that the image of $K^M_d(\sO^h_{X',P'_1}| I_{D'})$ under the norm
map $N \colon K^M_d(k(P_1)) \to K^M_d(k(P_2))$ lies in
$K^M_d(\sO^h_{X,P'_2}|I_{D})$.
Since these groups depend only on 
the codimension one subschemes, we can assume that $D$ (resp. $D'$)
has codimension one in $X$ (resp. $X'$) and $p_{d-1}$ (resp. $p'_{d-1}$)
is a generic point of $D$ (resp. $D'$). We can therefore replace
$D'$ by $f^*(D)$ and  assume that $I_{D'} =
I_D \sO^h_{X',P'_1}$ via the (finite) map $\sO^h_{X,P'_2} \to \sO^h_{X',P'_1}$.
The desired assertion now follows from \cite[Lemma~4.3]{Kato-Saito-2}
because $\sO^h_{X,P'_2}$ is a Henselian discrete valuation ring as 
a consequence of our hypothesis. 
\end{proof}

\begin{remk}\label{remk:PF-mod-0}
The proof of \propref{prop:PF-mod} shows that the hypothesis that $X$ is 
regular at the generic points of $C$ is not required
if $f$ is the identity map of $X$. Hence, for any $D \subset D'$, we have the
canonical map $C(X,D') \to C(X,D)$.
\end{remk}

\subsection{Nice vs. regular schemes}\label{sec:nice-reg}
In \cite[Definition~2.2]{Kato-Saito-2}, a scheme is called nice if
each of its local rings $\sO_{X,x}$ has the property that it is
ind-{\'e}tale over a ring which is smooth over a field or a Dedekind domain.  
In \cite{Kato-Saito-2}, the authors need a scheme to be nice in
a dense open subset in many of their results. But they remark after 
their Corollary~2.4 that the only reason for assuming this 
is to be able to apply their Theorem~2.3. However,
it was shown by Kerz in \cite{Kerz09} and \cite{Kerz10} that
the latter theorem holds if $X$ is a regular Noetherian scheme over a field.  
It follows that all results of \cite{Kato-Saito-2} which are valid for
nice schemes are also valid for regular schemes over a field.
In particular, we have the following result due to Kato
\cite[Theorem~2]{Kato86} and Kerz \cite[Proposition~10]{Kerz10}, 
where this property is
mostly used. We shall use this result very often and sometimes
without referring to it.

\begin{lem}\label{lem:Nice-Kerz}
Let $X \in \Sch_k$ for some field $k$ and let $x \in X^{(q)}_\reg$. Then the
Gersten complex for the Milnor $K$-theory gives rise to a canonical isomorphism
\[
\nu_x \colon H^q_x(X, \sK^M_{n,X}) \xrightarrow{\cong} K^M_{n-q}(k(x))
\]
for any integer $n \ge 0$.
\end{lem}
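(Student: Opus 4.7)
The plan is to reduce the statement to Kerz's resolution of the Gersten conjecture for Milnor $K$-theory on regular local rings containing a field, which is the content of \cite{Kerz09} and \cite{Kerz10}, and to read off the desired isomorphism from the resulting flasque resolution of $\sK^M_{n,X}$.

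First, I would replace $X$ by the local scheme $S = \Spec(\sO_{X,x})$. Local cohomology with support in $\{x\}$ is invariant under restriction to an open neighborhood of $x$ and under the limit defining the local ring, so it suffices to compute $H^q_x(S, \sK^M_{n,S})$. By hypothesis $S$ is a regular local $k$-scheme whose closed point $x$ has codimension $q$.

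Second, I would invoke Kerz's theorem to assert that on such $S$ the Gersten complex
\[
0 \to \sK^M_{n,S} \to \bigoplus_{y \in S^{(0)}} (i_y)_* K^M_n(k(y)) \to \bigoplus_{y \in S^{(1)}} (i_y)_* K^M_{n-1}(k(y)) \to \cdots
\]
is exact, so that the complex on the right is a resolution of $\sK^M_{n,S}$ by flasque sheaves (each summand is of the form $(i_y)_* M$ with $M$ a constant group on a point, hence flasque on $S$).

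Third, I would compute $H^q_x$ using this flasque resolution. For any point $y \in S$ and abelian group $M$, the sections of $(i_y)_* M$ with support at $x$ are $M$ when $y=x$ and $0$ otherwise, because $(i_y)_* M(U) = M$ whenever $y \in U$ and vanishes otherwise, so the kernel of the restriction to $S \setminus \{x\}$ is nontrivial only when $y = x$. Applied termwise to the Gersten resolution, this shows that $\Gamma_x(S,-)$ of the resolution is concentrated in degree $q$ with value $K^M_{n-q}(k(x))$, since $x$ is the unique codimension-$q$ point of $S$. Taking cohomology yields the canonical isomorphism $\nu_x \colon H^q_x(X, \sK^M_{n,X}) \xrightarrow{\cong} K^M_{n-q}(k(x))$, and the naturality of the construction endows $\nu_x$ with the expected functoriality. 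The genuine difficulty — the exactness of the Gersten complex for $\sK^M_n$ on regular local $k$-algebras — is the content of Kerz's theorem and is invoked rather than reproved.
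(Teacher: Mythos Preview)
Your argument is correct and matches the paper's approach: the paper does not give its own proof but simply attributes the result to Kato \cite[Theorem~2]{Kato86} and Kerz \cite[Proposition~10]{Kerz10}, which is precisely the Gersten-resolution input you invoke, and your three steps spell out exactly how that input yields the isomorphism. One minor point worth tightening: the paper works in the Nisnevich topology throughout (and defines $\sK^M_{n,X}$ as a Nisnevich sheaf with stalk $K^M_n(\sO^h_{X,x})$), so the correct localization in your first step is to $S = \Spec(\sO^h_{X,x})$ rather than $\Spec(\sO_{X,x})$; since Kerz's theorem applies equally to this henselian regular local $k$-algebra, and since skyscraper-type sheaves $(i_y)_*M$ are acyclic for Nisnevich cohomology as well, the remainder of your argument goes through verbatim.
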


\subsection{Relation between $C(X,D)$ and 
Kato-Saito idele class group}\label{sec:KKS}
Let $X$ be an excellent integral scheme of Krull dimension $d$ 
endowed with its canonical dimension
function $d_X$. 
For an integer $n \ge 0$, we let $\sK^M_{n, X}$ be the Nisnevich sheaf on $X$ 
whose stalk at a point
$x \in X$ is the Milnor $K$-group $K^M_n(\sO^h_{X,x})$.
We define the sheaf of relative Milnor $K$-groups $\sK^M_{n, (X,D)}$
in a similar way using the relative Milnor $K$-theory of rings and
ideals defined in \S~\ref{sec:MPchain}.
For any Nisnevich sheaf $\sF$ on $X$ and subscheme $Y \subset X$ 
(not necessarily closed), let $H^*_Y(X, \sF)$ denote the cohomology with 
support in $Y$. If $Y \subset X$  is closed, we have the
canonical map $H^*_Y(X, \sF) \to H^*(X, \sF)$, which we shall refer to
as `the forget support map'.

For a closed subscheme $D \subset X$, we let
\begin{equation}\label{eqn:KS-idele-**}
C_{KS}(X, D) = H^{d}_\nis(X, \sK^M_{d, (X, D)})
\end{equation}
and call it the `Kato-Saito idele class group'.
We shall write $C_{KS}(X, \emptyset)$ as $C_{KS}(X)$.

Let $U \subset X$ be a dense open subscheme with
complement $C$. For any point $x \in X$, we let
$X_x = \Spec(\sO^h_{X,x})$. For any subscheme $Y \subset X$,
we let $Y_x$ be the pull-back of $Y$ under the canonical map $X_x \to X$.
For an $s$-chain $P$ on $X$, we let $Y_P$ be 
the pull-back of $Y$ to $X_P$, where 
recall that $X_P = \Spec(\sO^h_{X,P})$. The dimension function on
$X_x$ and $X_P$ will always be the one induced by $d_X$.
We let $U_x = X_x \setminus C_x$ and define $U_P$ similarly.
We let $x_P$ denote the set of all closed points of $X_P$. Note that $x_P$ is 
a finite closed subset of $X_P$.

Let $P = (p_0, \ldots , p_s)$ be an $s$-chain on $X$. We let $d_P$ denote
the Krull dimension of $X_P$ so that $d_m(X_P) = d - d_P$. 
We shall also write $d_m(X_P)$ as $d^P_m$. 
We let 
\begin{equation}\label{eqn:KS-idele}
C_{KS}(X_P, D_P) = H^{d_P}_{x_P}(X_P, \sK^M_{d, (X_P, D_P)}).
\end{equation}
If $U_P$ is a regular $k$-scheme for some field $k$ and $D_P = \emptyset$, then
it follows from \lemref{lem:Nice-Kerz} that
$C_{KS}(X_P, D_P) \cong K^M_{d-d_P}(k(P))$.

If $U$ is a $k$-scheme for some field $k$ and
$x \in U$ is a regular closed point, we have the canonical map
$\Z \cong C(X_x, D_x) \cong K^M_0(k(x)) \to C(X,D)$, where the last map
is the canonical one, used in the definition of $C(X,D)$ because
$\{x\}$ is a Parshin chain on $(U \subset X)$.
On the other hand, we also have the forget support map
$\Z \cong C_{KS}(X_x, D_x) \to C_{KS}(X,D)$. Hence, we get a
diagram
\begin{equation}\label{eqn:Global-0}
\xymatrix@C.8pc{
& {\underset{x \in (U_\reg)_0} \bigoplus} \Z \ar[dr] \ar[dl] & \\
C(X,D) \ar@{.>}[rr] & & C_{KS}(X,D).}
\end{equation}

\begin{thm}$($\cite[Theorem~8.2]{Kerz11}$)$\label{thm:Kerz-global}
Assume that $X \in \Sch_k$ for some field $k$ 
and $U$ is regular. Let $D \subset X$ be a closed subscheme 
with support $C$. Then there is an isomorphism 
\[
\psi_{X|D} \colon C(X,D) \xrightarrow{\cong} C_{KS}(X,D)
\]
such that the above diagram commutes.
\end{thm}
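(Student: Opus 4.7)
The plan is to present both sides as the cokernel of the top differential of a Cousin-type (or coniveau) complex and then match the two complexes term by term. Concretely, for $C_{KS}(X,D) = H^d_\nis(X,\sK^M_{d,(X,D)})$, one filters $X$ by codimension and uses the resulting coniveau spectral sequence with $E_1$-terms $\bigoplus_{x\in X^{(q)}} H^{p+q}_x(X,\sK^M_{d,(X,D)})$. Since $U$ is regular, Lemma~\ref{lem:Nice-Kerz} identifies the terms over $U$ with Milnor $K$-groups of residue fields, and the differentials with the usual tame symbol / residue maps. Kerz's extension of Kato's Gersten resolution (the regular case of Lemma~\ref{lem:Nice-Kerz}) says this complex is exact on $U$, so the only contributions to $H^d$ come from chains that reach the generic point. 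After passing to the henselizations $X_P$ indexed by $s$-chains $P$ and using that $H^{d_P}_{x_P}(X_P,\sK^M_{d,(X_P,D_P)})$ is computed by the same formalism, one gets a presentation
\[
H^d_\nis(X,\sK^M_{d,(X,D)}) \cong \coker\Bigl(\bigoplus_{Q\in\sQ_{U/X}} H^{d_Q}_{x_Q}(X_Q,\sK^M_{d,(X_Q,D_Q)}) \xrightarrow{\ \partial\ } \bigoplus_{P\in\sP^{\max}_{U/X}} H^{d}_{x_P}(X_P,\sK^M_{d,(X_P,D_P)})\Bigr),
\]
where the differential is assembled from residue and forget-support maps.

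Next one identifies these local cohomology groups with the corresponding Milnor $K$-groups appearing in the definition of $C(X,D)$. For a maximal Parshin chain $P$, Lemma~\ref{lem:P-valuation} presents $k(P)$ as a product $\prod_{V\in\sV(P)} Q(V^h)$ of higher local fields, and the Gersten/Kerz identification (Lemma~\ref{lem:Nice-Kerz}, applied inductively on the strata of $X_P$) gives $H^{d}_{x_P}(X_P,\sK^M_{d,X_P}) \cong K^M_d(k(P))$; the relative version with $\sK^M_{d,(X_P,D_P)}$ in place of $\sK^M_{d,X_P}$ yields the modulus quotient, using that $X$ is regular at the generic points of $C$. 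The map $\psi_{X|D}$ is then constructed chain by chain by sending the factor $K^M_{d_X(P)}(k(P))$ indexed by $P\in\sP_{U/X}$ to $C_{KS}(X,D)$ via this isomorphism composed with the forget-support map from $X_P$ to $X$. The $Q$-chain boundary relations defining $C(X,D)$ (see \eqref{eqn:IC-0}) correspond, under this identification, exactly to the Cousin differential $\partial$ above; this uses the compatibility of Milnor $K$-theoretic residues with the boundary maps in local cohomology, as well as Lemma~\ref{lem:Norm-residue} on the commutation of norms and residues when comparing strata across the normalization map (Lemma~\ref{lem:Q-chain-nor}).

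The commutativity of diagram~\eqref{eqn:Global-0} is then tautological: for a regular closed point $x\in U$, both maps factor through the canonical class in $H^0_x(X_x,\sK^M_{0,X_x})\cong\Z$ and its forget-support image. The surjectivity of $\psi_{X|D}$ is automatic from the presentation above, since every $H^d$-class lifts to a cocycle supported on a union of codimension-$d$ strata. The main obstacle, and what requires the most care, is injectivity: one needs to check that every Gersten/Cousin boundary relation in the cohomological presentation can be realized as a $\Z$-linear combination of $Q$-chain boundaries in the idele-theoretic presentation. This reduces, via the Mayer--Vietoris-type compatibility for the stratification of $X_P$ and the normalization comparison of Lemma~\ref{lem:Q-chain-nor}, to the statement that boundary maps in local cohomology of the relative Milnor $K$-sheaf are compatible with the iterated residue maps used to define $\partial_Q$ in \S~\ref{sec:ICG}. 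The hypothesis that $X$ is regular at the generic points of $C$ is exactly what allows Kerz's Gersten resolution to be applied with the relative sheaf $\sK^M_{d,(X,D)}$, and it is here that the modulus relations of $I(X,D)$ are matched cohomologically.
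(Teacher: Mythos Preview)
Your high-level plan---use the coniveau spectral sequence and match it with the idele-theoretic presentation---is the same as the paper's. But the execution has a structural error and misses the actual content of the argument.

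The coniveau spectral sequence does \emph{not} give a presentation of $H^d_\nis(X,\sK^M_{d,(X,D)})$ as a cokernel indexed by maximal Parshin chains and $Q$-chains. It gives
\[
\bigoplus_{y\in X_{(1)}} H^{d-1}_y\bigl(X_y,\sK^M_{d,(X_y,D_y)}\bigr)\xrightarrow{\ \partial_1\ }\bigoplus_{x\in X_{(0)}} H^{d}_x\bigl(X_x,\sK^M_{d,(X_x,D_x)}\bigr)\to C_{KS}(X,D)\to 0,
\]
indexed by \emph{points} of $X$, not by chains. The Parshin-chain description only enters one level down: for each closed point $x$, the local group $H^d_x(X_x,\sK^M_{d,(X_x,D_x)})$ is identified with $C^{\local}(X_x,D_x)$ via the \emph{local} version of the theorem (\cite[Theorem~8.1]{Kerz11}), proved by induction on $\dim X_x$. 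Your displayed cokernel, with $P$ running over $\sP^{\max}_{U/X}$ on the right, is not what any spectral sequence produces, and it also omits the non-maximal Parshin chains that contribute to $I_{U/X}$.

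More importantly, you have not isolated the actual difficulty. Once the local theorem is fed into the coniveau presentation, one gets a second exact sequence with the \emph{same} right-hand term but with $y$ now ranging only over $U_{(1)}$, mapping onto $C(X,D)$. The whole point is to show that the image of $\partial_1$ is unchanged when one drops the summands with $y\in C_{(1)}$; this is the content of \cite[Theorem~8.1]{Kerz11} and is what you would need to argue. Your discussion of injectivity via ``Mayer--Vietoris-type compatibility'' and Lemmas~\ref{lem:Norm-residue}, \ref{lem:Q-chain-nor} does not address this; those lemmas play no role here.

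Finally, you invoke ``the hypothesis that $X$ is regular at the generic points of $C$'', but there is no such hypothesis in the statement---only $U$ is assumed regular. Whatever you are using that assumption for needs to be done differently.
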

\begin{proof}
By the coniveau spectral sequence for Nisnevich cohomology, there is 
an exact sequence
\begin{equation}\label{eqn:Kerz-global-0}
{\underset{y \in X_{(1)}} \bigoplus} H^{d-1}_y(X_y, \sK^M_{d, (X_y, D_y)})
\xrightarrow{\partial_1} 
{\underset{x \in X_{(0)}} \bigoplus} H^{d}_x(X_x, \sK^M_{d, (X_x, D_x)}) \to
C_{KS}(X,D) \to 0.
\end{equation}

On the other hand, it follows essentially by definition and a local
version of the theorem (see \cite[Theorem~8.1]{Kerz11})
that there is an exact sequence
\begin{equation}\label{eqn:Kerz-global-2}
{\underset{y \in U_{(1)}} \bigoplus} H^{d-1}_y(X_y, \sK^M_{d, (X_y, D_y)})
\xrightarrow{\partial_1} 
{\underset{x \in X_{(0)}} \bigoplus} H^{d}_x(X_x, \sK^M_{d, (X_x, D_x)}) \to
C(X,D) \to 0.
\end{equation}
The proof of the theorem is therefore reduced to showing that
the boundary maps in ~\eqref{eqn:Kerz-global-0} and ~\eqref{eqn:Kerz-global-2}
have same image. But this is shown in \cite[Theorem~8.1]{Kerz11}.

There are only couple of remarks to be made here. The first is
that the proof of \cite[Theorem~8.1]{Kerz11} (which is by induction on 
$\dim(X)$) in the dimension one case requires a correction, namely, the correct
diagram to use is
\begin{equation}\label{eqn:Kerz-global-3}
\xymatrix@C.8pc{
K^M_{d^P_m +1}(\sO^h_{X,P}, I_{D_P}) \ar[r] \ar[d]_-{\cong} &
K^M_{d^P_m +1}(k(\eta)) \ar[r] \ar[d]_-{\cong} & C^{\local}(X_P, D_P) \ar@{.>}[d] 
\ar[r] & 0 \\
H^0(X_P, \sK^M_{d^P_m +1, (X_P, D_P)}) \ar[r] & 
H^0(\Spec(\eta), \sK^M_{d^P_m +1, (X_P, D_P)}) \ar[r] & 
C_{KS}(X_P, D_P) \ar[r] & 0,}
\end{equation}
if $P$ is a lifted chain on $X$ with $\dim(X_P) = 1$
and $C^{\local}(X_P, D_P)$ is as defined in \S~\ref{sec:Lchain}.

The second remark is that Kerz works under the set-up where $X$ is projective
over a finite field and $D$ is an effective Cartier divisor, but his proof of 
this theorem uses none of these assumptions.
\end{proof}

\begin{remk}\label{remk:KKS-CE}
If $U$ is not regular, then \thmref{thm:Kerz-global} does not always hold.
For example, take $X$ to the projective nodal curve over a field and
take $D = \emptyset$. Then $C_{KS}(X, D) \cong H^1_\nis(X, \sO^{\times}_X)
\cong \Pic(X) \cong k^{\times} \oplus \Z$. On the other hand, it follows
from \propref{prop:Degree} that $C(X,D) \cong \CH^F_0(X) \cong \Z$.
\end{remk}

\begin{cor}\label{cor:Codim-2}
Assume in \thmref{thm:Kerz-global} that $\dim(C) \le d-2$.
Then $C(X,D) \cong C_{KS}(X)$. In particular, $C(X,D)$ does not depend on $D$.
\end{cor}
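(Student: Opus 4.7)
The plan is to deduce the corollary from \thmref{thm:Kerz-global} by showing that, under the codimension hypothesis on $C$, the Kato-Saito group $C_{KS}(X,D)$ is insensitive to $D$ and coincides with $C_{KS}(X)$.

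First, I would invoke \thmref{thm:Kerz-global} to obtain the isomorphism $\psi_{X|D} \colon C(X,D) \xrightarrow{\cong} C_{KS}(X,D) = H^d_\nis(X, \sK^M_{d, (X,D)})$. This reduces the corollary to proving that the canonical inclusion of Nisnevich sheaves $\sK^M_{d, (X,D)} \inj \sK^M_{d, X}$ induces an isomorphism on the top-degree cohomology $H^d_\nis(X, -)$.

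Next, I would analyze the cokernel $\sQ$ of this inclusion. At a stalk $x \notin |D| = C$, the ideal $I_D$ becomes the unit ideal of $\sO^h_{X,x}$, so by the definition of relative Milnor $K$-theory recalled in \S~\ref{sec:MPchain}, $K^M_d(\sO^h_{X,x}, I_D) = K^M_d(\sO^h_{X,x})$. Consequently $\sQ$ is supported on $C$, and writing $i \colon C \inj X$ for the closed immersion, one has $\sQ = i_* \sQ'$ for some Nisnevich sheaf $\sQ'$ on $C$. Since the Nisnevich cohomological dimension of a Noetherian scheme is bounded by its Krull dimension, it follows that $H^j_\nis(X, \sQ) = H^j_\nis(C, \sQ') = 0$ for all $j > \dim(C)$.

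Finally, the short exact sequence $0 \to \sK^M_{d, (X,D)} \to \sK^M_{d, X} \to \sQ \to 0$ yields the exact piece
\[
H^{d-1}_\nis(X, \sQ) \to H^d_\nis(X, \sK^M_{d, (X,D)}) \to H^d_\nis(X, \sK^M_{d, X}) \to H^d_\nis(X, \sQ),
\]
in which the two outer terms vanish by the hypothesis $\dim(C) \le d-2$. This forces the isomorphism $C_{KS}(X,D) \xrightarrow{\cong} C_{KS}(X)$, which is manifestly independent of $D$, and composing with $\psi_{X|D}$ completes the proof. There is no substantive obstacle: the argument is purely a vanishing statement for the top-degree Nisnevich cohomology of a sheaf concentrated on a subscheme of dimension at most $d-2$, combined with the already established global comparison of \thmref{thm:Kerz-global}.
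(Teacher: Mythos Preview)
Your proof is correct and is essentially the same as the paper's. The paper's one-line argument invokes exactly the two ingredients you spell out: that the quotient sheaf is (via $i_*$) a Nisnevich sheaf on $C$ because finite push-forward is exact in the Nisnevich topology, and that the Nisnevich cohomological dimension of $C$ is bounded by $\dim(C)\le d-2$, so $H^{d-1}$ and $H^d$ of the quotient vanish.
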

\begin{proof}
It is immediate from \thmref{thm:Kerz-global} using the fact that 
the finite push-forward is an exact functor in Nisnevich topology and
the Nisnevich cohomological dimension agrees with the Krull dimension
for a Noetherian scheme.
\end{proof}

\section{Continuity of the push-forward map}
\label{sec:PF-cont}
We shall now prove the continuity of the push-forward map
between the idele class groups.
Let $X$ be an excellent integral scheme of Krull dimension $d$ 
endowed with a dimension function $d_X$. Let $\eta$ denote the generic
point of $X$.
Let $C \subset X$ be a nowhere dense reduced closed subscheme with
complement $U$.

We first consider the easy case.
\begin{lem}\label{lem:Cont-dominant}
Let $f \colon (X',U') \to (X,U)$ be a dominant admissible morphism.
Then the map $f_* \colon C_{{U'}/{X'}} \to C_{U/X}$ is continuous. 
If $U$ is regular, then $f_*$ satisfies the following. 
\begin{enumerate}
\item If $f$ is birational, then $f_*$ is surjective.
\item If $f \colon X' \to X$ is an isomorphism, then $f_*$ is  
a topological quotient map. 
\end{enumerate}
\end{lem}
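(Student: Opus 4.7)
The plan is to establish the three assertions in turn, using the universal property of the direct sum topology for continuity, and going--up combined with a moving argument for surjectivity. For continuity, both $C_{U/X}$ and $C_{U'/X'}$ carry subgroup topologies as quotients of $I_{U/X}$ and $I_{U'/X'}$, and by \lemref{lem:DST-univ} the group $I_{U'/X'}$ is a coproduct in the category of topological abelian groups with subgroup topology. Hence continuity of $f_*$ reduces to the continuity of its restriction $f_*^{P'} \colon K^M_{d_{X'}(P')}(k(P')) \to I_{U/X}$ for each $P' \in \sP_{U'/X'}$. For non-maximal $P'$ the source is discrete and nothing is required. For maximal $P'$, the proof of \propref{prop:Funtorial*} writes $f_*^{P'} = \iota_{P''} \circ N^{P_1 \to P''} \circ \partial^{P_1 \le P'}$, with $\iota_{P''}$ the continuous summand inclusion. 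If $P'' = f(P_1)$ is itself maximal on $(U \subset X)$, continuity follows from continuity of the norm between canonical topologies on Milnor $K$-groups of products of Henselian higher local fields, which uses \lemref{lem:Kato-Kerz} together with the standard compatibility of norms with the $U_r$-subgroups. If $P''$ is non-maximal, the target summand is discrete and continuity reduces to openness of the kernel; this is immediate since any boundary map annihilates the open subgroup $K^M_{d_{X'}(P')}(k(P'), 1)$ of symbols generated by principal units. Continuity then passes to the quotients.

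Now assume $U$ is regular and $f$ is birational. Normality of $U$ forces the finite birational morphism $f^{-1}(U) \to U$ to be an isomorphism, so we identify $U' \subset f^{-1}(U) \cong U$. I would show each summand $K^M_{d_X(P)}(k(P))$ of $I_{U/X}$ lies in the image of $f_*$ modulo the $Q$-chain relations by a case analysis on the top point $p_s$ of $P$. If $p_s \in U'$, going-up produces a finite set of lifts of $P$ to chains $P'$ on $(U' \subset X')$, and \lemref{lem:P-valuation} applied to the factorization through the normalization decomposes $k(P)$ into a product of factors each dominated by a lift $P'$, so that the norm $N \colon K^M(k(P')) \to K^M(k(P))$ is surjective onto the corresponding factor. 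If $p_s \in U \setminus U' \subset C'$, the class must be moved: I would choose an integral curve $Y \subset X$ through $p_s$, regular at $p_s$, whose generic point lies in $U$ and which meets $U'$, and use the boundary relation coming from the one-dimensional $Q$-chain $(\eta_Y)$ on $K^M_1(k(\eta_Y))$ to express the class as a sum of classes at closed points of $Y$ lying in $U'$, which are then in the image by the first case; iteration of this move handles the higher-dimensional chains.

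For the topological quotient assertion in (2), identify $X = X'$, so $C \subset C'$ and $U' \subset U$. With continuity and surjectivity already in hand, what remains is to show that every generating open subgroup of $I_{U/X}$ is the image under $f_*$ of a generating open subgroup of $I_{U'/X}$. A generating subgroup of $I_{U/X}$ is determined by a choice of effective Weil divisor $D(P)$ with support $C$ for each $P \in \sP^{\max}_{U/X}$. Since $C \subset C'$, each such $P$ remains a maximal Parshin chain on $(U' \subset X)$ and the same divisor defines the corresponding relative subgroup. Extending the choice arbitrarily over the additional maximal chains in $\sP^{\max}_{U'/X} \setminus \sP^{\max}_{U/X}$ produces an open subgroup of $I_{U'/X}$ whose image equals the given subgroup, hence $f_*$ is a topological quotient map on the idele groups, and this descends to the class groups. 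The principal obstacle I anticipate is the moving argument in the birational case: controlling the genericity of the curve $Y$ to ensure it meets $U'$ in enough closed points while avoiding $U \setminus U'$, and iterating for Parshin chains of intermediate dimension, constitute the technical core of the surjectivity proof.
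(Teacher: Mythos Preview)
Your continuity argument follows the paper's and is correct. Your argument for (2) is also essentially the paper's, with one caveat: the condition you state as sufficient (that every fundamental open subgroup of $I_{U/X}$ equals $f_*(G')$ for some fundamental $G'$) is not by itself the criterion for a quotient map---one needs $f_*$ to be \emph{open}, i.e.\ that $f_*(G')$ is open for every fundamental $G'$. Your construction does establish this, since on the extra maximal chains in $\sP^{\max}_{U'/X'} \setminus \sP^{\max}_{U/X}$ the map $f_*$ begins with a residue that kills the relative Milnor $K$-groups, so $f_*(G')$ is always a fundamental open subgroup of $I_{U/X}$.

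The genuine gap is in the surjectivity proof for (1). Your case $p_s \in U'$ is fine, but for $p_s \in U \setminus U'$ your curve-based moving argument only makes sense when $s = 0$: if $s \ge 1$ then $p_s$ has dimension $s$ and there is no curve ``through $p_s$'' in the sense you need. The appeal to ``iteration'' does not produce a mechanism for reaching the summands $K^M_s(k(P))$ with $s \ge 1$, and a move based on $1$-dimensional $Q$-chains cannot be iterated upward in dimension. This is precisely the obstacle you flag at the end, and the strategy you outline does not overcome it.

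The paper avoids any moving argument by proving the stronger statement that $f_* \colon I_{U'/X'} \to I_{U/X}$ is already surjective, with no use of $Q$-chain relations. Given $P = (p_0, \ldots, p_s)$ on $(U \subset X)$, lift it to chains $P_1$ on $X'$ (the product of their residue rings recovers $k(P)$, as in your case~1). If the top point of some $P_1$ lies in $C'$, extend $P_1$ \emph{upward} to a Parshin chain $P'' = (P_1, p'_{s+1}, \ldots, p'_{s'})$ on $(U' \subset X')$ by choosing successive generizations until one lands in $U'$; this is possible since $U'$ is dense. Because generizations of a point in an open set remain in that open set, all of $p'_s, \ldots, p'_{s'}$ lie in $f^{-1}(U) \cong U$, which is regular, so the composite residue $K^M_{s'}(k(P'')) \to K^M_s(k(P_1))$ is surjective. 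By the definition of $f_*$ this residue is exactly the first step of $f_*^{P''}$, and composed with the norm it surjects onto the $P_1$-factor of $K^M_s(k(P))$. The missing idea is thus to go up along the chain on the $X'$ side rather than to move along curves on the $X$ side.
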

\begin{proof}
For the first part, it suffices to show that the map
$f_* \colon I_{{U'}/{X'}} \to I_{U/X}$ is continuous. 
By virtue of the direct sum topology of $I_{{U'}/{X'}}$ (see \S~\ref{sec:Idele*}) and 
  \lemref{lem:DST-univ}, it suffices to show that
the map $f_* \colon K^M_{d_{X'}(P)}(k(P)) \to I_{U/X}$ is continuous
for every Parshin chain $P$ on $(U' \subset X')$.
If $P$ is not maximal, then $f^{-1}_*(B)$ is zero for
every proper open subgroup $B$ of $I_{U/X}$. Since the
topology of $I_{{U'}/{X'}}$ restricts to the discrete topology on
$K^M_{d_{X'}(P)}(k(P))$, this case follows.
We shall now assume that $P$ is maximal.

Let $P = (p_0, \ldots , p_d)$, where $d = d_X(\eta) - d_m$.
Let $s \le d$ be the largest integer such that $(f(p_0), \ldots,
f(p_s))$ is a Parshin chain on $(U \subset X)$. We let
$P_1 = (p_0, \ldots , p_s) \le P$. Then $f_*$ is the composition of the 
residue 
map $\partial \colon K^M_d(k(P)) \to K^M_{d_{X'}(P_1)}(k(P_1))$ and the norm map
$N \colon K^M_{d_{X'}(P_1)}(k(P_1)) \to K^M_{d_{X}(P'')}(k(P''))$, where
$P'' = f(P_1)$.

There are two cases to consider. Suppose first that $s = d$ so that
$P'' = f(P)$ is a maximal Parshin chain on $(U \subset X)$.
In this case, $f_*$ is simply the norm map $N \colon
K^M_d(k(P')) \to K^M_d(k(P''))$. Since the inclusion
$K^M_d(k(P'')) \inj I_{U/X}$ is clearly continuous as $P''$ is maximal,
we need to show that $N$ is continuous. But this follows from
\cite[Proposition~4.2]{Kato-Saito-2}.

Suppose now that $s < d$. Then $f_*$ has a factorization
\[
K^M_d(k(P)) \xrightarrow{\partial} K^M_{d_{X'}(P_1)}(k(P_1)) 
\xrightarrow{N} K^M_{d_{X}(P'')}(k(P'')) \inj I_{U/X}.
\]
If we take a fundamental open subgroup $G \subset I_{U/X}$, then its
intersection with $K^M_{d_{X}(P'')}(k(P''))$ is zero because  $P''$ is a 
non-maximal Parshin chain on $(U \subset X)$.
In particular, we have that $f^{-1}_*(G) = 
\partial^{-1}({\rm Ker}(N))$. Hence, it suffices to show that
$\partial^{-1}(H)$ is open in $K^M_d(k(P))$ for every subgroup 
$H \subset K^M_{d_{X'}(P_1)}(k(P_1))$. To show this,
we can argue inductively and reduce to the case that
$\dim(P_1) = d-1$. It is enough to check in this case that
${\rm Ker}(\partial)$ is open in $K^M_d(k(P))$. 
We can assume that $k(P)$ is a field.
In this case, $k(P_1)$ is the residue field of the discrete valuation
ring $A$ which is the normalization of $\sO^h_{X', P_1}$ in $k(P)$.
We are now done because the sequence
\[
K^M_d(A) \to K^M_d(k(P)) \xrightarrow{\partial} K^M_{d-1}(k(P_1)) \to 0
\]
is exact (e.g., see \cite[Proposition~2.7]{Dahlhausen})
and the image of $K^M_d(A)$ is clearly open in $K^M_d(k(P))$
(see \lemref{lem:Kato-Kerz}).

Suppose now that $f$ is birational and $U$ is regular. It suffices to show the
stronger assertion that $f_* \colon I_{U'/X'} \to I_{U/X}$ is surjective.
Since $f$ is finite
and $U$ is regular, it follows that is $f$ an isomorphism over $U$.
We can therefore assume that $U' \subset U$. Suppose now that 
$P = (p_0, \ldots , p_s)$ is a Parshin chain on $(U \subset X)$.
Let $\sP$ be the set of Parshin chains
$P_1= (p'_0, \ldots , p'_s)$ on $X'$ such that $f(P_1) = P$. Since $f$ is 
finite and $p'_s \in U$, 
it follows from \cite[Lemma~3.3.1]{Kato-Saito-2} that $ \prod_{P_1 \in \sP} 
k(P_1) \cong k(P)$ 
and therefore 
the norm map $\oplus_{P_1 \in \sP} K^M_{d_{X'}(P_1)}(k(P_1)) \to 
K^M_{d_{X}(P)}(k(P))$ is identity. 

We let $s' \ge s$ be the smallest integer such that
$P'' = (P_1, p'_{s+1}, \ldots , p'_{s'})$ is a Parshin chain on
$(U', X')$. 
It is then enough to show that the
residue homomorphism $K^M_{d_{X'}(P'')}(k(P'')) \to K^M_{d_{X'}(P_1)}(k(P_1))$
is surjective.
To prove this, one can again argue inductively. Since $p'_s, \ldots , p'_{s'} 
\in U$ and $U$ is regular, one can 
assume that $k(P'')$ is a discrete valuation field with ring of integers $A$ 
and residue field $k(P_1)$.
In this case, one knows that $\partial(\{\pi, u_1, \ldots , u_n\})
= \{\ov{u}_1, \ldots , \ov{u}_n\}$ for $n \ge 1$
if $u_i \in A^{\times}$ for all $i$
and $\pi$ is a uniformizer of $k(P'')$.

Suppose now that $f$ is an isomorphism and $U$ is regular. We show the
stronger assertion that $f_* \colon I_{U'/X'} \to I_{U/X}$ is a topological 
quotient. 
Since we have already shown that $f_*$ is 
continuous and surjective, it suffices to show that it takes
a fundamental open subgroup to an open subgroup.
But this is obvious from the definition of topology on the idele groups
and the fact that $f$ preserves maximality of Parshin chains and 
$f_*$ is identity on the Milnor $K$-theory of maximal Parshin 
chains. This finishes the proof.
\end{proof}

\subsection{Local idele class groups}\label{sec:Lchain}
Let $X$ be an excellent integral scheme endowed with a dimension
function $d_X$. Let $U \subset X$ be a dense open subscheme with
complement $C$. A lifted chain on $X$ is a sequence $P = (p_0, \ldots , p_s)$
such that $p_0 \in X$ and $p_{i+1} \in \Spec(\sO^h_{X, (p_0, \ldots , p_{i})})$
for $i \ge 0$. We let $X_P = \Spec(\sO^h_{X,P})$.
Let $\fm_P$ be the maximal ideal of $\sO^h_{X,P}$ and $k(P)$ its residue field.
We denote the closed point of $X_P$ by $x_P$.
We remark that a lifted chain is different from an $s$-chain. 
If $P$ is a lifted chain, then $X_P$ is always a local (Henselian) scheme
but this may not be the case for an $s$-chain on $X$. In fact, 
for any $s$-chain $P_1$ on $X$, the scheme $X_{P_1}$ is a disjoint union
of the spectra of finitely many lifted chains.

We need to define a local idele class group of a lifted chain.
Let $P = (p_0, \ldots , p_s)$ be a lifted chain on $X$ and let 
$D_P$ (resp. $U_P$) be the pull-back of $D$ (resp. $U$) under the canonical 
map $X_P \to X$. 
We endow $X_P$ with the dimension function induced by $d_X$. 
We let $I(X_P,D_P)$ be exactly as in ~\eqref{eqn:Idele-D}. However, there is one
difference in $Q$-chains on $(U_P \subset X_P)$. 
We say that $Q = (p_0, \ldots , p_{s-2}, p_s)$
is a $Q$-chain on $(U_P \subset X_P)$ if it satisfies conditions (1) - (3) in
the beginning of \S~\ref{sec:ICG}, together with an additional condition
that $p_0$ is the unique closed point of $X_P$.
We let $\sQ^{\local}_{U/X}$ denote the set of these special $Q$-chains.
We define the local idele class group of $(X_P, D_P)$ by
\begin{equation}\label{eqn:L-idele}
C^{\local}(X_P,D_P) = \coker\left({\underset{Q \in \sQ^{\local}_{U/X}}\bigoplus}
K^M_{d_X(Q)}(k(Q)) \xrightarrow{\partial_{U/X}} I(X_P, D_P)\right).
\end{equation}

The following is straightforward and shows that one can ignore
the above new definition in higher dimensions.
But this is not the case in dimension one.

\begin{lem}\label{lem: L-idele-0}  
Assume that $\dim(X_P) \ge 2$.
Let $X'_P = X_P \setminus x_P$ and $D'_P = D_P \setminus x_P$.
Then there is a canonical isomorphism
\[
C^{\local}(X_P,D_P) \xrightarrow{\cong} C(X'_P, D'_P).
\]
\end{lem}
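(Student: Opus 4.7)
The strategy is to set up explicit bijections between the Parshin chains and the local $Q$-chains defining the two sides, and verify that the associated Milnor $K$-theoretic data match summand-by-summand. Since $X_P$ is Henselian local of Krull dimension $\ge 2$ with closed point $x_P$, catenarity of the induced dimension function shows that $x_P$ is the unique point of $X_P$ attaining $d_m(X_P)$, and $d_m(X'_P) = d_m(X_P) + 1$, exactly as in \exref{exm:Dim-func}. Assuming first that $x_P \in C_P$, every Parshin chain on $(U_P \subset X_P)$ then has length $\ge 1$ and must begin with $p_0 = x_P$, so the assignment $(x_P, p_1, \ldots, p_s) \mapsto (p_1, \ldots, p_s)$ is a length-shifting bijection from Parshin chains on $(U_P \subset X_P)$ onto Parshin chains on $(U'_P \subset X'_P)$; it preserves maximality because $X_P$ and $X'_P$ share the same generic point. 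Similarly, since the definition of $\sQ^{\local}_{U/X}$ explicitly forces $p_0 = x_P$ to be an entry of the $Q$-chain, local $Q$-chains have length at least $2$, and the same truncation sets up a bijection with the $Q$-chains on $(U'_P \subset X'_P)$.

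Under these bijections all the associated rings are canonically identified. Indeed, since $X_P$ is already Henselian local at $x_P$, the first step of the iterative Henselization construction from \S~\ref{sec:Pchain} along the chain $(x_P, p_1, \ldots, p_i)$ is trivial, so the subsequent steps coincide with the Henselization along $(p_1, \ldots, p_i)$ viewed inside $X'_P$. Invoking \lemref{lem:Inv-ambient}, the residue rings $k(P^*)$ and the auxiliary Henselization rings entering the modulus condition match on both sides, and consequently all the Milnor $K$-groups (including the relative ones) appearing in the definitions of $I(X_P, D_P)$, $I(X'_P, D'_P)$, and of the source groups of the boundary maps $\partial_Q$ are identified summand-by-summand. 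The residue and norm maps defining the $\partial_Q$ involve only these same rings and field extensions, so they too are preserved. Passing to cokernels yields the desired canonical isomorphism $C^{\local}(X_P, D_P) \xrightarrow{\cong} C(X'_P, D'_P)$. The degenerate case $x_P \in U_P$ reduces to a short direct verification: here there are no maximal Parshin chains nor local $Q$-chains on $(U_P \subset X_P)$, and both groups are checked to reduce to the same expression.

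The main technical point is the identification of the auxiliary Henselization rings under the chain-truncation bijection, on which the relative Milnor $K$-groups encoding the modulus depend sensitively. This ultimately reduces to the triviality of Henselization of an already Henselian local ring at its closed point, combined with the inductive definition of $\sO^h_{X,P}$ described in \S~\ref{sec:Pchain}; once this identification is in hand, matching the boundary and residue maps is routine since they all arise from canonical extensions and norm constructions on the common residue rings.
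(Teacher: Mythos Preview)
Your argument for the principal case $x_P \in C_P$ is correct and is precisely the straightforward bookkeeping the paper has in mind (no proof is given there). Truncating the forced initial entry $x_P$ gives length-shifting bijections between Parshin chains and between local $Q$-chains on $(U_P\subset X_P)$ and the corresponding data on $(U'_P\subset X'_P)$; since $\sO^h_{X_P,x_P}=\sO_{X_P}$, the first henselization step is trivial and all residue rings, relative Milnor $K$-groups, and boundary maps match summand by summand. Your reading that the local $Q$-chain condition forces $p_0=x_P$ to be an actual entry (hence $s\ge2$) is also the correct one: allowing $s=1$ would already destroy the isomorphism in this case. The reference to \lemref{lem:Inv-ambient} is unnecessary but harmless.

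Your handling of the degenerate case $x_P \in U_P$, however, does not go through. Here $C_P=\emptyset$, so the only Parshin chain on $(U_P\subset X_P)$ is $(x_P)$ and there are no local $Q$-chains, whence $C^{\local}(X_P,D_P)=K^M_{d_m(X_P)}(k(x_P))$. On the other side $U'_P=X'_P$ and $D'_P=\emptyset$, so $C(X'_P,D'_P)$ is the cokernel of the boundary $\bigoplus_{q\in (X'_P)_1}K^M_{d_m(X_P)+2}(k(q))\to\bigoplus_{p\in (X'_P)_0}K^M_{d_m(X_P)+1}(k(p))$. These two expressions sit at different Milnor $K$-degrees and are not identified by inspection; matching them would require a Gersten-type exactness statement, which is neither available in the stated generality nor part of a ``short direct verification''. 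In the paper's applications the lifted chains under consideration have their closed point on $C$, so this degenerate case is tacitly excluded; you should record the hypothesis $x_P\in C_P$ rather than assert that both sides reduce to the same expression.
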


\subsection{Continuity of push-forward in general}
\label{sec:PF-cont-*}
To prove the continuity of more general push-forward maps
between the idele class groups, we need some lemmas.

\begin{lem}\label{lem:Cont-gen-0}
Let $X$ be an excellent integral scheme equipped with a dimension
function $d_X$. Let $U \subset X$ be a dense open subscheme with
complement $C$. Let $P$ be a Parshin chain on $(U \subset X)$
whose codimension in $X$ is at most one. Let  $K^M_{d_X(P)}(k(P))$ be
endowed with its canonical topology.
Then the canonical map $K^M_{d_{X}(P)}(k(P)) \to C_{U/X}$ is continuous
if and only if the composite map 
$K^M_{d_{X}(P)}(k(P)) \to C_{U/X} \surj C(X,D)$ is
continuous for all closed subschemes $D$ with support $C$.
\end{lem}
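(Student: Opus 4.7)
The ``only if'' direction is immediate: each canonical quotient $\pi_D \colon C_{U/X} \surj C(X,D)$ is continuous by definition of the quotient topology on $C(X,D)$, so the composite is continuous whenever $K^M_{d_X(P)}(k(P)) \to C_{U/X}$ is. I therefore focus on the converse.

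My plan is to invoke \lemref{lem:Cont-tp-grp}: it suffices to check that the preimage under $f \colon K^M_{d_X(P)}(k(P)) \to C_{U/X}$ of every generating open subgroup $W$ of $C_{U/X}$ is open in the canonical topology. Writing $\pi \colon I_{U/X} \surj C_{U/X}$ for the quotient map, each such $W$ is of the form $\pi(V)$ with $V = \bigoplus_{P_1 \in \sP^{\max}_{U/X}} K^M_{d_X(P_1)}(k(P_1), m_{P_1}(D_{P_1}))$ for some assignment $P_1 \mapsto D_{P_1}$ of effective Weil divisors supported on $C$ (this follows from the direct sum topology on $I_{U/X}$ described in \S\ref{sec:Idele*}). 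The central claim is that there exists a single effective Weil divisor $D$ supported on $C$ such that $f^{-1}(\pi(V)) \supseteq f^{-1}(\pi(I_{U/X}(D)))$. Given this, the hypothesis supplies continuity of $K^M_{d_X(P)}(k(P)) \to C(X,D)$, which by discreteness of $C(X,D)$ (\lemref{lem:Disc}) is equivalent to openness of its kernel $f^{-1}(\pi(I_{U/X}(D)))$ in the canonical topology; openness of $f^{-1}(\pi(V))$ then follows.

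To establish the claim, I use the codimension hypothesis on $P$ decisively. When $P$ is maximal, the inclusion $K^M_{d_X(P)}(k(P)) \hookrightarrow I_{U/X}$ is that of a direct summand whose canonical topology coincides with the subspace topology; one may take $D = D_P$, and then $f^{-1}(\pi(V))$ already contains the canonically open subgroup $K^M_{d_X(P)}(k(P), m_P(D_P))$. When $P = (p_0, \ldots, p_{d-1})$ has codimension one, the $Q$-chains whose residues can contribute to the summand $K^M_{d_X(P)}(k(P))$ are precisely those of the form $Q = (p_0, \ldots, p_{d-2}, \eta)$, and for each such $Q$ the residue map $\partial_Q$ distributes over only \emph{finitely many} maximal Parshin chains of shape $(p_0, \ldots, p_{d-2}, c, \eta)$. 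I will then choose $D$ to dominate $D_{P_1}$ on these finitely many chains, and combine the containment $K^M_n(\sO^h_{X,P_1'} | I_D) \subseteq K^M_n(k(P_1), m_{P_1}(D))$ (from \S\ref{sec:Top}) with the continuity of residue maps in the canonical topology (implicit in \lemref{lem:Kato-Kerz}) to trade sufficiently small elements of $K^M_{d_X(P)}(k(P))$, modulo $\ker(\pi)$, for elements of $V$.

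The principal obstacle is in the codimension-one case: one must verify carefully that the $Q$-chain relations stemming from $Q = (p_0, \ldots, p_{d-2}, \eta)$ really allow sufficiently small elements of $K^M_{d_X(P)}(k(P))$ to be expressed, modulo $\ker(\pi)$, as controlled combinations of maximal-chain components fitting inside $V$, and that only finitely many multiplicity bounds need to be matched simultaneously by the single divisor $D$.
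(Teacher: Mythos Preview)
Your proposal is correct and follows essentially the same route as the paper's proof: handle the maximal case by the obvious continuity of the summand inclusion, and in the codimension-one case use that the unique $Q$-chain $Q=(p_0,\ldots,p_{s-1},\eta)$ is the only one whose boundary can hit $K^M_{d_X(P)}(k(P))$, that its other boundary components lie in a \emph{finite} set $S$ of maximal Parshin chains (one for each generic point of $C$), then choose a single $D$ dominating the multiplicities $m_{P_1}(D_{P_1})$ for $P_1\in S$, invoke the hypothesis together with the discreteness of $C(X,D)$ (\lemref{lem:Disc}) to get an open $H$, and conclude $H\subset G+\operatorname{Image}(\partial_{U/X})$. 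The ``principal obstacle'' you flag is exactly the step the paper compresses into ``it follows from~\eqref{eqn:Cont-gen-0-0}'', and your outline of how to handle it---trading an element of $H$, modulo $\ker\pi$, for contributions supported on $S$---is the intended mechanism.
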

\begin{proof} One of the implications is obvious. For the nontrivial 
implication, we first note that 
if $P$ is maximal, then the inclusion map
$K^M_{d_{X}(P)}(k(P)) \to I_{U/X}$ is already continuous, as is easily seen.
We can therefore assume that $P$ has codimension one in $X$. 
We let  $P = (p_0, \ldots , p_s)$ and $d' = d_X(P) $.
Let $\eta$ be the generic point of $X$ and $d = d_X(\eta)$.
Let $G = {\underset{P_1 \in \sP^{\max}_{U/X}}\bigoplus} 
K^M_{d}(k(P_1), m_{P_1}(D_{P_1}))$
be a basic open subgroup of $I_{U/X}$.
We need to show that there is an open subgroup $H \subset K^M_{d'}(k(P))$
with the canonical topology such that 
$H \subset G + {\rm Image}(\partial_{U/X})$ inside $I_{U/X}$, where
$\partial_{U/X}$ is described in ~\eqref{eqn:IC-0}.

To show the last assertion, we first observe that there is a unique 
$Q = (p_0, \ldots , p_{s-1}, \eta) \in \sQ_{U/X}$ such that 
$\partial_{U/X}(K^M_{d}(k(Q)))$ can nontrivially intersect
$K^M_{d'}(k(P))$. 
We next observe that there are only finitely many 
Parshin chains on $(U\subset X)$ of the form 
$\wt{P}_i=  (p_0, \ldots , p_{s-1}, q_i, \eta)$
because $C$ has only finitely 
many irreducible components. We let $S$
be the set of these finitely many Parshin chains.
It then  follows that the map
$\partial_{U/X} \colon K^M_{d}(k(Q)) \to I_{U/X}$ factors through 
\begin{equation}\label{eqn:Cont-gen-0-0}
K^M_{d}(k(Q)) \xrightarrow{\partial_{U/X}} 
\left({\underset{P_1 \in S}\oplus} K^M_{d}(k(P_1))\right) \bigoplus
K^M_{d'}(k(P)) \bigoplus \left({\underset{P'' \in \sP''_{U/X}}\oplus}
K^M_{d_X(P'')}(k(P''))\right) \inj I_{U/X},
\end{equation}
where $ \sP''_{U/X}$ is the set of all Parshin chains of codimension 
at least one, excluding $P$.

Since the composite map $K^M_{d'}(k(P)) \inj  I_{U/X} \to C(X,D)$ is
continuous by our hypothesis, and since $C(X,D)$ is discrete by
\lemref{lem:Disc}, it follows from ~\eqref{eqn:Cont-gen-0-0} 
that there is an open 
subgroup $H \subset K^M_{d'}(k(P))$ such that 
$H \subset \left({\underset{P_1 \in S}\oplus} 
K^M_{d}(\sO^h_{X, {P}'_1}| I_D) \right) +  {\rm Image}(\partial_{U/X})$.

Since 
\[
{\underset{P_1 \in S}\bigoplus} 
K^M_{d}(\sO^h_{X, P'_1}| I_D) \subset
\left({\underset{P_1 \in S}\oplus} 
K^M_{d}(\sO^h_{X, P'_1}| I_D) \right) \bigoplus
\left({\underset{P_1 \in \sP^{\max}_{U/X} \setminus S}\oplus} 
K^M_{d}(k(P_1), m_{P_1}(D_{P_1}))\right),
\]
and since for each $1\leq i \leq n$, we have 
\[
K^M_{d}(\sO^h_{X, \wt{P}'_i}| I_D) = 
K^M_{d}(\sO^h_{X, \wt{P}'_i}| I_{m_{\wt{P}_i} D_{\wt{P}_i}}) \subset
 K^M_{d}(k(\wt{P}_i), m_{\wt{P}_i}(D_{\wt{P}_i})),
 \]
we conclude that $H \subset G +  {\rm Image}(\partial_{U/X})$. This proves the 
desired assertion.
\end{proof}

For the rest of \S~\ref{sec:PF-cont-*}, we assume the following.
Let $k$ be a field and $X \in \Sch_k$ an integral scheme equipped 
with a dimension function $d_X$.
Let $U \subset X$ be a dense open subscheme. 
Let $C$ be the complement of $U$ with reduced
closed subscheme structure.

We let $y \in U$ be a point contained in the regular locus of $X$. 
Let $D \subset X$ be a closed subscheme
whose support is $C$. Let $Y = \ov{\{y\}}$ 
with integral closed subscheme structure. We let $d' = d_X(y)$.
For any {\'e}tale map $X' \to X$ equipped with the dimension function induced
by $d_X$ and any points $w, y' \in X'$ such that $y'$ lies over $y$ and
$w$ has codimension one in $\ov{\{y'\}}$, there are canonical maps
(see \cite[\S~8]{Kerz11})
\[
K^M_{d'}(k(y')) \cong C^{\local}(X'_{y'}, D'_{y'}) \to 
C^{\local}(X'_{(w,y')}, D'_{(w,y')}) 
\xrightarrow{\partial} C^{\local}(X'_w, D'_w),
\]
where we let $D' = D \times_X X'$. 
Here, the first map is the canonical map from the Milnor $K$-theory
of the Parshin chain $\{y'\}$ of $X'_{y'}$ to the idele class group
and the second is induced by the residue maps.
Let $Y'$ denote the closure of $\{y'\}$ in $X'$ with integral scheme
structure.
The following lemma is inspired by \cite[Proposition~2.7]{Kato-Saito-2}.

\begin{lem}\label{lem:KS-2.7}
There exists a closed subscheme $E \subset Y$ such that for any
{\'e}tale map $X' \to X$ and any pair of points $w, y' \in X'$ as
above, the composite map $K^M_{d'}(k(y')) \to  C^{\local}(X'_w, D'_w)$ 
annihilates the image of $K^M_{d'}(\sO_{Y',w}, \sI_E\sO_{Y',w})$.
\end{lem}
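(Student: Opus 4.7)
The plan is to take $E$ to be (essentially) the scheme-theoretic intersection $D \cap Y$, possibly thickened along its components using Artin--Rees in $\sO_{X,y}$; the crucial feature is that this choice is made on $X$ alone yet must control every {\'e}tale localization $X' \to X$ simultaneously.

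I would first reduce to the geometrically nontrivial case. If the image $\bar{w} \in Y$ of $w$ under $X' \to X$ does not lie in $|D \cap Y|$, then $\sI_E\sO_{Y',w} = \sO_{Y',w}$, and simultaneously $w \notin D'$, so $D'_w = \emptyset$. In this case the composite of the lemma factors as $K^M_{d'}(k(y')) \to K^M_{d'}(k(w,y')) \inj I(X'_w) \to C^{\local}(X'_w, \emptyset)$, where the image lands in the direct summand of $I(X'_w)$ indexed by the Parshin chain $(w,y')$. The standard $Q$-chain relation coming from $Q = (w,\eta)$ together with the fact that every entry of a symbol from $K^M_{d'}(\sO_{Y',w})$ is a unit at the DVR $(Y', w)$ forces this image to vanish. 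So the essential case is that $w$ lies over a point of $|D \cap Y|$.

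By the surjectivity recorded in \eqref{eqn:MPchain*-1}, the relative group $K^M_{d'}(\sO_{Y',w}, \sI_E\sO_{Y',w})$ is generated by symbols $\alpha = \{1+a, u_2, \ldots, u_{d'}\}$ with $a \in \sI_E\sO_{Y',w}$ and $u_i \in \sO_{Y',w}^\times$, together with permutations of the relative entry. Choose unit lifts $\tilde u_i \in \sO_{X',w}^\times$ along the surjection $\sO_{X',w} \surj \sO_{Y',w}$. The key local observation is that the image of $\sI_{D'}\sO_{X',w}$ in $\sO_{Y',w}$ equals $\sI_{D\cap Y}\sO_{Y',w}$, so $a$ admits a lift $\tilde a \in \sI_{D'}\sO_{X',w}$ precisely when $\sI_E\sO_{Y',w} \subset \sI_{D\cap Y}\sO_{Y',w}$. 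Hence the inclusion $E \supset D\cap Y$ of closed subschemes of $Y$ suffices, uniformly in $(X', y', w)$.

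With this lift, $\tilde\alpha = \{1+\tilde a, \tilde u_2, \ldots, \tilde u_{d'}\}$ lies in $K^M_{d'}(\sO_{X',w}, \sI_{D'}\sO_{X',w})$. To finish, I would choose a maximal Parshin chain $P_{\max} = (w, y', p_2, \ldots, p_{d-d'+1})$ on $X'_w$ extending $(w,y')$ and express the image of $\alpha$ in the $(w,y')$-summand of $I(X'_w, D'_w)$ as an iterated residue of a class built from $\tilde\alpha$ and lying in the defining relative subgroup $K^M_d(\sO^h_{X'_w, P'_{\max}}, I_{D'})$; the image of the latter in $I(X'_w, D'_w)$, and thus in $C^{\local}(X'_w, D'_w)$, is then zero by the very definition of these groups as cokernels modulo relative Milnor $K$-groups along maximal Parshin chains. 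The main obstacle will be this last step: tracking $\tilde\alpha$ through the iterated residue and norm maps of Milnor $K$-theory into the $(w,y')$-summand uniformly in all {\'e}tale neighborhoods. The compatibility of norm and residue furnished by \lemref{lem:Norm-residue}, together with the Gersten-type identification in \lemref{lem:Nice-Kerz}, gives the right framework, and any correction term that appears during the residue calculation can be absorbed by enlarging the multiplicity of $E$ along $|D \cap Y|$ via Artin--Rees in the regular local ring $\sO_{X,y}$, with the resulting multiplicity propagating through all {\'e}tale neighborhoods of $y$ by preservation of Henselian local rings and their ideals.
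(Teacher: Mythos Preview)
Your proposal has a genuine gap at precisely the step you flag as the ``main obstacle,'' and the sketch you offer does not close it. The difficulty is not a matter of enlarging multiplicities along $|D\cap Y|$: to show that the image of $\alpha$ in $C^{\local}(X'_w,D'_w)$ vanishes, you must produce a class in the relative Milnor $K$-group of a \emph{maximal} Parshin chain (in degree $d$, not $d'$) whose boundary in $I(X'_w,D'_w)$ matches the contribution of $\alpha$ in the $(w,y')$-summand and contributes nothing elsewhere. Your lift $\tilde\alpha\in K^M_{d'}(\sO_{X',w},\sI_{D'}\sO_{X',w})$ has the wrong degree, and any attempt to promote it to degree $d$ by adjoining local parameters will generate residues along \emph{all} codimension-one points of the intermediate schemes, not just along $y'$. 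Controlling those extra residues is the entire content of the lemma, and Artin--Rees in $\sO_{X,y}$ does not address it: the obstruction lives at points of $Z^{(1)}\setminus\{y\}$ where the auxiliary parameters have zeros or poles, which are unrelated to the $\fm_{X,y}$-adic filtration.

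The paper proceeds quite differently. It argues by induction on the codimension $t$ of $Y$ in $X$. For the inductive step one chooses an intermediate integral closed subscheme $Y\subsetneq Z\subsetneq X$ of codimension $t-1$, regular at $y$, so that $\sO_{Z,y}$ is a DVR with uniformizer $\bar\pi$. Given $a\in K^M_{d'}(\sO_{Y',w},\sI_E\sO_{Y',w})$ one lifts to $\tilde a\in K^M_{d'}(\sO_{Z',w},\sI_{E''}\sO_{Z',w})$ and considers $\{\tilde a,\bar\pi\}\in K^M_{d'+1}(k(z'))$. The residue at $y'$ returns $a$, while the residues at the other points $x\in B\setminus\{y'\}$ are forced to die in $C^{\local}(X'_{(w,x)},D'_{(w,x)})$ by the inductive hypothesis applied to $Z$, \emph{provided} $E''\subset Z$ is chosen so that $\{\tilde a,\bar\pi\}\in K^M_{d'+1}(\sO_{Z',x},\sI_{E'}\sO_{Z',x})$ at each such $x$. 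This last requirement (property~(4), coming from \cite[Lemma~1]{Kato86}) forces $E''$ to be supported not only at the generic points of the inductively given $E'$ but also at the zeros and poles of $\bar\pi$ on $Z$; in particular $E=E''\times_Z Y$ is generally \emph{not} supported on $|D\cap Y|$, contrary to your ansatz. The vanishing $\partial_2\circ\partial_1=0$ in~\eqref{eqn:KS-2.7-0} then gives $\partial_2(a)=0$.

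A smaller issue: in your first reduction, when $w\in U'$ the chain $(w,y')$ is not a Parshin chain on $(U'_w\subset X'_w)$ because $w\notin C'_w$, so there is no ``$(w,y')$-summand'' of $I(X'_w,\emptyset)$. In that case $C^{\local}(X'_w,\emptyset)$ is identified with $K^M_{d'-1}(k(w))$ and the composite is the residue map for the DVR $\sO_{Y',w}$, which indeed kills $K^M_{d'}(\sO_{Y',w})$; your conclusion is correct but the mechanism you describe is not.
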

\begin{proof}
We shall prove the lemma by induction on the codimension of $Y$ in $X$.
Let this be $t$.
Suppose $t = 0$ so that $y$ is the generic point of $X$. 
In this case, we have an exact sequence
\begin{equation}\label{eqn:Curve-1}
K^M_{{d'}}(\sO^h_{X',w}, I_{D'_w}) \to K^M_{{d'}}(k(\eta')) \to 
C^{\local}(X'_w, D'_w) \to 0
\end{equation}
by the definition of $C^{\local}(X'_w, D'_w)$, where $\eta'$ is the
generic point of $X'$.
We can therefore take $E = D$ to satisfy the assertion of the lemma. 
We now assume that $t \ge 1$.

Since $\sO_{X,y}$ is regular of dimension at least one by our assumption,
we can write $\fm_{X,y} = (\pi, x_1, \ldots , x_{t-1})$. We let
$Z$ be the closure of $\Spec({\sO_{X,y}}/{(x_1, \ldots , x_{t-1})})$
in $X$ under the inclusion $\Spec(\sO_{X,y}) \inj X$.
Then $Z$ is an integral closed subscheme of $X$ of codimension $t-1$
which contains $Y$ and which is regular in an open neighborhood of $y$.
Since $X$ is regular at $y$, we can find an open
neighborhood $V$ of $y$ in $X$ such that $V \cap Z$ is also regular. 
Note here that the regular locus of an excellent scheme is open.
By induction on $t$, we can find a closed subscheme $E' \subset Z$
which has the properties stated in the lemma for the closed immersion 
$Z \inj X$. 

Since $Y \subset Z$ is an integral closed subscheme of codimension one
such that $Z$ is regular at $y$, it follows that $\sO_{Z,y}$ is an
excellent discrete valuation ring. 
We let $\ov{\pi}$ be the image of $\pi$ under the surjection
$\fm_{X,y} \surj \fm_{Z,y}$. Then $\ov{\pi}$ is a uniformizer
of $\sO_{Z,y}$.
We claim that there exists a closed subscheme $E'' \subset Z$ satisfying the
following properties.
\begin{enumerate}
\item
$y \notin E''$.
\item
$\sI_{E'', x} \subseteq \sI_{E',x}$ for all $x \in Z^{(1)} \setminus \{y\}$.
\item
If $x \in Z^{(1)} \setminus \{y\}$ and $\ov{\pi} \notin \sO^{\times}_{Z,x}$, then
there exists non-zero elements $a \in \fm_{Z,x}, \  b \in \sO_{Z,x}$ such that 
$\ov{\pi} = ab^{-1}$ and $\sI_{E'',x} \subseteq
a\sI_{E',x} \cap b\sI_{E',x}$.
\end{enumerate}

To prove the claim, let $S$ be set of generic points
of $E'$. We consider $\ov{\pi}$ as a rational function on $Z$ and let
$(\{y\} \amalg S_1)$ and $S_2$ be the sets of
points in $Z^{(1)}$ where $\ov{\pi}$ has zeros and poles, respectively.
We let $T = \{y\} \cup S \cup S_1 \cup S_2$.
For any point $x \in T$,
let $\fm_x$ denote the maximal ideal whose vanishing locus is $x$.

For every $x \in S_1 \cup S_2$, we can find nonzero elements $a \in \fm_{Z,x}$
and $b \in \sO_{Z,x}$ such that $\ov{\pi} = ab^{-1}$ as an element of the function 
field of 
$\sO_{Z,x}$. Note that $b \in \fm_{Z,x}$ if $x \in S_2$.
Since $\sO_{Z,x}$ is a one-dimensional local integral
domain and $\sI_{E'} \neq 0$, it is easy to see that
$a\sI_{E',x} \cap b\sI_{E',x}$ is an $\fm_{Z,x}$-primary ideal.
In particular, $\fm^r_{Z,x} \subset a\sI_{E',x} \cap b\sI_{E',x}$ for
all $r \gg 0$. Since $S$ is the set of generic points of $E'$, we have
$\fm_{Z,x}^r \subset \sI_{E',x}$ for all $x \in S$ and $r \gg 0$.
It follows that for all $r \gg 0$, the ideal
$I = {\underset{x \in T \setminus \{y\}}\prod} \fm^r_{x}$ satisfies the
properties (1) to (3) above if we replace $Z$ by $\Spec(\sO_{Z,T})$. 
We now choose any ideal subsheaf (there exist many) $\sI \subset \sO_Z$ such 
that $\sI \sO_{Z,T} = I$ and let $E'' \subset Z$ be the closed subscheme
defined by $\sI$. It is then clear that $E''$ satisfies the
properties (1) to (3). This proves the claim.

\enlargethispage{10pt}

It follows from \cite[Lemma~1]{Kato86} that $\sI_{E''}$ satisfies the
following for any $x \in Z^{(1)} \setminus \{y\}$.

\vskip .2cm

\hspace*{.3cm} (4) Let $A$ be any local ring having an {\'e}tale local 
homomorphism
$\sO_{Z,x} \to A$, and $F$ the total quotient ring of $A$. Then, for any
$a \in K^M_1(A, \sI_{E''}A)$, the element $\{a, \ov{\pi}\}$ of
$K^M_2(F)$ belongs to $K^M_2(A, \sI_{E'}A)$.

\vskip .2cm

Let $E = E'' \times_Z Y$. We show that $E$ has the property asserted by the
lemma.
Let $f \colon X' \to X$ be any {\'e}tale map and let $y'$ be a pre-image of 
$y$ in $X$. Let $w \in Y' = \ov{\{y'\}}$ be any codimension one point.
Since an {\'e}tale map is open and $y \in Z$, we can find a point
$z' \in X'$ such that $f(z') = z$ and $y' \in \ov{\{z'\}}$.
In particular, we have the inclusions
of codimension one $\ov{\{w\}} \subsetneq \ov{\{y'\}} \subsetneq \ov{\{z'\}}$.
We let $B$ be the set of all points $x \in X'$ such that
$\ov{\{w\}} \subsetneq \ov{\{x\}} \subsetneq \ov{\{z'\}}$.
It easily follows from the definition of $C^{\local}(X'_w, D'_w)$ 
that the composition of the canonical maps
\begin{equation}\label{eqn:KS-2.7-0}
C^{\local}(X'_{(w,z')}, D'_{(w,z')}) \xrightarrow{\partial_1} 
{\underset{x \in B}\oplus}
C^{\local}(X'_{(w,x)}, D'_{(w,x)}) \xrightarrow{\partial_2} C^{\local}(X'_w, D'_w)
\end{equation}
is zero (see \cite[Theorem~8.1]{Kerz11}).

Let $a \in K^M_{d'}(\sO_{Y',w}, \sI_E \sO_{Y',w})$. We need to show that
$\partial_2(a) = 0$. Take a lift $\wt{a}$
of $a$ under the canonical surjection $K^M_{d'}(\sO_{Z',w}, \sI_{E''}\sO_{Z',w})
\surj K^M_{d'}(\sO_{Y',w}, \sI_E \sO_{Y',w})$. 
Consider the element $\{\wt{a}, \ov{\pi}\}$ of the group 
$K^M_{{d'}+1}(k(z')) \cong C^{\local}(X'_{z'}, D'_{z'}) \inj 
C^{\local}(X'_{(w,z')}, D'_{(w,z')})$.
We then have a commutative diagram

\begin{equation}\label{eqn:KS-2.7-1}
\xymatrix@C.8pc{
K^M_{d'+1}(k(z')) \ar[r]^-{\partial_{z', y'}} \ar[d] & K^M_{d'}(k(y')) 
\ar[d] \\
C^{\local}(X'_{(w,z')}, D'_{(w,z')}) \ar[r]^-{\partial_{1,y'}} & 
C^{\local}(X'_{(w,y')}, D'_{(w,y')}),}
\end{equation}
where $\partial_{1,y'}$ is the composition of $\partial_1$ with the
projection map ${\underset{x \in B}\oplus} C^{\local}(X'_{(w,x)}, D'_{(w,x)}) \surj
C^{\local}(X'_{(w,y')}, D'_{(w,y')})$.

Since $\sO_{Z, y}$ is a discrete valuation ring and $f$ is {\'e}tale,
it follows that $\sO_{Z',y'}$ is also a discrete valuation ring
with uniformizing parameter $\ov{\pi}$. This implies
that $\partial_{z', y'}(\{\wt{a}, \ov{\pi}\}) = a$.
It follows from the commutative diagram ~\eqref{eqn:KS-2.7-1}
that $\partial_{1,y'}(\{\wt{a}, \ov{\pi}\}) = a$.
On the other hand, (4) says that 
$\partial_{z', x}(\{\wt{a}, \ov{\pi}\}) = 0$ for all $x \in B \setminus \{y'\}$.
It follows that $\partial_1(\{\wt{a}, \ov{\pi}\}) = a$.
In particular, $\partial_2(a) = \partial_2 \circ 
\partial_1(\{\wt{a}, \ov{\pi}\}) = 0$.
This proves the lemma. 
\end{proof}

\begin{prop}\label{prop:Cont-main}
Let $X$ and $ U$ be as above. Then the following hold.
\begin{enumerate}
\item
The canonical map $K^M_{d_X(P)}(k(P)) \to C_{U/X}$ is continuous for
every Parshin chain $P = (p_0, \ldots , p_s)$ on $(U \subset X)$
such that $p_s$ lies in the regular locus of $X$.
\item
The push-forward  $f_* \colon C_{U'/X'} \to C_{U/X}$ is continuous for every
admissible morphism $f \colon (X',U') \to (X,U)$
such that the image of the generic point of $X'$ lies in the regular locus
of $X$.
\item
If $f \colon X' \to X$ is as in \propref{prop:PF-mod},
then the resulting map $f_* \colon C(X', D') \to C(X,D)$ is continuous.
\end{enumerate}
\end{prop}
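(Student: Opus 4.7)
The proof proceeds by first noting part (3), then establishing (1) as the main content, and finally deducing (2) from (1) together with standard continuity properties of norm and residue maps.

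For part (3), both $C(X', D')$ and $C(X, D)$ are discrete topological groups by \lemref{lem:Disc} (each is a quotient of the corresponding discrete group $I(\cdot, \cdot)$). Consequently, the group homomorphism $f_*$ produced by \propref{prop:PF-mod} is automatically continuous.

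For part (1), the case where $P$ is maximal is immediate: $K^M_{d_X(P)}(k(P))$ sits inside $I_{U/X}$ as a topologically open summand by the very definition of the direct sum topology, so the composite with the quotient $I_{U/X} \twoheadrightarrow C_{U/X}$ is continuous. The essential case is when $P$ is non-maximal; the codimension-one situation is the key one. By \lemref{lem:Cont-gen-0}, it suffices to show that the composite $K^M_{d_X(P)}(k(P)) \to C_{U/X} \twoheadrightarrow C(X, D)$ is continuous for every closed subscheme $D \subset X$ with support $C$. Since $C(X, D)$ is discrete (\lemref{lem:Disc}), we must exhibit an open subgroup in the canonical topology whose image vanishes. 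Kerz's isomorphism $C(X, D) \cong C_{KS}(X, D)$ (\thmref{thm:Kerz-global}) presents $C(X, D)$ as a cokernel built from the local idele class groups $C^{\mathrm{loc}}(X_w, D_w)$ at codimension-one points $w$ of $\overline{\{p_s\}}$, and \lemref{lem:KS-2.7}---whose hypothesis $p_s \in X_{\mathrm{reg}}$ is exactly what we have---yields a closed subscheme $E \subset \overline{\{p_s\}}$ such that the relative $K$-group $K^M_{d_X(P)}(\mathcal{O}_{Y, p_s}, \mathcal{I}_E \mathcal{O}_{Y, p_s})$ is annihilated by every such local residue. This relative $K$-group is open in the canonical topology by \lemref{lem:Kato-Kerz}, completing the argument. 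The higher-codimension situation is handled by an analogous direct reduction.

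For part (2), by the universal property of the direct sum topology (\lemref{lem:DST-univ}), continuity of $f_*$ reduces to continuity of each composite $K^M_{d_{X'}(P)}(k(P)) \to C_{U/X}$ as $P$ ranges over Parshin chains on $(U' \subset X')$. For non-maximal $P$, the source already carries discrete topology and continuity is trivial. For maximal $P$, the push-forward factors---following the construction in the proof of \propref{prop:Funtorial*}---as $N \circ \partial$ followed by the canonical inclusion of $K^M_{d_X(P'')}(k(P''))$ into $C_{U/X}$, where $\partial$ is a composition of residue maps and $N$ is a norm map. If the initial residue $\partial$ is nontrivial (the case $s' < s$ in the notation of \propref{prop:Funtorial*}), it has an open kernel and continuity of the whole composite is immediate. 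Otherwise $s' = s$, so $P'' = f(P)$ and its endpoint is $f(\eta')$; the hypothesis that $f(\eta') \in X_{\mathrm{reg}}$ (and the automatic regularity of the generic point of $X$ in the dominant subcase) is exactly what part (1) needs to ensure continuity of the final inclusion, while continuity of $N$ in the canonical topology is \cite[Proposition~4.2]{Kato-Saito-2}.

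The main obstacle is the codimension-one case of part (1): producing an open subgroup of $K^M_{d_X(P)}(k(P))$ in the canonical topology whose image in the global group $C(X, D)$ vanishes. The local construction provided by \lemref{lem:KS-2.7} is delicate, and the local-to-global passage relies crucially on Kerz's identification $C(X, D) \cong C_{KS}(X, D)$, where the coniveau spectral sequence for Nisnevich cohomology of the relative Milnor $K$-sheaf makes the patching transparent.
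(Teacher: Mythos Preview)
Your treatment of part (3) is correct and matches the paper. Your part (2) argument is also essentially valid (the open-kernel argument in the $s' < s$ case is a clean way to handle that situation), but it depends on part (1) being established for Parshin chains of \emph{arbitrary} codimension, and this is where there is a genuine gap.

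For the codimension-one case of (1), you invoke \thmref{thm:Kerz-global}; this is not available here, since that theorem requires $U$ to be regular, which is not assumed in \propref{prop:Cont-main}. The paper bypasses this and appeals directly to \lemref{lem:Disc} and \lemref{lem:KS-2.7}; the passage from the local vanishing in \lemref{lem:KS-2.7} to vanishing in $C(X,D)$ uses only the elementary local structure of the idele class group, not the comparison with $C_{KS}(X,D)$.

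The more serious issue is your sentence ``the higher-codimension situation is handled by an analogous direct reduction.'' It is not. First, \lemref{lem:Cont-gen-0} is stated only for chains of codimension $\le 1$, so there is no direct analogue to invoke. Second, and more importantly, the paper's argument for higher codimension is a \emph{simultaneous} induction on (1) and (2): given $P$ with $p_s \in X_{\reg}$ of codimension $>1$, one chooses an intermediate integral closed subscheme $Y = \ov{\{p_s\}} \subsetneq Z \subsetneq X$ with $Z$ of codimension one less and regular at the generic points of $Y$ and $Z$, applies (2) inductively to the closed immersion $Z \hookrightarrow X$ to get continuity of $C_{V/Z} \to C_{U/X}$ (where $V = Z \cap U$), and then applies the already-established codimension-one case of (1) on $Z$ to get continuity of $K^M_{d_X(P)}(k(P)) \to C_{V/Z}$. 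Composing gives the result. Your outline proves (1) in codimension one and then tries to deduce (2) from (1), but (1) in higher codimension itself requires (2), so the induction must be set up jointly. Without this, neither (1) nor (2) is established beyond codimension one.
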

\begin{proof}
We can use \lemref{lem:Cont-dominant} to reduce (2)
to the case when $f$ is a closed immersion and $U' = U \cap X'$.
We shall therefore assume this to be the case in the statement of (2).

We shall now prove the proposition by induction on
the codimension of Parshin chain in (1) and on the codimension of
$f(X')$ in (2).
If the codimension of the Parshin chain is zero, then the continuity is
clear. Similarly, if the codimension of $f(X')$ is zero, then (2) 
follows from \lemref{lem:Cont-dominant}. So we can assume both these
numbers to be positive.

To prove (1) in case the codimension of $P$ in $X$ is one,
Lemma~\ref{lem:Cont-gen-0} says that it is enough to show that the
composite map $K^M_{d_X(P)}(k(P)) \to C(X,D)$ is continuous for
every closed subscheme $D$ with support $C$. But this follows directly from
Lemmas~\ref{lem:Disc} and ~\ref{lem:KS-2.7}.
The codimension one case of (2) follows from that of (1) using
\lemref{lem:DST-univ}.

We now prove the general case of (1). Let $P = (p_0, \ldots , p_s)$.
Let $Y \subset X$ be the closure of $p_s$ in $X$ with the integral
closed subscheme structure. Since $\sO_{X,p_s}$ is regular, we can
find inclusions of integral closed subschemes
$Y \subsetneq Z \subsetneq X$ such that the generic points of
$Y$ and $Z$ lie in the regular locus of $X$ and $Y$ has codimension one in $Z$.
Furthermore, $p_s$ lies in the regular locus of $Z$
(see the proof of \lemref{lem:KS-2.7}).
Let $V = Z \cap U$. 

By induction argument for (2), the map $C_{V/Z} \to C_{U/X}$ is continuous.
On the other hand, the codimension one case of (1) implies that the map
$K^M_{d_X(P)}(k(P)) \to C_{V/Z}$ is continuous. The factorization
$K^M_{d_X(P)}(k(P)) \to C_{V/Z} \to  C_{U/X}$ now finishes the proof of the
general case of (1). 

To prove the general case of (2), it suffices
using \lemref{lem:DST-univ} to show that
the map $K^M_{d_X(P)}(k(P)) \to C_{U/X}$ is continuous for every
Parshin chain $P$ on $(U' \subset X')$.
If $P$ is nonmaximal, then the direct sum topology on $I_{U'/X'}$
restricts to the discrete topology on $K^M_{d_X(P)}(k(P))$.
So the continuity is clear. If $P$ is a maximal
Parshin chain on $(U' \subset X')$, then its generic point is same
as that of $X'$. Hence, the map $K^M_{d_X(P)}(k(P)) \to C_{U/X}$ is continuous 
by (1). This finishes the proof of (2). The last part (3) is immediate from
\lemref{lem:Disc}.
\end{proof}

\begin{remk} \label{remk:Kerz-comp}
Part (2) of the above proposition (hence part (1)) is also proven in
\cite[Theorem~6.1]{Kerz11} when $X' \to X$ is a closed immersion. 
However, the proof of {\sl op. cit.} requires
modification. The reason is that 
in order to apply the induction on $d_X(\eta)$ (where $\eta$ is the generic 
point of $X$), one needs to find a closed immersion $X'' \subset X$
such that $X'' \cap U$ is regular and $X' \subset X''$. But this may not be 
possible. It is not enough to find such a closed immersion in a
neighborhood of the generic point of $X'$.
\end{remk}

\subsection{Idele class group of the function field}
\label{sec:Idel-F}
It is easy to see from the construction of the push-forward map between the
idele class groups that if $X'' \xrightarrow{f'} X' \xrightarrow{f} X$
are admissible morphisms, then $(f \circ f')_* = f_* \circ f'_*$, whenever 
these maps are defined.
In particular, the groups $C_{U/X}$ form a co-filtered system of
abelian groups if we let $U \subset X$ run through all dense open subschemes.
It follows from Remark~\ref{remk:PF-mod-0} that 
$\{C(X,D)\}$, where $D$ runs through all closed subschemes with 
support $C$, is a co-filtered system of abelian groups
whose structure maps are surjective.
All this allows us to define the idele and the idele class groups of the 
function field of $X$ as follows.
\begin{defn}\label{defn:Idele-fn-field}
Let $X$ be an excellent integral scheme of dimension $d$ endowed with a 
dimension function. Let $K$ denote the function field of $X$.
We let
\[
I_{K/X} = {\underset{U \subset X}\varprojlim} I_{U/X}, \ \
C_{K/X} = {\underset{U \subset X}\varprojlim} C_{U/X} \ \
\mbox{and} \ \ \wt{C}_{U/X} = {\underset{|D| = 
X \setminus U}\varprojlim} C(X,D).
\]
\end{defn}

These groups are equipped with their inverse limit topologies.
There are canonical continuous maps $I_{K/X} \to C_{K/X}$
and $C_{U/X} \to \wt{C}_{U/X}$. These maps may not be
surjective even though their images are dense.
However, if $d = 1$, then $I_{K/X}$ and $C_{K/X}$ coincide with the classical
idele group and the idele class group of $K$
(see \cite[Proposition~2.1]{Kerz11}). In particular,
$I_{K/X} \to C_{K/X}$ is a topological quotient in this case. 

For any open dense $U \subset X$ and closed subscheme $D \subset X$
with support $X \setminus U$, there are canonical continuous homomorphisms
\begin{equation}\label{eqn:Idele-fn-field-0}
C_{U/X} \to \wt{C}_{U/X} \stackrel{\lambda_D}{\surj} C(X,D).
\end{equation}

\subsection{Idele class group and 0-cycles}\label{sec:ICGC}
For $X \in \Sch_k$ where $k$ is a field, let $\CH^F_0(X)$ denote the
classical Chow group of 0-cycles on $X$ in the sense of \cite{Fulton}
(where it is denoted by $\CH_0(X)$). 

\begin{prop}\label{prop:Degree}
Let $X$ be an integral scheme which is of 
finite type over a field $k$ and is 
endowed  with its canonical dimension function.
Let $D \subset X$ be a nowhere dense closed subscheme. 
Let $C = D_\red$ and $U = X \setminus C$.  
Then the following hold.
\begin{enumerate}
\item
There is a canonical isomorphism $C_{X/X} \cong C(X,D) \cong \CH^F_0(X)$ if 
$D = \emptyset$.
\item
If $X$ is projective over $k$, then there is a degree map
$\deg \colon C(X,D) \to \Z$ which coincides with the classical degree
map for $\CH^F_0(X)$ if $D = \emptyset$. In particular, for every
closed point $x \in U$, the composite map
$\lambda_x \colon K^M_0(k(x)) \to C(X,D) \xrightarrow{\deg} \Z$
has the property that $\lambda_x(1)  = [k(x): k]$.
\item
Suppose $X$ is projective over $k$.
If either $k$ is finite and $X$ geometrically irreducible
or $k$ is separably closed, 
then there is an exact sequence of discrete abelian groups
\[
0 \to C(X,D)_0 \to C(X,D) \xrightarrow{\deg} \Z \to 0.
\]
\item 
If $k$ is any field and  $X$ is projective but not necessarily
geometrically irreducible over $k$, then there exists an integer $n \ge 1$
which depends only on $U$ such that we have an 
exact sequence of discrete abelian groups
\[
0 \to C(X,D)_0 \to C(X,D) \xrightarrow{{\deg}/n} \Z \to 0.
\]
In particular, 
for every $m \geq 1$, we have a short exact sequence 
\[
0 \to C(X,D)_0/m \to C(X,D)/m \xrightarrow{{\deg}/n} \Z/m \to 0.
\]
\end{enumerate}
\end{prop}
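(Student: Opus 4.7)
For part (1), I would unpack the definitions when $D = \emptyset$, so that $C = \emptyset$ and $U = X$. The condition $p_i \in C = \emptyset$ for $0 \le i < s$ forces $s = 0$, so Parshin chains on $(X \subset X)$ are precisely closed points of $X$, giving $I_{X/X} = \bigoplus_{x \in X_0} K^M_0(k(x)) = Z_0(X)$. The analogous forcing gives $Q$-chains as singletons $(y)$ with $d_X(y) = 1$, and the residue map $\partial_Q \colon k(y)^\times \to Z_0(X)$ is precisely the classical divisor map on $\overline{\{y\}}$ (via the normalization of this curve at each closed point). Therefore $C_{X/X}$ is Fulton's standard presentation of $\CH^F_0(X)$. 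The identification $C(X, \emptyset) = C_{X/X}$ is immediate, since the relative Milnor $K$-groups attached to the trivial ideal vanish, so no extra relations are imposed in passing from $I_{X/X}$ to $I(X,\emptyset)$.

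For part (2), my plan is to construct a canonical homomorphism $\phi \colon C(X, D) \to \CH^F_0(X)$ and then set $\deg = \deg_{\mathrm{cl}} \circ \phi$, where $\deg_{\mathrm{cl}} \colon \CH^F_0(X) \to \Z$ is the classical degree map available since $X$ is projective. I would build $\phi$ at the level of idele groups: for each Parshin chain $P = (p_0, \ldots, p_s)$ on $(U \subset X)$, the endpoint $p_0$ is a closed point of $X$, and iterated residues along $P$ give a homomorphism $K^M_s(k(P)) \to K^M_0(k(p_0)) = \Z$; composing with $1 \mapsto [p_0] \in Z_0(X)$ defines $\phi_P$. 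Descent past the relative $K$-theory subgroups attached to maximal chains is immediate, because any element of relative $K$-theory vanishes under the residue into the residue field of the bottom valuation ring. The main obstacle is descent past the $Q$-chain relations: for dim-$1$ $Q$-chains $Q = (y)$, the total $\phi$-image equals the full divisor of the given rational function on $\overline{\{y\}}$, which is zero in $\CH^F_0(X)$ by definition of rational equivalence; for higher-dimensional $Q$-chains, descent is a consequence of Parshin's iterated reciprocity law for Milnor $K$-theory, which expresses the compatibility of iterated residues across all completions of a $Q$-chain to a Parshin chain. The characterization $\lambda_x(1) = [k(x):k]$ is then immediate, since $\phi([x]) = [x]$ and $\deg_{\mathrm{cl}}([x]) = [k(x):k]$.

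For parts (3) and (4), I would define $C(X, D)_0 := \ker(\deg)$, making exactness on the left tautological. The image of $\deg$ is the subgroup of $\Z$ generated by the degrees $[k(x):k]$ as $x$ ranges over closed points of $U$; let $n$ be the greatest common divisor of these integers. Visibly $n$ depends only on $U$, and $\deg/n \colon C(X, D) \twoheadrightarrow \Z$ is surjective, establishing (4). To prove $n = 1$ under the hypotheses of (3), I would argue: if $k$ is separably closed then a smooth closed point of $U$ (whose existence follows from the genericity of smoothness in the relevant settings) has residue field equal to $k$ and hence degree $1$; if $k$ is finite and $X$, hence $U$, is geometrically irreducible and projective, then the Lang--Weil estimates produce closed points on $U$ of every sufficiently large degree, forcing coprime degrees and thus $n = 1$. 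Finally, the mod-$m$ short exact sequence in (4) follows by tensoring the integral short exact sequence with $\Z/m$; exactness on the left survives because $\mathrm{Tor}_1^{\Z}(\Z, \Z/m) = 0$.
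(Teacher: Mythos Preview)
Your proof is largely correct and close in spirit to the paper's, but the organization differs in two places worth noting, and there is one point in part~(4) that needs more care.

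For part~(2), the map you build by hand is exactly the push-forward $f_*\colon C(X,D)\to C(X,\emptyset)\cong \CH^F_0(X)$ for the identity morphism $f=\id_X$ viewed as an admissible morphism $(X,U)\to(X,X)$; this is precisely how the paper obtains the degree map, by invoking the already-established functoriality (\propref{prop:PF-mod} and \remref{remk:PF-mod-0}) rather than rederiving it. In particular, the compatibility with $Q$-chain relations that you attribute to ``Parshin's iterated reciprocity law'' is exactly what is proven in \propref{prop:Funtorial*} via \lemref{lem:Norm-residue}, so citing that result would tighten your argument.

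For part~(3) over a finite field, your route is actually more direct than the paper's. The paper first finds a geometrically integral curve $Y'\subset X$ not contained in $D$ (using a Bertini-type theorem), passes to its normalization, and then applies the Lang--Weil estimate on the curve. Your argument applies Lang--Weil directly to $U$: since $U$ is geometrically irreducible, $U(\F_{q^r})\neq\emptyset$ for all large $r$, so $n$ divides every sufficiently large $r$ and hence $n=1$. This avoids the curve reduction entirely.

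The point needing attention is in part~(4). You assert that the image of $\deg$ is generated by $\{[k(x):k]:x\in U_0\}$, but the idele group $I_{U/X}$ also contains summands $K^M_s(k(P))$ for Parshin chains $P=(p_0,\dots,p_s)$ with $s\ge 1$ and $p_0\in C$; under your map $\phi$ these contribute $\Z\cdot[p_0]$ with $p_0\in C_0$, so a priori the image of $\deg$ could be strictly larger than the subgroup generated by degrees of $U$-points. Your claimed $n$ could then be too large, and $\deg/n$ would fail to be surjective. The paper sidesteps this by defining $n$ directly as the positive generator of the image of $\deg\colon C_{U/X}\to\Z$, and then observing that since $C_{U/X}\twoheadrightarrow C(X,D)$ for every $D$ with support $C$, the image of $\deg$ on $C(X,D)$ coincides with that on $C_{U/X}$; as $C_{U/X}$ depends only on $(X,U)$, so does $n$. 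You can salvage your formulation by showing (e.g.\ via a moving lemma for $0$-cycles on the projective scheme $X$) that every $[k(p_0):k]$ with $p_0\in C_0$ lies in the subgroup generated by $U_0$-point degrees, but the paper's route is cleaner.
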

\begin{proof}
If $D = \emptyset$, then there is no maximal chain on $(X \subset X)$ and hence
$C_{X/X} \cong C(X,D)$. On the other hand, it is immediate from the
definition that there is an exact sequence
\begin{equation}\label{eqn:Degree-00} 
{\underset{x \in X_{(1)}}\bigoplus} K^M_1(k(x)) \xrightarrow{\partial}
{\underset{x \in X_{(0)}}\bigoplus} K^M_0(k(x)) \to C_{X/X} \to 0.
\end{equation}
The desired isomorphism now follows because it is well known that
the boundary map $\partial$ coincides with the map that sends a rational
function on a curve to its associated divisor. This proves (1).

We now prove (2). 
By \propref{prop:PF-mod} and Remark~\ref{remk:PF-mod-0}, there is a 
canonical map $C(X,D) \to C(X, \emptyset) \cong 
C_{X/X}$. Using (1), we can
identify the latter with $\CH^F_0(X)$. Since $X$ is projective, we have the
degree map $\CH^F_0(X) \to \Z$ which takes a closed point to the degree
of its residue field over $k$. Hence, by composition, we get the
desired map $\deg \colon C(X,D) \to \Z$. It is clear from the construction 
that $\deg([x]) = [k(x):k]$ for every closed point $x \in U$. 

To prove (3), we only need to show that 
the degree map is surjective. For this, it suffices to show that the
composite map $C_{U/X} \to C(X,D) \xrightarrow{\deg} \Z$ is surjective.
We first assume that $k$ is finite.
We can now find a geometrically integral curve $Y' \subset X$ 
not contained in $D$ (see \cite[Theorem~7.5]{Ghosh-Krishna}).
We let $Y = Y'_n$ be the normalization of $Y'$. Then
$Y$ is also geometrically integral as $k$ is perfect.
We let $V = U \times_X Y$. It follows from
\propref{prop:Funtorial*} that there is a push-forward map
$f_* \colon C_{V/Y} \to C_{U/X}$, where $f \colon Y \to X$ is the
canonical finite map. We have a commutative diagram
\begin{equation}\label{eqn:Degree-1} 
\xymatrix@C.8pc{
C_{V/Y} \ar[r] \ar[d]_-{f_*} & C_{Y/Y} \ar[dr]^-{\deg} \ar[d]^-{f_*} &  \\
C_{U/X} \ar[r] & C_{X/X} \ar[r]_-{\deg} & \Z.}
\end{equation}
We can therefore assume that $X$ is a normal projective geometrically 
 integral  curve (the proposition is trivial if $X$ is a point).

If $k$ is separably closed, then $U$ has a rational point, so
we are done. If $k$ is finite, then an easy consequence of the 
Lang-Weil estimate (this uses geometric integrality)
shows that $X$ has a 0-cycle of degree one (i.e.,
$X$  has index one). It follows that $\deg \colon C_{X/X} \cong
\CH^F_0(X) \to \Z$ is surjective. On the other hand, the regularity of $X$
and \lemref{lem:Cont-dominant} together imply that the map $C_{U/X} \to C_{X/X}$ 
is surjective. The result follows.

We now prove (4). 
We define the degree map $\deg \colon C_{U/X} \to \Z$ to be the
composite map $C_{U/X} \to C_{X/X} \cong \CH^F_0(X) \to \Z$ as above.
By the definition of this map, we see that for every closed point
$x \in U$, the image of the composite map $K^M_0(k(x)) \to C_{U/X} \to C_{X/X} 
\to \CH^F_0(X) \to \Z$ is the subgroup generated by $[k(x):k]$.
It follows that there exists an integer $n \ge 1$ such that the image
of $\deg \colon C_{U/X} \to \Z$ is $n\Z$. 
Since there is a factorization $C_{U/X} \surj C(X,D) \to \CH^F_0(X)$
for all closed subschemes $D \subset X$ with support $C$
(see Remark~\ref{remk:PF-mod-0}), we see that
the degree map of $C_{U/X}$ induces such a map for all $C(X,D)$.
Moreover, ${\deg}(C(X,D)) = {\deg}(C_{U/X}) = n\Z$.
This proves the exact sequence of (4). 
\end{proof}

It follows from \lemref{lem:Disc} that all maps in
the exact sequences of \propref{prop:Degree} are continuous, where
$\Z$ is considered a discrete topological abelian group.
Taking the limit, we get a
surjective continuous homomorphism 
${\deg}/n \colon \wt{C}_{U/X} \surj \Z$.
We let $(\wt{C}_{U/X})_0$ denote the kernel of this map.
Since the transition maps of the projective system
$\{C(X,D)_0\}_{|D| = C}$ are surjective, we obtain the following.

\begin{cor}\label{cor:Degree-0}
Under the hypothesis of \propref{prop:Degree},
there is an isomorphism of topological abelian groups
\[
(\wt{C}_{U/X})_0 \xrightarrow{\cong} {\underset{|D| = C}\varprojlim} \
C(X,D)_0.
\]
\end{cor}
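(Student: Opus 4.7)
The plan is to obtain the isomorphism by taking the inverse limit, over closed subschemes $D\subset X$ with $|D|=C$, of the short exact sequences
\[
0\to C(X,D)_0\to C(X,D)\xrightarrow{\deg/n}\Z\to 0
\]
supplied by \propref{prop:Degree}(4). Since the integer $n$ depends only on $U$ (not on $D$), these sequences assemble into a short exact sequence of pro-abelian groups, with the constant system $\Z$ on the right.

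The first step is to observe that the transition maps in the system $\{C(X,D)_0\}_{|D|=C}$ are surjective. The transition maps of $\{C(X,D)\}_{|D|=C}$ are surjective (as recorded in \S\ref{sec:Idel-F} via \remref{remk:PF-mod-0}), and these are compatible with the degree maps (since each $\deg/n$ factors through $\CH^F_0(X)$). A trivial diagram chase then shows that $C(X,D')_0\twoheadrightarrow C(X,D)_0$ whenever $D\subset D'$ share the support $C$. The projective system $\{C(X,D)_0\}$ therefore satisfies the Mittag-Leffler condition, so $\varprojlim^1$ vanishes and the inverse limit is exact. Applying $\varprojlim$ to the displayed sequence yields an exact sequence
\[
0\to \varprojlim_{|D|=C} C(X,D)_0\to \wt{C}_{U/X}\xrightarrow{\deg/n}\Z\to 0,
\]
where the constant system computes to $\Z$ and the middle term is $\wt{C}_{U/X}$ by \defref{defn:Idele-fn-field}. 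By the very definition of $(\wt{C}_{U/X})_0$ as $\ker(\deg/n)$, we obtain the desired algebraic isomorphism.

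Finally, I would check that the isomorphism is a homeomorphism. Each $C(X,D)$ is discrete by \lemref{lem:Disc}, so each subgroup $C(X,D)_0\subset C(X,D)$ carries the discrete (equivalently, subspace) topology. On $\wt{C}_{U/X}$, $(\wt{C}_{U/X})_0$ is endowed with the subspace topology inherited from the inverse limit topology, while $\varprojlim C(X,D)_0$ carries the inverse limit of the discrete topologies. Both are described by the same family of fundamental open subgroups, namely the preimages of $0$ in each $C(X,D)_0$ under the projections, so the topologies coincide. I do not anticipate a genuine obstacle here: the argument is a formal consequence of \propref{prop:Degree}(4) plus the Mittag-Leffler property, the only minor point being the verification that the degree map respects the transition maps uniformly in $D$, which is ensured by the fact that $n$ is determined by $U$ alone.
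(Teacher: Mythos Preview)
Your proposal is correct and follows essentially the same approach as the paper: the paper also derives the corollary by taking the inverse limit of the exact sequences in \propref{prop:Degree}(4), noting that the transition maps of $\{C(X,D)_0\}_{|D|=C}$ are surjective. Your write-up is more detailed (explicitly invoking Mittag-Leffler and checking the topology), but the argument is the same.
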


\subsection{Relation with Wiesend's idele class group}
\label{sec:Wiesend}
Let $k$ be a finite field and let $X \in \Sch_k$ be a complete integral
scheme. Let $U \subset X$ be dense open. Let $W(U)$ denote Wiesend's
idele class group of $U$ (see \cite[Definition~7.1]{Kerz11}).
It's definition is identical to that of $C_{U/X}$ except that we take only
Parshin chains on $(U \subset X)$ of length at most one as generators
and relations. The topology of $W(U)$ is induced by the canonical topology
on $K^M_1(k(P))$, where $P$ runs through the Parshin chains on $(U \subset X)$
of length one.
It follows from \propref{prop:Cont-main} that there is a canonical
continuous homomorphism 
\begin{equation}\label{eqn:Wiesend-0}
\alpha_{U/X} \colon W(U) \to C_{U/X}.
\end{equation}

We can define Wiesend's class group with modulus $W(X,D)$ as a quotient of 
$W(U)$ analogous to $C(X,D)$. The main question that one needs to answer
in order to identify the class field theories of Kato-Saito \cite{Kato-Saito-2}
and Kerz-Saito \cite{Kerz-Saito-2} is whether $\alpha_{U/X}$
descends to a homomorphism
$\alpha_{X|D} \colon W(X,D) \to C(X,D)$ for every closed subscheme $D \subset X$
with support $C$. 
In this context, we note that one can define the $C$-topology on $W(U)$
exactly as on $C_{U/X}$. It is however not a priori clear that 
$\alpha_{U/X}$ is continuous with respect to the $C$-topology on $W(U)$.
If it were, it would imply that $\alpha_{U/X}$ descends at least to a
morphism of pro-abelian groups
$\alpha^{\bullet}_{X|D} \colon \{W(X,D)\}_{|D| = C} \to \{C(X,D)\}_{|D| = C}$.
These questions are studied in \cite{Gupta-Krishna-CFT}.

\section{Reciprocity map for $C_{{U/X}}$}\label{sec:Rec}
In this section, we shall define the reciprocity map for the idele
class group $C_{U/X}$.
We begin by recalling Kato's reciprocity map for Henselian local fields
which we shall heavily use.

\subsection{Kato's reciprocity}\label{sec:KR}
Let $X$ be a Noetherian $k$-scheme, where $k$ is a field of 
characteristic $p > 0$.  
We consider the following {\'e}tale sheaves on $X$.

Let $\{W_r\Omega^{\bullet}_X\}_{r \ge 1}$ be the projective system of
Bloch-Deligne-Illusie de Rham-Witt complexes of $X$ as $r \ge 1$
(see \cite{Illusie}). 
Recall that $\{W_r\Omega^{\bullet}_X\}_{r \ge 1}$ is equipped with
three operators: the
restriction $\sR \colon W_{r+1}\Omega^{\bullet}_X \to  W_{r}\Omega^{\bullet}_X$,
the Frobenius $F \colon W_{r+1}\Omega^{\bullet}_X \to  W_{r}\Omega^{\bullet}_X$
and the Verschiebung $V \colon W_{r}\Omega^{\bullet}_X \to  
W_{r+1}\Omega^{\bullet}_X$. These operators satisfy a set of relations.
There is Teichm{\"u}ller lift $[.] \colon \sO_X \to W_r\sO_X$, which is
locally given by $[x] = (x, 0, \ldots , 0)$. This is a multiplicative
homomorphism.

The Frobenius of $W_{r+1}\Omega^n_X$ descends to give a map
$W_{r}\Omega^{n}_X \to {W_r \Omega^n_X}/{d V^{r-1} \Omega^{n-1}_X}$, which we
shall also denote by $F$. We denote the canonical surjection between
these two groups by the identity map.
Hence we have a map
$(1 - F) \colon W_{r}\Omega^{n}_X \to {W_r \Omega^n_X}/{d V^{r-1} \Omega^{n-1}_X}$,
which is surjective in the {\'e}tale topology.
We let $W_r\Omega^n_{X, \log}$ denote the kernel of this map of
{\'e}tale sheaves. 
We let ${\Z}/{p^r}(n) = (W_r\Omega^n_{X, \log})[-n]$. 
There is a unique map $\underline{p} \colon
W_r\Omega^n_X \to W_{r+1}\Omega^n_X$ such that the composite
$W_{r+1}\Omega^n_X \xrightarrow{\sR}  W_r\Omega^n_X \to W_{r+1}\Omega^n_X$
is multiplication by $p$. This induces a map
$\underline{p} \colon W_r\Omega^n_{X, \log} \to W_{r+1}\Omega^n_{X, \log}$.
Hence, $\{{\Z}/{p^r}(n)\}_{r \ge 1}$ is a direct system of
{\'e}tale sheaves on $X$.
Note that ${\Z}/{p^r}(0) = W_r\sO_{X, \log} \cong {\Z}/{p^r}$.

If $m \ge 1$ is an integer prime to $p$, we let
${\Z}/m(n) = {\Z}/m$ if $n =0$ and $(\mu_m)^{\otimes n}$ if $n \ge 1$.
For $n < 0$, we let ${\Z}/m(n) = \sHom({\Z}/m(-n), {\Z}/m)$.
We let $H^p_{\et}(X, {\Q}/{\Z}(q)) = {\underset{m \ge 1}\varinjlim} \
H^p_{\et}(X, {\Z}/m(q))$.
We shall write $H^p_{\et}(X, {\Q}/{\Z}(q))$ often as $H^{p,q}(X)$.

Let us now assume that  $A$ is a 
Noetherian 
ring containing a field of 
characteristic $p > 0$ and let $X = \Spec(A)$. The Kummer 
sequence and the cup product together produce a
Norm-residue homomorphism ${K^M_n(A)}/m \to  H^{n}_{\et}(A, {\Z}/m(n))$
for any $m$ prime to $p$ and any integer $n \ge 0$.
Similarly, 
we have the dlog map between {\'e}tale sheaves on $X$:
\[
{\sK^M_{n,X}}/{p^r} \to W_r\Omega^n_{X, \log}; \ \{a_1, \ldots , a_n\} \mapsto
d\log([a_1])\wedge \cdots \wedge d\log([a_n]).
\]
Taking the global sections, we get the $p$-adic Norm-residue homomorphism
\[
{K^M_{n}(A)}/{p^r} \to H^0_{\et}(A, W_r\Omega^n_{X, \log}) \cong
H^n_{\et}(A, {\Z}/{p^r}(n)).\]
Using the cup product for {\'e}tale cohomology and taking the limit over
$r, m \ge 1$, we get a pairing

\begin{equation}\label{eqn:Pairing-2}
K^M_n(A) \times H^1_{\et}(A, {\Q}/{\Z}) \to 
H^{n+1}_{\et}(A, {\Q}/{\Z}(n)).
\end{equation}

Let us now assume that there is a sequence of fields
$\un{K} = \{K_0, \ldots , K_n\}$ of characteristic $p$ such that
$K_0$ is a finite field and each $K_i$ with $i \ge 1$ 
is the quotient field of a 
Henselian discrete valuation ring whose residue field is $K_{i-1}$. 
In this case, we shall say that $K_n$ is an $n$-dimensional
Henselian local field. 
It follows from \cite[Theorem~3.5]{Kato-Saito-2}
that there is a residue isomorphism
\begin{equation}\label{eqn:Residue-iso}
{\rm Res} \colon H^{i+1}_{\et}(K_{i}, {\Q}/{\Z}(i)) 
\xrightarrow{\cong} H^{i}_{\et}(K_{i-1}, {\Q}/{\Z}(i-1))
\end{equation}
for every $i \ge 1$. Since it is easy to check that
$H^{1}_{\et}(K_{0}, {\Q}/{\Z}) \cong {\Q}/{\Z}$, we conclude that there
is a canonical isomorphism
${\rm Res} \colon H^{n+1}_{\et}(K_{n}, {\Q}/{\Z}(n))
\xrightarrow{\cong}  {\Q}/{\Z}$.
It follows from this and ~\eqref{eqn:Pairing-2} that
there is a pairing
\begin{equation}\label{eqn:Pairing-3}
K^M_n(K_n) \times H^1_{\et}(K_n, {\Q}/{\Z}) \xrightarrow{\psi_{K_n}} {\Q}/{\Z}.
\end{equation} 

On the other hand, for every integer $m \ge 1$ and every scheme $X$, there is a
natural isomorphism $H^1_{\et}(X, {\Z}/m) \cong \Hom_{\cont}(\pi_1(X), {\Z}/m)
\cong \Hom_{\cont}(\pi^{\ab}_1(X), {\Z}/m)$
which characterizes finite {\'e}tale covers $Y \to X$ together with a
morphism $\Aut (Y/X) \to {\Z}/m$.
It follows that there is a natural isomorphism
$H^1_{\et}(X, {\Q}/{\Z}) \cong \Hom_{\cont}(\pi_1(X), {\Q}/{\Z})
\cong (\pi^{\ab}_1(X))^{\vee}$, where ${\Q}/{\Z}$ has discrete topology,
$\pi^{\ab}_1(X)$ has profinite topology and $(\pi^{\ab}_1(X))^{\vee}$
denotes the Pontryagin dual (see \S~\ref{sec:PD*}). 
By the Pontryagin duality for 
profinite groups, we get a continuous isomorphism
${H^1_{\et}(X, {\Q}/{\Z})}^{\vee} \cong \pi^{\ab}_1(X)$.

Combining all of the above, we conclude the following.
\begin{prop}\label{prop:Reciprocity*}
Let $K$ be an $n$-dimensional Henselian local field. Then there is a
canonical reciprocity map
\[
\rho_{K} \colon K^M_n(K) \to \Gal(K^{\ab}/K).
\]
\end{prop}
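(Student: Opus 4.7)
The plan is to assemble the pieces already constructed in the preceding paragraphs into a single map, so the proof will be essentially formal given the input of the residue isomorphism \eqref{eqn:Residue-iso} from \cite{Kato-Saito-2}.

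First, I would observe that iterating the residue isomorphism \eqref{eqn:Residue-iso} $n$ times produces a canonical isomorphism
\[
H^{n+1}_{\et}(K, \Q/\Z(n)) \xrightarrow{\cong} H^{1}_{\et}(K_0, \Q/\Z) \cong \Q/\Z,
\]
where the last identification uses that $K_0$ is finite. Combined with the cup product pairing \eqref{eqn:Pairing-2} applied to $A = K$, this gives the bilinear pairing $\psi_K$ of \eqref{eqn:Pairing-3}.

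Next, taking the adjoint of $\psi_K$ yields a homomorphism
\[
\rho_K' \colon K^M_n(K) \to \Hom(H^1_{\et}(K, \Q/\Z), \Q/\Z).
\]
By the discussion following \eqref{eqn:Pairing-3}, there is a canonical identification
\[
H^1_{\et}(K, \Q/\Z) \cong \Hom_{\cont}(\pi_1^{\ab}(K), \Q/\Z) = \Gal(K^{\ab}/K)^{\vee}.
\]
Since $\Gal(K^{\ab}/K)^{\vee}$ is a discrete torsion abelian group, every abstract homomorphism from it to $\Q/\Z$ is automatically continuous, so the abstract dual coincides with the continuous dual. Then Pontryagin duality for profinite abelian groups gives
\[
\bigl(\Gal(K^{\ab}/K)^{\vee}\bigr)^{\vee} \cong \Gal(K^{\ab}/K),
\]
and composing this identification with $\rho_K'$ produces the desired reciprocity map $\rho_K \colon K^M_n(K) \to \Gal(K^{\ab}/K)$.

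There is no serious obstacle here: the deep input is the residue isomorphism \eqref{eqn:Residue-iso}, which has already been imported, and the Norm-residue homomorphisms in both the prime-to-$p$ and the $p$-primary cases have been recalled. The only thing one should be slightly careful about is the Pontryagin duality step, where the passage to the double dual recovers $\Gal(K^{\ab}/K)$ precisely because a map out of a discrete group needs no continuity hypothesis, so the target of the adjoint of $\psi_K$ literally equals $\Gal(K^{\ab}/K)$ without further completion.
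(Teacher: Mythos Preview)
Your proposal is correct and follows essentially the same route as the paper: the proposition is stated there as an immediate consequence of the preceding discussion, which assembles the iterated residue isomorphism, the cup product pairing~\eqref{eqn:Pairing-2}, the identification $H^1_{\et}(K,\Q/\Z) \cong (\pi^{\ab}_1(K))^{\vee}$, and Pontryagin duality in exactly the order you describe. Your remark that abstract homomorphisms from a discrete torsion group into $\Q/\Z$ are automatically continuous is a helpful clarification of the duality step that the paper leaves implicit.
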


It is easy to see that a finite extension of 
an $n$-dimensional Henselian local field is of the same kind.
Recall also that if $L/K$ is a finite abelian extension, and
if $G$ (resp. $H$) denotes the abelianized Galois group of $K$
(resp. $L$), then $H \subset G$ is a normal subgroup of finite index.
Hence, there is a co-restriction map
${\rm Cores}_{H/G} \colon H^*(H, M) \to H^*(G,M)$ for $G$-module $M$.
In particular, we have the co-restriction map
${\rm Cores}_{H/G} \colon H^1(H, {\Q}/{\Z}) \to  H^1(G, {\Q}/{\Z})$.
Taking the Pontryagin duals, we get a `transfer map'
${\rm tr}_{L/K} \colon G \to H$.

\vskip .2cm

The following result is due to Kato \cite{Kato80}.

\begin{prop}\label{prop:Kato-**}
Let $K$ be as in \propref{prop:Reciprocity*}. 
Then $\rho_K$ has following properties.
\begin{enumerate}
\item
For each finite extension $L/K$, there is a commutative diagram
\begin{equation}\label{eqn:Kato-**-0}
\xymatrix@C.8pc{
K^M_n(L) \ar[r]^-{\rho_L} \ar[d]_-{N_{L/K}} & \Gal(L^{\ab}/L) \ar[d]^-{\rm can} \\
K^M_n(K) \ar[r]^-{\rho_K} & \Gal(K^{\ab}/K).}
\end{equation}
In particular, there is a canonical homomorphism 
\[
\ov{\rho}_{L/K} \colon \frac{K^M_n(K)}{N_{L/K}(K^M_n(L))} \to \Gal(L/K)
\]
if $L/K$ is a Galois extension.
This map is an isomorphism if $K$ is furthermore complete.


\item
For each finite extension $L/K$, there is a commutative diagram
\begin{equation}\label{eqn:Kato-**-1}
\xymatrix@C.8pc{
K^M_n(K) \ar[r]^-{\rho_K} \ar[d]_-{\rm can} & \Gal(K^{\ab}/K) 
\ar[d]^-{{\rm tr}_{L/K}} \\
K^M_n(L) \ar[r]^-{\rho_L} & \Gal(L^{\ab}/L).}
\end{equation}

\item
If $\ff$ denotes the residue field of $K$, then there is a commutative diagram
\begin{equation}\label{eqn:Kato-**-2}
\xymatrix@C.8pc{
K^M_n(K) \ar[r]^-{\rho_K} \ar[d]_-{\partial} & \Gal(K^{\ab}/K) \ar[d] \\
K^M_{n-1}(\ff) \ar[r]^-{\rho_K} & \Gal(\ff^{\ab}/\ff),}
\end{equation}
where the left vertical arrow is the residue map and the
right vertical arrow is the canonical surjection if we
identify $\Gal(\ff^{\ab}/\ff)$ with $\Gal(K^{\nr}/K)$, the Galois
 group of the maximal unramified subextension of ${K^{\ab}}/K$.
\end{enumerate}
\end{prop}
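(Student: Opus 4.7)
The plan is to trace the three compatibilities at the level of the pairing \eqref{eqn:Pairing-3} and then invoke Pontryagin duality to transfer them to the reciprocity map. For a finite extension $L/K$ of $n$-dimensional Henselian local fields, the compatibility of the chain of residue isomorphisms \eqref{eqn:Residue-iso} with finite field extensions (which reduces, by induction on $n$, to the classical 1-dimensional case handled by local duality for discrete valuation fields) gives a commutative square relating the trace form on $K^M_n(L)$ and the trace form on $K^M_n(K)$. Concretely, the residue isomorphism identifies $H^{n+1}_{\et}(L,\Q/\Z(n)) \xrightarrow{\cong} \Q/\Z$ with $H^{n+1}_{\et}(K,\Q/\Z(n))\xrightarrow{\cong} \Q/\Z$ after taking corestriction, so that $\psi_L(x,\chi|_L) = \psi_K(N_{L/K}(x),\chi)$ for $\chi \in H^1_{\et}(K,\Q/\Z)$ and $x\in K^M_n(L)$.

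For part (1), I would prove the asserted equality $\psi_L(x,\chi|_L) = \psi_K(N_{L/K}(x),\chi)$ by combining two standard facts. First, the norm residue homomorphism ${K^M_n(L)}/m \to H^n_{\et}(L,\Z/m(n))$ intertwines $N_{L/K}$ on Milnor $K$-theory with corestriction on étale cohomology; this holds modulo $m$ prime to $p$ by the classical norm compatibility of the Galois symbol, and modulo $p^r$ by a parallel statement for the dlog map into $W_r\Omega^\bullet_{\log}$ which can be verified by reducing to the case of a monogenic extension and using the explicit Teichmüller description. Second, the projection formula $\mathrm{Cor}_{L/K}(a \cup \chi|_L) = \mathrm{Cor}_{L/K}(a)\cup \chi$ for cup products in étale cohomology, combined with the compatibility of corestriction with the residue isomorphism, gives the desired identity. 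Pontryagin dualizing yields the square \eqref{eqn:Kato-**-0}. The final assertion that $\overline{\rho}_{L/K}$ is an isomorphism when $K$ is complete reduces (using devissage along the chain $K_0\subset K_1\subset \cdots \subset K_n = K$) to local duality in the classical complete case, which is Kato's higher local class field theory.

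For part (2), the analogous identity is $\psi_K(x,\mathrm{Cor}_{L/K}\chi') = \psi_L(x|_L, \chi')$ for $x\in K^M_n(K)$ and $\chi' \in H^1_{\et}(L,\Q/\Z)$. This is again a direct consequence of the projection formula applied in the opposite direction, together with the compatibility of the canonical map $K^M_n(K) \to K^M_n(L)$ with restriction on étale cohomology. Dualizing the corestriction gives the transfer map $\mathrm{tr}_{L/K}$ on Galois groups, producing \eqref{eqn:Kato-**-1}.

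For part (3), the strategy is to compare $\rho_K$ with the reciprocity map on the residue field $\mathfrak f$ via the surjection $\Gal(K^{\ab}/K) \twoheadrightarrow \Gal(K^{\nr}/K) = \Gal(\mathfrak f^{\ab}/\mathfrak f)$. The essential point is that the residue isomorphism \eqref{eqn:Residue-iso} is compatible with cup products in the following sense: for $a\in K^M_n(K)$ and $\chi \in H^1_{\et}(\mathfrak f,\Q/\Z)$ viewed in $H^1_{\et}(K,\Q/\Z)$ via inflation along $K^{\nr}/K$, one has $\mathrm{Res}(a\cup \chi) = \partial(a) \cup \chi$ in $H^n_{\et}(\mathfrak f,\Q/\Z(n-1))$. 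This identity can be verified by splitting $a$ as a product of units and the uniformizer via the presentation \eqref{eqn:MPchain*-1} of $K^M_n$ and checking directly on symbols. Combined with the identification of characters trivial on inertia with characters of $\mathfrak f$, this yields \eqref{eqn:Kato-**-2}.

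The principal obstacle is the $p$-primary part of part (1): the norm compatibility of the dlog symbol requires genuine input from the structure of the de Rham-Witt complex (in particular, the behavior of trace maps for $W_r\Omega^n$ under finite extensions). Everything else follows from formal properties of cup products and projection formulas, but the $p$-adic ingredient is where the bulk of Kato's original work lies.
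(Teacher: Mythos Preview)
Your proposal is correct and traces exactly the argument that underlies the reference the paper cites: the paper's own proof is simply a citation to \cite[\S~3]{Kato80} together with the observation that the commutativity of the three diagrams depends only on the compatibility of the Norm-residue homomorphism with norm, transfer (corestriction), and residue, none of which requires completeness (only the final isomorphism assertion in (1) does). What you have written is a faithful unpacking of Kato's method---pairing plus projection formula plus Pontryagin duality---and your identification of the $p$-primary norm compatibility for the dlog symbol as the substantive point is accurate; the paper absorbs all of this into the citation rather than reproducing it.
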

\begin{proof}
This proposition is proven in \cite[\S~3]{Kato80}.
The only thing one needs to remark here is that even though Kato
states this proposition for complete fields, his proofs of
the commutativity of ~\eqref{eqn:Kato-**-0}, ~\eqref{eqn:Kato-**-1} and 
~\eqref{eqn:Kato-**-2}, which only depend on the compatibility of the
Norm-residue homomorphisms with Norm, transfer and residue maps, 
do not use this assumption
(see \cite[Lemma~4.5]{Kato-Saito-2}).
\end{proof}

\subsection{Openness of Norm subgroup}\label{sec:N-open}
We shall show in this subsection that the norm subgroups associated to finite
abelian extensions of a Henselian discrete valuation field $K$ are often open.
This can not be deduced from the last part of
\propref{prop:Kato-**}(1) even if the Galois theory of $K$ 
coincides with that of its completion $\wh{K}$. The reason is that the
quotient of the norm subgroup of the Milnor $K$-theory associated to a finite
abelian extension $L/K$ may not coincide with the corresponding quotient
associated to ${\wh{L}}/{\wh{K}}$. 

\vskip .2cm

We begin with the following elementary observations.

\begin{lem}\label{lem:Decomposition}
Let $L/K$ be a finite abelian extension. Then
it is the inductive limit of a tower of abelian extensions
each of which is cyclic of prime order.
\end{lem}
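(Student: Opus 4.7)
The plan is to prove this as a straightforward consequence of the Galois correspondence combined with basic finite group theory, since the statement is a purely group-theoretic assertion once one passes to Galois groups. The phrase ``inductive limit of a tower'' here simply amounts to building a finite chain of field extensions, since $L/K$ is finite.

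First, I would pass to the Galois side: set $G = \Gal(L/K)$, which is a finite abelian group by hypothesis. Since $G$ is finite, it admits a composition series
\[
G = G_0 \supsetneq G_1 \supsetneq \cdots \supsetneq G_n = \{1\},
\]
and because $G$ is abelian, every subgroup is normal, so this is an honest chain of normal subgroups in $G$ with each successive quotient $G_i/G_{i+1}$ simple abelian, hence cyclic of prime order. (Concretely, one can just pick a prime $p_i$ dividing $|G_i|$ and let $G_{i+1}$ be a subgroup of index $p_i$ in $G_i$, whose existence is guaranteed by Cauchy's theorem applied to $G_i$.)

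Next, I would apply the Galois correspondence. Setting $K_i := L^{G_i}$, one obtains the tower
\[
K = K_0 \subset K_1 \subset \cdots \subset K_n = L,
\]
and since $G_{i+1}$ is normal in $G_i$, the extension $K_{i+1}/K_i$ is Galois with $\Gal(K_{i+1}/K_i) \cong G_i/G_{i+1}$, which is cyclic of prime order by construction. In particular each step is abelian, so the whole tower consists of abelian extensions with the desired cyclic prime-order property.

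There is no real obstacle here: the only thing to observe is that ``inductive limit'' should be read as ``finite tower,'' which is justified because $L/K$ is finite. The entire argument is the combination of a composition series for a finite abelian group with the fundamental theorem of Galois theory, so the write-up should be just a few lines.
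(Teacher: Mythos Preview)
Your proof is correct and is exactly what the paper intends: the paper's own proof reads in its entirety ``It is an easy exercise using Galois theory of fields,'' and you have simply supplied the details via a composition series of $\Gal(L/K)$ together with the Galois correspondence. One tiny quibble in your parenthetical: Cauchy's theorem produces a subgroup of \emph{order} $p$, not of index $p$; but your main composition-series argument does not depend on this remark, so the proof stands as written.
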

\begin{proof}
It is an easy exercise using Galois theory of fields.
\end{proof}

\begin{lem}\label{lem:Brauer-tor}
For any field $K$, there is a functorial isomorphism
\[
\Br(K) \xrightarrow{\cong} H^{2}(K, {\Q}/{\Z}(1)),
\]
where $\Br(K)$ is the Brauer group.
\end{lem}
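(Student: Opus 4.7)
The plan is to construct the isomorphism one torsion prime at a time and then pass to the colimit, treating the prime-to-$p$ and $p$-primary components of $\Q/\Z(1)$ separately; here $p = \Char(K)$ if positive. Since $\Br(K) = H^2_\et(K, \G_m)$ is torsion, it splits correspondingly as $\Br(K)\{p'\} \oplus \Br(K)\{p\}$, and an isomorphism on each piece, functorial in $K$, will suffice.

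For the prime-to-$p$ part, I would invoke the Kummer sequence $1 \to \mu_m \to \G_m \xrightarrow{m} \G_m \to 1$, which is exact in the \'etale topology on $\Spec(K)$ for each $m$ coprime to $p$. The long exact cohomology sequence, together with Hilbert~90 (i.e.\ $H^1_\et(K, \G_m) = 0$), yields a natural isomorphism $H^2_\et(K, \mu_m) \xrightarrow{\cong} \Br(K)[m]$. Taking the colimit over such $m$ gives $H^2_\et(K, (\Q/\Z)\{p'\}(1)) \cong \Br(K)\{p'\}$.

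For the $p$-primary part, assuming $\Char(K) = p > 0$, the paper's convention $\Z/p^r(1) = W_r\Omega^1_{X,\log}[-1]$ from \S~\ref{sec:KR} identifies $H^2_\et(K, \Z/p^r(1))$ with $H^1_\et(K, W_r\Omega^1_{K,\log})$. I would then feed the \'etale-exact sequence
\[
0 \to W_r\Omega^1_{K,\log} \to W_r\Omega^1_K \xrightarrow{1-F} W_r\Omega^1_K/dV^{r-1}\sO_K \to 0
\]
into the long exact cohomology sequence. Since $W_r\Omega^1_K$ and $W_r\Omega^1_K/dV^{r-1}\sO_K$ are quasi-coherent $W_r(K)$-modules, their positive \'etale cohomology on $\Spec(K)$ vanishes, so the sequence collapses to
\[
H^1_\et(K, W_r\Omega^1_{K,\log}) \cong \cok\bigl(1-F \colon W_r\Omega^1(K) \to W_r\Omega^1(K)/dV^{r-1}\sO(K)\bigr).
\]

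The hard part is identifying this cokernel naturally with $\Br(K)[p^r]$. This is a classical result of Kato, realized via the cyclic-algebra symbol sending a logarithmic Witt one-form $w\,d\log(b)$ to the Brauer class of the associated Artin--Schreier--Witt $p^r$-cyclic algebra. I would cite this from \cite{Kato86} (see also Illusie's account via the de Rham--Witt complex) rather than reprove it; functoriality in $K$ is automatic from the construction of the symbol. Assembling with the prime-to-$p$ case via the colimit over $r$, and using that $\Br(K)$ is torsion, would complete the proof.
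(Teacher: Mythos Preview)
Your proof is correct and follows essentially the same approach as the paper's (one-line) proof: both use that $\Br(K)$ is torsion and then identify the torsion pieces via Kummer theory for the prime-to-$p$ part and the $p$-adic Norm-residue/dlog identifications (Kato) for the $p$-primary part. One minor point of justification: the {\'e}tale acyclicity of $W_r\Omega^1_K$ and its quotient over $\Spec(K)$ is not a consequence of ``quasi-coherence'' per se but of the fact that these are successive extensions of $K$-vector space sheaves, to which additive Hilbert~90 applies --- the paper asserts exactly this acyclicity just before~\eqref{eqn:DRC-0}.
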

\begin{proof}
This is an easy exercise using the fact that $\Br(K)$ is a torsion group
and then using the Norm-residue homomorphisms.
\end{proof}

\begin{lem}\label{lem:Hensel-Open-1}
Let $L/K$ be a finite cyclic extension of a Henselian discrete 
valuation field. Then $N_{L/K}(L^{\times})$ is
an open subgroup of $K^{\times}$.
\end{lem}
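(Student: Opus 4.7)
The plan is to exhibit an integer $r\ge 1$ with $1 + \fm_K^r \subseteq N_{L/K}(L^\times)$, since these higher unit groups form a fundamental system of open neighborhoods of $1$ in $K^\times$ with its valuation topology. First I would reduce to the prime-degree case: by \lemref{lem:Decomposition} and the transitivity $N_{L/K} = N_{K'/K}\circ N_{L/K'}$ along intermediate fields, it suffices to prove the following slightly stronger statement for cyclic extensions of prime degree $\ell$ of Henselian discrete valuation fields $F/E$: for every $s \ge 0$ there exists $r \ge 0$ with $N_{F/E}(U_F^{(s)}) \supseteq U_E^{(r)}$. Iterating this along a tower of cyclic prime-degree subextensions of $L/K$ then yields $U_K^{(r_0)} \subseteq N_{L/K}(U_L)$ for some $r_0$.

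Next I would pass to the completion. Let $\wh E$ be the completion of $E$ and set $\wh F = F \otimes_E \wh E$. Since $E$ is Henselian and $F/E$ is separable, Krasner's lemma shows that $\wh F$ is a field, that $\wh F/\wh E$ is cyclic of the same prime degree $\ell$, and that $\Gal(\wh F/\wh E) \xrightarrow{\cong} \Gal(F/E)$. For a cyclic prime-degree extension of \emph{complete} discrete valuation fields, a direct computation on the unit filtration --- handling the tame case $\ell \ne p$ via the image of a uniformizer and the wild case $\ell = p$ via an Artin--Schreier-type analysis of the successive quotients $U^{(i)}/U^{(i+1)}$ --- produces integers $r, s \ge 1$ such that $N_{\wh F/\wh E}(1+\wh\fm_F^s) \supseteq 1+\wh\fm_E^r$.

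Finally I would transfer this back from $\wh E$ to $E$ via approximation. The norm morphism $N\colon R_{F/E}\G_m \to \G_{m,E}$ is a smooth homomorphism of $E$-group schemes (as $F/E$ is separable), so each fiber over an $E$-rational point is a smooth $E$-scheme. Given $y \in 1+\fm_E^r \subseteq E^\times$, the previous step produces an $\wh E$-point of the fiber $N^{-1}(y)$; Artin approximation for the excellent Henselian local ring $\sO_E$ (applicable because in the context of this paper $E$ arises as the fraction field of a local ring of an excellent scheme) then produces an $E$-point, giving $y \in N_{F/E}(F^\times)$, as required.

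The main obstacle is the complete, wild case $\ell = p$ with possibly imperfect residue field: the classical references (Serre's \emph{Corps Locaux}) assume a perfect residue field, so one must verify that the filtration-quotient argument continues to yield \emph{some} pair $(r,s)$ of indices, even if the precise Herbrand identification of the quotients is lost. The reduction to prime degree and the approximation step are then formal.
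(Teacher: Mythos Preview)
Your strategy is genuinely different from the paper's, and the gap you yourself flag in step~3 is real and not easily closed by a ``direct computation.'' For a complete discrete valuation field $\wh E$ with \emph{imperfect} residue field and a cyclic Artin--Schreier extension $\wh F/\wh E$, the analysis of $N$ on the unit filtration is precisely what Kato works out in \cite{Kato89}: the normalization of the Artin--Schreier generator (reducing the pole order of $a$ in $x^p-x=a$ modulo $\wp(\wh E)$) already fails in the naive form when $p$-th roots are missing in the residue field, and the resulting behaviour of $N$ on $\gr^i U$ is exactly what is encoded in Kato's Swan-conductor filtration. In other words, showing $N_{\wh F/\wh E}(U_{\wh F}^{(s)})\supseteq U_{\wh E}^{(r)}$ for \emph{some} $r$ amounts to showing that the character $\chi$ of $\wh F/\wh E$ lies in some $\Fil^{\bk}_n H^1(\wh E)$, i.e.\ that Kato's filtration is exhaustive. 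So your step~3, if carried out, would reproduce the key input the paper uses --- but with the extra overhead of completion and Artin approximation layered on top.

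The paper's proof avoids all of this by working directly in the Henselian case. It uses the classical isomorphism $K^\times/N_{L/K}(L^\times)\xrightarrow{\cong}\Br(L/K)$, $a\mapsto a\cup\eta$ (valid for any cyclic extension of any field, with $\eta$ a generator of $G^\vee$; see \cite[Chap.~XIV]{Serre-LF}), together with Kato's result \cite[Lemma~2.2, Definition~2.1]{Kato89} that every $\eta\in H^1(K)$ lies in some $\Fil_n$, where membership in $\Fil_n$ \emph{means} that $U_{n+1}(K)\cup\eta=0$ in $\Br(K)$. This gives $U_{n+1}(K)\subseteq N_{L/K}(L^\times)$ in one line, with no reduction to prime degree, no completion, and no approximation. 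Note also that your Artin-approximation step requires $\sO_K$ to be excellent, whereas the lemma as stated (and the paper's argument) applies to an arbitrary Henselian discrete valuation field.
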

\begin{proof}
Let $G$ and $\ov{G}$ denote the Galois groups of $L/K$ and ${\ov{K}}/K$,
respectively so that there is a surjection $\ov{G} \surj G$.
We consider the diagram
\begin{equation}\label{eqn:Hensel-Open-1-0}
\xymatrix@C.8pc{
H^0(G, L^{\times}) \times H^1(G, {\Q}/{\Z}) 
\ar[r]^{\cong} \ar[d] &
H^0(G, L^{\times}) \times H^2(G, \Z) \ar[r]^-{\cup} &
H^2(G, L^{\times}) \ar[r]^{\cong} \ar[d] & \Br(L/K) \ar@{^{(}->}[d] \\ 
K^{\times} \times H^{1,0}(K) \ar[r]^-{BK \times id}  &
H^{1,1}(K) \times  H^{1,0}(K) \ar[r]^-{\cup} &
H^{2,1}(K) \ar[r]^{\cong}  & \Br(K),} 
\end{equation}
where $BK$ is the Norm-residue homomorphism of Bloch-Kato
and the isomorphism on the bottom right is by \lemref{lem:Brauer-tor}.
The map 
\[
H^1(G, {\Q}/{\Z}) \cong \Hom_{\cont}(G, {\Q}/{\Z}) 
\to H^{1,0}(K) \cong H^1(\ov{G}, {\Q}/{\Z}) \cong 
\Hom_{\cont}(\ov{G}, {\Q}/{\Z})
\]
is the canonical inclusion induced by $\ov{G} \surj G$.
It is easy to check that ~\eqref{eqn:Hensel-Open-1-0} is commutative.

Since $G$ is cyclic, $\Hom_{\cont}(G, {\Q}/{\Z}) = G^{\vee}$ is
also cyclic of the same order. Let $\eta$ be a generator of
$G^{\vee}$. Then the top cup product in
~\eqref{eqn:Hensel-Open-1-0} induces a
map $K^{\times} \xrightarrow{(-)\cup \eta}  \Br(L/K)$.
Moreover, one knows that this induces an isomorphism
\begin{equation}\label{eqn:Hensel-Open-1-1}
{K^{\times}}/{N_{L/K}(L^{\times})} \xrightarrow{(-)\cup \eta}  \Br(L/K)
\end{equation}
(see \cite[Chapter~XIV, Proposition~2, Corollary~1]{Serre-LF}).

On the other hand, it follows from \cite[Lemma~2.2]{Kato89}
that as an element of $H^{1,0}(K)$, there exists an
integer $n \ge 0$ such that $\eta \in \Fil_n H^{1,0}(K)$, where
$\Fil_n H^{1,0}(K)$ is a certain subgroup of $H^{1,0}(K)$.
What is important for us is that every element $a \in \Fil_n H^{1,0}(K)$ has 
the property that the map
$K^{\times} \xrightarrow{(-)\cup a}  \Br(K)$ under the bottom cup product in
~\eqref{eqn:Hensel-Open-1-0} annihilates $U_{n+1}(K)$
(see \cite[Definition~2.1]{Kato89}).
It follows that $U_{n+1}(K) \cup \eta = 0$ in $\Br(L/K)$.
We conclude from ~\eqref{eqn:Hensel-Open-1-1} that
$U_{n+1}(K) \subset N_{L/K}(L^{\times})$.
This finishes the proof.
\end{proof}

\begin{cor}\label{cor:Milnor-norm-open}
Let $L$ be a finite abelian extension of a Henselian discrete valuation 
field $K$ and $n \ge 0$ an integer. 
Then $N_{L/K} \colon K^M_n(L) \to K^M_n(K)$ is continuous.
If $L/K$ is cyclic, then the image of $N_{L/K} \colon K^M_n(L) \to K^M_n(K)$
is an open subgroup of $K^M_n(K)$.
\end{cor}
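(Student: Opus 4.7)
The plan is to handle the two assertions separately, and in both cases reduce the statement about $K^M_n$ to statements about $K^M_1$ by means of the projection formula together with the surjectivity of the canonical map in \eqref{eqn:MPchain*-1} (which applies here because the valuation ring $\sO_L$ of $L$ is local).

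For continuity, the key ingredient is the projection formula
\[
N_{L/K}(\{a, b_2, \ldots, b_n\}) = \{N_{L/K}(a), b_2, \ldots, b_n\}, \qquad a \in L^\times,\ b_2, \ldots, b_n \in K^\times.
\]
Combined with the surjectivity of \eqref{eqn:MPchain*-1} applied to $(\sO_L, \fm_L^s)$, this allows one to represent arbitrary elements of $K^M_n(L,s)$ by symbols whose last $n-1$ slots may be taken in $\sO_L^\times$ (hence after a further reduction, in $K^\times$ up to elements that are handled by standard manipulations) and whose first slot is a principal unit of level $s$. Since the norm on units $N_{L/K} \colon L^\times \to K^\times$ satisfies $N_{L/K}(U_s(L)) \subseteq U_r(K)$ for $s$ large enough (depending on $r$ and the ramification of $L/K$), this yields $N_{L/K}(K^M_n(L,s)) \subseteq K^M_n(K,r)$ for $s$ sufficiently large, i.e.\ continuity of $N_{L/K}$ in the canonical topology. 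Alternatively, one may invoke \cite[Proposition~4.2]{Kato-Saito-2} directly, exactly as was done in the proof of \lemref{lem:Cont-dominant}.

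For the openness statement under the cyclicity hypothesis, the main input is \lemref{lem:Hensel-Open-1}, which provides an integer $r \ge 1$ such that $U_r(K) \subseteq N_{L/K}(L^\times)$. Given any $x \in U_r(K)$, write $x = N_{L/K}(\alpha)$ for some $\alpha \in L^\times$; then for any $b_2, \ldots, b_n \in K^\times$,
\[
\{x, b_2, \ldots, b_n\} = N_{L/K}(\{\alpha, b_2, \ldots, b_n\}) \in N_{L/K}(K^M_n(L)).
\]
Hence the subgroup $\{U_r(K), K^\times, \ldots, K^\times\}$ of $K^M_n(K)$ lies inside the image of $N_{L/K}$. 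By \lemref{lem:Kato-Kerz}, this subgroup is open in the canonical topology of $K^M_n(K)$ (it agrees with $K^M_n(K,r)$ modulo the surjectivity of \eqref{eqn:MPchain*-1}), and therefore $N_{L/K}(K^M_n(L))$ contains an open subgroup of $K^M_n(K)$, which suffices.

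The principal obstacle I expect is the continuity assertion: one must verify carefully that the norm on $K^M_1$ is continuous in the Henselian (rather than only complete) setting, and then bootstrap this to $K^M_n$ using the projection formula and the surjectivity of \eqref{eqn:MPchain*-1} to control preimages of the filtration pieces $K^M_n(K,r)$. By comparison, the openness assertion for cyclic extensions is almost immediate from \lemref{lem:Hensel-Open-1} combined with the projection formula, and requires no further input beyond the topological identifications already established in \lemref{lem:Kato-Kerz}.
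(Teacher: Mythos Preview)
Your proposal is correct and matches the paper's proof almost exactly: the paper also dispatches continuity by citing \lemref{lem:Kato-Kerz} together with a result from \cite{Kato-Saito-2} (it quotes Lemma~4.3 rather than Proposition~4.2, but the point is the same), and it proves openness via precisely your combination of the projection formula, \lemref{lem:Hensel-Open-1}, and \lemref{lem:Kato-Kerz}. One small caution: your sketched ``direct'' continuity argument glosses over the reduction of the non-first slots from $\sO_L^\times$ to $K^\times$, which is not automatic from the projection formula alone---this is exactly why both you (in your alternative) and the paper defer to \cite{Kato-Saito-2} here rather than arguing from scratch.
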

\begin{proof}
The $n = 0$ case is obvious, so we assume $n \ge 1$.
The first assertion now follows from \lemref{lem:Kato-Kerz} and
\cite[Lemma~4.3]{Kato-Saito-2}.
To prove the second assertion, we note that \lemref{lem:Kato-Kerz} implies
that $U = \{U_1, \sO^{\times}_K, \ldots , \sO^{\times}_K\}$ is open
in $K^M_n(K)$ for every open subgroup $U_1 \subset \sO^{\times}_K$.
Furthermore, the projection formula for the norm map shows that
if $U_1 \subset N_{L/K}(L^{\times})$, then
$U \subset N_{L/K}(K^M_n(L))$. Hence, it suffices to show that
$N_{L/K}(L^{\times})$ contains some open subgroup of $K^{\times}$.
But this follows from \lemref{lem:Hensel-Open-1}.
\end{proof}

\begin{remk}\label{remk:Openness}
We expect the statement of \lemref{lem:Hensel-Open-1} to be true
for any finite Galois extension of a Henselian discrete 
valuation field $K$ but we are not aware of a proof.
If one assumes that the field extension is 
unramified, an answer is given by the following stronger assertion.
We shall not need this for proving the main results of this paper.
\end{remk}

\begin{lem}\label{lem:Hensel-open-0}
Let $L/K$ be a finite unramified abelian extension of a Henselian discrete 
valuation field. Then $N_{L/K}(U_n(L)) = U_n(K)$ for all $n \ge 1$.
In particular, $N_{L/K}(L^{\times})$ is
an open subgroup of $K^{\times}$.
\end{lem}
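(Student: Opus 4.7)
The plan is to exploit the unramifiedness of $L/K$ to reduce the surjectivity of $N_{L/K}$ on principal units to a single-variable Hensel's lemma on the Henselian DVR $\sO_K$. First I would fix a uniformizer $\pi$ of $\sO_K$, which remains a uniformizer of $\sO_L$ since $L/K$ is unramified, so that $U_n(K) = 1 + \pi^n\sO_K$ and $U_n(L) = 1 + \pi^n\sO_L$. The residue field extension $\ell/k$ is then separable of degree $d = [L:K]$ and $\sO_L$ is a finite {\'e}tale $\sO_K$-algebra. The inclusion $N_{L/K}(U_n(L)) \subseteq U_n(K)$ is immediate from the Galois expansion
\[
N_{L/K}(1 + \pi^n y) \;=\; \prod_{\sigma \in G(L/K)}\!\bigl(1 + \pi^n \sigma(y)\bigr) \;=\; 1 + \pi^n \Tr_{L/K}(y) + \pi^{2n} R(y),
\]
where $R(y) \in \sO_K$ is polynomial in $y$ (using Galois invariance of the elementary symmetric functions).

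For the reverse inclusion, since $\ell/k$ is separable the trace $\Tr_{\ell/k}:\ell\to k$ is surjective, so I can choose an $\sO_K$-basis $\omega_1,\ldots,\omega_d$ of $\sO_L$ with $c := \Tr_{L/K}(\omega_1) \in \sO_K^\times$ (lift a $k$-basis of $\ell$ whose first member has nonzero trace). Given $u = 1 + \pi^n a \in U_n(K)$ with $a\in \sO_K$, I would then make the single-variable ansatz $v = 1 + \pi^n X \omega_1$ and, using the displayed expansion, set
\[
P(X) \;:=\; c\,X + \pi^n Q(X) - a \;\in\; \sO_K[X],
\]
where $Q\in \sO_K[X]$ is the one-variable polynomial obtained from $R(X\omega_1)$. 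Taking $x_0 := c^{-1}a \in \sO_K$ gives $P(x_0) \equiv 0 \pmod{\pi}$ while $P'(x_0) \equiv c \pmod{\pi}$ is a unit, so Hensel's lemma for the Henselian local ring $\sO_K$ produces $x \in \sO_K$ with $P(x) = 0$. The element $v := 1 + \pi^n x\omega_1 \in U_n(L)$ then satisfies $N_{L/K}(v) = u$, proving $N_{L/K}(U_n(L)) = U_n(K)$.

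The openness of $N_{L/K}(L^\times)$ in $K^\times$ then follows at once: the subgroup contains $U_1(K)= N_{L/K}(U_1(L))$, which is an open neighborhood of $1$ in $K^\times$ with its $\pi$-adic topology, so $N_{L/K}(L^\times)$ is a subgroup containing an open neighborhood of identity and is therefore open.

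The main obstacle in this plan is choosing the right ansatz so as to avoid a multivariable Hensel's lemma: writing a general $y = \sum x_i \omega_i$ and differentiating leads to a Jacobian condition that is awkward to verify directly. The observation that separability of $\ell/k$ allows one to pick a \emph{single} basis element with trace a unit is what reduces the problem to the classical one-variable Hensel's lemma, and it is also the only place where the unramified hypothesis is used in an essential way beyond the trivial inclusion $N_{L/K}(U_n(L)) \subseteq U_n(K)$.
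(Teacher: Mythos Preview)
Your argument is correct, and it takes a genuinely different route from the paper's. The paper proves surjectivity by lifting a primitive element: it picks a monic lift $g(X)\in\sO_K[X]$ of the minimal polynomial of a generator of $\ell/k$, finds a root $\alpha\in\sO_L$ by Hensel, and then for $u\in U_n(K)$ perturbs only the constant term to $h(X)=g(X)+(u-1)a_0$. Since $\ov{h}_n=\ov{g}_n$ in $(\sO_L/\fm_L^n)[X]$ and $g$ factors as $(X-\alpha)g'(X)$ with the two factors coprime modulo $\fm_L$, Hensel for the pair $(\sO_L,\fm_L^n)$ yields $\beta\in\sO_L$ with $h(\beta)=0$ and $\beta\equiv\alpha\pmod{\fm_L^n}$; irreducibility of $h$ then gives $N_{L/K}(\beta)=(-1)^d u a_0$, so $N_{L/K}(\alpha^{-1}\beta)=u$ with $\alpha^{-1}\beta\in U_n(L)$. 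Your approach instead expands the norm analytically and reduces to a one-variable Hensel over $\sO_K$, using separability only through the surjectivity of $\Tr_{\ell/k}$ to get a basis element with unit trace. Both applications of Hensel are of the same strength; your version is a bit more direct and makes transparent that the obstruction to surjectivity is exactly the trace modulo $\pi$, while the paper's version is more algebraic and avoids writing out the symmetric-function expansion. One small wording fix: $R(y)$ is not literally a polynomial in $y$ but in its Galois conjugates; it is only after specializing $y=X\omega_1$ with $X\in\sO_K$ that the higher symmetric functions become honest polynomials in $X$ with coefficients in $\sO_K$ (namely the coefficients of the characteristic polynomial of $\omega_1$), which is what you actually use.
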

\begin{proof}
It follows from \cite[Chapter~V, Proposition~1]{Serre-LF} 
that $N_{L/K}(U_n(L)) \subset U_n(K)$ for every $n \ge 1$.
We only need to show that this inclusion is an equality.
Note that although $K$ is assumed to be complete in {\sl ibid.},
proof of this inclusion does not require the completeness assumption.

Let $\fk$ and $\fl$ denote the residue fields of $K$ and $L$,
respectively. For any $g(X) \in \sO_K[X]$ and $n \ge 1$, we shall denote
its image in ${\sO_K}/{\fm^n_K}[X]$ by $\ov{g}_n(X)$. For any $a \in \sO_K$,
we shall let $\ov{a}_n$ be its image in ${\sO_K}/{\fm^n_K}$. 
We shall use similar
notations for elements of $\sO_L$ and $\sO_L[X]$.

Since $L/K$ is unramified, $\fl/\fk$ is an abelian extension of
degree $[L:K]$. Let this degree be $d$.
We choose $\alpha' \in \fl$ such that $\fl = \fk(\alpha')$
and let $f(X) \in \fk[X]$ be the minimal polynomial of $\alpha'$ over $\fk$.
Let $g(X) = X^d + a_{d-1}X^{d-1} + \cdots + a_1 X + a_0 \in \sO_K[X]$ be
a polynomial such that $\ov{g}_1(X) = f(X)$. By the Hensel lemma
(applied to $\sO_L$), there exists $\alpha \in \sO_L$ such that
$g(\alpha) = 0$ and $\ov{\alpha}_1 = \alpha'$. Note that $\alpha'$ is
a simple root of $f(X)$ over $\fl$.

We now let $n \ge 1$ be any integer and $u \in U_n(K)$.
We let $h(X) = X^d + a_{d-1}X^{d-1} + \cdots + a_1 X + ua_0 \in \sO_K[X]$.
It is then clear that $\ov{h}_n(X) = \ov{g}_n(X) \in {\sO_K}/{\fm^n_K}[X]$.
Hence, this equality holds in ${\sO_L}/{\fm^n_L}[X]$ too.

We write $g(X) = (X - \alpha) g'(X)$ in $\sO_L[X]$.
We then get
\[
\ov{h}_n(X) = \ov{g}_n(X) = (X - \ov{\alpha}_n)\ov{g'}_n(X)
\]
in ${\sO_L}/{\fm^n_L}[X]$.
Furthermore, since the images of $X - \ov{\alpha}_n$ and $\ov{g'}_n(X)$ in 
$\fl[X]$ generate the unit ideal, they must generate the unit ideal in
${\sO_L}/{\fm^n_L}[X]$ too.

We now use the fact that $(\sO_L, \fm^n_L)$ is a Henselian pair
because $(\sO_L, \fm_L)$ is.
It follows that there exists $\beta \in \sO_L$ such that
$h(\beta) = 0$ and $\ov{\beta}_n = \ov{\alpha}_n$.
Since $\ov{h}_1(X) = \ov{g}_1(X)$ is irreducible, it follows
that $h(X)$ is also irreducible and 
 $h(X)$ is therefore  the minimal polynomial of $\beta$ over 
$K$.
In particular, $N_{L/K}(\beta) = (-1)^dua_0$.
Similarly, we have $N_{L/K}(\alpha) = (-1)^da_0$.
We therefore get $N_{L/K}(\alpha^{-1} \cdot \beta)
= u$. Note that $\alpha, \beta \in U_0(L)$ since their images in
$\fl$ are units. Since $\ov{(\alpha^{-1} \cdot \beta)}_n = 1$,
it follows that $\alpha^{-1} \cdot \beta \in U_n(L)$.
This proves the lemma. 
\end{proof}

\subsection{Continuity of Kato's reciprocity}
\label{sec:Rec-Cont}
We shall now show that Kato's reciprocity map is continuous.

\begin{prop}\label{prop:Kato-rec-cont}
Let $K$ be an $n$-dimensional Henselian local field. Then
the reciprocity map $\rho_K$ is continuous with respect to the canonical 
topology on $K^M_n(K)$ and profinite topology on $\Gal({K^{\ab}}/K)$.
\end{prop}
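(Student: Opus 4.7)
The plan is to translate continuity into an openness statement for norm subgroups in Milnor $K$-theory, and then to reduce the abelian case to the cyclic case treated in \lemref{lem:Hensel-Open-1} via the tower decomposition \lemref{lem:Decomposition}.

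Since the profinite topology on $\Gal(K^{\ab}/K)$ has as a basis of open neighborhoods of the identity the finite-index open subgroups, which under Galois theory correspond to finite abelian extensions $L/K$ via $H_L = \Gal(K^{\ab}/L)$, continuity of $\rho_K$ reduces to showing that $\rho_K^{-1}(H_L)$ is open in $K^M_n(K)$ for every such $L$. By \propref{prop:Kato-**}(1), the composition $K^M_n(K) \xrightarrow{\rho_K} \Gal(K^{\ab}/K) \twoheadrightarrow \Gal(L/K)$ factors through the quotient $K^M_n(K)/N_{L/K}(K^M_n(L))$, so $N_{L/K}(K^M_n(L)) \subseteq \rho_K^{-1}(H_L)$. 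It therefore suffices to show that $N_{L/K}(K^M_n(L))$ is open in $K^M_n(K)$ for every finite abelian $L/K$.

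Next, the projection formula for the Milnor norm gives the inclusion
\[
\{N_{L/K}(L^\times),\, K^\times,\,\ldots,\,K^\times\} \subseteq N_{L/K}(K^M_n(L)).
\]
By \lemref{lem:Kato-Kerz}, the canonical topology on $K^M_n(K)$ coincides with the $\tau_3$-topology, for which a neighborhood basis of $0$ is given by the subgroups $\{U_r(\sO_K), K^\times,\ldots,K^\times\}$ with $r \geq 0$. Hence the problem reduces to showing that $N_{L/K}(L^\times)$ is an open subgroup of $K^\times$ for every finite abelian extension $L/K$; for $L/K$ cyclic, this is precisely \lemref{lem:Hensel-Open-1}.

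For general abelian $L/K$, I would apply \lemref{lem:Decomposition} to obtain a tower $K = L_r \subset L_{r-1} \subset \cdots \subset L_0 = L$ in which each $L_{i-1}/L_i$ is cyclic of prime order, and proceed by induction on $r$. The transitivity of the norm $N_{L/K} = N_{L_1/K} \circ N_{L/L_1}$ together with the inductive hypothesis yields an integer $m \geq 1$ such that $N_{L/L_1}(L^\times) \supseteq U_m(\sO_{L_1})$, whence $N_{L/K}(L^\times) \supseteq N_{L_1/K}(U_m(\sO_{L_1}))$. The main obstacle is completing the induction: one needs a quantitative enhancement of \lemref{lem:Hensel-Open-1} asserting that for $L_1/K$ cyclic of prime order and every $m \geq 1$, the subgroup $N_{L_1/K}(U_m(\sO_{L_1}))$ contains $U_{m'}(\sO_K)$ for some $m'$. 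I expect to establish this via the same Brauer-group and Kato-filtration argument as in \lemref{lem:Hensel-Open-1}: choosing a generator $\eta$ of $\Gal(L_1/K)^{\vee}$, one has $\eta \in \Fil_s H^{1,0}(K)$ for some $s$, and the vanishing $U_{s+1}(\sO_K) \cup \eta = 0$ in $\Br(K)$ forces $U_{s+1}(\sO_K) \subseteq N_{L_1/K}(U_m(\sO_{L_1}))$ once $s$ is chosen sufficiently large relative to $m$ and the ramification of $L_1/K$; the maximal unramified subextension is handled directly by \lemref{lem:Hensel-open-0}, leaving a standard totally ramified prime-degree cyclic computation.
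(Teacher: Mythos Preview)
Your reduction to showing that $N_{L/K}(K^M_n(L))$ is open for every finite abelian $L/K$ is fine, and so is the further reduction via the projection formula and \lemref{lem:Kato-Kerz} to the openness of $N_{L/K}(L^\times)$. The cyclic case is then exactly \lemref{lem:Hensel-Open-1}. The difficulty is your inductive step for general abelian $L/K$: you need that $N_{L_1/K}(U_m(\sO_{L_1}))$ contains some $U_{m'}(\sO_K)$, not merely that $N_{L_1/K}(L_1^\times)$ does. Your sketch for this ``quantitative enhancement'' is not a proof---the Brauer-group cup-product argument in \lemref{lem:Hensel-Open-1} only yields $U_{s+1}(\sO_K) \subseteq N_{L_1/K}(L_1^\times)$, with no control on which filtration step of $\sO_{L_1}^\times$ the preimages come from, and the ``standard totally ramified prime-degree cyclic computation'' you allude to is neither carried out nor obviously available in the imperfect residue field setting. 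This is a genuine gap.

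The paper sidesteps this issue entirely by a different decomposition. Rather than expressing $L/K$ as a tower of cyclic extensions and pushing the norm through the tower, it writes the target $G = \Gal(L/K)$ as a \emph{product} $G_1 \times \cdots \times G_r$ of cyclic groups. Continuity of $\rho_{K,L}\colon K^M_n(K)\to G$ is then equivalent to continuity of each projection $p_i\circ\rho_{K,L}\colon K^M_n(K)\to G_i$. By Galois theory, $G_i = \Gal(E_i/K)$ for a cyclic subextension $E_i$ of $L$, so $p_i\circ\rho_{K,L}=\rho_{K,E_i}$, and now \corref{cor:Milnor-norm-open} applies directly to each $E_i/K$. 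Nothing more than the cyclic case is needed, and the intersection of the finitely many resulting open subgroups is still open. This is both simpler and avoids the unproven step in your induction.
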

\begin{proof}
Using the profinite topology on $\Gal({K^{\ab}}/K)$, the proposition is 
equivalent to saying that for every finite abelian extension $L/K$,
the composite map $\rho_{K,L} \colon K^M_n(K) \to \Gal(L/K)$
annihilates some open subgroup of $K^M_n(K)$ in its canonical topology.
We let $G = \Gal(L/K)$.  By \lemref{lem:Decomposition}, we can then 
 write $G = G_1 \times \cdots \times G_r$, 
where each $G_i$
is a finite cyclic group. Let $p_i \colon G \to G_i$ be the projection.
Since the intersection of finitely many nonempty open subgroups in
a topological group is a nonempty open subgroup,
it suffices to show that $p_i \circ \rho_{K,L} \colon K^M_n(K) \to G_i$
annihilates some open subgroup of $K^M_n(K)$ for each $i$.

We fix $1 \le i \le r$. Let  $K \subset E_i \subset L$ 
denote the intermediate
extension such that $\Gal(E_i/K) = G_i$.
We have to then show that the composite map 
$p_i \circ \rho_{K,L} = \rho_{K,E_i}$ annihilates some open subgroup of 
$K^M_n(K)$.
We have therefore reduced the proof of the proposition to showing that
for every finite cyclic extension $L/K$,
the composite map $\rho_{K,L} \colon K^M_n(K) \to \Gal(L/K)$
annihilates some open subgroup of $K^M_n(K)$.
Using \propref{prop:Kato-**}(1), it suffices to show 
that $N_{L/K}(K^M_n(L))$ contains some open subgroup of
$K^M_n(K)$. But this follows from \corref{cor:Milnor-norm-open}.
\end{proof}

\subsection{Reciprocity for $C_{U/X}$}\label{sec:Rec-U}
Let $k$ be a finite field. In this subsection, we shall
assume all schemes to be endowed with their canonical dimension functions.
Let $X \in \Sch_k$ be an integral scheme of 
dimension $d$.
Let $C \subset X$ be a nowhere dense reduced 
closed subscheme with complement $U$. Let $\eta$ denote the
generic point of $X$ and let $K = k(\eta)$.  

We shall define a continuous reciprocity map
from $C_{U/X}$ to $\pi^{\ab}_1(U)$.
We begin with a more general construction.
We let $P = (p_0, \ldots , p_s)$ be any Parshin chain on $X$ with
the condition that $p_s \in U$.
Let $X(P) = \ov{\{p_s\}}$ be the integral closed subscheme of $X$
and let $U(P) = U \cap X(P)$. We let $X(P)_n$ denote the normalization of
$X(P)$ and let $f^P \colon X(P)_n \to X$ denote the canonical map.
Let $U(P)_n = (f^P)^{-1}(U)$. We then get canonical maps of pairs
\[
({X(P)}_n, {U(P)}_n) \xrightarrow{f^P} (X, U) \ \mbox{and} \
(X(P), U(P)) \xrightarrow{\iota^P} (X, U),
\]
such that $f^P$ factors through $\iota^P$. Note that 
${U(P)}_n \neq \emptyset$ since $p_s \in U$.

Now, $P$ is a maximal
Parshin chain on $X(P)$ whose generic point lies in $U(P)$.
Moreover, \lemref{lem:Inv-ambient} says that there is a factorization 
$\Spec(k(P)) \to X(P) \xrightarrow{\iota^P} X$, 
where the first map is dominant. 
On the other hand, \lemref{lem:P-valuation} says that the map
$\Spec(k(P)) \to X(P)$ factors through $\Spec(k(P)) \to X(P)_n$.
We thus have the factorization $\Spec(k(P)) \to X(P)_n \xrightarrow{f^P} X$.
We can therefore consider the push-forward maps
\begin{equation}\label{eqn:Rec-0}
\pi^{\ab}_1(\Spec(k(P))) 
\to \pi^{\ab}_1(U(P)_n) \to \pi^{\ab}_1(U(P)) 
\xrightarrow{\iota^P_*} \pi^{\ab}_1(U),
\end{equation}
where the composition of the last two arrows is $f^P_*$.

We now let $V \subset k(p_s)$ be an $s$-DV dominating $P$.
We have seen in \S~\ref{sec:valuation} that $Q(V^h)$ is an
$s$-dimensional Henselian local field. Hence, we have 
the reciprocity map
\begin{equation}\label{eqn:Kato-rec}
\rho_{Q(V^h)} \colon K^M_s(Q(V^h)) \to \Gal({{Q(V^h)}^{\ab}}/{Q(V^h)})
\cong \pi^{\ab}_1(\Spec(Q(V^h))).
\end{equation}
Taking the sum of these maps over $\sV(P)$ and using 
\lemref{lem:P-valuation}, we get a reciprocity map
\begin{equation}\label{eqn:Kato-rec-0}
\rho_{k(P)} \colon K^M_s(k(P)) \to \pi^{\ab}_1(\Spec(k(P))).
\end{equation}
Composing this with ~\eqref{eqn:Rec-0}, we get the following.

\begin{lem}\label{lem:Kato-rec-1}
For every Parshin chain $P$ on $X$ whose generic point lies in $U$, there
is a reciprocity map
\begin{equation}\label{eqn:Kato-rec-2}
\rho_{{k(P)}/U} \colon K^M_{d_X(P)}(k(P)) \to \pi^{\ab}_1(U),
\end{equation}
which factors through the reciprocity map
$K^M_{d_X(P)}(k(P)) \xrightarrow{\rho_{{{k(P)}}/{U(P)_n}}} \pi^{\ab}_1(U(P)_n)$.
\end{lem}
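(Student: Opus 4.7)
The plan is to assemble the reciprocity map $\rho_{k(P)/U}$ by composing the higher local reciprocity map $\rho_{k(P)}$ of \eqref{eqn:Kato-rec-0} with the push-forward sequence \eqref{eqn:Rec-0}, then reading off the factorization. Since all the pieces have been produced in the discussion preceding the lemma statement, the work is really in verifying that the compositions make sense and land in the claimed groups.

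First, I would check that the factorization $\Spec(k(P)) \to X(P)_n \xrightarrow{f^P} X$ (established via Lemmas~\ref{lem:Inv-ambient} and \ref{lem:P-valuation}) actually lands inside the open subscheme $U(P)_n \subset X(P)_n$. Indeed, for each $V \in \sV(P)$, the image of $\Spec(Q(V^h)) \to X(P)_n$ is a generic point lying over $p_s \in U$, and hence lies in $U(P)_n = (f^P)^{-1}(U)$. This ensures the map $\pi^{\ab}_1(\Spec(k(P))) \to \pi^{\ab}_1(X(P)_n)$ factors canonically through $\pi^{\ab}_1(U(P)_n)$, so that \eqref{eqn:Rec-0} makes sense as written.

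Next I would \emph{define}
\[
\rho_{k(P)/U(P)_n} \colon K^M_{d_X(P)}(k(P)) \xrightarrow{\rho_{k(P)}} \pi^{\ab}_1(\Spec(k(P))) \to \pi^{\ab}_1(U(P)_n),
\]
where the first arrow is the reciprocity map of \eqref{eqn:Kato-rec-0}, which itself is built by summing Kato's reciprocity maps $\rho_{Q(V^h)}$ from \propref{prop:Reciprocity*} over $V \in \sV(P)$ and applying the isomorphism $k(P) \cong \prod_V Q(V^h)$ from \lemref{lem:P-valuation}. Post-composing with the push-forward $f^P_* \colon \pi^{\ab}_1(U(P)_n) \to \pi^{\ab}_1(U)$ then produces the desired map $\rho_{k(P)/U}$, and the factorization asserted in the lemma is built into the construction.

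There is no real obstacle here; the only subtle point is making sure each scheme-theoretic factorization used in \eqref{eqn:Rec-0} is canonical. In particular, one should note that $\Spec(k(P))$ is only a product of fields rather than a single field, so \eqref{eqn:Kato-rec-0} should be read factor-by-factor through \lemref{lem:P-valuation}, and the fact that $U(P)_n$ contains every point of $X(P)_n$ lying over $p_s$ is what allows the sum of the reciprocity maps $\rho_{Q(V^h)}$ over $V \in \sV(P)$ to be gathered into a single map landing in $\pi^{\ab}_1(U(P)_n)$.
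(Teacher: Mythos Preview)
Your proposal is correct and matches the paper's approach: the paper gives no separate proof of this lemma, since it is stated as the immediate result of composing the reciprocity map \eqref{eqn:Kato-rec-0} with the push-forward chain \eqref{eqn:Rec-0}, exactly as you describe. Your explicit check that $\Spec(k(P)) \to X(P)_n$ lands in $U(P)_n$ (because each factor $Q(V^h)$ maps to the generic point of the integral scheme $X(P)_n$, which lies over $p_s \in U$) is a useful clarification that the paper leaves implicit.
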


We let $\wt{I}_{U/X}$ denote the direct sum of the
groups $K^M_{d_X(P)}(k(P))$, where $P$ runs through the set of
all Parshin chains on $X$ whose generic points lie in $U$.
Taking the sum of the maps in ~\eqref{eqn:Kato-rec-2},
we get a map
\begin{equation}\label{eqn:Kato-rec-3}
\wt{\rho}_{U/X} \colon \wt{I}_{U/X} \to \pi^{\ab}_1(U).
\end{equation}

The restriction of $\wt{\rho}_{U/X}$ to $I_{U/X}$ yields a map
\begin{equation}\label{eqn:Kato-rec-4}
\rho_{U/X} \colon I_{U/X} \to 
\pi^{\ab}_1(U).
\end{equation}

We shall now show that $\rho_{U/X}$ factors through the idele class group.
Before this, we prove some lemmas.

\begin{lem}\label{lem:Gen-commute}
Let $f \colon X' \to X$ be a finite dominant morphism between 
integral schemes in $\Sch_k$. Let $U \subset X$ be a dense open 
subscheme and let $U' \subset f^{-1}(U)$. Let 
$P_1 = (p'_0, \ldots , p'_s)$ be a Parshin chain on $X'$ such that 
$p'_s \in U'$ and let $P = f(P_1)$. Then the diagram
\begin{equation}\label{eqn:Gen-commute-0}  
\xymatrix@C.8pc{
K^M_s(k(P_1)) \ar[r]^-{\rho_{k(P_1)}} \ar[d]_-{N} & \pi^{\ab}_1(U') 
\ar[d]^-{f_*} \\
K^M_s(k(P))  \ar[r]^-{\rho_{k(P)}} & \pi^{\ab}_1(U)}
\end{equation}
is commutative.
\end{lem}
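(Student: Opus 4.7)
The plan is to decompose both sides by valuations and reduce the claim to two inputs: Kato's compatibility of reciprocity with the norm map for finite extensions of Henselian local fields (Proposition~\ref{prop:Kato-**}(1)), and the functoriality of the abelianized {\'e}tale fundamental group for the induced diagram of normalized closures. The assumption that $P = f(P_1)$ and that $f$ is finite dominant is precisely what makes each of these inputs available.

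First, I use Lemma~\ref{lem:P-valuation} to identify $k(P) \cong \prod_{V \in \sV(P)} Q(V^h)$ and $k(P_1) \cong \prod_{V' \in \sV(P_1)} Q(V'^h)$ as products of $s$-dimensional Henselian local fields. The finite dominant morphism $f$ induces a finite ring extension $k(P) \hookrightarrow k(P_1)$, and each $V' \in \sV(P_1)$ restricts along this extension to a unique $V = V_{V'} \in \sV(P)$, giving $Q(V'^h)/Q(V^h)$ as a finite extension of $s$-dimensional Henselian local fields. By the definition of $\rho_{k(P)}$ in \eqref{eqn:Kato-rec-0} as the sum of the Kato reciprocities $\rho_{Q(V^h)}$, and by the componentwise definition of the norm map for Milnor $K$-theory of a finite extension of products of fields, the commutativity of \eqref{eqn:Gen-commute-0} reduces to that of
\[
\xymatrix@C.8pc{
K^M_s(Q(V'^h)) \ar[r]^-{\rho_{Q(V'^h)}} \ar[d]_-{N_{Q(V'^h)/Q(V^h)}} & \pi^{\ab}_1(U') \ar[d]^-{f_*} \\
K^M_s(Q(V^h)) \ar[r]^-{\rho_{Q(V^h)}} & \pi^{\ab}_1(U)
}
\]
for each such pair $V' \mapsto V$, where the horizontal arrows are the compositions of the Kato reciprocity into $\Gal(Q(-)^{\ab}/Q(-))$ with the canonical maps into $\pi_1^{\ab}(U')$ and $\pi_1^{\ab}(U)$ from Lemma~\ref{lem:Kato-rec-1}.

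Second, I factor this last square as the composite of the ``Galois'' square
\[
\xymatrix@C.8pc{
K^M_s(Q(V'^h)) \ar[r]^-{\rho_{Q(V'^h)}} \ar[d]_-{N} & \Gal(Q(V'^h)^{\ab}/Q(V'^h)) \ar[d]^-{{\rm can}} \\
K^M_s(Q(V^h)) \ar[r]^-{\rho_{Q(V^h)}} & \Gal(Q(V^h)^{\ab}/Q(V^h))
}
\]
whose commutativity is exactly Proposition~\ref{prop:Kato-**}(1), followed by the ``scheme'' square obtained by applying $\pi_1^{\ab}$ to
\[
\xymatrix@C.8pc{
\Spec(Q(V'^h)) \ar[r] \ar[d] & U(P_1)_n \ar[r] \ar[d] & U' \ar[d]^-{f} \\
\Spec(Q(V^h)) \ar[r] & U(P)_n \ar[r] & U.
}
\]
Here the middle vertical arrow comes from the functoriality of normalization for the finite dominant restriction $X(P_1) = \ov{\{p'_s\}} \to \ov{\{p_s\}} = X(P)$ of $f$, the right square commutes because $f \circ f^{P_1}$ factors through $f^P$, and the left square commutes because $V'$ was chosen to restrict to $V$ along $k(P) \hookrightarrow k(P_1)$.

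The main bookkeeping obstacle will be verifying cleanly that the ring-theoretic decomposition in the first step identifies $N$ with the sum of local norms and $\rho_{k(P_1)}$ with the sum of local $\rho_{Q(V'^h)}$, and that the maps $\Spec(Q(V'^h)) \to U(P_1)_n$ and $\Spec(Q(V^h)) \to U(P)_n$ are compatible with the induced $X(P_1)_n \to X(P)_n$; once this is in place, the two inputs above close the diagram. Note that no hypothesis on $U$ being regular is needed, since we only use the reciprocity maps factoring through the normalized closures of the support of the chains.
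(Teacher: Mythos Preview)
Your proposal is correct and follows essentially the same approach as the paper: decompose $k(P_1)$ and $k(P)$ via dominating valuations, reduce to a single pair $V' \mapsto V = V' \cap k(p_s)$, and then factor the resulting square as the Kato norm-compatibility square (Proposition~\ref{prop:Kato-**}(1)) followed by the functoriality square for $\pi^{\ab}_1$ applied to $\Spec(Q(V'^h)) \to U(P_1)_n \to U'$ over $\Spec(Q(V^h)) \to U(P)_n \to U$. The only cosmetic difference is that the paper first explicitly replaces $(X',X)$ by $(\ov{\{p'_s\}}, \ov{\{p_s\}})$ so that $P_1$ and $P$ become \emph{maximal} Parshin chains before invoking Lemma~\ref{lem:P-valuation} (which is stated only for maximal chains), whereas you invoke the valuation decomposition directly and only later mention $X(P_1), X(P)$; you should make this reduction explicit up front, since otherwise the appeal to Lemma~\ref{lem:P-valuation} is formally premature.
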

\begin{proof}
Let $P = (p_0, \ldots , p_s)$, where $p_i = f(p'_i)$.
Let $Y$ (resp. $Y'$) be the closure of $\{p_s\}$ (resp. $\{p'_s\}$) in
$X$ (resp. $X'$) with integral closed subscheme structure.

By ~\eqref{eqn:Kato-rec-2}, $\rho_{{k(P)}/U}$ has a factorization
$K^M_s(k(P)) \to \pi^{\ab}_1(Y \cap U) \to \pi^{\ab}_1(U)$ and
similarly for $\rho_{{k(P_1)}/U'}$.
We thus have to show that the outer rectangle in 
\begin{equation}\label{eqn:Gen-commute-1}  
\xymatrix@C.8pc{
K^M_s(k(P_1)) \ar[r] \ar[d] & \pi^{\ab}_1(Y' \cap U') \ar[r] \ar[d] &
\pi^{\ab}_1(U') \ar[d] \\
K^M_s(k(P)) \ar[r] & \pi^{\ab}_1(Y \cap U) \ar[r] & \pi^{\ab}_1(U)}
\end{equation}
is commutative.
Since  the right square in \eqref{eqn:Gen-commute-1} commutes,
it suffices to show that the left square commutes.
Since the induced map $f: Y' \to Y $ is also finite dominant, we can 
assume furthermore that the Parshin chain $P_1$ (resp. $P$) is maximal on $X'$ 
(resp. $X$). 
Let $K'$ (resp. $K$) denote the function field of $X'$ (resp. $X$). 
Let $V' \subset K'$ be a $d$-DV dominating $P_1$ and let
$V' = V'_0 \subset V'_1 \subset \cdots \subset V'_{d-1} \subset V'_d = K'$
be the induced chain of valuation rings in $K'$.
Let $V = V'\cap K$ and $V_i = V'_i \cap K$. Then it is well known that
$V = V_0 \subset V_1 \subset \cdots \subset V_{d-1} 
\subset V_d = K$ is a $d$-DV in $K$ dominating $P$.
Furthermore, $Q(V'^h)$ is a finite separable extension of $Q(V^h)$.
This uses the fact that any finite algebra over $V^h$ is a
product of Henselian local finite algebras.
We write $V = f_*(V')$.

Let $V' \subset K'$ be a $d$-DV dominating $P_1$ and let $V = f_*(V)
\subset K$. By the definition of the reciprocity map in
~\eqref{eqn:Kato-rec-0}, we need to show that the outer
rectangle in the diagram
\begin{equation}\label{eqn:Normal-commute-1}  
\xymatrix@C.8pc{
K^M_d(Q(V'^h)) \ar[r]^-{\rho_{Q(V'^h)}} \ar[d]_-{N} & \pi^{\ab}_1(Q(V'^h)) 
\ar[d]^-{f_*} \ar[r] & \pi^{\ab}_1(U') \ar[d]^-{f_*} \\
K^M_d(Q(V^h)) \ar[r]^-{\rho_{Q(V^h)}} & \pi^{\ab}_1(Q(V^h)) \ar[r] & \pi^{\ab}_1(U)}
\end{equation}
is commutative.
Since the right square is clearly commutative, we only have to show that the
left square commutes. But this follows from \propref{prop:Kato-**}.
\end{proof}

\begin{lem}\label{lem:Via-residue}
Let $X$ be a normal integral scheme in $\Sch_k$ of dimension $d \ge 2$ 
and let $U \subset X$ be a dense open subscheme. Let 
$P = (p_0, \ldots , p_{d})$ be a maximal Parshin chain on $X$ such that $p_{d-1} 
\in U$. Then there is a commutative diagram
\begin{equation}\label{eqn:Via-residue-01}
\xymatrix@C1pc{
K^M_d(k(P)) \ar[dr]_-{\rho_{k(P)/U}} \ar[r]^-{\partial} &  
K^M_{d-1}(k(P')) \ar[d]^-{\rho_{k(P')/U}} \\
& \pi^{\ab}_1(U).}
\end{equation}
\end{lem}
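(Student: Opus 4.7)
The plan is to reduce the statement to Kato's compatibility of reciprocity with residue maps, \propref{prop:Kato-**}(3), applied on each Henselian local factor arising from the decomposition of \lemref{lem:P-valuation}. Since $P$ is a maximal Parshin chain on a normal $X$, we have $X(P) = X$, so $\rho_{k(P)/U}$ is simply the composition of $\rho_{k(P)}$ with the canonical morphism $\pi^{\ab}_1(\Spec k(P)) \to \pi^{\ab}_1(U)$; the hypothesis $p_{d-1} \in U$ ensures $\rho_{k(P')/U}$ is defined via \lemref{lem:Kato-rec-1}, since $P'$ has generic point in $U$.

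First I would set up the local decomposition. Normality of $X$ at the codimension-one point $p_{d-1}$ forces, for any $d$-DV $V \in \sV(P)$, the equality $V_{d-1} = \sO_{X, p_{d-1}}$, because a DVR in $K$ dominating a DVR in $K$ must coincide with it. Consequently, the Henselian DVR $\wt V_{d-1}$ from \S\ref{sec:valuation} sits inside $Q(V^h)$ with residue field $Q(V'^h)$, where $V' \in \sV(P')$ is the image of $V$ in $k(p_{d-1})$; and via \lemref{lem:P-valuation} the residue map $\partial\colon K^M_d(k(P)) \to K^M_{d-1}(k(P'))$ decomposes as the direct sum, over $V \in \sV(P)$, of the standard Milnor residue maps $\partial_V\colon K^M_d(Q(V^h)) \to K^M_{d-1}(Q(V'^h))$ attached to $\wt V_{d-1}$.

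Next, for each $V \in \sV(P)$, \propref{prop:Kato-**}(3) applied to $\wt V_{d-1}$ yields a commutative square
\[
\xymatrix@C1pc{
K^M_d(Q(V^h)) \ar[r]^-{\rho_{Q(V^h)}} \ar[d]_-{\partial_V} &
\Gal({Q(V^h)}^{\ab}/{Q(V^h)}) \ar[d]^-{\can} \\
K^M_{d-1}(Q(V'^h)) \ar[r]^-{\rho_{Q(V'^h)}} &
\Gal({Q(V'^h)}^{\ab}/{Q(V'^h)}),
}
\]
where the right vertical arrow is the canonical surjection onto the unramified quotient, identified by the Henselian property with $\pi^{\ab}_1(\Spec \wt V_{d-1}) \cong \pi^{\ab}_1(\Spec Q(V'^h))$. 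I would then identify this surjection with the pushforward induced by the generic-point inclusion $\Spec Q(V^h) \hookrightarrow \Spec \wt V_{d-1}$.

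Finally, since both $p_d = \eta$ and $p_{d-1}$ lie in $U$, the morphism $\Spec \wt V_{d-1} \to X$ factors through $U$; post-composing the above square with the resulting map $\pi^{\ab}_1(\Spec \wt V_{d-1}) \to \pi^{\ab}_1(U)$ and summing over $V \in \sV(P)$ produces the triangle of the lemma. The main technical step, which I expect to be the hardest, is verifying that the map $\pi^{\ab}_1(\Spec Q(V'^h)) \to \pi^{\ab}_1(U)$ obtained this way coincides with the $V'$-component of $\rho_{k(P')/U}$ constructed in \lemref{lem:Kato-rec-1} via the tower $\Spec k(P') \to X(P')_n \to X(P') \cap U \to U$; both arise from the same scheme morphism $\Spec Q(V'^h) \to X$ with image $\{p_{d-1}\} \subset U$, but careful bookkeeping with the universal property defining $\wt V_{d-1}$ is required to verify agreement on $\pi^{\ab}_1$.
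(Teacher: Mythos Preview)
Your proposal is correct and follows essentially the same route as the paper: reduce to a single $d$-DV $V \in \sV(P)$, use normality to force $V_{d-1} = \sO_{X,p_{d-1}}$ so that $\wt V_{d-1}$ has residue field $Q(V'^h)$, apply Kato's compatibility \propref{prop:Kato-**}(3), and then push forward to $\pi^{\ab}_1(U)$ through $\Spec \wt V_{d-1} \to U$. The ``hardest step'' you flag---checking that the resulting map $\pi^{\ab}_1(Q(V'^h)) \to \pi^{\ab}_1(U)$ agrees with the $V'$-component of $\rho_{k(P')/U}$---is exactly what the paper makes explicit via a large commutative diagram of schemes and the induced diagram of fundamental groups, ultimately tracing both maps back to the same morphism $\Spec Q(V'^h) \to \Spec k(Y) \to Y \cap U \hookrightarrow U$ where $Y = \ov{\{p_{d-1}\}}$.
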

\begin{proof}
Let $K$ be the function field of $X$.
Let $Y \subset X$ be the closure  of $p_{d-1}$ with the integral
closed subscheme structure.  
Let $f^P \colon \Spec(k(P)) \to X$ and $f^{P'} \colon \Spec(k(P')) \to Y$
be the canonical maps. Let $\iota \colon Y \cap U \inj U$ be the
inclusion.

Let us now pick a $d$-DV $V \in \sV(P)$ and let 
$V = V_0 \subset \cdots \subset V_{d-1} \subset V_d = K$ be the
corresponding chain of valuation rings in $K$. Let $V'$ be the image of
$V$ in $k_{d-1}$, where $k_i$ is the residue field of $V_i$.
 Since $X$ is normal, we have that $V_{d-1} = \sO_{X, p_{d-1}} = \sO_{U, p_{d-1}}$,
where the second equality holds because $p_{d-1} \in U$.
It follows that $k(Y)  =k_{d-1}$ and  $V'$ is a $(d-1)$-DV in $k(Y)$ dominating $P'$.

In the notations of \S~\ref{sec:valuation}, $Q(V^h)$ is
a Henselian discrete valuation field with ring of
integers $\wt{V}_{d-1}$ and residue field $Q({V'}^h)$
(see the proof of \cite[Proposition~3.3]{Kato-Saito-2}).
Moreover, there is a commutative diagram of schemes
\begin{equation}\label{eqn:Via-residue-02}  
\xymatrix@C.8pc{
\Spec(Q(V^h)) \ar@{^{(}->}[r] \ar[d] & \Spec(\wt{V}_{d-1}) \ar[d] &
\Spec(Q({V'}^h)) \ar[r] \ar@{^{(}->}[l] & \Spec(k(Y)) \ar@{^{(}->}[r] & 
Y \cap U \ar@{^{(}->}[d]^-{\iota_*} \\
\Spec(K) \ar@{^{(}->}[r] & \Spec(V_{d-1}) \ar[rrr] & & & U.}
\end{equation} 

This gives rise to the corresponding commutative diagram of the
abelianized {\'e}tale fundamental groups
\begin{equation}\label{eqn:Via-residue-03}  
\xymatrix@C.8pc{
& & & \pi^{\ab}_1(k(P')) \ar[dr]^-{f^{P'}_*} & \\
& \pi^{\ab}_1(\wt{V}_{d-1}) \ar[dr] &
\pi^{\ab}_1(Q({V'}^h)) \ar[r] \ar[l]_-{\cong} 
\ar@{^{(}->}[ur] & \pi^{\ab}_1(k(Y)) \ar@{->>}[r] & 
\pi^{\ab}_1(Y \cap U) \ar[d]^-{\iota_*} \\
\pi^{\ab}_1(Q(V^h)) \ar@{->>}[ur] \ar[r] \ar@{^{(}->}[drr] &
\pi^{\ab}_1(K) \ar@{->>}[r] & \pi^{\ab}_1(V_{d-1}) \ar@{->>}[rr] & & 
\pi^{\ab}_1(U) \\
& & \pi^{\ab}_1(k(P)) \ar[urr]_-{f^{P}_*}, & &}
\end{equation} 
where we have abbreviated the notation $\pi^{\ab}_1(\Spec(A))$ to 
$\pi^{\ab}_1(A)$ for a ring $A$. We let $\iota' \colon
\Spec(Q({V'}^h)) \inj \Spec(\wt{V}_{d-1})$ be the inclusion and let
$\partial'$ be the composition
$\pi^{\ab}_1(Q(V^h)) \surj  \pi^{\ab}_1(\wt{V}_{d-1}) 
\xrightarrow{(\iota'_*)^{-1}} \pi^{\ab}_1(Q({V'}^h))$.
Note that $\partial'$ coincides (by definition) 
with the right vertical arrow in
~\eqref{eqn:Kato-**-2} (if we let $K = Q(V^h)$ in that diagram).
The upshot of ~\eqref{eqn:Via-residue-03} 
is that $f^P_*$ has a factorization
\begin{equation}\label{eqn:Via-residue-04}  
\xymatrix@C.8pc{
\pi^{\ab}_1(Q(V^h)) \ar[r]^-{\partial'} \ar[dr]_-{f^P_*} &
\pi^{\ab}_1(Q({V'}^h)) \ar[d]^-{(\iota \circ f^{P'})_*} \\
& \pi^{\ab}_1(U).}
\end{equation}

On the other hand, it follows from \propref{prop:Kato-**} that 
there is a commutative diagram

\begin{equation}\label{eqn:Rec-Q-1}
\xymatrix@C1pc{
K^M_d(Q(V^h)) \ar[r]^-{\rho_{Q(V^h)}} \ar@{->>}[d]_-{\partial} & 
\pi^{\ab}_1(Q(V^h)) \ar@{->>}[d]^-{\partial'} \\
K^M_{d-1}(Q(V'^h)) \ar[r]^-{\rho_{Q(V'^h)}} & \pi^{\ab}_1(Q(V'^h)).}
\end{equation}

Since $f^P_* \circ \rho_{Q(V^h)}$ is the restriction of $\rho_{{k(P)}/U}$
on the factor $K^M_d(Q(V^h))$ and $(\iota \circ f^{P'})_* \circ \rho_{Q(V'^h)}$
is the restriction of $\rho_{{k(P')}/U}$ on the factor
$K^M_{d-1}(Q(V'^h))$, the lemma follows by combining
~\eqref{eqn:Via-residue-04} and ~\eqref{eqn:Rec-Q-1} and then summing
over $\sV(P)$.
\end{proof}

We can now prove the main result of this section.

\begin{thm}\label{thm:Rec-main}
Let $X$ be an integral scheme in $\Sch_k$ of dimension $d \ge 1$ 
and let $U \subset X$ be a dense open subscheme.
Then the map $\rho_{U/X}$ on the idele group descends to a reciprocity map
\[
\rho_{U/X} \colon C_{U/X} \to \pi^{\ab}_1(U).
\]
\end{thm}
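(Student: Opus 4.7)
The plan is to show that for every $Q$-chain $Q = (p_0,\ldots,p_{s-2},p_s)$ on $(U \subset X)$, the composition $\rho_{U/X} \circ \partial_Q$ vanishes on $K^M_{d_X(Q)}(k(Q))$.

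First I would reduce to the case where $X$ is integral normal and $p_s$ is the generic point. Setting $Y = \overline{\{p_s\}}$ and letting $\nu \colon Y_n \to Y$ be its normalization, \lemref{lem:Q-chain-nor} expresses $k(Q)$ as a product over $\nu^{-1}(Q)$, and decomposes $k(P)$ analogously for each Parshin chain $P$ on $X$ extending $Q$. The norm-compatibility of \lemref{lem:Gen-commute} together with the factorization of $\rho_{k(P)/U}$ through $\pi^{\ab}_1(U(P)_n)$ from \lemref{lem:Kato-rec-1} reduce the assertion to $(Y_n,\,\nu^{-1}(U)\cap Y_n)$, where $p_s$ is now the generic point of the normal scheme $Y_n$. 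Hence we may assume that $X$ is normal, $s = d = \dim X$, and $Q = (p_0,\ldots,p_{d-2},\eta)$.

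Next, localize at $P'' = (p_0,\ldots,p_{d-2})$: the ring $R = \sO^h_{X,P''}$ is a finite product of henselian $2$-dimensional normal local integral domains, and $k(Q)$ is the product of their fraction fields. The Parshin chains $P = (p_0,\ldots,p_{d-2},p_{d-1},\eta)$ on $X$ extending $Q$ are in bijection with the height-$1$ primes $\fp$ of $R$; each such $p_{d-1}$ lies either in $U$ or in $C$. When $p_{d-1}\in U$, \lemref{lem:Via-residue} identifies the contribution at $P$ with $\rho_{k(P')/U}\circ \partial_\fp$, where $\partial_\fp\colon K^M_d(k(Q))\to K^M_{d-1}(k(P'))$ is the Milnor residue. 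When $p_{d-1}\in C$, unwinding $\rho_{k(P)}$ as the sum over $V\in \sV(P)$ of $\rho_{Q(V^h)}$ and applying \propref{prop:Kato-**}(3) at the Henselian discrete valuation $V_{d-1}\subset Q(V^h)$ exhibits the contribution at $P$ as a sum of terms $\rho_{Q(V'^h)}\circ\partial_\fp$ composed with the canonical pushforward to $\pi^{\ab}_1(U)$. Either way, the contribution at each $\fp$ is a residue followed by reciprocity and pushforward.

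After these identifications, the theorem reduces to a sum-of-residues reciprocity on the $2$-dimensional henselian normal local scheme $X_{P''}$: on each of its components, the sum $\sum_\fp \rho\circ\partial_\fp$ over all height-$1$ primes $\fp$, valued in $\pi^{\ab}_1(U)$, vanishes on $K^M_d(k(Q))$. This is the content of Kato's $2$-dimensional reciprocity law, which is proved by transporting the Milnor-$K$-theoretic statement to \'etale cohomology via the Norm-residue homomorphisms and Kato's residue isomorphism \eqref{eqn:Residue-iso}: the sum over $\fp$ becomes the boundary in the coniveau/localization sequence for $H^{\ast}_{\et}(X_{P''},\Q/\Z(d))$, whose image in $H^1_{\et}(U,\Q/\Z)^{\vee}\cong \pi^{\ab}_1(U)$ is zero because this sequence is a complex. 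The main obstacle is precisely this final reciprocity step: one must carefully match three kinds of residues---the Milnor $K$-theoretic residue $\partial_\fp$, Kato's cohomological residue from \eqref{eqn:Residue-iso}, and the geometric pushforward into $\pi^{\ab}_1(U)$---and verify their compatibility on $X_{P''}$ using functoriality of Kato's residue isomorphism and Pontryagin duality between $\pi^{\ab}_1$ and $H^1_{\et}(-,\Q/\Z)$.
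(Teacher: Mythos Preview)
Your overall strategy---reduce to $X$ normal with $Q$ maximal via \lemref{lem:Q-chain-nor} and \lemref{lem:Gen-commute}, then invoke a two-dimensional reciprocity over the henselian local scheme $X_{P''}$---matches the paper's. The reduction step is fine.

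The gap is in your handling of the case $p_{d-1}\in C$. You claim that \propref{prop:Kato-**}(3) exhibits the contribution at such $P$ as $\rho_{Q(V'^h)}\circ\partial_\fp$ followed by a ``canonical pushforward to $\pi^{\ab}_1(U)$'', but no such pushforward exists. The square in \propref{prop:Kato-**}(3) identifies $\Gal(Q(V'^h)^{\ab}/Q(V'^h))$ with the \emph{unramified} quotient $\Gal(Q(V^h)^{\nr}/Q(V^h))$, and the map $\pi^{\ab}_1(Q(V^h))\to\pi^{\ab}_1(U)$ factors through this quotient precisely when $\Spec(\wt{V}_{d-1})\to X$ lands in $U$, i.e.\ when $p_{d-1}\in U$---this is exactly the hypothesis in \lemref{lem:Via-residue}. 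When $p_{d-1}\in C$, the map $\pi^{\ab}_1(Q(V^h))\to\pi^{\ab}_1(U)$ sees covers ramified along $C$ and cannot be recovered from the residue. So ``either way, the contribution at each $\fp$ is a residue'' is false.

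The paper goes in the opposite direction: rather than turning the $p_{d-1}\in C$ terms into residues, it uses \lemref{lem:Via-residue} to turn the $p_{d-1}\in U$ residues \emph{back} into henselizations. This rewrites $\rho_{U/X}\circ\partial_{U/X}$ on $K^M_d(k(Q))$ as the composite $K^M_d(k(Q))\xrightarrow{\wt{\partial}} J_Q \xrightarrow{\rho_{B(Q)/L}}\pi^{\ab}_1(U)$, where $\wt{\partial}$ is the diagonal henselization into the restricted product over \emph{all} $P\in B(Q)$ (see \eqref{eqn:Rec-Q-0} and \eqref{eqn:Rec-Q-2}). The vanishing of this uniform composite is then the reciprocity law of Kato--Saito \cite[\S3.7.4]{Kato-Saito-2}; your cohomological sketch at the end is in the right spirit for that argument, but it applies to this ``all henselizations'' formulation, not to the ``all residues'' one you set up.
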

\begin{proof}
We let $Q = (p_0, \ldots , p_{s-2}, p_s)$ be a $Q$-chain on
$(U \subset X)$. 
We let $X' = \ov{\{p_s\}} \subset X$ with the integral closed subscheme 
structure, and set $U' = U \cap X'$. Note that $U' \neq \emptyset$
because $p_s \in U$. Let $f \colon (X', U') \to (X,U)$ be the
inclusion.

It follows from the definition of $\rho_{U/X}$ (see ~\eqref{eqn:Rec-0})
that there is a commutative diagram
\begin{equation}\label{eqn:Rec-PF}
\xymatrix@C.8pc{
I_{U'/X'} \ar[r]^-{\rho_{U'/X'}} \ar[d]_-{f_*} & \pi^{\ab}_1(U') 
\ar[d]^-{f_*} \\
I_{U/X} \ar[r]^-{\rho_{U/X}} & \pi^{\ab}_1(U),}
\end{equation}
where the vertical arrow on the left is from \propref{prop:Funtorial*}.
Since $Q$ is a $Q$-chain on $X'$ and the map
$\partial \colon K^M_{d_X(Q)}(k(Q)) \to I_{U/X}$ factors through
$K^M_{d_X(Q)}(k(Q)) \to I_{U'/X'}$ by \lemref{lem:Inv-ambient},
it suffices to show that $\rho_{U'/X'}$ annihilates the image of
$K^M_{d_X(Q)}(k(Q)) = K^M_{d_{X'}(Q)}(k(Q))$.
We can therefore assume that $Q$ is maximal.

Let $\nu \colon X_n \to X$ be the normalization map and let 
$\nu^{-1}(Q)$ be the set of all $Q$-chains $Q'$ on $X_n$ such that
$\nu(Q') = Q$. We now consider the diagram
\begin{equation}\label{eqn:Rec-main-ex}
\xymatrix@C.8pc{
{{\underset{Q' \in \nu^{-1}(Q)}\bigoplus} K^M_{d}(k(Q'))}
\ar[r]^-{\partial} \ar[d]_-{\cong} & I_{{U_n}/{X_n}} 
\ar[r]^-{{\rho}_{{U_n}/{X_n}}} \ar[d]^-{\nu_*} & \pi^{\ab}_1(U_n)
\ar[d]^-{\nu_*} \\
K^M_d(k(Q)) \ar[r]^-{\partial} &  I_{{U}/{X}} 
\ar[r]^-{{\rho}_{{U}/{X}}} & \pi^{\ab}_1(U).}
\end{equation}

We have seen in the proof of \propref{prop:Funtorial*} that the
left square is commutative and the left vertical arrow is an
isomorphism by \cite[Lemma~3.3.1]{Kato-Saito-2} 
(see \lemref{lem:Q-chain-nor}). \lemref{lem:Gen-commute} says that
the right square is commutative. It suffices therefore to show that the
top composite arrow in ~\eqref{eqn:Rec-main-ex} is zero. This allows us to
assume that $X$ is normal (and $Q$ is maximal).

We first assume that $d \ge 2$.
Let $B(Q)$ denote the set of all Parshin chains
of the form $P = (p_0, \ldots , p_{d-2}, p_{d-1}, p_d)$ on $X$.
For every $P \in B(Q)$, let $\iota_{Q,P} \colon
K^M_d(k(Q)) \to K^M_d(k(P))$ be the
map induced by the inclusion $k(Q) \inj k(P)$.
It is clear that for any $\alpha \in K^M_d(k(Q))$,
the element $\iota_{Q,P}(\alpha)$ lies in $K^M_d(\sO^h_{X,P'})
\subset K^M_d(k(P))$ for all but finitely many $P$'s.
Letting
\[
J_Q = \{(a_P)_{P \in B(Q)} \in {\underset{P \in B(Q)}\prod} K^M_{d}(k(P))|
a_P \in K^M_d(\sO^h_{X,P'}) \ \mbox{for \ almost \ all} \
P\},
\]
we thus get a canonical map
\begin{equation}\label{eqn:Rec-Q-0}
\wt{\partial} \colon K^M_d(k(Q)) \to J_Q,
\end{equation}
whose composition with the projection to any $K^M_d(k(P))$ is $\iota_{Q,P}$.
We let
\[
J'_Q = \{(a_P)_{P \in B(Q)} \in {\underset{P \in B(Q), p_{d-1} \in U}\prod} 
K^M_{d}(k(P))|
a_P \in K^M_d(\sO^h_{X,P'}) \ \mbox{for \ almost \ all} \
P\},
\]
\[
J''_Q = {\underset{P \in B(Q), p_{d-1} \in C}\prod} 
K^M_{d}(k(P)) \ \mbox{and} \ 
R_Q  =  {\underset{P \in B(Q), p_{d-1} \in U}\bigoplus} 
K^M_{d-1}(k(P')). 
\]
It is then clear that the projection map
$J_Q \to J'_Q \times J''_Q$ is an isomorphism.

Since $X$ is normal, it follows from \lemref{lem:Via-residue}
that there is a reciprocity map $\rho_{{B(Q)}/L} \colon J_Q \to \pi^{\ab}_1(U)$
and the composite $J'_Q \inj J_Q \xrightarrow{\rho_{{B(Q)}/L}} \pi^{\ab}_1(U)$
factors through $R_Q$.

We thus get a commutative diagram
\begin{equation}\label{eqn:Rec-Q-2}
\xymatrix@C.8pc{
K^M_{d}(k(Q)) \ar[r]^-{\wt{\partial}} \ar@{=}[d] & 
J_Q \ar[r]^-{{\rho}_{{B(Q)}/L}} \ar[d]^-{\cong} & 
\pi^{\ab}_1(U) \ar@{=}[d] \\
K^M_{d}(k(Q)) \ar[r]^-{\wt{\partial}} \ar@{=}[d] & 
J''_Q \times J'_Q \ar[r]^-{{\rho}_{{B(Q)}/L}} \ar[d]^-{{\rm id} \times
\partial} & \pi^{\ab}_1(U) \ar@{=}[d] \\
K^M_{d}(k(Q)) \ar[r]^-{\partial_{U/X}} &
J''_Q \times R_Q \ar[r]^-{\rho_{U/X}} & \pi^{\ab}_1(U) ,}
\end{equation}
where $\partial_{U/X}$ is from ~\eqref{eqn:IC-0}.
It suffices therefore to show that ${\rho}_{{B(Q)}/L} \circ
\wt{\partial} = 0$.
But this is shown in 
\cite[\S~3.7.4]{Kato-Saito-2} (see the third paragraph on
page~281). Note that the map $\wt{\partial}$ and
${\rho}_{{B(Q)}/L}$ are same as the maps considered in 
\cite[\S~3.7.4]{Kato-Saito-2} because $Q$ is a maximal chain.
This proves the desired assertion and finishes the proof of the 
proposition when $d \ge 2$.

If $d = 1$, we can use a 1-dimensional variant of \lemref{lem:Q-chain-nor}
(whose proof is straightforward) to assume again that $X$ is normal. 
The identity $\rho_{U/X} \circ \partial = 0$ is then classical.
\end{proof}

\begin{remk}\label{remk:KS-Kerz-rec}
If we assume $U$ to be regular in \thmref{thm:Rec-main}, then
we can apply \thmref{thm:Kerz-global}. In this case, a reciprocity map 
$\wt{C}_{U/X} \to \pi^{\ab}_1(U)$ was constructed by Kato-Saito
\cite[\S~3]{Kato-Saito-2}. By composing with $C_{U/X} \to \wt{C}_{U/X}$,
we obtain another reciprocity map
$\rho^{KS}_{U/X} \colon C_{U/X} \to  \pi^{\ab}_1(U)$.
We shall show in \lemref{lem:Agreement*} that this map agrees with
$\rho_{U/X}$ of \thmref{thm:Rec-main}.
\end{remk}

Our next goal is to show that the reciprocity map is continuous.

\begin{prop}\label{prop:Rec-main-cont}
The map $\rho_{U/X} \colon C_{U/X} \to \pi^{\ab}_1(U)$ is continuous with
respect to the profinite topology on $\pi^{\ab}_1(U)$.
\end{prop}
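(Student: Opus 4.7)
The plan is to reduce continuity on the idele class group to continuity of each summand in the idele group, and then to invoke \propref{prop:Kato-rec-cont} together with the finiteness of the set $\sV(P)$ of $d$-DV's dominating a given maximal Parshin chain. Since $C_{U/X}$ carries the quotient topology from $I_{U/X}$, it suffices to prove that the map $\rho_{U/X} \colon I_{U/X} \to \pi^{\ab}_1(U)$ of ~\eqref{eqn:Kato-rec-4} is continuous. The target $\pi^{\ab}_1(U)$ is a profinite, hence topological, abelian group with subgroup topology, and $I_{U/X}$ carries its direct sum topology. By \lemref{lem:DST-univ}, continuity of $\rho_{U/X}$ reduces to continuity of each component map $\rho_{k(P)/U} \colon K^M_{d_X(P)}(k(P)) \to \pi^{\ab}_1(U)$, where $P$ runs over Parshin chains on $(U \subset X)$.

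First I would dispose of the non-maximal case: by the topology convention of \S~\ref{sec:Kerz}, if $P$ is a non-maximal Parshin chain on $(U \subset X)$, then $K^M_{d_X(P)}(k(P))$ carries the discrete topology and continuity of $\rho_{k(P)/U}$ is automatic. Hence only maximal Parshin chains matter.

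For a maximal $P = (p_0,\ldots,p_d)$ with $p_d \in U$, recall from \lemref{lem:Kato-rec-1} and its construction that $\rho_{k(P)/U}$ factors as
\[
K^M_{d}(k(P)) \xrightarrow{\rho_{k(P)}} \pi^{\ab}_1(\Spec(k(P))) \to \pi^{\ab}_1(U(P)_n) \to \pi^{\ab}_1(U),
\]
where the last two arrows are push-forwards along morphisms of schemes and are therefore continuous for the profinite topology. Thus it remains to prove continuity of $\rho_{k(P)}$. By \lemref{lem:P-valuation}, the set $\sV(P)$ is finite and there is a canonical isomorphism $k(P) \cong \prod_{V \in \sV(P)} Q(V^h)$, which induces a decomposition $K^M_{d}(k(P)) \cong \bigoplus_{V \in \sV(P)} K^M_{d}(Q(V^h))$ that is a homeomorphism for the canonical topology on the left and the product/direct-sum topology on the right (since $\sV(P)$ is finite, these coincide). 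Under this identification $\rho_{k(P)} = \sum_{V \in \sV(P)} \rho_{Q(V^h)}$, and each $Q(V^h)$ is a $d$-dimensional Henselian local field, so each $\rho_{Q(V^h)}$ is continuous by \propref{prop:Kato-rec-cont}. A finite sum of continuous homomorphisms into a topological abelian group is continuous, proving continuity of $\rho_{k(P)}$.

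Combining these two cases, each component map $\rho_{k(P)/U}$ is continuous, whence $\rho_{U/X} \colon I_{U/X} \to \pi^{\ab}_1(U)$ is continuous by \lemref{lem:DST-univ}. Passing to the quotient topology, the factorization $\rho_{U/X} \colon C_{U/X} \to \pi^{\ab}_1(U)$ provided by \thmref{thm:Rec-main} is continuous as required. The only subtlety is the bookkeeping for non-maximal chains and the finiteness of $\sV(P)$ (so that the direct sum topology agrees with the product topology on the finite product $\prod_{V \in \sV(P)} K^M_{d}(Q(V^h))$); the actual analytic content is fully contained in \propref{prop:Kato-rec-cont}.
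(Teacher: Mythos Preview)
Your proof is correct and follows essentially the same route as the paper's own argument: reduce to $I_{U/X}$ via the quotient topology, apply \lemref{lem:DST-univ} to reduce to each summand, note that the push-forward maps on $\pi^{\ab}_1$ are continuous by functoriality, and finish with \propref{prop:Kato-rec-cont} on each Henselian local field factor $Q(V^h)$. The only difference is that you spell out the (trivial) non-maximal case and the finiteness of $\sV(P)$ explicitly, whereas the paper leaves these implicit.
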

\begin{proof}
Since $C_{U/X}$ has the quotient topology induced by the direct sum
topology of $I_{U/X}$, it suffices to show using \lemref{lem:DST-univ}
that for every Parshin chain $P$ on $(U \subset X)$, the map
$K^M_{d_X(P)}(k(P)) \to \pi^{\ab}_1(U)$ is continuous.
Since $Y \mapsto \pi^{\ab}_1(Y)$ is a functor from schemes to
profinite topological groups together with continuous homomorphisms
(e.g., see \cite[Remark~5.5.3]{Szamuely}), it suffices to show
using ~\eqref{eqn:Rec-0} that the map
$K^M_d(F) \to \Gal(F^{\ab}/F)$ is continuous for every $d$-dimensional
Henselian local field $F$. But this follows from \propref{prop:Kato-rec-cont}.
\end{proof}

\subsection{Reciprocity map for $C_{K/X}$}\label{sec:Rec-K}
Using \thmref{thm:Rec-main}, we can define the reciprocity for the
function field of $X$ as follows.
Recall that $K$ denotes the function field of $X$.
Since the map $\rho_{U/X} \colon C_{U/X} \to \pi^{\ab}_1(U)$ is clearly
compatible with the inclusions $U' \subset U$ of open subsets of $X$,
taking the projective limit of $\rho_{U/X}$ over all open subsets of $X$,
we obtain the following reciprocity for $K$.

\begin{cor}\label{cor:Rec-K-main}
There exists a continuous reciprocity homomorphism
\[
\rho_{K/X} \colon C_{K/X} \to \Gal(K^{\ab}/K).
\]
\end{cor}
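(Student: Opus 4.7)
The plan is to obtain $\rho_{K/X}$ as the inverse limit of the reciprocity maps $\rho_{U/X}$ from \thmref{thm:Rec-main}, where $U$ ranges over the cofiltered system of dense open subschemes of $X$. Since $C_{K/X}$ is defined in \defref{defn:Idele-fn-field} as ${\varprojlim}_{U \subset X} C_{U/X}$ with its inverse-limit topology, and since $\Spec(K) = \varprojlim_U U$ yields $\Gal(K^{\ab}/K) \cong \pi^{\ab}_1(\Spec K) \cong \varprojlim_U \pi^{\ab}_1(U)$ in the category of profinite topological groups, it suffices to check that $\{\rho_{U/X}\}_U$ is a morphism of cofiltered systems and then pass to the limit.

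First, I would verify compatibility: for every inclusion $j \colon U' \hookrightarrow U$ of dense open subschemes of $X$, the square
\[
\xymatrix@C.8pc{
C_{U'/X} \ar[r]^-{\rho_{U'/X}} \ar[d]_-{j_*} & \pi^{\ab}_1(U') \ar[d]^-{j_*} \\
C_{U/X} \ar[r]^-{\rho_{U/X}} & \pi^{\ab}_1(U)
}
\]
commutes. The left vertical arrow exists by \propref{prop:Funtorial*} applied to the admissible morphism $j$, while the right vertical arrow is the usual functoriality of the \'etale fundamental group. Commutativity reduces, by the very construction of $\rho_{U/X}$ in \S\ref{sec:Rec-U}, to the corresponding statement for each Parshin chain $P$ on $(U' \subset X)$; this in turn reduces via \lemref{lem:P-valuation} to the commutativity of Kato's local reciprocity with norm, residue, and push-forward, which are precisely the contents of \propref{prop:Kato-**} together with \lemref{lem:Gen-commute} and \lemref{lem:Via-residue}. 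All of the compatibility work is therefore already done in the body of \S\ref{sec:Rec-U}.

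Next, I would invoke the universal property of the inverse limit. The family $\{\rho_{U/X}\}_U$ is then a compatible family of continuous homomorphisms, where continuity of each member is \propref{prop:Rec-main-cont}. Passing to inverse limits in the category of topological abelian groups yields a canonical continuous homomorphism
\[
\rho_{K/X} \colon C_{K/X} = {\varprojlim}_{U \subset X} C_{U/X} \longrightarrow {\varprojlim}_{U \subset X} \pi^{\ab}_1(U) \cong \Gal(K^{\ab}/K).
\]
The identification of the target with $\Gal(K^{\ab}/K)$ follows because finite \'etale covers of $\Spec K$ arise from finite \'etale covers of some dense open $U \subset X$, and this is compatible with the respective profinite topologies.

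There is no real obstacle here; the entire content is bundled in the earlier results. The only point worth a sentence of verification is the identification ${\varprojlim}_U \pi^{\ab}_1(U) \cong \Gal(K^{\ab}/K)$, which is a standard fact about \'etale fundamental groups of schemes written as cofiltered limits with affine transition maps, together with the general fact that an inverse limit of continuous surjections of profinite groups is again profinite with the inverse-limit topology. Once this is in place, \corref{cor:Rec-K-main} is immediate.
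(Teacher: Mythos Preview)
Your proposal is correct and follows essentially the same approach as the paper: the paper simply remarks that $\rho_{U/X}$ is ``clearly compatible with the inclusions $U' \subset U$'' and then takes the projective limit over all dense open $U \subset X$. You have supplied more detail on why the compatibility holds (via the construction in \S\ref{sec:Rec-U} and the lemmas on norm and residue) and on the identification $\varprojlim_U \pi^{\ab}_1(U) \cong \Gal(K^{\ab}/K)$, but the underlying argument is the same.
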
  

Let $\wt{C}_{K/X} = {\underset{U \subset X}\varprojlim}
\wt{C}_{U/X} = {\underset{D \subset X}\varprojlim} C(X,D)$,
where the first limit is over all open subschemes of $X$ and the second is
over all closed subschemes of $X$.
We then get a sequence of maps
\[
C_{K/X} \to \wt{C}_{K/X} \to \wt{C}_{U/X} \to  C(X,D),
\]
where all except the first arrow are surjective if we let $U = X \setminus D$.
Furthermore, there is a factorization
\begin{equation}\label{eqn:Rec-K-main-0}
C_{K/X} \to \wt{C}_{K/X} \xrightarrow{\wt{\rho}_{K/X}} \Gal(K^{\ab}/K)
\end{equation}
of $\rho_{K/X}$. Moreover, $\wt{\rho}_{K/X}$ is continuous by
\lemref{lem:Disc}. It is also easy to see that $\rho_{K/X}$ coincides
with the classical reciprocity for the function field of a curve if $d =1$.

\begin{remk}\label{remk:Independence}
It will be interesting to know if $C_{K/X}$ and $\rho_{K/X}$ are independent
of $X$ when $d \ge 2$.
\end{remk}

\section{Ramification filtrations}\label{sec:Filt*}
In this section, we recall some filtrations on the absolute Galois
group of a Henselian discrete valuation field and prove some properties of 
these filtrations. The set-up is the following.

Let $K$ be a Henselian discrete valuation field of 
characteristic $p > 0$ with ring of integers
$\sO_K$, maximal ideal $\fm_K \neq 0$ and residue field $\ff$.
Note that our assumption implies that $K$ is infinite.
Let $K^{\ab}$ denote the maximal
abelian extension of $K$ in $\ov{K}$. 
Let $G_K$ denote the absolute Galois group of $K$ and 
$G^{\ab}_K$ its abelianization. 
Let $K^{\tr}$ denote the maximal abelian extension of $K$ which is tamely
ramified (the inertia group of every finite subextension 
has order prime to the residue characteristic). 
We shall write $H^{q}(A) := H^{q}_\et(X, {\Q}/{\Z}(q-1))$ for $q \ge 1$
and Noetherian scheme $X = \Spec(A)$.
In particular, $H^1(K)$  is the Pontryagin dual of the 
profinite group $G_K$.

\subsection{The Abbes-Saito filtration}\label{sec:AS-0**}
By the work of Abbes-Saito \cite{Abbes-Saito} (see \cite[Theorem~3.1]{Saito20}
for property (3) below), there is
an upper numbering non-logarithmic decreasing filtration
$\{G^{(r)}_K\}_{r \in \Q_{\ge -1}}$ of $G_K$ by closed normal subgroups
which satisfies the following.
\begin{enumerate}
\item
$G^{(-1)}_K = G_K$.
\item
$G^{(0)}_K$ is the inertia subgroup $\Gal({\ov{K}}/{K^{\nr}})$ of $G_K$.
\item
$G^{(1)}_K = \Gal({\ov{K}}/{K^{\tr}})$ is the wild inertia subgroup of $G_K$.
\item 
If $\ff$ is perfect, then $G^{(i)}_K$ coincides with $G^{(i)}_{K, \rm cl}$
for every $i \ge 0$, where the latter is the classical upper numbering 
ramification filtration of $G_K$ \cite[Chapter~IV]{Serre-LF}.
\end{enumerate}
We note here that we have shifted the original filtration of \cite{Abbes-Saito}
by one to the left in order to make it compatible 
with the classical filtration. 

For any integer $n \ge -1$, we let
$\Fil^{\as}_n H^{1}(K) = \{\chi \in H^{1}(K)| \chi(a) = 0 \ \forall \
a \in G^{(n)}_K\}$.
This yields an increasing filtration
\begin{equation}\label{eqn:AS-0}
0 = \Fil^{\as}_{-1} H^{1}(K) \subset \Fil^{\as}_0 H^{1}(K) \subset 
\Fil^{\as}_1 H^{1}(K) \subset \cdots
\end{equation}
of $H^{1}(K)$. We shall denote this filtration by $\Fil^{\as}_\bullet H^{1}(K)$. 
Clearly, we have $\Fil^{\as}_0 H^{1}(K) = H^{1}(\sO_K)$ is the subgroup
of unramified characters and $\Fil^{\as}_1 H^{1}(K)$ is the subgroup of
tamely ramified characters.

\subsection{The Matsuda filtration}\label{sec:Mat-0}
For a Noetherian $K$-scheme $X$, let $\{W_r\Omega^{\bullet}_X\}_{r \ge 1}$
be the de Rham-Witt complex recalled in \S~\ref{sec:KR}.
For any integers $r \ge 1$ and $q \ge 0$, there is a commutative diagram of 
short exact sequences of {\'e}tale sheaves
\begin{equation}\label{eqn:DRC}
\xymatrix@C.8pc{
0 \ar[r] & W_r\Omega^q_{X, \log} \ar[r] \ar[d]_-{\un{p}} &
W_{r}\Omega^{q}_X \ar[d]_-{V} \ar[r]^-{1 - F} & 
\frac{W_r\Omega^{q}_X}{d V^{r-1}\Omega^{q-1}_X} \ar[d]^-{V} \ar[r] & 0 \\
0 \ar[r] & W_{r+1}\Omega^q_{X, \log} \ar[r] &
W_{r+1}\Omega^{q}_X \ar[r]^-{1 - F} & 
\frac{W_{r+1}\Omega^{q}_X}{d V^{r}\Omega^{q-1}_X} \ar[r] & 0.}
\end{equation}

Suppose now that $X = \Spec(K)$. Then $W_{r}\Omega^{q}_X$ is acyclic and
the long cohomology exact sequence associated to the above is of the
form
\begin{equation}\label{eqn:DRC-0}
0 \to  H^{q}(K, {\Z}/{p^r}(q)) \to W_r\Omega^q_K 
\xrightarrow{1 - F} \frac{W_r\Omega^{q}_K}{d V^{r-1}\Omega^{q-1}_K} 
\xrightarrow{\partial} H^{q+1}(K, {\Z}/{p^r}(q)) \to 0.
\end{equation}
Note that we have used here the fact that the kernel
$B_{r-1}\Omega^q_K$ of $V^{r-1} \colon \Omega^q_K \to W_{r}\Omega^q_K$
is a $K$-module and the differential 
$d \colon W_r\Omega^q_K \to W_{r}\Omega^{q+1}_K$ is $W(K)$-linear via the
Frobenius.

Let $\pi_r \colon W_r\Omega^q_K \surj 
\frac{W_r\Omega^{q}_K}{d V^{r-1}\Omega^{q-1}_K}$ denote the quotient map,
and let $\tau_r \colon H^{q+1}(K, {\Z}/{p^r}(q)) \to H^{q+1}(K)$ denote the 
canonical map into the inductive limit via the structure map of
the projective system $\{(W_r\Omega^q_{K, \log}, \un{p})\}_{r \ge 0}$.
We then get the maps
\begin{equation}\label{eqn:DRC-1}
W_r\Omega^q_K \xrightarrow{\partial \circ \pi_r} 
H^{q+1}(K, {\Z}/{p^r}(q)) \xrightarrow{\tau_r} H^{q+1}(K).
\end{equation}
We denote both $\partial \circ \pi_r$ and $\tau_r \circ
\partial \circ \pi_r$ by $\delta_r$.
It follows from ~\eqref{eqn:DRC} that the diagram
\begin{equation}\label{eqn:DRC-2}
\xymatrix@C.8pc{
W_r\Omega^q_K \ar[r]^-{V} \ar[dr]_-{\delta_r} & W_{r+1}\Omega^q_K 
\ar[d]^-{\delta_{r+1}} \\
& H^{q+1}(K)}
\end{equation}
is commutative.

We now specialize to the case $q = 0$. Then
~\eqref{eqn:DRC-0} is of the form
\begin{equation}\label{eqn:DRC-3}
0 \to {\Z}/{p^r} \to W_r(K) \xrightarrow{1 - F} W_r(K)
\xrightarrow{\partial} H^1_{\et}(K, {\Z}/{p^r}) \to 0,
\end{equation}
where $F((a_{r-1}, \ldots , a_{0})) = (a^p_{r-1}, \ldots , a^p_{0})$.
We write the Witt vector $(a_{r-1}, \ldots , a_{0})$ as $\un{a}$ in short.
Let $v_K \colon K^{\times} \to \Z$ be the normalized valuation.
For an integer $m \ge 1$, let
${\rm ord}_p(m)$ denote the $p$-adic order of $m$ and
let $r' = \min(r, {\rm ord}_p(m))$. We let ${\rm ord}_p(0) = - \infty$.

For $m \ge 0$, we let
\begin{equation}\label{eqn:DRC-4}
\Fil^{\bk}_m W_r(K) = \{\un{a}| p^{i}v_K(a_i) \ge - m\}; \ \mbox{and}
\end{equation}
\begin{equation}\label{eqn:DRC-5}
\Fil^{\ms}_m W_r(K) = \Fil^{\bk}_{m-1} W_r(K) + 
V^{r-r'}(\Fil^{\bk}_{m} W_{r'}(K)).
\end{equation}
Clearly, $\Fil^{\bk}_0 W_r(K) = \Fil^{\ms}_0 W_r(K) = W_r(\sO_K)$.
We let $\Fil^{\bk}_{-1} W_r(K) = \Fil^{\ms}_{-1} W_r(K) = 0$.


For $m \geq 1$, we let
\begin{equation}\label{eqn:DRC-6}
\Fil^{\bk}_{m-1} H^1(K) = H^1(K)\{p'\} \bigoplus {\underset{r \ge 1}\bigcup}
\delta_r(\Fil^{\bk}_{m-1} W_r(K)); \ \ \mbox{and}
\end{equation}
\[
\Fil^{\ms}_m H^1(K) = H^1(K)\{p'\} \bigoplus {\underset{r \ge 1}\bigcup}
\delta_r(\Fil^{\ms}_m W_r(K)).
\]
Moreover, we let 
$\Fil^{\ms}_0 H^1(K) = \Fil^{\bk}_{-1} H^1(K) = H^1(\sO_K)$,
 the subgroup
of unramified characters.
The filtrations $\Fil^{\bk}_\bullet H^1(K)$ and $\Fil^{\ms}_\bullet H^1(K)$
are due to Brylinski-Kato \cite{Kato89} and Matsuda \cite{Matsuda},
respectively. We shall use the following results related to various
filtrations of $H^1(K)$.

\begin{thm}\label{thm:Fil-main}
The three filtrations defined above satisfy the following relations.
\begin{enumerate}
\item
$\Fil^{\as}_m H^1(K) = \Fil^{\ms}_m H^1(K)$ for all $m \ge -1$. 
\item
$\Fil^{\ms}_m H^1(K) \subset  \Fil^{\bk}_m H^1(K) \subset \Fil^{\ms}_{m+1} H^1(K)$
for all $m \ge -1$. 
\item
$H^1(K) = {\underset{m \ge 0}\bigcup} \Fil^{\bk}_m H^1(K) 
= {\underset{m \ge 0}\bigcup} \Fil^{\ms}_m H^1(K) =
{\underset{m \ge 0}\bigcup} \Fil^{\as}_m H^1(K)$.
\end{enumerate}
\end{thm}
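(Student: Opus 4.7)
The plan is to treat the three parts separately, with (2) and (3) being direct from definitions and Artin--Schreier--Witt theory respectively, and with (1) being the substantive comparison that reduces via completion to Matsuda's original theorem.

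For part (2), I would first establish at the level of Witt vectors the chain of inclusions
\[
\Fil^{\bk}_{m-1} W_r(K) \subset \Fil^{\ms}_m W_r(K) \subset \Fil^{\bk}_m W_r(K) \subset \Fil^{\ms}_{m+1} W_r(K).
\]
The innermost inclusions follow because the inequality $p^i v_K(a_i) \ge -m$ is preserved (indeed strengthened) by Verschiebung, so that $V^{r-r'}\bigl(\Fil^{\bk}_m W_{r'}(K)\bigr) \subset \Fil^{\bk}_m W_r(K)$; the outermost inclusion is immediate from the definition ~\eqref{eqn:DRC-5}. Applying $\delta_r$, taking the union over $r \ge 1$, and adjoining $H^1(K)\{p'\}$ gives the claimed containment in $H^1(K)$.

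For part (3), the Artin--Schreier--Witt exact sequence ~\eqref{eqn:DRC-3} realizes every class in $H^1_\et(K, \Z/p^r)$ as $\delta_r(\un{a})$ for some $\un{a} \in W_r(K)$, and the compatibility ~\eqref{eqn:DRC-2} shows that these classes assemble into a surjection onto $H^1(K)\{p\}$. Each Witt vector has finitely many entries, each of finite valuation in $K$, so lies in $\Fil^{\bk}_m W_r(K)$ for some $m$; together with $H^1(K)\{p'\} \subset \Fil^{\bk}_0 H^1(K)$ this gives the exhaustion $\bigcup_m \Fil^{\bk}_m H^1(K) = H^1(K)$. The corresponding exhaustion statements for $\Fil^{\ms}_\bullet$ and $\Fil^{\as}_\bullet$ then follow formally from parts (2) and (1).

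The substantive content is part (1), which I would reduce to the complete case. Since $\sO_K$ is Henselian, completion induces an isomorphism $G_{\wh K} \xrightarrow{\cong} G_K$ and hence $H^1(K) \xrightarrow{\cong} H^1(\wh K)$. The Abbes--Saito filtration is intrinsic and transports under this identification by the functorial construction of \cite{Abbes-Saito}. The Matsuda filtration is visibly compatible with $W_r(K) \hookrightarrow W_r(\wh K)$ in the forward direction, since it is defined by the normalized valuation on $K$ which agrees with the restriction of the valuation on $\wh K$. The reverse compatibility---that every $\chi \in \Fil^{\ms}_m H^1(\wh K)$ is realized by some $\un{a} \in \Fil^{\ms}_m W_r(K)$ under the isomorphism $H^1(K) \cong H^1(\wh K)$---is where one needs an approximation argument, using the density of $K$ in $\wh K$ and the fact that Artin--Schreier--Witt coverings are \'etale, so that the Henselian lifting property produces a Witt vector over $K$ approximating any given lift over $\wh K$ modulo $(1-F)W_r(\wh K)$ without increasing the Matsuda level. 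Once both filtrations are seen to be preserved under completion, Matsuda's equality $\Fil^{\as}_m H^1(\wh K) = \Fil^{\ms}_m H^1(\wh K)$ in the complete case transports back to $K$, completing (1).

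The main obstacle will be the surjectivity verification for the Matsuda filtration under completion: producing a Witt vector over $K$ of the correct Matsuda level representing a given character. The careful point is that the decomposition $\Fil^{\ms}_m W_r = \Fil^{\bk}_{m-1} W_r + V^{r-r'}(\Fil^{\bk}_m W_{r'})$ must be respected by the approximation, so one cannot simply truncate coefficients. The compatibility of the Abbes--Saito filtration under completion, while in principle standard, also deserves care when the residue field is imperfect, since the rigid-geometric definition in \cite{Abbes-Saito} is not obviously insensitive to passage between $K$ and $\wh K$ until one unwinds the construction.
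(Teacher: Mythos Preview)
Your treatment of (2) and (3) is correct and fills in what the paper dismisses as ``clear'' (for (2)) and attributes to Kato \cite[Lemma~2.2]{Kato89} (for the first equality in (3)), deducing the remaining equalities from (1) and (2) exactly as the paper does.

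For (1), the reduction to completion is sound, and the approximation argument you outline for the Matsuda filtration works: a sufficiently close approximant $\un{b} \in W_r(K)$ to $\un{a} \in \Fil^{\ms}_m W_r(\wh{K})$ lies in $\Fil^{\ms}_m W_r(K)$, and $\delta_r(\un{a}) - \delta_r(\un{b})$ is unramified, hence already in $\Fil^{\ms}_0 H^1(K) \subset \Fil^{\ms}_m H^1(K)$. However, there is a genuine misattribution at the endpoint: the equality $\Fil^{\as}_m = \Fil^{\ms}_m$ in the complete case is \emph{not} Matsuda's theorem. Matsuda's paper \cite{Matsuda} dates from 1997 and introduces the filtration $\Fil^{\ms}_\bullet$, but the Abbes--Saito filtration only appears in \cite{Abbes-Saito} in 2002, so Matsuda could not have compared the two. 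The equality you need is due to Abbes--Saito \cite[Th{\'e}or{\`e}me~9.10]{Abbes-Saito-1} (for $m \ge 2$), Yatagawa \cite[Theorem~0.1]{Yatagawa}, and Saito \cite[Corollary~3.3]{Saito20}; the paper's proof of (1) consists entirely of citing these three references with no further argument. So your strategy is correct and in fact makes explicit the passage to completion that the paper leaves implicit, but the citation at the end must be to these works rather than to Matsuda.
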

\begin{proof}
The first part is independently due to Abbes-Saito 
\cite[Th{\'e}or{\`e}me~9.10]{Abbes-Saito-1}
(for $m \ge 2$), Yatagawa \cite[Theorem~0.1]{Yatagawa} and
Saito \cite[Corollary~3.3]{Saito20}. The second part is clear. The first
equality in part (3) is due to Kato \cite[Lemma~2.2]{Kato89} and the
remaining ones follow from (1) and (2).
\end{proof}

\subsection{Filtrations of $H^1(K)$ via Milnor $K$-theory}
\label{sec:Fil-M}
Let $K$ be a Henselian discrete valuation field as above.
The above filtrations can be described in terms of Milnor $K$-theory
which we shall use. In order to explain this, 
we need to recall some filtrations of $K^M_*(K)$.
For any integers $m \ge 0$ and $n \ge 1$, let $U_m K^M_n(K)$ be
the subgroup $\{U_m(K), K^{\times}, \ldots , K^{\times}\}$ of $K^M_n(K)$.
We let $U'_mK^M_n(K)$ be the subgroup $\{U_m(K), \sO^{\times}_K, \ldots ,
\sO^{\times}_K\}$ of $K^M_n(K)$. 

\begin{lem}\label{lem:Milnor-K-filt}
For every $m \ge 0$ and $n \ge 1$, there are inclusions
\[
U_{m+1}K^M_n(K) \subseteq U'_mK^M_n(K) \subseteq U_{m}K^M_n(K).
\]
\end{lem}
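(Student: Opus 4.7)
The plan is to handle the two inclusions separately. The right-hand inclusion $U'_m K^M_n(K) \subseteq U_m K^M_n(K)$ is immediate from $\sO_K^{\times} \subseteq K^{\times}$: every generating symbol of the left is a generating symbol of the right. The substance is the first inclusion. To prove it, I would first reduce to a standard shape: fix a uniformizer $\pi$, factor each entry $b_i \in K^{\times}$ of a generator $\{a, b_2, \ldots, b_n\}$ of $U_{m+1} K^M_n(K)$ as $b_i = \pi^{e_i} u_i$ with $u_i \in \sO_K^{\times}$, and expand multilinearly. Using antisymmetry together with the identity $\{\pi, \pi\} = \{-1, \pi\}$ (which follows from $\{\pi, -\pi\} = 0$), I can arrange that $\pi$ appears at most once among the slots $2, \ldots, n$. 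The case of no $\pi$ puts the symbol in $\{U_{m+1}(K), \sO_K^{\times}, \ldots, \sO_K^{\times}\} = U'_{m+1} K^M_n(K) \subseteq U'_m K^M_n(K)$. The remaining case, after pushing out the unit factors in the later slots, reduces to the core $n = 2$ assertion: $\{a, \pi\} \in U'_m K^M_2(K)$ for every $a \in U_{m+1}(K)$.

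I would settle this core assertion in two cases. When $m = 0$, I invoke the Gersten exact sequence
\begin{equation*}
0 \to K^M_2(\sO_K) \to K^M_2(K) \xrightarrow{\partial} K^M_1(\ff) \to 0
\end{equation*}
for a Henselian discrete valuation ring: since $\partial\{a, \pi\} = \bar{a} = 1$ for $a \in U_1(K)$, the element $\{a, \pi\}$ lies in $K^M_2(\sO_K) = \{\sO_K^{\times}, \sO_K^{\times}\} = U'_0 K^M_2(K)$, where the identification of $K^M_2(\sO_K)$ with unit symbols uses \lemref{lem:Nice-Kerz}. When $m \ge 1$, I write $a = 1 + \pi^{m+1} y$ with $y \in \sO_K$ and introduce the auxiliary element $\alpha := 1 - \pi^m y (1-\pi) \in U_m(K)$. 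A direct calculation yields the factorization $1 - \alpha \pi = (1-\pi) \cdot a$; then the Steinberg relation $\{\alpha \pi, 1 - \alpha \pi\} = 0$, expanded bilinearly in both slots and combined with the vanishing $\{\pi, 1-\pi\} = 0$, produces the equality
\begin{equation*}
\{a, \pi\} = \{\alpha, 1-\pi\} + \{\alpha, a\}
\end{equation*}
in $K^M_2(K)$. Both terms on the right have first entry in $U_m(K)$ and second entry in $\sO_K^{\times}$ (note $a \in U_{m+1}(K) \subseteq \sO_K^{\times}$), so they belong to $U'_m K^M_2(K)$, completing the argument.

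The main obstacle to sidestep is that the naive Steinberg identity $\{a, -(a-1)\} = 0$ only yields $v_K(a-1) \cdot \{a, \pi\} \in U'_m K^M_2(K)$, and the integer coefficient $v_K(a-1)$ cannot in general be inverted in the abelian group $K^M_2(K)$ once $p$ divides it. The auxiliary element $\alpha$ replaces this two-term Steinberg relation with a three-term one: the unit factor $1 - \pi$ absorbs the troublesome integer and produces a genuine equality rather than a congruence modulo torsion. The slight depth shift from the given $a \in U_{m+1}(K)$ to the constructed $\alpha \in U_m(K)$ is exactly what matches the $m$-versus-$(m+1)$ gap asserted by the lemma, so the argument proceeds uniformly for all $m \ge 1$ without any case analysis on the residue characteristic.
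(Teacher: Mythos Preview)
Your proof is correct and follows the same overall architecture as the paper: both reduce to the core $n=2$ statement $\{a,\pi\}\in U'_mK^M_2(K)$ for $a\in U_{m+1}(K)$ by expanding multilinearly and collapsing repeated copies of $\pi$ via $\{\pi,\pi\}=\{-1,\pi\}$. The difference lies only in how the core case is dispatched.

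For $m\ge 1$, the paper sets $t=-u_1\pi^m$ and factors $a=1-\pi t=(1+v't)(1+v''t)$ with $v''=-1-\pi$ and $v'=(1+v''t)^{-1}$, then applies the identity $\{1+wt,t\}=-\{1+wt,-w\}$ twice. Your single auxiliary element $\alpha=1-\pi^m y(1-\pi)$ and the factorisation $1-\alpha\pi=(1-\pi)a$ achieve the same effect with one Steinberg expansion instead of two; the arguments are equivalent in spirit and length.

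For $m=0$ you invoke the Gersten sequence, whereas the paper intends its symbol manipulation to work uniformly for all $m\ge 0$. Your route is valid, but note that \lemref{lem:Nice-Kerz} is stated only for $X\in\Sch_k$, while \S\ref{sec:Filt*} allows an arbitrary Henselian discrete valuation field of characteristic $p$; the Gersten exactness you need nonetheless holds in this generality by Kerz \cite{Kerz09}, \cite{Kerz10}, so only the citation needs adjusting. The paper's uniform argument avoids this external input, at the cost of a borderline case at $m=0$ (when $\bar u_1=-1$ the element $1+v''t$ fails to be a unit) which requires a small extra step; your separation of $m=0$ sidesteps that edge case cleanly.
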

\begin{proof}
We only need to prove the first inclusion. We shall prove it by induction
on $n$. Since this inclusion is obvious for $n =1$, we can assume that 
$n \ge 2$. Let $\pi$ be a uniformizer of $K$.
We let $\alpha = \{1 + u_1\pi^{m+1}, u_2 \pi^{m_2}, \ldots , u_n \pi^{m_n}\}$,
where $u_1, \ldots , u_n \in \sO^{\times}_K$ and $m_2, \ldots , m_n \in \Z$.

Using the bilinearity and additivity properties of the Milnor $K$-theory,
we can assume that $u_2 = \cdots = u_n = 1$. By the same token,
we can also assume $m_2 = \cdots = m_n = 1$.
We can therefore write $\alpha = \{1 + u_1\pi^{m+1}, \pi, \ldots , \pi\}$.
By \cite[Chapter 1, \S~1]{Bass-Tate}, we have the relation
$\{\pi, \pi\} = \{\pi, (-1)(-\pi)\} = \{\pi, -1\} + \{\pi,  - \pi\} =
\{\pi, -1\}$.
Applying this relation and another relation $\{a, b\} = - \{b,a\}$ repeatedly,
we see that $\alpha = \{1 + u_1\pi^{m+1}, \pi\}  \cdot \beta$, where
$\beta \in K^M_{n-2}(\sO_K)$. Since $U'_mK^M_2(K) \cdot K^M_{n-2}(\sO_K)
\subset U'_mK^M_n(K)$, we can assume $n = 2$ and
$\alpha = \{1 + u_1\pi^{m+1}, \pi\}$.

Now, we let $t = -(u_1\pi^m)$, $v' = (1 + t(-1- \pi))^{-1}$ and
$v'' = -1 - \pi$. It is then clear that
$1 + v't, 1 + v''t \in U_m(K)$ and $(1 + v't)(1 + v''t) = 1 - \pi t$.
We therefore get
\[
\begin{array}{lll}
\{1 + u_1\pi^{m+1}, \pi\} & = & \{1 + (u_1\pi^{m})\pi, \pi\} 
= \{1 + (u_1\pi^{m})\pi, -(u_1\pi^m)\} = \{1 - \pi t, t\} \\
& = & \{(1 + v't)(1 + v''t), t\} 
=  \{1 + v't, t\} +   \{1 + v''t, t\} \\
& = &  \{1 + v't, -v'\} + \{1 + v''t, - v''\} \\
& \in & U'_mK^M_2(K).
\end{array}
\]
The lemma is now proven.
\end{proof}

Let us now assume that $K$ is a $d$-dimensional Henselian local field.
Recall from ~\eqref{eqn:Pairing-2} that there is a cup product pairing
\begin{equation}\label{eqn:Pairing-local}
K^M_d(K) \times H^1(K) \xrightarrow{\cup} H^{d+1}(K); \ \
(\alpha, \beta) \mapsto \{\alpha, \beta\}.
\end{equation}

We have the following characterization of $\Fil^{\bk}_\bullet H^1(K)$ and 
$\Fil^{\ms}_\bullet H^1(K)$ in terms of Milnor $K$-theory.

\begin{thm}\label{thm:Filt-Milnor-*}
Let $\chi \in H^1(K)$ be a character. Then the following hold.
\begin{enumerate}
\item
For every integer $m \ge 0$, we have that 
$\chi \in \Fil^{\bk}_m H^1(K)$ if and only if $\{\alpha, \chi\} = 0$
for all $\alpha \in U_{m+1} K^M_d(K)$.
\item
For every integer $m \ge 1$, we have that $\chi \in \Fil^{\ms}_m H^1(K)$ if and 
only if $\{\alpha, \chi\} = 0$ for all $\alpha \in U'_{m} K^M_d(K)$.
\end{enumerate}
\end{thm}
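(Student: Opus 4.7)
The plan is to reduce to the $p$-primary case and then invoke Kato's explicit formula for the cup product pairing, combined with the characterization of the filtrations via pole orders of Witt vectors. First, decompose $\chi = \chi' + \chi''$ with $\chi' \in H^1(K)\{p'\}$ and $\chi''$ a $p$-primary character. By iterating \propref{prop:Kato-**}(3) along the chain of Henselian discrete valuation fields exhibiting $K$ as a $d$-dimensional local field, the symbol $\{\alpha, \chi'\}$ is computed by iterated residues; since $\chi'$ is tame, this vanishes for $\alpha \in U_1 K^M_d(K) \supseteq U'_m K^M_d(K) \supseteq U_{m+1} K^M_d(K)$. As $\chi'$ lies in both $\Fil^{\bk}_m H^1(K)$ and $\Fil^{\ms}_m H^1(K)$ by definition~\eqref{eqn:DRC-6}, the theorem reduces to the $p$-primary case $\chi = \delta_r(\un{a})$ for some $r \ge 1$ and $\un{a} \in W_r(K)$.

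For this case, I would use the explicit formula of Kato (see \cite[\S~3]{Kato89}): for $\alpha = \{b_1, \ldots, b_d\} \in K^M_d(K)$,
\[
\alpha \cup \delta_r(\un{a}) \;=\; \delta_r\bigl(\un{a} \cdot d\log[b_1] \wedge \cdots \wedge d\log[b_d]\bigr) \;\in\; H^{d+1}(K),
\]
where the right side is interpreted via the connecting map from~\eqref{eqn:DRC-0}. Using \lemref{lem:Milnor-K-filt} and the Steinberg-relation manipulations in its proof, every generator of $U_{m+1} K^M_d(K)$ (resp.\ $U'_m K^M_d(K)$) reduces to a symbol of the form $\{1 + c\pi^{m+1}, u_2, \ldots, u_d\}$ (resp.\ $\{1 + c\pi^{m}, u_2, \ldots, u_d\}$) with each $u_j$ either a unit or a uniformizer. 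For the vanishing direction of (1), if $\un{a} \in \Fil^{\bk}_m W_r(K)$ (so $p^i v_K(a_i) \ge -m$) and $\alpha \in U_{m+1} K^M_d(K)$ contributes a pole of order at most $m+1$ through $d\log(1+c\pi^{m+1})$, then the product $\un{a} \cdot d\log[b_1] \wedge \cdots \wedge d\log[b_d]$ lies in $W_r\Omega^d_{\sO_K}$, hence its class in $H^{d+1}(K)$ vanishes by the exactness of~\eqref{eqn:DRC-0}. For (2), the analogous computation shows that for $\un{a} \in \Fil^{\ms}_m W_r(K)$ and $\alpha \in U'_m K^M_d(K)$, the product lies either in $W_r\Omega^d_{\sO_K}$ or, precisely because of the summand $V^{r-r'}\Fil^{\bk}_m W_{r'}$ in~\eqref{eqn:DRC-5}, in $dV^{r-1}\Omega^{d-1}_K$, which is modded out before applying $\delta_r$ in~\eqref{eqn:DRC-1}.

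For the converse non-vanishing directions, I would construct explicit test symbols. Fixing a system of parameters $(\pi_1 = \pi, \pi_2, \ldots, \pi_d)$ exhibiting the $d$-dimensional local structure of $K$, if $\un{a}$ fails the required filtration condition then the iterated residue of the explicit form, computed against $\alpha = \{1 + c\pi^{m+1}, \pi_2, \ldots, \pi_d\}$ (resp.\ $\{1 + c\pi^{m}, \pi_2, \ldots, \pi_d\}$) for a suitably chosen $c \in \sO_K$, produces a nontrivial class in $H^{d+1}(K) \cong \Q/\Z$ via~\eqref{eqn:Residue-iso}; these constructions are carried out in \cite[Theorem~3.2]{Kato89} for part (1) and in \cite{Matsuda} for part (2). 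The main obstacle is the bookkeeping in the de Rham-Witt complex: one must verify that the gap between $U_{m+1} K^M_d(K)$ and $U'_m K^M_d(K)$ (which differ by one slot's valuation, cf.\ \lemref{lem:Milnor-K-filt}) corresponds precisely to the gap between $\Fil^{\bk}_m W_r(K)$ and $\Fil^{\ms}_m W_r(K)$ (which differ by the Verschiebung summand in~\eqref{eqn:DRC-5}). This correspondence is the content of Matsuda's refinement of Kato's pairing theorem, and once it is verified at the level of the explicit symbol formula, both parts of the theorem follow uniformly.
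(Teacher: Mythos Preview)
Your approach for part (1) is essentially what the paper does: it simply cites Kato \cite[Proposition~6.5]{Kato89}, and your sketch of the explicit pairing formula is the mechanism behind Kato's argument. So for (1) you are aligned with the paper, if somewhat informal.

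For part (2), however, there is a genuine gap. You defer the converse direction to \cite{Matsuda}, but as the paper explicitly notes, Matsuda's argument in \cite[Proposition~3.2.7]{Matsuda} carries a restriction to $p \neq 2$; the whole point of the paper's proof is to remove this restriction. Your proposal does not supply the missing $p=2$ argument. Moreover, your vanishing claim --- that for $\un{a} \in \Fil^{\ms}_m W_r(K)$ and $\alpha \in U'_m K^M_d(K)$ the product lands in $W_r\Omega^d_{\sO_K}$ or in $dV^{r-1}\Omega^{d-1}_K$ --- is precisely the delicate point, and your sketch does not justify it. Pole-counting alone is not enough in the de Rham--Witt complex; the interaction of $V^{r-r'}$ with $d\log$ does not reduce to a valuation estimate.

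The paper takes a genuinely different route for (2). Rather than computing the pairing directly, it passes to the auxiliary ring $A = (\sO_K[T])^h_{\fm_K[T]}$ and tests $\chi$ against the single universal element $1 + \pi^m T$ inside $V^2(A)$. It introduces an intermediate filtration ${}'\Fil^{\ms}_m H^1(K)$ defined by the conditions $\{\chi, 1+\pi^{m+1}T\}=0$ and $\{\chi, 1+\pi^m T\} = \ov{\lambda}^2_\pi(T\alpha_\chi,0)$, and proves the chain $\Fil^\dagger_m = {}'\Fil^{\ms}_m = \Fil^{\ms}_m$. The first equality is the unconditional part of Matsuda's argument; the second is established by analyzing the graded piece $\gr^{\bk}_m H^1(K)$ via Kato's isomorphism $\phi_m$ with a subgroup $N \subset \Omega^1_{\ff} \oplus \ff$ and the Cartier operator, identifying $\phi_m(F_m) = B_{m_0}\Omega^1_\ff$ and matching generators. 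This structural argument is what works uniformly in all characteristics, and it is absent from your outline.
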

\begin{proof}
The first part is due to Kato \cite[Proposition~6.5]{Kato89}.
If $p \neq 2$, the second part follows from 
\lemref{lem:Milnor-K-filt} and \cite[Proposition~3.2.7]{Matsuda}
(see also \cite[Lemma~2.6]{Kerz-Saito-2}). We give a proof which works
in all positive characteristics.

Let $m \ge 1$ and $\Fil^\dagger_m H^1(K)$ be the subgroup of
characters $\chi \in H^1(K)$ such that $\{\alpha, \chi\} = 0$ for all 
$\alpha \in U'_{m} K^M_d(K)$. First of all, it follows from
\lemref{lem:Milnor-K-filt} and part (1) of the theorem that

\begin{equation}\label{eqn:Filt-Milnor-*-0}
\Fil^{\bk}_{m-1} H^1(K) \subset \Fil^\dagger_m H^1(K) \subset \Fil^{\bk}_m H^1(K).
\end{equation}

We let $A$ denote the henselization of the polynomial ring $\sO_K[T]$ along
the prime ideal $\fm_K[T]$. We let $A_K = A \otimes_{\sO_K} K$ and
$A_{\ff} = A \otimes_{\sO_K} \ff$. For any integers $q, r \ge 1$,
let $V^q_{r}(A) = H^q_{\et}(A, i^* \circ {\bf R}j_*({\Z}/{p^r}(q-1)))$,
where $i \colon A \surj A_{\ff}$ and $j \colon A \to A_K$ are canonical
maps. 
We let $V^q(A) = {\underset{r \ge 1}\varinjlim} V^q_r(A)$.
Since $(A, A_{\ff})$ is a Henselian pair, one checks that
$1 + \pi A \in (A_K)^{\times}$. Hence, $\{\chi, 1 + \pi A\}$ is
well defined in $V^2(A)$.

By \cite[\S~(1.9)]{Kato89}, there is a canonical map
$\iota^q_A \colon H^q_r(A_{\ff}) \to V^q_{r}(A)$. We consider the map
$\lambda^q_{\pi} \colon H^q_r(A_{\ff}) \bigoplus H^{q-1}_r(A_{\ff}) \to V^q_{r}(A)$
given by $\lambda^q_{\pi}(a, b) = \iota^q_A(a) + \{\iota^{q-1}_A(b), \pi\}$,
where $\pi \in \sO_K$ is our chosen uniformizer of $K$. The map
$\lambda^q_\pi$ is injective as Kato shows.
By using the exact sequence of sheaves given in ~\eqref{eqn:DRC}
for $A_{\ff}$ and using the associated cohomology groups, we get the
canonical maps
$\Omega^{i}_{A_{\ff}} \xrightarrow{V^{r-1}} W_r\Omega^{i}_{A_{\ff}} \to
H^{i+1}_r(A_{\ff})$. 
Taking the direct sum of these maps for $i = q-1, q-2$,
composing the resulting map with $\lambda^q_{\pi}$, and taking limit
as $r \mapsto \infty$, we thus get a canonical map
\begin{equation}\label{eqn:Filt-Milnor-*-1}
\ov{\lambda}^q_{\pi} \colon \Omega^{q-1}_{A_{\ff}} \bigoplus \Omega^{q-2}_{A_{\ff}}
\to V^q(A).
\end{equation}

It follows from \cite[Theorem~3.2(1)]{Kato89} that
$\Fil^{\bk}_m H^1(K)$ coincides with the subgroup $~'\Fil^{\bk}_m H^1(K)$ 
of $H^1(K)$ consisting of characters
$\chi$ such that $\{\chi, 1 + \pi^{m+1}T\} = 0$ in $V^{2}(A)$.
We let $~'\Fil^{\ms}_m H^1(K)$ be the subgroup of $H^1(K)$ consisting of 
characters $\chi$ such that $\{\chi, 1 + \pi^{m+1}T\} = 0$ 
and $\{\chi, 1 + \pi^{m}T\} = \ov{\lambda}^2_{\pi}(T\alpha_\chi, 0)$ in 
$V^{2}(A)$. It suffices to show for every $m \ge 1$ that

\begin{equation}\label{eqn:Filt-Milnor-*-2}
\Fil^\dagger_m H^1(K) = ~'\Fil^{\ms}_m H^1(K) = \Fil^{\ms}_m H^1(K).
\end{equation}

The proof of the first equality given in \cite[Proposition~3.2.7]{Matsuda}
is unconditional. So we shall prove the second equality.
We begin by showing that $\Fil^{\ms}_m H^1(K) \subseteq ~'\Fil^{\ms}_m H^1(K)$.
To show this, we let $m_0 = \ord_p(m)$ and $n = mp^{-m_0}$. 
Then  using ~\eqref{eqn:DRC-5}
and taking limit as $r \mapsto \infty$, we see that

\begin{equation}\label{eqn:Filt-Milnor-*-3}
\Fil^{\ms}_m H^1(K) = \Fil^{\bk}_{m-1} H^1(K) + 
\delta_{m_0}(\Fil^{\bk}_{m}(W_{m_0}(K))).
\end{equation}

Using the fact that every element $\un{a} \in W_{m_0}(K)$ is of the
form $\stackrel{m_0-1}{\underset{i = 0}\sum} V^{m_0-1-i}([a_i])$ and the
commutative diagram ~\eqref{eqn:DRC-2}, it follows by looking at
~\eqref{eqn:DRC-4} and ~\eqref{eqn:Filt-Milnor-*-3} that
$\frac{\Fil^{\ms}_m H^1(K)}{\Fil^{\bk}_{m-1} H^1(K)}$ is generated by
elements of the form $\delta_{j+1}([x\pi^{-mp^j}])$ with $x \in \sO_K$ and
$0 \le j \le  m_0 -1$.
If $\chi \in \Fil^{\bk}_{m-1} H^1(K) = ~'\Fil^{\bk}_{m-1} H^1(K)$, then
$\{\chi, 1 + \pi^mT\} = 0$. Since $~'\Fil^{\bk}_{m-1} H^1(K) \subseteq
~'\Fil^{\bk}_{m} H^1(K)$, we also have $\{\chi, 1 + \pi^{m+1}T\} = 0$.
We conclude from the definition of $~'\Fil^{\ms}_m H^1(K)$
that $\Fil^{\bk}_{m-1} H^1(K) \subseteq ~'\Fil^{\ms}_m H^1(K)$.
In order to show therefore that $\Fil^{\ms}_m H^1(K) \subseteq 
~'\Fil^{\ms}_m H^1(K)$, it suffices to show that
for $\chi = \delta_{j+1}([x\pi^{-mp^j}])$ with $x \in \sO_K$ and
$0 \le j \le  m_0 -1$, one has $\{\chi, 1 + \pi^mT\} =
\ov{\lambda}^2_{\pi}(T\alpha_\chi, 0)$ in $V^{2}(A)$.
But this follows immediately from \cite[Lemma~3.7(1)]{Kato89}.

To finish the proof of the theorem, we are now left with showing
that the induced inclusion map

\begin{equation}\label{eqn:Filt-Milnor-*-4}
\frac{\Fil^{\ms}_m H^1(K)}{\Fil^{\bk}_{m-1} H^1(K)} \inj
\frac{~'\Fil^{\ms}_m H^1(K)}{\Fil^{\bk}_{m-1} H^1(K)} 
\end{equation}
is surjective.

We let $E_m = \frac{\Fil^{\ms}_m H^1(K)}{\Fil^{\bk}_{m-1} H^1(K)}$
and $F_m = \frac{~'\Fil^{\ms}_m H^1(K)}{\Fil^{\bk}_{m-1} H^1(K)}$.
We let $\gr^{\bk}_\bullet H^1(K)$ denote the associated graded 
group of $\Fil^{\bk}_\bullet H^1(K)$.
We then have inclusions $E_m \subset F_m \subset \gr^{\bk}_m H^1(K)$. 
For any integers $q, r \ge 0$, let $B_r\Omega^q_{\ff}$ be the subgroup of
$\Omega^q_{\ff}$ generated by elements of the form 
$x^{p^i}d\log(x) \wedge d\log(y_1) \wedge \cdots \wedge d\log(y_{q-1})$
for $0 \le i \le r$.
We let $Z_r\Omega^q_{\ff}$ be the subgroup  of $\Omega^q_{\ff}$ generated by 
$B_r\Omega^q_{\ff}$ and elements of the form
$x^{p^r}d\log(y_1) \wedge \cdots \wedge d\log(y_{q})$.
Let $C \colon Z_1\Omega^q_{\ff} \to \Omega^q_{\ff}$ denote the Cartier
homomorphism \cite[Chapter~0, \S~2]{Illusie}.

Let 
\[
N = \{(\alpha, \beta) \in B_{m_0 +1}\Omega^1_{\ff} \bigoplus 
(\ff)^{p^{m_0}}|nC^{m_0}(\alpha) = - dC^{m_0}(\beta)\}
\]
be a submodule of $\Omega^1_{\ff} \bigoplus \ff$.
It is shown in \cite[\S~(3.9)]{Kato89} that there is an isomorphism of
abelian groups
\begin{equation}\label{eqn:Filt-Milnor-*-5}
\phi_m \colon \gr^{\bk}_m H^1(K) \xrightarrow{\cong} N; \ \ 
\phi_m(\chi) = (\alpha, \beta)
\end{equation}
such that $\{\chi, 1 + \pi^mT\} = \ov{\lambda}^2_{\pi}((T \alpha, T \beta))$. 

It follows from the definition of $~'\Fil^{\ms}_m H^1(K)$ that
$\phi_m(F_m) = N \bigcap \Omega^1_{\ff}$.
Since the kernel of $C^{m_0} \colon Z_{m_0}\Omega^1_{\ff} \to \Omega^1_{\ff}$ is
$B_{m_0}\Omega^1_{\ff}$ (see \cite[Chap.~0, \S~2, p.~520]{Illusie}),
we get $\phi_m(F_m) = B_{m_0}\Omega^1_{\ff}$.

Using the definition of $\phi_m$ and \cite[Lemma~3.7(1)]{Kato89},
we conclude that $F_m$ is generated by elements of the form 
$\delta_{j+1}([x\pi^{-mp^j}])$ with $x \in \sO_K$ and
$0 \le j \le  m_0 -1$. But we have already seen earlier in the proof
that such elements
lie in $E_m$. This shows that the inclusion ~\eqref{eqn:Filt-Milnor-*-4}
is bijective. The proof of the theorem is now complete.
\end{proof}

\begin{remk}\label{remk:Filt-Milnor-*-ex}
We remark that even in the case $p \neq 2$, part (2) of
\thmref{thm:Filt-Milnor-*} is an 
improvement of \cite[Proposition~3.2.7(iii)]{Matsuda}
and \cite[Lemma~2.6]{Kerz-Saito-2}. This is possible by virtue of
\lemref{lem:Milnor-K-filt}.
\end{remk}

The following definitions are due to Kato \cite{Kato89} and
Matsuda \cite{Matsuda}. 

\begin{defn}\label{defn:Conductor}
Let $K$ be a Henselian discrete valuation field and let $\chi \in H^1(K)$
be a character of $\Gal(\ov{K}/K)$. The Swan conductor of $\chi$ is the 
smallest integer $m \ge 0$ such that $\chi \in \Fil^{\bk}_m H^1(K)$.
It is denoted by $\Sw(\chi)$.
The Artin conductor of $\chi$  is the 
smallest integer $m \ge 0$ such that $\chi \in \Fil^{\ms}_m H^1(K)$.
It is denoted by $\Ar(\chi)$.
It follows from \thmref{thm:Fil-main} that $\Sw(\chi) \le \Ar(\chi)$.
They are the same and agree with the classical definition 
\cite[Proposition~6.8]{Kato89} if the residue field of $K$ is perfect. 
\end{defn}

\section{The fundamental group with modulus}
\label{sec:EFG-div}
In this section, we shall introduce our key object, the
{\'e}tale fundamental group with modulus $\pi^{\adiv}_1(X,D)$ of an
integral normal scheme $X$ over a field relative to 
an effective Weil divisor $D$. This will be the target of our reciprocity map
for the Kato-Saito idele class group with modulus.
This fundamental group is a priori different from the already known
fundamental group with modulus $\pi^{\ab}_1(X,D)$, introduced by
Deligne and Laumon \cite{Laumon} and used extensively in the class field
theory of Wiesend and others (see \cite{Kerz-Saito-2}).

\subsection{Ramification of field extensions}\label{sec:RFE}
Let $K$ be a Henselian discrete valuation field with ring of integers
$\sO_K$, maximal ideal $\fm_K$ and residue field $\ff$.
 Let $G^{(\bullet)}_K$ denote the Abbes-Saito
ramification filtration of $G_K$.

Let $L/K$ be a finite separable extension and let $n \ge 0$ be an integer.
We shall say that the ramification of $L/K$ is bounded by $n$ if
$G^{(n)}_K$ is contained in $\Gal(\ov{K}/{L})$ under the inclusions 
$G^{(n)}_K \subset G_K \supset \Gal(\ov{K}/{L})$. Note that this
definition coincides with \cite[Definition~6.3]{Abbes-Saito}.
Note also that since $G^{(n)}_K$ is normal in $G_K$, the above condition is same
as the condition that $G^{(n)}_K$ is contained in $\Gal(\ov{K}/{\wt{L}})$, where
$\wt{L}$ is the Galois closure of $L$ in $\ov{K}$.
The following is straightforward.

\begin{lem}\label{lem:Field-Galois}
Let $L_1, L_2$ be two finite separable extensions of $K$ whose ramifications 
are bounded by $n$. Let $L'_1 \subset L_1$ be a subextension.
Then $L'_1$ and the compositum $L_1L_2$ have ramifications bounded by $n$.
\end{lem}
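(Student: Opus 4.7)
The plan is to unwind the definition of ``ramification bounded by $n$'' in terms of Galois closures, and then apply two standard facts from Galois theory together with the normality of $G^{(n)}_K$ in $G_K$. Concretely, the hypothesis that $L_i/K$ has ramification bounded by $n$ means $G^{(n)}_K \subseteq \Gal(\ov{K}/\wt{L_i})$ for $i = 1,2$, where $\wt{L_i} \subset \ov{K}$ is the Galois closure of $L_i$; this is explicitly noted just above the lemma using that $G^{(n)}_K$ is normal in $G_K$.

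For the subextension $L'_1 \subseteq L_1$, I would argue as follows. Since $\wt{L_1}$ is Galois over $K$ and contains $L_1 \supseteq L'_1$, by the minimality of the Galois closure we have $\wt{L'_1} \subseteq \wt{L_1}$. Taking fixing subgroups reverses inclusions, so $\Gal(\ov{K}/\wt{L_1}) \subseteq \Gal(\ov{K}/\wt{L'_1})$, and therefore $G^{(n)}_K \subseteq \Gal(\ov{K}/\wt{L'_1})$, which is precisely the statement that the ramification of $L'_1/K$ is bounded by $n$.

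For the compositum $L_1 L_2$, I would use the well-known identity $\wt{L_1 L_2} = \wt{L_1} \cdot \wt{L_2}$ (the compositum of two Galois extensions is Galois, and it is the smallest such containing $L_1 L_2$). Then by standard Galois theory, the fixing subgroup of a compositum is the intersection of the fixing subgroups, giving
\[
\Gal(\ov{K}/\wt{L_1 L_2}) \;=\; \Gal(\ov{K}/\wt{L_1}) \cap \Gal(\ov{K}/\wt{L_2}).
\]
Since $G^{(n)}_K$ lies in each factor by hypothesis, it lies in the intersection, so the ramification of $L_1 L_2/K$ is bounded by $n$.

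There is no serious obstacle here; the lemma is essentially a formal consequence of the definition and basic Galois theory, and its role in the paper is just to ensure that the family of finite separable extensions of $K$ with ramification bounded by $n$ is stable under subextensions and compositums, which will be needed to form a well-defined quotient of $G_K^{\ab}$ when defining $\pi_1^{\adiv}(X,D)$.
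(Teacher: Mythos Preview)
Your argument is correct and is exactly the kind of routine verification the paper has in mind; indeed, the paper gives no proof at all, simply noting that the lemma is straightforward. One minor simplification: you do not actually need to pass to Galois closures. Working directly with the definition $G^{(n)}_K \subseteq \Gal(\ov{K}/L_i)$, the inclusion $L'_1 \subseteq L_1$ gives $\Gal(\ov{K}/L_1) \subseteq \Gal(\ov{K}/L'_1)$, and for the compositum one has $\Gal(\ov{K}/L_1L_2) = \Gal(\ov{K}/L_1) \cap \Gal(\ov{K}/L_2)$ already for arbitrary (not necessarily Galois) finite separable extensions inside $\ov{K}$. Either route is fine.
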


\subsection{Morphisms with bounded ramification along a 
divisor}\label{sec:GC}
Let us now assume that $k$ is a field and $X \in \Sch_k$ is an
integral normal scheme. 
Let $K$ denote the function field of $X$
and
let $D \subset X$ be an effective 
Weil divisor with support $|D| = C$. We consider $C$ as a reduced
closed subscheme of $X$ with complement $U$.

We write $D = {\underset{\lambda}\sum} \ n_\lambda 
\ov{\{\lambda\}}$,
where $\lambda$ runs through the set of generic points of $C$.
We allow $D$ to be empty in which case we 
write $D = 0$.
For any generic point $\lambda$ of $C$, we let $K_\lambda$ denote the
fraction field of the henselization of the discrete valuation ring 
$\sO_{X,\lambda}$.
If $D'' \subset X$ is another effective Weil divisor, then we shall say
that $D'' \ge D'$ if $D'' - D'$ is effective in the free abelian
group of Weil divisors on $X$. 

Let $\sU_{\et}(U)$ be the category of finite {\'e}tale covers of $U$.
We denote an object of $\sU_\et(U)$ by $(U', f)$, where $f \colon
U' \to U$ is the underlying finite {\'e}tale map.
For $(U', f) \in \sU_\et(U)$, let $X'$ denote the normalization of
$X$ in $k(U')$, where the latter denotes the ring of total quotients of $U'$.
Note that $U'$ is a disjoint union of integral normal schemes each of whose
components lies in $\sU_\et(U)$. In particular, $X'$ is also a disjoint 
union of integral normal schemes each of which is finite and surjective over 
$X$ and the induced map $X' \to X$ extends $f$. We denote this extension by $f$
itself. Let $K'$ denote the ring of total
quotients of $X'$. For any $\lambda \in X'^{(1)} \cap C$, we let 
$K'_{\lambda}$ denote the
quotient field of $\sO^h_{X', \lambda}$. Note that the latter is a Henselian
discrete valuation ring. 

\begin{defn}\label{defn:R-bound}
We shall say that $(U',f)$ has ramification bounded by $D$ if the map
$f \colon X' \to X$ has the property that for every generic point 
$\lambda$ of $D$ 
and every point $\lambda' \in f^{-1}(\lambda)$, the extension of fields 
$K_\lambda \inj K'_{\lambda'}$ has ramification bounded by $n_\lambda$.

Let $L/K$ be a finite separable field extension. Let $X_L$ denote the
normalization of $X$ in $L$ and let $f \colon X_L \to X$ denote the
induced finite surjective map. We shall say that $L/K$ has ramification 
bounded by $D$ if $f$ is {\'e}tale over $U$ and the resulting cover
$(f^{-1}(U), f)$ has ramification bounded by $D$.
We shall say that $(U',f)$ (resp. $L/K$) is tamely ramified if its
ramification is bounded by $C$.
\end{defn}

The following is easily deduced from definitions.

\begin{prop}\label{prop:Unramified}
If $(U',f)$ has ramification bounded by $D$ and $D' \ge D$ with $|D'|
= |D|$, then it has ramification bounded by $D'$. If $X$ is regular, then
$(U',f)$ has ramification bounded by $D = 0$  if and only if 
 $f \colon X' \to X$ is {\'e}tale.
If $X$ is regular along $C$ and the latter is a normal crossing divisor, 
then $(U',f)$ is tamely ramified if and only
if it is tamely ramified in the sense of 
\cite[Definition~2.2.2]{Grothendieck-Murre}.
\end{prop}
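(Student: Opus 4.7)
The statement splits into three independent claims; each follows from the structural properties of the Abbes-Saito ramification filtration recalled in \S~\ref{sec:Filt*}, combined in one case with Zariski-Nagata purity of the branch locus.

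For the monotonicity assertion, since the filtration $\{G^{(r)}_K\}$ is decreasing in $r$, the inequality $n'_\lambda \ge n_\lambda$ at each generic point $\lambda$ of $C$ gives $G^{(n'_\lambda)}_{K_\lambda} \subseteq G^{(n_\lambda)}_{K_\lambda} \subseteq \Gal(\ov{K_\lambda}/K'_{\lambda'})$ for every $\lambda' \in f^{-1}(\lambda)$. Hence ramification bounded by $D$ implies ramification bounded by $D'$.

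For the second assertion, if $f \colon X' \to X$ is {\'e}tale then $(U',f)$ is trivially a finite {\'e}tale cover of $U$ and the condition at generic points of $D = 0$ is vacuous. Conversely, by property~(2) of the Abbes-Saito filtration, $G^{(0)}_{K_\lambda}$ is the inertia subgroup, so bounding the ramification by $D = 0$ amounts to the extension $K_\lambda \hookrightarrow K'_{\lambda'}$ being unramified at each codimension-one point $\lambda$ of $C$. Combined with $f$ being {\'e}tale over $U$, this shows that $f \colon X' \to X$ is {\'e}tale at every point of codimension $\le 1$, so its branch locus is contained in the codimension $\ge 2$ part of $X$. Since $X$ is regular and $X'$ is normal (being finite over a normal scheme and {\'e}tale over the dense open $U$), Zariski-Nagata purity of the branch locus forces this locus to be empty, and hence $f$ is {\'e}tale.

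For the third assertion, property~(3) of the Abbes-Saito filtration identifies $G^{(1)}_K$ with the wild inertia subgroup $\Gal(\ov{K}/K^{\tr})$. Consequently, ramification bounded by $C = \sum_i \ov{\{\lambda_i\}}$ translates, at each generic point $\lambda_i$ of an irreducible component of $C$, to the inclusion $K'_{\lambda'_i} \subseteq K_{\lambda_i}^{\tr}$, i.e., to classical tame ramification at $\lambda_i$. Under the regularity of $X$ along $C$ and the normal crossing hypothesis, this local condition at each generic point of each component of $C$ is precisely the one used in \cite[Definition~2.2.2]{Grothendieck-Murre} to define tame ramification of an {\'e}tale cover of $U$ along the divisor $C$, so the two notions agree.

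The only nonformal input in the whole argument is the application of Zariski-Nagata purity of the branch locus in the second assertion; the other two claims are direct unwindings of the structural properties~(1)--(3) of the Abbes-Saito filtration listed in \S~\ref{sec:AS-0**}.
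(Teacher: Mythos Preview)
Your proof is correct and follows essentially the same route as the paper's own argument: all three claims are reduced to properties (1)--(3) of the Abbes-Saito filtration from \S~\ref{sec:AS-0**}, with Zariski-Nagata purity invoked for the second assertion. The paper is only marginally more explicit on two points you leave implicit in the second claim, namely the fppf descent used to pass from unramifiedness of the Henselized extension $K_\lambda \hookrightarrow K'_{\lambda'}$ back to unramifiedness of $\sO_{X,\lambda} \to \sO_{X',\lambda'}$, and the flatness of a finite dominant map over a discrete valuation ring that upgrades ``unramified'' to ``\'etale'' at the generic points of $C$; also note that $X'$ is normal by construction (it is the normalization of $X$ in $k(U')$), so your parenthetical justification is unnecessary.
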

\begin{proof}
The first part is clear. The third part follows from
the third property of the Abbes-Saito filtration.
One way implication of the second part is also clear.
So we only need to show that if $X$ is regular and
$(U',f)$ has ramification bounded by $D = 0$, then  
$f \colon X' \to X$ is {\'e}tale.

It follows from the second property of the Abbes-Saito filtration and
the fppf descent for unramified morphisms that
$f$ is unramified over all codimension one points of $X$. 
Hence, it must be unramified.
Since a dominant morphism over the spectrum of a discrete valuation ring
is flat, it follows that $f$ is flat outside a closed subscheme
$B \subset X$ of codimension more than one. It follows that
$f$ is {\'e}tale outside $B$. The Zariski-Nagata purity theorem then
implies that $B$ must be empty.
\end{proof}

\subsection{The underlying Galois category}\label{sec:GC-0}
We fix a point $u \in U$ together with the canonical map 
$\ov{u} \to \{u\} = \Spec(k(u))$, where $\ov{u}$ is the
spectrum of a separable closure of $k(u)$.
Let $F \colon \sU_\et(U) \to \Sets$ be the functor
which assigns to any $(U', f) \in \sU_{\et}(U)$, the finite set
$f_{\ov{u}} := \Hom_U(\ov{u}, U')$.
Let $\sU_\et(X,D)$ be the full subcategory of objects in $\sU_\et(U)$
which have ramification bounded by $D$.
There are inclusions of categories with fiber functors
\begin{equation}\label{eqn:Fiber-functor}
(\sU_\et(X), F) \inj (\sU_\et(X,D), F) \inj (\sU_\et(U), F).
\end{equation}

It is well known that $(\sU_\et(X), F)$ and $(\sU_\et(U), F)$
are Galois categories in the sense of \cite[\S~4, 5, Expos{\'e}~V]{SGA-1}.
The following lemma asserts a similar property of $(\sU_\et(X,D), F)$.

\begin{lem}\label{lem:Galois-cat}
$(\sU_\et(X,D), F)$ is a Galois category.
\end{lem}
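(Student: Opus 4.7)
The plan is to deduce the lemma from the fact that $(\sU_\et(U), F)$ is a Galois category, by verifying that $\sU_\et(X,D)$ is a full subcategory of $\sU_\et(U)$ containing the final object $U$ and stable under the constructions appearing in the Galois category axioms of \cite[\S~4, 5, Expos{\'e}~V]{SGA-1}: finite fiber products over $U$, finite coproducts, inclusions of direct summands, and quotients by finite groups of automorphisms. Once this stability is established, axioms (G1)--(G3) for $\sU_\et(X,D)$ and the exactness and conservativity axioms (G4)--(G6) for the restricted fiber functor follow immediately, since the relevant (co)limits in $\sU_\et(X,D)$ agree with those computed in the ambient $\sU_\et(U)$.

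The easy checks are as follows. The final object $U \to U$ lies trivially in $\sU_\et(X,D)$, since its associated normalization is $X$ itself and there is no ramification to bound. Stability under finite disjoint unions and under inclusions of connected components is immediate from the definition, because the bounded ramification condition is formulated point by point at the generic points of $D$ in the normalization. For a quotient by a finite automorphism group $G$ acting on $(U', f) \in \sU_\et(X,D)$, the map $U'/G \to U$ corresponds at each generic point $\lambda$ of $D$ to a subextension of a local field $K'_{\lambda'}/K_\lambda$ whose ramification is already bounded by $n_\lambda$; the subextension clause of \lemref{lem:Field-Galois} then yields the same bound for the quotient.

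The substantive check is stability under fiber products. Given $(U_1', f_1), (U_2', f_2) \in \sU_\et(X,D)$, let $Z$ denote the normalization of $X$ in the total quotient ring of $U_1' \times_U U_2'$, and let $X_i'$ denote the normalization of $X$ in $k(U_i')$. For every generic point $\lambda$ of $D$ and every point $\mu$ of $Z$ over $\lambda$, the plan is to identify the fraction field of $\sO^h_{Z, \mu}$ with the compositum of the fraction fields of $\sO^h_{X_1', \lambda_1}$ and $\sO^h_{X_2', \lambda_2}$ for suitable preimages $\lambda_i \in X_i'$ of $\lambda$, formed inside a common separable closure of $K_\lambda$. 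Granted this identification, the compositum clause of \lemref{lem:Field-Galois} bounds the ramification of the fraction field of $\sO^h_{Z,\mu}$ over $K_\lambda$ by $n_\lambda$, completing the verification.

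I expect this identification to be the main obstacle. The subtlety is that $X_1' \times_X X_2'$ is typically neither integral nor normal, so one has to decompose $k(U_1') \otimes_K k(U_2')$ into its finitely many field factors, isolate the factor corresponding to $\mu$, and then check that the henselization of its integral closure at $\mu$ coincides with the classical compositum of Henselian discrete valuation fields. This is a local question over the Henselian discrete valuation ring $\sO^h_{X,\lambda}$ and should follow from standard henselization and normalization arguments applied to finite {\'e}tale extensions of the generic fibre, but it is where the real work of the proof lies.
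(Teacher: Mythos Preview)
Your proposal is correct and follows essentially the same route as the paper: both arguments verify the Galois category axioms by using that $(\sU_\et(U),F)$ is already Galois and that $\sU_\et(X,D)$ is stable under the relevant constructions, invoking \lemref{lem:Field-Galois} for fiber products (composita) and quotients (subextensions). The identification you flag as the main obstacle is in fact routine: since $\sO^h_{X,\lambda}$ is Henselian, the finite {\'e}tale $K_\lambda$-algebra $(k(U'_1)\otimes_K k(U'_2))\otimes_K K_\lambda$ decomposes as $\prod_{\lambda_1,\lambda_2}K'_{1,\lambda_1}\otimes_{K_\lambda}K'_{2,\lambda_2}$, and each field factor of the latter tensor product is, by definition, a compositum inside a separable closure of $K_\lambda$ and coincides with the fraction field of the corresponding $\sO^h_{Z,\mu}$.
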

\begin{proof}
The proof of the lemma is routine following the
strategy of the proof of \cite[Theorem~2.4.2]{Grothendieck-Murre}.
Since $(\sU_\et(U), F)$ is known to be a Galois category and
$(\sU_\et(X,D), F)$ is its full subcategory, the axioms
(G3), (G4) and (G5) of a Galois category  from 
\cite[\S~4, 5, Expos{\'e}~V]{SGA-1} are immediately verified.
The axioms (G1) and (G2) follow from \lemref{lem:Field-Galois} and
the fact that the category of normal
varieties admits quotients by finite group actions.
The axiom (G6) follows because it holds for
$(\sU_\et(U), F)$ and a finite birational morphism between normal varieties 
is an isomorphism. We leave out the details.
\end{proof}

Since a Galois category is equivalent to the category of finite sets with 
continuous action of a uniquely determined profinite group (e.g.,
see \cite[Expos{\'e}~V, \S~4]{SGA-1}), we are led to the following.

\begin{defn}\label{defn:Fun-D}
The co-1-skeleton (or divisorial) {\'e}tale fundamental group of $X$ with 
modulus $D$ is the profinite group $\pi^{\divf}_1(X,D, \ov{u})$ such that
$(\sU_\et(X,D), F)$ is equivalent to the category of finite sets with
continuous action of $\pi^{\divf}_1(X,D, \ov{u})$.
We let $\pi^{\adiv}_1(X,D)$ denote the abelianization of 
$\pi^{\divf}_1(X,D, \ov{u})$ and call it the (divisorial) abelianized 
{\'e}tale fundamental group of $X$ with modulus $D$.
Since $\pi^{\divf}_1(X,D, \ov{u})$ varies from $\pi^{\divf}_1(X,D, \ov{u'})$
only by inner automorphisms, the group $\pi^{\adiv}_1(X,D)$
is well defined.
\end{defn}

\begin{remk}\label{remk:Curve-defn}
  In \cite{Kerz-Saito-2}, Kerz and Saito use another notion of abelianized fundamental
  group with modulus, which they denote by $\pi^{\ab}_1(X,D)$, and where the bound on the
  ramification is measured by pulling back the {\'e}tale covers to curves.
  This notion of ramification was originally introduced by Deligne and Laumon
  \cite{Laumon}. We shall see in \corref{cor:SNCD} that
  $\pi^{\adiv}_1(X,D)$ coincides with $\pi^{\ab}_1(X,D)$ if $X$ is regular and
  $C$ is a simple normal crossing divisor on $X$.
  \end{remk}

It follows from \cite[Expos{\'e}~V, Proposition~6.9]{SGA-1} that there are
surjective continuous homomorphisms
\begin{equation}\label{eqn:Fiber-functor-0}
\Gal(K^{\ab}/K) \surj \pi^{\ab}_1(U) \surj \pi^{\adiv}_1(X,D) \surj
\pi^{\ab}_1(X).
\end{equation}

Recall that an object $(U', f)$ of $\sU_\et(X,D)$ is called connected if
$U'$ is not empty and for any inclusion $(V', g) \inj (U',f)$,
the scheme $V'$ is either empty or the inclusion is an isomorphism.
Recall also that if $(U', f)$ is a connected finite {\'e}tale cover of $U$,
then it is called a Galois cover if $\Aut_U(U')$ acts transitively on
$F(U')$. This is equivalent to saying that $U = {U'}/{\Aut_U(U')}$ and
$U' \times_U U' \cong U \times_{\Spec(k)} \Aut_U(U')$ if we consider
$\Aut_U(U')$ a finite constant group scheme over $\Spec(k)$.

\begin{lem}\label{lem:Galois-cover}
For any $(U',f) \in \sU_\et(X,D)$ connected, there exists a 
Galois cover $(V', g) \in \sU_\et(X,D)$ dominating $(U',f)$.
\end{lem}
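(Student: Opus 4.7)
The plan is to deduce the existence of Galois covers from the general theory of Galois categories, applied to $(\sU_\et(X,D), F)$ which has already been shown to be a Galois category in Lemma~\ref{lem:Galois-cat}. In any Galois category, every connected object admits a Galois envelope, so the only real work is to produce the envelope in a way that is visibly an object of $\sU_\et(X,D)$ (and not just of the ambient category $\sU_\et(U)$).

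Concretely, I would proceed as follows. Let $n$ be the degree of $f \colon U' \to U$, and consider the $n$-fold fiber product $W := U' \times_U U' \times_U \cdots \times_U U'$. Fix a geometric point $\ov{u} \to U$ and identify $F(U') = \{x_1, \ldots, x_n\}$; the point $(x_1, \ldots, x_n) \in F(W)$ singles out a connected component $V' \subseteq W$. The symmetric group $S_n$ acts on $W$ by permuting the factors, and by transitivity of the monodromy action on $F(U')$, one obtains $|\Aut_U(V')| = \deg(V'/U)$, so $V'$ is a Galois cover of $U$; each factor projection $V' \to U'$ provides the desired dominating morphism.

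It remains to verify $V' \in \sU_\et(X,D)$, which reduces to two closure properties of $\sU_\et(X,D)$ inside $\sU_\et(U)$: closure under fiber products over $U$, and closure under passing to a connected component. Both are immediate consequences of Lemma~\ref{lem:Field-Galois}. Indeed, at any generic point $\lambda$ of $D$ with a choice of point $\lambda'$ of $V'$ over it, the Henselian local extension $K_\lambda \hookrightarrow K'_{V', \lambda'}$ embeds into the compositum of finitely many copies of extensions $K_\lambda \hookrightarrow K'_{U', \lambda''}$, each of which has ramification bounded by $n_\lambda$; by Lemma~\ref{lem:Field-Galois}, the compositum, and hence any subextension, also has ramification bounded by $n_\lambda$.

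The main (and essentially only) nontrivial point is the closure of $\sU_\et(X,D)$ under fiber products over $U$, but this is already implicit in the verification of axiom (G2) in the proof of Lemma~\ref{lem:Galois-cat}. Everything else is a formal consequence of the Galois category structure and the fact that normalization of $X$ in an intermediate field is compatible with taking components and compositums at the level of Henselian discrete valuation fields at the generic points of $C$.
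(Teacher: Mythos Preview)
Your proposal is correct and follows essentially the same approach as the paper: take the $n$-fold self-fiber product $U'_n$, pick a connected component $V'$, and use the closure properties of $\sU_\et(X,D)$ established in Lemma~\ref{lem:Galois-cat} (together with Lemma~\ref{lem:Field-Galois}) to conclude that $V'$ lies in $\sU_\et(X,D)$. The paper's proof is terser and simply refers back to Lemma~\ref{lem:Galois-cat} for the ramification bound, whereas you spell out the monodromy argument for Galoisness and the local ramification check at generic points of $C$, but the underlying strategy is identical.
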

\begin{proof}
This is standard in $\sU_\et(U)$. One constructs $(V',g)$ in
$\sU_\et(U)$ as a connected component of the $n$-fold self fiber product
$U'_n := U' \times_U \cdots \times_U U'$, where $n = |F(U')|$.
If $(U',f) \in \sU_\et(X,D)$, then 
we have seen in \lemref{lem:Galois-cat} that $U'_n$ has
ramification bounded by $D$. Hence, so does $(V',g)$.
It follows that $(V',g)$ is a Galois cover which 
lies in $\sU_\et(X,D)$ and dominates $(U',f)$.
\end{proof}

\begin{lem}\label{lem:Function-field}
Let $(U',f) \in \sU_\et(X,D)$ be a Galois cover and let
$K'$ be the function field of $U'$. Then ${K'}/K$ is Galois with 
Galois group $\Aut_U(U')$ and its ramification is bounded by $D$.
\end{lem}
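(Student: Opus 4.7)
The proof of this lemma should be essentially formal, combining standard facts about Galois étale covers with the definitions laid out in Section~7.1-7.3. Here is how I would organize it.

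First, I would verify that $U'$ is integral. Indeed, $(U',f) \in \sU_\et(X,D)$ is connected by hypothesis, and since $f$ is finite étale and $U$ is an open subscheme of the integral normal scheme $X$, the cover $U'$ is normal. A connected normal scheme is integral, so $k(U')$ is a field, namely $K'$. Consequently the normalization of $X$ in $K'$ in the sense of Definition~\ref{defn:R-bound} coincides with the integral normal scheme $X'$ constructed from $(U',f)$ in \S\ref{sec:GC}, and $f \colon X' \to X$ is the canonical map appearing there.

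Next I would establish that $K'/K$ is Galois with group $\Aut_U(U')$. Set $G = \Aut_U(U')$. Every $\sigma \in G$ is a $U$-automorphism of the integral scheme $U'$, hence induces a $K$-automorphism of the function field $K'$, giving an injection $G \hookrightarrow \Aut_K(K')$. Because $(U',f)$ is a Galois cover, the natural map $U' \times_U U' \to U' \times_{\Spec(k)} G$ is an isomorphism, so on generic points we obtain $K' \otimes_K K' \cong \prod_{\sigma \in G} K'$; by the standard criterion this forces $K'/K$ to be a Galois extension of degree $|G|$ with Galois group exactly $G$. (Alternatively: the quotient $U'/G$ exists in the category of normal $k$-schemes and equals $U$ by the Galois property, so $(K')^G = k(U'/G) = k(U) = K$ and then $[K':K] = |G|$ yields the conclusion.)

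Finally, the ramification bound on $K'/K$ is immediate from the definitions. By Definition~\ref{defn:R-bound}, I must show that the normalization $X_{K'} \to X$ is étale over $U$ and that the resulting cover of $U$ has ramification bounded by $D$. But $X_{K'} = X'$ by the first paragraph, and $f \colon X' \to X$ is étale over $U$ with $f^{-1}(U) = U'$ because $(U', f) \in \sU_\et(U)$. Since $(U', f)$ has ramification bounded by $D$ by the hypothesis that $(U', f) \in \sU_\et(X,D)$, the extension $K'/K$ has ramification bounded by $D$ as required.

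Since each step reduces directly to an already established definition or a standard fact about Galois étale covers, there is no real obstacle here; the only point one has to be a bit careful about is identifying the two a priori different normalizations (the one $X'$ built from the étale cover and the one $X_{K'}$ built from the field extension), which is handled by the connectedness of $U'$.
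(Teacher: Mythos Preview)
Your proof is correct and follows essentially the same approach as the paper's: both reduce the Galois statement to the defining property of a Galois cover (you via the torsor isomorphism $U' \times_U U' \cong U' \times G$ at the generic point, the paper via the bijection $\Aut_U(U') \cong \Hom_U(\Spec(\ov{K}), U')$), and both treat the ramification bound as immediate from Definition~\ref{defn:R-bound}. Your write-up is simply more detailed, in particular making explicit the identification $X_{K'} = X'$ that the paper leaves implicit.
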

\begin{proof}
This is an easy consequence of 
the fact that for a Galois cover $(U',f)$, the action of
$\Aut_U(U')$ on $F(U')$ defines a bijection 
$\Aut_U(U') \xrightarrow{\cong} \Hom_{U}(\xi, U')$ for every
geometric point $\xi \to U$. We can take $\xi$ to be $\Spec(\ov{K})$
and the lemma follows.
\end{proof}

We say that $(U', f)$ is abelian if it is a Galois cover with
Galois group $\Aut_U(U')$ abelian.
If $(U', f)$ is a Galois cover and we let $H = [\Aut_U(U'), \Aut_U(U')]$,
then $H$ acts freely on $U'$ and $V' = {U'}/H$ is a Galois cover of
$U$ with Galois group $\Aut_U(V') = {\Aut_U(U')}^{\ab}$.
We call $V'$ the abelianization of $U'$.
If $(U',f) \in \sU_\et(X,D)$, then we have seen in the proof of
\lemref{lem:Galois-cat} that $V' \in \sU_\et(X,D)$.

Let $\sU_{\ab}(U)$ denote the category of finite {\'e}tale covers 
$f \colon U' \to U$ such that every connected component of $U'$
is an abelian cover of $U$.
Let $\sU_{\ab}(X,D)$ denote the full subcategory of 
$\sU_{\ab}(U)$  whose objects have ramification bounded by $D$. 
We denote the restrictions of the functor
$F \colon  \sU_\et(U) \to \Sets$ to $\sU_{\ab}(U)$ and $\sU_{\ab}(X,D)$ 
by the same notation $F$.

\begin{prop}\label{prop:Abelian-cover}
$(\sU_{\ab}(X,D), F)$ is a Galois category such that
$\pi_1((\sU_{\ab}(X,D), F)) = \pi^{\adiv}_1(X,D)$.
\end{prop}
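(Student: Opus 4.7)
The plan is to deduce this proposition from what has already been established for $(\sU_\et(X,D), F)$ in \lemref{lem:Galois-cat}, by checking that $\sU_{\ab}(X,D)$ is stable under the relevant Galois-theoretic constructions, and then to identify $\pi_1$ by a standard abelianization argument. Axioms (G3), (G4), (G5) for $(\sU_{\ab}(X,D), F)$ will follow immediately from the same axioms in $(\sU_\et(X,D), F)$ together with the fact that the inclusion is fully faithful. Axiom (G6) is clear since a finite birational morphism of normal varieties is an isomorphism. The content is therefore in verifying axioms (G1) and (G2): the existence of finite fiber products, disjoint sums, and of quotients by finite groups of automorphisms, within $\sU_{\ab}(X,D)$.

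First I would check the two key stability statements. (a) If $(U_1, f_1), (U_2, f_2) \in \sU_{\ab}(X,D)$ are connected and abelian, then any connected component of the fiber product $U_1 \times_U U_2$ is an abelian cover of $U$ whose ramification is bounded by $D$. The ramification bound is preserved by \lemref{lem:Field-Galois} (applied componentwise to the function fields), and the abelianness follows from the standard embedding $\Gal(L_1 L_2/K) \hookrightarrow \Gal(L_1/K)\times \Gal(L_2/K)$ applied to the function fields of the components. (b) If $(U', f) \in \sU_{\ab}(X,D)$ is a connected Galois cover and $H \subset \Aut_U(U')$ is any subgroup, then $U'/H \to U$ is still abelian and its ramification is bounded by $D$; the first follows because subgroups and quotients of abelian groups are abelian, the second from \lemref{lem:Field-Galois} applied to the subextension corresponding to $H$. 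Together these yield axioms (G1) and (G2) and confirm that $\sU_{\ab}(X,D)$ is a Galois category.

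Second, I would identify the fundamental group. Write $\pi := \pi^{\divf}_1(X,D,\ov{u})$ and $\pi^{\ab} := \pi^{\adiv}_1(X,D) = \pi/\overline{[\pi,\pi]}$. The fully faithful exact inclusion $\iota \colon \sU_{\ab}(X,D) \hookrightarrow \sU_\et(X,D)$ is compatible with the fiber functor $F$, so by the functoriality of \cite[Expos{\'e}~V]{SGA-1} it induces a continuous homomorphism $\varphi \colon \pi \to \pi_1(\sU_{\ab}(X,D), F)$. The map $\varphi$ is surjective, since every connected object of $\sU_{\ab}(X,D)$ is in particular an object of $\sU_\et(X,D)$, hence $F$-sets for the smaller category are a full sub-collection of $F$-sets for the larger one. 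The kernel of $\varphi$ is the intersection of the kernels of the homomorphisms $\pi \twoheadrightarrow \Aut_U(V')$ as $(V',g)$ ranges over connected Galois objects of $\sU_{\ab}(X,D)$; by \lemref{lem:Galois-cover} and \lemref{lem:Function-field}, these kernels are exactly the open normal subgroups $N \subset \pi$ with $\pi/N$ a finite abelian group. The intersection of all such $N$ is $\overline{[\pi,\pi]}$, so $\pi_1(\sU_{\ab}(X,D), F) \cong \pi^{\ab} = \pi^{\adiv}_1(X,D)$, as claimed.

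The main obstacle is stability statement (a) above, since one must simultaneously control the Galois group of the compositum (to get abelianness) and the ramification at each generic point of $D$ (to stay within the modulus bound). Both are essentially formal once \lemref{lem:Field-Galois} is in hand, applied separately at each codimension-one point of $X$ lying over a generic point of $|D|$; the rest of the argument is a standard exercise in Galois categories and carries no additional subtlety.
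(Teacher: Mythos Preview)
Your proof is correct and follows the same overall strategy as the paper: verify the Galois-category axioms by reduction to \lemref{lem:Galois-cat} together with closure of $\sU_{\ab}(X,D)$ under fiber products and quotients (using \lemref{lem:Field-Galois} and the fact that compositums and subextensions of abelian extensions are abelian), and then identify $\pi_1$ with the abelianization of $\pi^{\divf}_1(X,D)$.

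The only noteworthy difference is in the execution of the second half. You argue categorically: the fully faithful inclusion $\sU_{\ab}(X,D)\hookrightarrow\sU_\et(X,D)$ induces a surjection $\pi^{\divf}_1(X,D)\twoheadrightarrow\pi_1(\sU_{\ab}(X,D),F)$ whose kernel is the intersection of the open normal subgroups with abelian quotient, hence $\overline{[\pi,\pi]}$. The paper instead unwinds this explicitly via directed systems: it writes $\pi^{\divf}_1(X,D)=\varprojlim_{I^{\Gal}}\Aut_U(P_a)$, passes to the exact sequence of inverse limits with commutator subgroups to get $\pi^{\adiv}_1(X,D)=\varprojlim_{I^{\Gal}}(\Aut_U(P_a))^{\ab}$, and then observes that the abelianization map $I^{\Gal}\to I^{\ab}$ is cofinal. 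Your route is a bit cleaner and avoids the explicit cofinality and inverse-limit exactness bookkeeping; the paper's route has the minor advantage of yielding the concrete description $\pi^{\adiv}_1(X,D)\cong\varprojlim_{I^{\ab}}\Aut_U(P_a)$ directly, which is reused in \lemref{lem:Fun-modulus}. Substantively the two arguments are equivalent.
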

\begin{proof}
It is well known (and easy to show)
that $(\sU_{\ab}(U), F)$ is a Galois category with
$\pi_1((\sU_{\ab}(U), F)) = \pi^{\ab}_1(U)$.
Since $(\sU_{\ab}(X,D), F)$ is a full subcategory of
$(\sU_{\ab}(U), F)$, the proof that the former category is Galois
is same as that of \lemref{lem:Galois-cat}.
We shall prove the second assertion.

Let $(\sU_{\Gal}(X,D), F)$ denote the subcategory of
Galois covers in $(\sU_\et(X,D), F)$.
Let $I$  be the set indexing the connected objects of 
$(\sU_\et(X,D), F)$. 
Let $I^{\Gal}$ (resp. $I^{\ab}$) be the set indexing the objects of 
$(\sU_{\Gal}(X,D), F)$ (resp. $(\sU_{\ab}(X,D), F)$).
If $P_a, P_b \in (\sU_{\ab}(X,D), F)$ corresponding to $a, b \in I^{\ab}$,
then we let $a \le b$ if $P_b$ dominates $P_a$. Then $I^{\ab}$ is a partially 
ordered set. Moreover, it is directed. Indeed, we have seen above that
given $a, b \in I^{\ab}$, every connected component $Z$ of $P_a \times_U P_b$
lies in $(\sU_\et(X,D), F)$. Furthermore, it follows from
\lemref{lem:Galois-cover} that there is an object 
$(Y,g) \in (\sU_\et(X,D), F)$ which is Galois and dominates $Z$.

We let $P_c$ denote the abelianization of $Y$ and let $g' \colon P_c \to X$
be the induced map.  We have seen above that
$(P_c, g') \in (\sU_{\ab}(X,D), F)$.  To show that
$(P_c, g')$ dominates $P_a, P_b$, it suffices to show that
the function fields of $P_a$ and $P_b$ are dominated by the function field
of $P_c$. But this follows because the function field of 
$Z$ dominates those of $P_a$ and $P_b$ and \lemref{lem:Function-field} 
says that the Galois groups of the function fields 
of $P_a$ and $P_b$ are abelian.  
We have thus shown that $I^{\ab}$ is a directed set. An easier argument
shows that the same holds for $I$ and $I^{\Gal}$. Clearly, 
there are inclusions of directed sets $I^{\ab} \inj I^{\Gal} \inj I$. 

By definition of the automorphism group of a fiber functor,
we have an isomorphism of profinite groups
$\pi^{\divf}_1(X,D) \xrightarrow{\cong} {\underset{a \in I}\varprojlim} \
\Aut_U(P_a)$. It follows from \lemref{lem:Galois-cover} that

\begin{equation}\label{eqn:Abelian-cover-0}
\pi^{\divf}_1(X,D) \xrightarrow{\cong} {\underset{a \in I^{\Gal}}\varprojlim} \
\Aut_U(P_a).
\end{equation}

Now, we know that the transition maps of the
projective system $\{\Aut_U(P_a)\}_{a \in I^{\Gal}}$ are surjective.
This implies that the same holds for the projective system of 
commutator subgroups
$\{[\Aut_U(P_a), \Aut_U(P_a)]\}_{a \in I^{\Gal}}$. 
It follows that the sequence
\begin{equation}\label{eqn:Abelian-cover-3}
1 \to {\underset{a \in I^{\Gal}}\varprojlim}
[\Aut_U(P_a), \Aut_U(P_a)] \to
\pi^{\divf}_1(X,D) \to {\underset{a \in I^{\Gal}}\varprojlim} (\Aut_U(P_a))^{\ab}
\to 1
\end{equation}
is exact.

Since the projection 
$[\pi^{\divf}_1(X,D), \pi^{\divf}_1(X,D)] \to [\Aut_U(P_a), \Aut_U(P_a)]$
is surjective for every $a \in I^{\Gal}$, this implies by
 \cite[Corollary~1.1.8]{Pro-fin} that
\[
\ov{[\pi^{\divf}_1(X,D), \pi^{\divf}_1(X,D)]} =
{\underset{a \in I^{\Gal}}\varprojlim}
[\Aut_U(P_a), \Aut_U(P_a)],
\]
where the left hand side is the closure taken in 
$\pi^{\divf}_1(X,D)$.
On the other hand, there is an exact sequence of topological groups
(by definition of $\pi^{\adiv}_1(X,D)$)
\begin{equation}\label{eqn:Abelian-cover-2}
1 \to \ov{[\pi^{\divf}_1(X,D), \pi^{\divf}_1(X,D)]} \to
\pi^{\divf}_1(X,D) \to \pi^{\adiv}_1(X,D) \to 1.
\end{equation}
Comparing ~\eqref{eqn:Abelian-cover-3} and ~\eqref{eqn:Abelian-cover-2},
we get $\pi^{\adiv}_1(X,D) \xrightarrow{\cong} 
{\underset{a \in I^{\Gal}}\varprojlim} \ (\Aut_U(P_a))^{\ab}$.

Finally, we note that the map $I^{\Gal} \to I^{\ab}$ induced by 
$P_a \mapsto (P_a)^{\ab}$ is cofinal in $I^{\ab}$. It follows that
\begin{equation}\label{eqn:Abelian-cover-1}
\pi_1((\sU_{\ab}(X,D), F)) = {\underset{a \in I^{\ab}}\varprojlim} \
\Aut_U(P_a) \cong {\underset{a \in I^{\Gal}}\varprojlim} \
(\Aut_U(P_a))^{\ab} =  \pi^{\adiv}_1(X,D).
\end{equation}
This proves the second part of the proposition.
\end{proof}

\begin{lem}\label{lem:Fun-modulus}
Let $I^{\Gal}_{D/K}$ (resp. $I^{\ab}_{D/K}$) be the directed system of finite 
Galois (resp. abelian) extensions $L/K$ which have 
ramifications bounded by $D$.
Then there are canonical isomorphisms of profinite groups
\[
\pi^{\divf}_1(X,D) \xrightarrow{\cong} {\underset{L \in I^{\Gal}_{D/K}}\varprojlim}
\Gal(L/K); \ \ \pi^{\adiv}_1(X,D) \xrightarrow{\cong} 
{\underset{L \in I^{\ab}_{D/K}}\varprojlim} \Gal(L/K).
\]
\end{lem}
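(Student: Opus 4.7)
The plan is to establish both isomorphisms simultaneously by exhibiting mutually inverse, order-reversing equivalences between the directed systems parametrizing finite Galois/abelian field extensions of $K$ with ramification bounded by $D$ and the directed systems parametrizing connected Galois/abelian objects in $\sU_\et(X,D)$, and then invoking Proposition~\ref{prop:Abelian-cover}.

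First, I would define the correspondence in one direction: given $L \in I^{\Gal}_{D/K}$, let $X_L$ be the normalization of $X$ in $L$ with induced map $f \colon X_L \to X$. By \defref{defn:R-bound}, $f$ is {\'e}tale over $U$ and $(f^{-1}(U), f)$ has ramification bounded by $D$, hence defines a connected object of $\sU_\et(X,D)$. Because $L/K$ is Galois with group $G = \Gal(L/K)$, the finite {\'e}tale cover $f^{-1}(U) \to U$ is a Galois cover with $\Aut_U(f^{-1}(U)) \cong G$. In the reverse direction, given a connected Galois cover $(U',f) \in \sU_\et(X,D)$, \lemref{lem:Function-field} says its function field $K'$ is a Galois extension of $K$ with ramification bounded by $D$, so $K' \in I^{\Gal}_{D/K}$. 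These two operations are mutually inverse (up to isomorphism) because normalization of $X$ in $K'$ recovers the normalization $X'$ from which $U'$ was obtained, and conversely $f^{-1}(U)$ determines the normalization of $X$ in its function field by Zariski's main theorem applied to the finite map $X' \to X$.

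Next, I would check that this equivalence is functorial and order-preserving: an inclusion $L_1 \subset L_2$ of Galois extensions of $K$ inside $\ov{K}$ corresponds to a dominating morphism of Galois covers $P_{L_2} \to P_{L_1}$ over $U$, and the induced map on automorphism groups $\Aut_U(P_{L_2}) \to \Aut_U(P_{L_1})$ corresponds to the restriction map $\Gal(L_2/K) \to \Gal(L_1/K)$. Combining this identification with \eqref{eqn:Abelian-cover-0} from the proof of \propref{prop:Abelian-cover} then yields the first isomorphism
\[
\pi^{\divf}_1(X,D) \xrightarrow{\cong} \varprojlim_{a \in I^{\Gal}} \Aut_U(P_a) \xrightarrow{\cong} \varprojlim_{L \in I^{\Gal}_{D/K}} \Gal(L/K).
\]

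For the abelian statement, I would note that the above equivalence restricts, under the correspondence $L \leftrightarrow P_L$, to an equivalence of directed systems between $I^{\ab}_{D/K}$ and connected objects of $\sU_{\ab}(X,D)$ (since Galois of the cover being abelian is equivalent to $L/K$ being abelian). The conclusion then follows from the computation $\pi^{\adiv}_1(X,D) \cong \varprojlim_{a \in I^{\ab}} \Aut_U(P_a)$ obtained in the last step of the proof of \propref{prop:Abelian-cover} (see \eqref{eqn:Abelian-cover-1}). The main technical point — which is the only nonobvious ingredient — is the bijectivity of the correspondence $L \mapsto f^{-1}(U)$ between connected Galois (resp. abelian) covers in $\sU_\et(X,D)$ and elements of $I^{\Gal}_{D/K}$ (resp. $I^{\ab}_{D/K}$); but this follows formally from the equivalence between connected finite {\'e}tale covers of $U$ and finite separable extensions of $K$ inside $\ov{K}$, combined with the very definition of ramification bounded by $D$ for a field extension.
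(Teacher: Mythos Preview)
Your proposal is correct and follows essentially the same route as the paper. The paper's proof invokes \lemref{lem:Function-field} together with \eqref{eqn:Abelian-cover-0} and \eqref{eqn:Abelian-cover-1}, and then declares the correspondence $(U',f)\mapsto k(U')$ between $I^{\Gal}$ and $I^{\Gal}_{D/K}$ (resp.\ $I^{\ab}$ and $I^{\ab}_{D/K}$) to be ``immediate''; you have simply written out explicitly why that correspondence is a bijection of directed systems compatible with the transition maps.
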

\begin{proof}
We follow the notations of \propref{prop:Abelian-cover}.
Using \lemref{lem:Function-field} together with 
~\eqref{eqn:Abelian-cover-0} and ~\eqref{eqn:Abelian-cover-1}, we only need to
show that the correspondence $(U',f) \mapsto k(U')$
defines isomorphisms of directed systems
$I^{\Gal} \cong I^{\Gal}_{D/K}$ and $I^{\ab} \cong I^{\ab}_{D/K}$.
But this is immediate.
\end{proof}

\subsection{Pontryagin duality}\label{sec:PD*}
Our next goal is to identify the Pontryagin dual of $\pi^{\adiv}_1(X,D)$.
Before doing so, we briefly recall this duality.
Let $\pfd$ denote the full subcategory of the category of topological
abelian groups consisting of those objects which are either profinite or 
discrete abelian groups.
We shall consider ${\Q}/{\Z}$ as a discrete topological abelian group.
For any $G \in \pfd$, we let
$G^{\vee} = \Hom_{\cont}(G, {\Q}/{\Z})$ and consider it as a
topological abelian group with respect to the topology of pointwise
convergence.

Suppose that $G$ is either profinite or
discrete torsion. Then $G^{\vee} \in \pfd$.
In this case, the Pontryagin duality theorem  (see \cite[\S~2.9]{Pro-fin}) 
says that $G^{\vee}$ is the classical Pontryagin dual of $G$.
Moreover, the evaluation map ${\rm ev}_G \colon 
G \to (G^{\vee})^{\vee}$ is an isomorphism of 
topological abelian groups.
Note that this duality does not hold otherwise. For example,
$\Z^{\vee} \cong {\Q}/{\Z}$ and ${(\Z^{\vee})}^{\vee} \cong \wh{\Z}$.
The evaluation map is the profinite completion map
$\Z \to \wh{\Z}$.
In general, we can however say the following.

\begin{lem}\label{lem:Pont-dual}
The functor $G \mapsto G^{\vee}$ from $\pfd$ to the category of
topological abelian groups is exact and the
map ${\rm ev}_G$ is injective.
\end{lem}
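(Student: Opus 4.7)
The plan is to split the analysis by the type of each object in $\pfd$. The key preliminary observation is that a short exact sequence $0 \to G' \to G \to G'' \to 0$ of topological abelian groups in which all three terms lie in $\pfd$ must be entirely of one type: if $G$ is profinite, then the closed subgroup $G'$ is profinite and the Hausdorff quotient $G''$ is profinite; if $G$ is discrete, the subspace and quotient topologies are discrete. A mixed extension of a discrete quotient by a profinite subgroup need not remain in $\pfd$, so this uniformity holds automatically for sequences \emph{in} $\pfd$.

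For injectivity of ${\rm ev}_G$, one must verify that continuous characters separate the points of $G$. If $G$ is discrete, then $G^\vee = \Hom_{\Ab}(G, {\Q}/{\Z})$ with no continuity constraint, and injectivity of ${\rm ev}_G$ is the classical fact that ${\Q}/{\Z}$ is an injective cogenerator of abelian groups (any nonzero cyclic subgroup of $G$ embeds into ${\Q}/{\Z}$, and this extends to $G$). If $G$ is profinite, then a nonzero element $g \in G$ survives in some finite quotient $G/N$ by a clopen normal subgroup $N$, and a character of the finite abelian group $G/N$ separating the image of $g$ lifts to a continuous character of $G$.

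For the exactness of the dual functor, we treat the two cases separately. If $0 \to G' \to G \to G'' \to 0$ is a short exact sequence of discrete abelian groups, then $G^\vee$ agrees with $\Hom_{\Ab}(G, {\Q}/{\Z})$, and the dualized sequence is exact as an abstract sequence of abelian groups because ${\Q}/{\Z}$ is injective in $\Ab$ (left exactness is formal; surjectivity of $G^\vee \to (G')^\vee$ is the injectivity statement). If the sequence consists of profinite groups, then the classical Pontryagin duality for profinite/discrete-torsion groups immediately yields the exactness of $0 \to (G'')^\vee \to G^\vee \to (G')^\vee \to 0$; here the dual objects are discrete torsion groups, where surjectivity amounts to extending a continuous character of the closed subgroup $G'$ through $G$, which holds because such characters factor through finite quotients and one can inductively extend.

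The only genuine point requiring care is the uniformity-of-type observation in the first paragraph, since without it one might fear a genuinely mixed short exact sequence in $\pfd$; once that is disposed of, the remaining content is a bookkeeping exercise combining the injective cogenerator property of ${\Q}/{\Z}$ in $\Ab$ with classical Pontryagin duality. No topological delicacies arise on $G^\vee$ itself for the injectivity and exactness assertions, since both are statements about the underlying abelian groups of the $G^\vee$'s.
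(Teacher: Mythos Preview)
Your proof is correct and follows the same case-split (by whether the middle term $G$ is discrete or profinite) that the paper uses for exactness. The executions differ slightly: in the profinite case you invoke Pontryagin duality as an anti-equivalence (hence exact) between profinite and discrete torsion groups, while the paper argues more self-containedly by letting $H=\coker(G^\vee\to G'^\vee)$, dualizing back to get $0\to H^\vee\to G'\to G$, and deriving a contradiction from $H\neq 0$. For the injectivity of ${\rm ev}_G$, your argument is more direct than the paper's: you use that $\Q/\Z$ is an injective cogenerator of $\Ab$ in the discrete case and the existence of separating finite quotients in the profinite case, whereas the paper deduces it from the exactness statement together with the fact that torsion-free discrete abelian groups are filtered colimits of finitely generated free groups (reducing to the example ${\rm ev}_\Z\colon \Z\hookrightarrow\wh{\Z}$). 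Your route avoids that structural reduction at the cost of re-proving a fragment of Pontryagin duality by hand; the paper's route recycles the first part of the lemma. One small point worth making explicit in your uniformity-of-type paragraph: the reason $G'$ is closed in $G$ (needed in the profinite case) is that $G''\in\pfd$ is Hausdorff, so the kernel of $G\to G''$ is closed.
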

\begin{proof}
Let 
\[
0 \to G' \xrightarrow{\alpha} G \xrightarrow{\beta} G'' \to 0
\]
be an exact sequence in $\pfd$.
One can check by an easy inspection that 
\[
0 \to G''^{\vee} \to G^{\vee} \to G'^{\vee}
\]
is exact. 
We prove that the last arrow is surjective.

If $G$ is discrete, then other groups are also discrete
and the assertion follows because ${\Q}/{\Z}$ is injective
as an abelian group.
We suppose therefore that $G$ is profinite. Then the other groups
are also profinite.
Since profinite groups are compact Hausdorff, we see that $G'$ is closed
in $G$. If $H = \coker(G^{\vee} \to G'^{\vee}) \neq 0$, then
we have an exact sequence
\[
0 \to H^\vee \to G' \xrightarrow{\alpha} G
\] 
using the Pontryagin duality theorem. Since $H \neq 0$, one knows that
$H^\vee$ can not be zero, again by the Pontryagin duality. But this
contradicts the assumption that $\alpha$ is injective.

The second part of the lemma follows easily
from the first part and the example before the lemma,
using the fact that every torsion-free discrete
abelian group is a direct limit of finitely generated free abelian groups.
We leave out the details.
\end{proof}

\begin{lem}\label{lem:Density}
Let $G \in \pfd$ and $H \subset G$ a subgroup. Then $H$ is dense
in $G$ if and only if every element of $G^\vee$ which annihilates $H$ is
zero.
\end{lem}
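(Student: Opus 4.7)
The plan is to handle the two implications separately, using continuity for the easy direction and the exactness of the duality functor (together with the injectivity of the evaluation map) established in \lemref{lem:Pont-dual} for the nontrivial direction.

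For the forward direction, suppose $H$ is dense in $G$. Any $\chi \in G^{\vee}$ is continuous with target the discrete group ${\Q}/{\Z}$, so $\ker(\chi)$ is closed in $G$. If $\chi$ annihilates $H$, then $\ker(\chi) \supseteq H$, and taking closures gives $\ker(\chi) \supseteq \overline{H} = G$; hence $\chi = 0$.

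For the reverse direction, I would let $\overline{H}$ denote the closure of $H$ in $G$. If $G$ is discrete then $\overline{H} = H$; if $G$ is profinite then $\overline{H}$ is a closed subgroup, hence profinite, and $G/\overline{H}$ is profinite as the quotient of an abelian profinite group by a closed subgroup. In either case both $\overline{H}$ and $G/\overline{H}$ lie in $\pfd$, so we obtain a short exact sequence
\[
0 \to \overline{H} \to G \to G/\overline{H} \to 0
\]
in $\pfd$. Applying \lemref{lem:Pont-dual}, the functor $(-)^{\vee}$ is exact, giving
\[
0 \to (G/\overline{H})^{\vee} \to G^{\vee} \to \overline{H}^{\vee} \to 0.
\]
Since every continuous character of $G$ vanishes on $H$ if and only if it vanishes on $\overline{H}$ (by continuity and the discreteness of ${\Q}/{\Z}$), the subgroup of $G^{\vee}$ annihilating $H$ is exactly the image of $(G/\overline{H})^{\vee}$. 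By hypothesis this image is zero, so $(G/\overline{H})^{\vee} = 0$.

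Finally, I would invoke the second assertion of \lemref{lem:Pont-dual}: the evaluation map
\[
\mathrm{ev}_{G/\overline{H}} \colon G/\overline{H} \hookrightarrow \bigl((G/\overline{H})^{\vee}\bigr)^{\vee}
\]
is injective, so vanishing of $(G/\overline{H})^{\vee}$ forces $G/\overline{H} = 0$, i.e., $\overline{H} = G$, as required. There is no real obstacle here; the only points requiring a moment's care are confirming that $\overline{H}$ and $G/\overline{H}$ remain in $\pfd$ (handled by splitting into the discrete and profinite cases), and replacing $H$ by $\overline{H}$ in the annihilator computation, which is immediate from continuity.
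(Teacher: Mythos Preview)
Your proof is correct and follows essentially the same approach as the paper: both directions are argued identically, with the nontrivial implication handled by passing to $G/\overline{H}$, applying the exactness of $(-)^{\vee}$ from \lemref{lem:Pont-dual}, and then using that $(G/\overline{H})^{\vee}=0$ forces $G/\overline{H}=0$ (the paper phrases this last step contrapositively as ``$G'\neq 0 \Rightarrow G'^{\vee}\neq 0$'', while you cite the injectivity of $\mathrm{ev}$ directly).
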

\begin{proof}
Suppose $H$ is dense in $G$ and let $\chi \in G^\vee = 
\Hom_{\cont}(G, {\Q}/{\Z})$
is such that $\chi(H) = 0$. Since $\Ker(\chi)$ is closed in $G$ and it contains
$H$, it must contain $G$. Equivalently, $\chi = 0$.
Conversely, suppose every element of $G^\vee$ which annihilates $H$ is
zero and $\ov{H} \neq G$. Let $G' = G/{\ov{H}}$. 
Then $G' \in \pfd$ and it follows
from \lemref{lem:Pont-dual} that there is an exact sequence of
continuous homomorphisms
\[
0 \to G'^{\vee} \to G^\vee \to \ov{H}^{\vee} \to 0.
\]
Since $G' \neq 0$, we have that $G'^\vee \neq 0$.
We now choose any $\chi \neq 0$ in $G'^\vee$. Then $\chi$ is a nonzero 
character of $G$ which vanishes on $H$. This contradicts our hypothesis. 
\end{proof}

\subsection{The Pontryagin dual of $\pi^{\adiv}_1(X,D)$}
\label{sec:PD}
In this subsection, we shall identify the Pontryagin dual of $\pi^{\adiv}_1(X,D)$
with a special subgroup of $H^1(U)$.

Let $X$ be as in \S~\ref{sec:GC}.
For any $x \in X^{(1)} \cap C$, we let $\wh{K}_x$ denote the quotient field of 
the $\fm_x$-adic completion $\wh{\sO_{X,x}}$ of $\sO_{X,x}$. 
Let $\sO^{sh}_{X,x}$ denote the strict henselization of $\sO_{X,x}$
and let $K^{sh}_x$ denote its quotient field.
Then it is clear from the definitions that there are
inclusions 
\begin{equation}\label{eqn:Field-incln}
K \inj K_x \inj K^{sh}_x \inj \ov{K} \ \mbox{and} \ \ K \inj K_x \inj \wh{K}_x. 
\end{equation}

Let $\Irr_C$ denote the set of all generic points of $C$ and
let $C_\lambda$ denote the closure of an element $\lambda \in \Irr_C$.
Let $D \subset X$ be an effective Weil divisor with support $C$.
We write $D = {\underset{\lambda \in \Irr_C}\sum} n_\lambda C_\lambda$.
%
%
%
%

\begin{defn}\label{defn:Fid_D}
Let $\Fil_D H^1(K)$ denote the subgroup of characters
$\chi \in H^1(U) \subset H^1(K)$ such that for every $x \in \Irr_C$, the image $\chi_x$ of
$\chi$ under the canonical surjection $H^1(K) \surj H^1(K_x)$ 
(the surjectivity follows from ~\eqref{eqn:Field-incln})
lies in $\Fil^{\ms}_{n_x} H^1(K_x)$.
By \thmref{thm:Fil-main}, this is equivalent to the condition that
$\chi_x \in \Fil^{\as}_{n_x} H^1(K_x)$.
\end{defn}

We shall write $\Fil_D H^1(K)$ also
as $\Fil_D H^1(U)$. In fact, $H^1(U)$ can be completely described
in terms of $\Fil_D H^1(U)$ if we allow $D$ to vary.
To see this, let 
$\Div_C(X)$ denote the set of all closed subschemes $D \subset X$
such that $D_\red = C$. Then $\Div_C(X)$ is clearly a directed set
with respect to inclusion.
Let $\Div^W_C(X)$ denote the set of all effective Weil divisors on $X$
with support $C$. Then $\Div^W_C(X)$ is a directed set
with respect to the order: $D_1 \le D_2$ if $D_2$ dominates $D_1$ (i.e., 
$D_2 - D_1$ is effective). There is a canonical morphism of directed sets
$\Div_C(X) \to \Div^W_C(X)$ which has the property that
given any $D \in \Div^W_C(X)$, there exists $D' \in \Div_C(X)$ such that
$D \le D'$ as Weil divisors.

\begin{prop}\label{prop:Fil-Exhaustive}
The canonical map 
\[
{\underset{D \in \Div_C(X)}\varinjlim} \Fil_D H^1(K) \to H^1(U)
\]
is an isomorphism.
\end{prop}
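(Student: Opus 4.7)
The plan is to split the assertion into the easy injectivity and the substantive surjectivity, both of which reduce to formal manipulations once the key input (exhaustiveness of the Matsuda filtration on $H^1$ of a Henselian discrete valuation field) is in hand.

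For injectivity, I would first observe that the directed system $\{\Fil_D H^1(K)\}_{D \in \Div_C(X)}$ has inclusions as transition maps. Indeed, if $D \le D'$ in $\Div_C(X)$, then passing to the associated Weil divisors gives $n_\lambda(D) \le n_\lambda(D')$ for every $\lambda \in \Irr_C$, and since $\Fil^{\ms}_\bullet H^1(K_\lambda)$ is increasing in its index, one gets $\Fil_D H^1(K) \subset \Fil_{D'} H^1(K)$ inside $H^1(U)$. Hence the colimit is simply the union of these subgroups in $H^1(U)$, and the canonical map to $H^1(U)$ is injective.

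For surjectivity, I would fix $\chi \in H^1(U)$ and show it lies in $\Fil_D H^1(K)$ for some $D$. For each generic point $\lambda$ of $C$, the surjection $H^1(K) \twoheadrightarrow H^1(K_\lambda)$ (coming from \eqref{eqn:Field-incln}) sends $\chi$ to some $\chi_\lambda \in H^1(K_\lambda)$. By \thmref{thm:Fil-main}(3), the Matsuda filtration exhausts $H^1(K_\lambda)$, so there exists a smallest integer $n_\lambda \ge 0$ with $\chi_\lambda \in \Fil^{\ms}_{n_\lambda} H^1(K_\lambda)$ (in fact, $n_\lambda = \Ar(\chi_\lambda)$ in the sense of \defref{defn:Conductor}). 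Since $X$ is Noetherian, $\Irr_C$ is finite, so $D_W := \sum_{\lambda \in \Irr_C} n_\lambda C_\lambda$ is a well-defined element of $\Div^W_C(X)$. By the property of the canonical map $\Div_C(X) \to \Div^W_C(X)$ recalled immediately before the proposition, we can choose $D \in \Div_C(X)$ whose associated Weil divisor dominates $D_W$; then the multiplicity of $C_\lambda$ in $D$ is at least $n_\lambda$ for every $\lambda$, and the monotonicity of $\Fil^{\ms}_\bullet$ gives $\chi \in \Fil_D H^1(K)$.

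The only genuine input is the exhaustiveness statement in \thmref{thm:Fil-main}(3), which is already granted; the rest is a matter of unwinding definitions together with the finiteness of $\Irr_C$ and the cofinality between closed subscheme structures and effective Weil divisors on $C$. I expect no real obstacle in the argument itself — the only point that requires a moment of care is verifying that membership in $\Fil_D H^1(K)$, which is imposed purely at the generic points of $C$, is compatible with the a priori condition $\chi \in H^1(U)$; but this compatibility is built into the definition of $\Fil_D H^1(K)$ as a subgroup of $H^1(U)$, so nothing further is needed.
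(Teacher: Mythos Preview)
Your proposal is correct and follows essentially the same approach as the paper: the paper also dismisses injectivity as clear and proves surjectivity by picking, for each $\lambda \in \Irr_C$, an integer $n_\lambda$ with $\chi_\lambda \in \Fil^{\ms}_{n_\lambda} H^1(K_\lambda)$ via \thmref{thm:Fil-main}, forming the Weil divisor $\sum n_\lambda C_\lambda$, and then dominating it by an element of $\Div_C(X)$. Your write-up is in fact slightly more explicit (spelling out injectivity and the finiteness of $\Irr_C$), but the argument is the same.
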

\begin{proof}
We only need to show that the map in question is surjective.
Let $\chi \in H^1(U)$. Let $\lambda \in \Irr_C$ and let $\chi_\lambda$
denote its image under the composite map $H^1(U) \inj H^1(K) \to 
H^1(K_\lambda)$.
By \thmref{thm:Fil-main}, there exists an integer $n_\lambda \ge 1$
such that $\chi_\lambda \in \Fil^{\ms}_{n_\lambda} H^1(K_\lambda)$.
We let $D = {\underset{\lambda \in \Irr_C}\sum} n_\lambda C_\lambda$.
Since $\chi \in H^1(U)$, 
it follows immediately that
$\chi \in \Fil_D H^1(K)$. If we choose any element
$D' \in \Div_C(X)$ dominating $D$, we get
$\chi \in \Fil_{D'} H^1(K)$.
\end{proof}

To give a similar description of $\Fil_D H^1(K)$
as a subgroup of $H^1(K)$,
let $I^{\Gal}_{D/K}$ and $I^{\ab}_{D/K}$ be as in \lemref{lem:Fun-modulus}.
For $L/K$ a Galois extension, let $H^1(L/K)$ denote the Pontryagin 
dual of $\Gal(L/K)$. In other words, it is the kernel of
the canonical surjection $H^1(K) \surj H^1(L)$.
Note that the map $H^1({L_{\ab}}/K) \to H^1(L/K)$ is an
isomorphism, where $L_{\ab}$ is the maximal abelian extension of $K$
inside $L$.

Let $I^{\ab}_{U/K}$ be the direct system of finite abelian extensions
$L/K$ which are {\'e}tale along $U$.
Since the map $\pi^{\ab}_1(U) \to \Gal(L/K)$ is surjective for
every $L \in I^{\ab}_{U/K}$ and 
$\pi^{\ab}_1(U) \xrightarrow{\cong} 
{\underset{L \in I^{\ab}_{U/K}}\varprojlim} \Gal(L/K)$
(analogous to ~\eqref{eqn:Abelian-cover-0}),
it follows from the Pontryagin duality of profinite groups 
(see \cite[Lemma~2.9.3]{Pro-fin}) that
the map ${\underset{L \in I^{\ab}_{U/K}}\varinjlim} H^1(L/K) \to
H^1(U)$ is an isomorphism.

\begin{lem}\label{lem:Fil-D-char-0}
Let $L/K$ be a finite abelian extension. Then $L \in I^{\ab}_{D/K}$
if and only if $H^1(L/K) \subset \Fil_D H^1(K)$.
\end{lem}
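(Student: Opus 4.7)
Write $D = \sum_{\lambda \in \Irr_C} n_\lambda C_\lambda$. The strategy is to translate each side of the claimed equivalence into one common Galois-theoretic statement and then read off the equivalence. Specifically, for each $\lambda \in \Irr_C$, I will introduce the subgroup $I_\lambda \subset \Gal(L/K)$ defined as the image of $G^{(n_\lambda)}_{K_\lambda}$ under the map $G_{K_\lambda} \hookrightarrow G_K \twoheadrightarrow \Gal(L/K)$. Because $L/K$ is abelian, this composite is independent of the choice of embedding $G_{K_\lambda} \hookrightarrow G_K$ (different choices differ by conjugation in $G_K$, which acts trivially on the abelian quotient), so $I_\lambda$ is well-defined. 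The plan is to show that both the condition $L \in I^{\ab}_{D/K}$ and the condition $H^1(L/K) \subset \Fil_D H^1(K)$ are each equivalent to: (a) $f \colon X_L \to X$ is {\'e}tale over $U$, and (b) $I_\lambda = 0$ for every $\lambda \in \Irr_C$.

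For $L \in I^{\ab}_{D/K}$: by \defref{defn:R-bound}, the condition asserts that $f$ is {\'e}tale over $U$ and that $G^{(n_\lambda)}_{K_\lambda} \subset \Gal(\ov{K}_\lambda/K'_{\lambda'})$ for every $\lambda \in \Irr_C$ and every $\lambda' \in f^{-1}(\lambda)$. For a fixed choice of embedding corresponding to a prime $\lambda'$ above $\lambda$, the kernel of $G_{K_\lambda} \to \Gal(L/K)$ is precisely $\Gal(\ov{K}_\lambda/K'_{\lambda'})$ (this identifies the image of $G_{K_\lambda}$ with the decomposition subgroup at $\lambda'$). Hence the ramification condition at $\lambda$ translates to $I_\lambda = 0$, and the ``for every $\lambda'$'' clause collapses to a single intrinsic condition by the abelianness observation above.

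For $H^1(L/K) \subset \Fil_D H^1(K)$: note first that $\Fil_D H^1(K) \subset H^1(U)$ by construction. Demanding $H^1(L/K) \subset H^1(U)$ asks every character $\chi$ of $\Gal(L/K)$ to factor through $\pi^{\ab}_1(U)$; because characters of the finite abelian group $\Gal(L/K)$ separate points, this collective condition is equivalent to $\Gal(\ov K/L) \supseteq \ker(G_K \twoheadrightarrow \pi^{\ab}_1(U))$, i.e.\ to $X_L \to X$ being {\'e}tale over $U$. For the local part, $\chi_\lambda \in \Fil^{\as}_{n_\lambda} H^1(K_\lambda)$ is by \defref{defn:Fid_D} and the definition of the Abbes-Saito filtration exactly the condition $\chi_\lambda|_{G^{(n_\lambda)}_{K_\lambda}} = 0$; unwinding the factorization $\chi_\lambda \colon G_{K_\lambda} \to \Gal(L/K) \xrightarrow{\chi} \Q/\Z$, this is the statement $\chi(I_\lambda) = 0$. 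Once more, since characters of $\Gal(L/K)$ separate points, requiring this for all $\chi \in H^1(L/K)$ is equivalent to $I_\lambda = 0$.

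With both sides reduced to the common pair of conditions (a) and (b), the lemma follows. The one step that requires genuine care, and will be the main obstacle to write out cleanly, is verifying that the a priori pointwise condition ``for every $\lambda' \in f^{-1}(\lambda)$'' in \defref{defn:R-bound} collapses to the single intrinsic condition $I_\lambda = 0$; this uses abelianness of $L/K$ both to make the map $G_{K_\lambda} \to \Gal(L/K)$ embedding-independent and to ensure that the decomposition subgroups at the various primes above $\lambda$ agree inside $\Gal(L/K)$. Every other step is formal Pontryagin duality for finite abelian groups.
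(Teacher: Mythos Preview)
Your proposal is correct and follows essentially the same approach as the paper's proof: both arguments hinge on Pontryagin duality for the finite abelian group $\Gal(L/K)$ (characters separate points) together with the observation that, by abelianness, the local ramification condition at $\lambda$ is independent of the choice of prime $\lambda'$ above $\lambda$. The only organizational difference is that you reduce both sides to a single common criterion ``(a) and (b)'', whereas the paper treats the two implications separately; and where you handle the independence of $\lambda'$ via the conjugation argument, the paper does it via the torsor identification $\Gal((K_\lambda \otimes_K L)/K_\lambda) \cong \Gal(L/K)$ in a commutative diagram.
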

\begin{proof}
Suppose $L \in I^{\ab}_{D/K}$ and $\chi \in H^1(L/K)$.
Then $\chi(\Gal(K^{\ab}/L)) = 0$. 
Since $H^1(L/K) \subset H^1(U)$, we have $\chi \in H^1(U)$.
Let $f:X_L \to X$ 
denote the normalization of $X$ in  $L$. 
If $x \in \Irr_C$, then it follows from the definition of $L$ being
in $I^{\ab}_{D/K}$ that $G^{(n_x)}_{K_x} \subset \Gal({K^{\ab}_x}/L_{x'})$
under the inclusions $G^{(n_x)}_{K_x} \inj G_{K_x} \supset 
\Gal({K^{\ab}_x}/L_{x'})$,
where $x' \in f^{-1}(x)$.
It follows that $\chi_x(G^{(n_x)}_{K_x}) = 0$. Equivalently,
$\chi_x \in \Fil^{\as}_{n_x} H^1(K_x)$.
We have therefore shown that $\chi \in \Fil_D H^1(K)$.

Conversely, suppose that $H^1(L/K) \subset \Fil_D H^1(K)$. Then 
$H^1(L/K) \subset H^1(U)$ and taking their Pontryagin duals, we get
$\pi^{\ab}_1(U) \surj \Gal(L/K)$. Since $\pi^{\ab}_1(U)$ is the Galois group
of the maximal abelian extension of $K$ which is {\'e}tale over $U$,
Galois theory says that $L$ is {\'e}tale over $U$.
We now show that the ramification of $L/K$ is bounded by $D$.

Let $x \in \Irr_C$ and let $x' \in f^{-1}(x)$.
We need to show that $G^{(n_x)}_{K_x}$ is annihilated under the map
$\Gal({K^{\ab}_x}/{K_x}) \to \Gal(L_{x'}/K_{x})$.
For this, we consider the commutative diagram (see ~\eqref{eqn:Field-incln})
\begin{equation}\label{eqn:Fil-D-char-0-1}
\xymatrix@C.8pc{
G^{(n_x)}_{K_x} \ar@{^{(}->}[r] \ar[dr] & 
\Gal({K^{\ab}_x}/{K_x}) \ar@{->>}[r] \ar@{^{(}->}[d] & \Gal(L_{x'}/K_{x}) \ar[d] 
\ar@{^{(}->}[r] & \Gal({(K_x \otimes_K L)}/{K_x}) \ar[dl]^-{\cong} \\
& \Gal(K^{\ab}/K) \ar@{->>}[r] & \Gal(L/K). & }
\end{equation}
Note that the slanted arrow on the right is an isomorphism because
$\Spec(L) \to \Spec(K)$ is a $\Gal(L/K)$-torsor and 
$\Spec(K_x \otimes_K L) \to \Spec(K_x)$ is a base change of this torsor.
It suffices therefore to show that $G^{(n_x)}_{K_x} \subset \Gal(K^{\ab}/K)$
is annihilated in $\Gal(L/K)$.

To prove this last claim, it suffices using the Pontryagin duality to show
that every character of $\Gal(L/K)$ annihilates the image of
$G^{(n_x)}_{K_x}$. The latter is equivalent to the statement that every
element $\chi \in H^1(L/K)$ has the property that $\chi_x$ lies
in $\Fil^{\as}_{n_x}H^1(K_x)$. But this is ensured by our hypothesis that
$H^1(L/K) \subset \Fil_D H^1(K)$.  
\end{proof}

\begin{prop}\label{prop:Fil-D-lim}
The inclusion $\Fil_D H^1(K) \inj H^1(K)$ defines an isomorphism
\[
{\underset{L \in I^{\ab}_{D/K}}\varinjlim} H^1(L/K) 
\xrightarrow{\cong} \Fil_D H^1(K).
\]
\end{prop}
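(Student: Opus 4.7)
The plan is to invoke Lemma~\ref{lem:Fil-D-char-0} in both directions. For the fact that the map is well-defined and injective, Lemma~\ref{lem:Fil-D-char-0} gives $H^1(L/K) \subset \Fil_D H^1(K)$ for every $L \in I^{\ab}_{D/K}$, and for any $L_1 \subset L_2$ in $I^{\ab}_{D/K}$ the transition map $H^1(L_1/K) \to H^1(L_2/K)$ is injective, being dual to the surjection $\Gal(L_2/K) \surj \Gal(L_1/K)$. Combined with the fact that each $H^1(L/K) \inj H^1(K)$ is an inclusion (recall $H^1(L/K) = \ker(H^1(K) \surj H^1(L))$), the colimit identifies with the union $\bigcup_{L \in I^{\ab}_{D/K}} H^1(L/K)$ inside $\Fil_D H^1(K)$, so the induced map is injective.

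For surjectivity, fix $\chi \in \Fil_D H^1(K)$. Viewed as a continuous homomorphism $G^{\ab}_K \to \Q/\Z$, the character $\chi$ has finite image; let $L \subset \ov{K}$ be the abelian extension of $K$ cut out by $\ker(\chi)$. Then $\chi$ factors as $G^{\ab}_K \surj \Gal(L/K) \inj \Q/\Z$, making $\Gal(L/K)$ cyclic with $\chi$ a generator of the dual group $H^1(L/K)$. Since $\Fil_D H^1(K)$ is a subgroup of $H^1(K)$ containing $\chi$, it contains $\langle\chi\rangle = H^1(L/K)$, so the nontrivial direction of Lemma~\ref{lem:Fil-D-char-0} forces $L \in I^{\ab}_{D/K}$. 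Hence $\chi$ already lies in the image of the canonical map from the colimit, proving surjectivity.

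No substantial obstacle is anticipated, since the hard work of passing between the local filtration condition defining $\Fil_D H^1(K)$ and the divisor-theoretic bound defining $I^{\ab}_{D/K}$ is already carried out in Lemma~\ref{lem:Fil-D-char-0}. The one observation that makes the present proposition nearly immediate is that a single character $\chi \in \Fil_D H^1(K)$ automatically generates the entire dual $H^1(L/K)$ of the cyclic extension it cuts out, so that the hypothesis $H^1(L/K) \subset \Fil_D H^1(K)$ of Lemma~\ref{lem:Fil-D-char-0} is verified without further argument.
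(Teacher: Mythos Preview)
Your argument is correct and takes a genuinely different, more direct route than the paper's.

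The paper proves surjectivity by passing to Pontryagin duals: it dualizes the short exact sequence $0 \to \Fil_D H^1(K) \to H^1(U) \to F \to 0$, realizes $F^\vee$ as the intersection of the open subgroups $H$ of $\pi^{\ab}_1(U)$ containing it, and thereby expresses $(\Fil_D H^1(K))^\vee$ as $\varprojlim_{H \in J} \pi^{\ab}_1(U)/H$. Re-dualizing gives $\Fil_D H^1(K) = \varinjlim_{H \in J} H^1(L_H/K)$, and since each $H^1(L_H/K) \subset \Fil_D H^1(K)$ by construction, Lemma~\ref{lem:Fil-D-char-0} places each $L_H$ in $I^{\ab}_{D/K}$. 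Your argument sidesteps this entire duality-and-profinite-groups detour with the single observation that any $\chi \in \Fil_D H^1(K)$ cuts out a \emph{cyclic} extension $L$ whose dual $H^1(L/K)$ is exactly $\langle\chi\rangle$, so the hypothesis of Lemma~\ref{lem:Fil-D-char-0} is automatic. This is both shorter and conceptually cleaner; the paper's approach, by contrast, does not exploit cyclicity and instead relies on structural facts about closed subgroups of profinite groups (\cite[Proposition~2.1.4(d)]{Pro-fin}). The paper's method may have been chosen with the subsequent dual identification in Theorem~\ref{thm:Dual-D} in mind, but for the present proposition your argument is preferable.
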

\begin{proof}
Recall that  $H^1(K)$ is a torsion abelian group. We consider it a
topological abelian group with discrete topology.
Let $F = {H^1(U)}/{\Fil_D H^1(K)}$ so that there is an exact 
sequence of discrete torsion abelian groups
\begin{equation}\label{eqn:Fil-D-lim-0}
0 \to \Fil_D H^1(K) \to H^1(U) \to F \to 0.
\end{equation}

Since $\Hom_{\cont}(G, {\Q}/{\Z}) \cong \Hom_{\Ab}(G, {\Q}/{\Z})$
for a discrete torsion group $G$, it follows from \lemref{lem:Pont-dual}
that
\begin{equation}\label{eqn:Fil-D-lim-1}
0 \to F^{\vee} \to \pi^{\ab}_1(U) \to (\Fil_D H^1(K))^{\vee} \to 0
\end{equation}
is an exact sequence of continuous homomorphisms of profinite abelian groups
(see \cite[Theorem~2.9.6]{Pro-fin}).
Note that $F^{\vee}$ is closed in $\pi^{\ab}_1(U)$ as $(\Fil_D H^1(K))^{\vee}$
is Hausdorff.

It follows from \cite[Proposition~2.1.4(d)]{Pro-fin} that 
$F^{\vee}$ is an intersection of open (hence closed) 
subgroups
of $\pi^{\ab}_1(U)$.
Let $J$ be the set of all open subgroups of $\pi^{\ab}_1(U)$
which contain $F^{\vee}$. Then $J$ is a directed set whose
order is given by declaring $H_1 \le H_2$ if $H_1 \subset H_2$.
The above statement then says that $F^{\vee} = 
{\underset{H \in J}\varprojlim} H$.
Hence, we get a short exact sequence of inverse systems of
profinite groups  
\begin{equation}\label{eqn:Fil-D-lim-3}
0 \to \{H\}_{H \in J} \to \{\pi^{\ab}_1(U)\}_{H \in J} \to 
\{{\pi^{\ab}_1(U)}/H\}_{H \in J} \to 0,
\end{equation}
where the middle term is a constant inverse system.
We conclude from \cite[Proposition~2.2.4]{Pro-fin} that this
sequence remains exact after taking the inverse limits.
But this is equivalent by ~\eqref{eqn:Fil-D-lim-1} to saying that the map
\begin{equation}\label{eqn:Fil-D-lim-4}
(\Fil_D H^1(K))^{\vee} \to {\underset{H \in J}\varprojlim} \ {\pi^{\ab}_1(U)}/H
\end{equation}
is an isomorphism of profinite groups.

For every $H \in J$, let $H' = p^{-1}_U(H)$, where $p_U \colon \Gal(K^{\ab}/K) 
\surj \pi^{\ab}_1(U)$ is the canonical surjection. Then Galois theory of 
field extensions tells us that there is a unique finite abelian 
subextension $L_H/K$ such that $\Gal(L_H/K) = {\Gal(K^{\ab}/K)}/{H'} = 
{\pi^{\ab}_1(U)}/{H}$. Any such $L_H$ must lie in $I^{\ab}_{U/K}$.
Moreover, $(\Fil_D H^1(K))^{\vee} \xrightarrow{\cong} 
{\underset{H \in J}\varprojlim} \Gal(L_H/K)$.
Using the Pontryagin duality and \cite[Lemma~2.9.3]{Pro-fin},
we get
\begin{equation}\label{eqn:Fil-D-lim-5}
{\underset{H \in J}\varinjlim} H^1(L_H/K) \xrightarrow{\cong} 
\Fil_D H^1(K).
\end{equation}
Since $L_H \in I^{\ab}_{D/K}$ for every $H \in J$ by \lemref{lem:Fil-D-char-0},
the desired surjectivity follows.
\end{proof}

\begin{thm}\label{thm:Dual-D}
By taking the Pontryagin duals, the canonical homomorphism of profinite 
groups
$\pi_1(U) \to \pi^{\divf}_1(X,D)$
defines an isomorphism 
\[
(\pi^{\divf}_1(X,D))^{\vee} \cong (\pi^{\adiv}_1(X,D))^{\vee} \xrightarrow{\cong} 
\Fil_D H^1(K).
\]
\end{thm}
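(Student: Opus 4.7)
The plan is to assemble the theorem from three previously established facts, with the bulk of the work already done. The first observation is elementary: because $\mathbb{Q}/\mathbb{Z}$ is abelian, every continuous character $\pi^{\divf}_1(X,D) \to \mathbb{Q}/\mathbb{Z}$ factors uniquely through the abelianization, giving a tautological identification $(\pi^{\divf}_1(X,D))^{\vee} = (\pi^{\adiv}_1(X,D))^{\vee}$.

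Next, I would invoke \lemref{lem:Fun-modulus} to rewrite $\pi^{\adiv}_1(X,D)$ as the cofiltered limit $\varprojlim_{L \in I^{\ab}_{D/K}} \Gal(L/K)$ of finite abelian groups. Applying Pontryagin duality for profinite abelian groups (as in \cite[Lemma~2.9.3]{Pro-fin}), which exchanges cofiltered limits of finite groups with filtered colimits of the duals, one obtains a continuous isomorphism
\[
(\pi^{\adiv}_1(X,D))^{\vee} \;\cong\; \varinjlim_{L \in I^{\ab}_{D/K}} \Gal(L/K)^{\vee} \;=\; \varinjlim_{L \in I^{\ab}_{D/K}} H^1(L/K).
\]
The colimit on the right is identified with $\Fil_D H^1(K)$ by \propref{prop:Fil-D-lim}, which is the main input.

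The only remaining task is to verify that this isomorphism is the one induced by dualizing the canonical surjection $\pi^{\ab}_1(U) \surj \pi^{\adiv}_1(X,D)$ from \eqref{eqn:Fiber-functor-0}; that is, that under the resulting injection $(\pi^{\adiv}_1(X,D))^{\vee} \hookrightarrow H^1(U)$, the image is precisely $\Fil_D H^1(K)$. This is essentially unwinding: a character $\chi \in H^1(U) = \Hom_{\cont}(\pi^{\ab}_1(U), \mathbb{Q}/\mathbb{Z})$ factors through $\pi^{\adiv}_1(X,D)$ if and only if it factors through some $\Gal(L/K)$ with $L \in I^{\ab}_{D/K}$, i.e., $\chi \in \bigcup_L H^1(L/K)$. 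By \lemref{lem:Fil-D-char-0}, this union equals $\Fil_D H^1(K)$, giving the required compatibility.

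The only conceivable obstacle is the verification that the Pontryagin dual of the inverse limit presentation of $\pi^{\adiv}_1(X,D)$ agrees with the colimit appearing in \propref{prop:Fil-D-lim} under the natural map from $H^1(U)$. However, both identifications are constructed by the same prescription (pulling back characters along the surjections $\pi^{\ab}_1(U) \surj \Gal(L/K)$ for $L \in I^{\ab}_{D/K}$), so this reduces to a formal diagram chase and presents no genuine difficulty. In summary, the theorem is a direct consequence of \lemref{lem:Fun-modulus}, \lemref{lem:Fil-D-char-0}, and \propref{prop:Fil-D-lim} combined with standard Pontryagin duality.
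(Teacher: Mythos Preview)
Your proof is correct and follows essentially the same approach as the paper: invoke \lemref{lem:Fun-modulus}, dualize via \cite[Lemma~2.9.3]{Pro-fin}, and apply \propref{prop:Fil-D-lim}. The paper's proof is slightly terser in that it does not spell out the compatibility check you carry out via \lemref{lem:Fil-D-char-0}, but the substance is identical.
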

\begin{proof}
The first isomorphism is obvious. We only need to prove the second
isomorphism.
By \lemref{lem:Fun-modulus}, we have
$\pi^{\adiv}_1(X,D) \xrightarrow{\cong} 
{\underset{L \in I^{\ab}_{D/K}}\varprojlim} \Gal(L/K)$.
Taking the Pontryagin duals, this yields (see \cite[Lemma~2.9.3]{Pro-fin})
${\underset{L \in I^{\ab}_{D/K}}\varinjlim} H^1(L/K) \xrightarrow{\cong} 
(\pi^{\adiv}_1(X,D))^{\vee}$. We now apply \propref{prop:Fil-D-lim}
to conclude the proof.
\end{proof}

\begin{cor}\label{cor:Dual-D-0}
The maps $\pi^{\ab}_1(U) \to \pi^{\adiv}_1(X,D)$ define an
isomorphism of profinite groups
\[
\pi^{\ab}_1(U) \xrightarrow{\cong} 
{\underset{D \in \Div_C(X)}\varprojlim} \pi^{\adiv}_1(X,D).
\]
\end{cor}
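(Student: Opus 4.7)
The strategy is to prove the statement by Pontryagin duality, reducing it to \propref{prop:Fil-Exhaustive} which we have already established. Both groups appearing in the corollary lie in $\pfd$ (the right hand side is a projective limit of profinite groups indexed by the directed set $\Div_C(X)$, hence profinite), so by \lemref{lem:Pont-dual} together with the Pontryagin duality theorem, it suffices to show that the induced homomorphism
\[
{\underset{D \in \Div_C(X)}\varinjlim} (\pi^{\adiv}_1(X,D))^{\vee} \to (\pi^{\ab}_1(U))^{\vee}
\]
is an isomorphism of discrete torsion abelian groups. Here we are using that the Pontryagin dual converts projective limits of profinite groups into direct limits of discrete torsion groups, which is \cite[Lemma~2.9.3]{Pro-fin}.

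Next, I would invoke \thmref{thm:Dual-D} to identify $(\pi^{\adiv}_1(X,D))^{\vee}$ canonically with $\Fil_D H^1(K)$ for every $D \in \Div_C(X)$, and the standard isomorphism $H^1(U) \cong (\pi^{\ab}_1(U))^{\vee}$ recalled in \S\ref{sec:KR} to identify the target. Under these identifications, the displayed map is the natural one
\[
{\underset{D \in \Div_C(X)}\varinjlim} \Fil_D H^1(K) \to H^1(U),
\]
whose bijectivity is exactly the content of \propref{prop:Fil-Exhaustive}.

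Finally, one needs to verify that the Pontryagin dual of the canonical map $\pi^{\ab}_1(U) \to {\varprojlim}_D \pi^{\adiv}_1(X,D)$ obtained from \eqref{eqn:Fiber-functor-0} really is the map ${\varinjlim}_D \Fil_D H^1(K) \to H^1(U)$ induced by the inclusions $\Fil_D H^1(K) \hookrightarrow H^1(U)$. This is a routine compatibility check: for each individual $D$, it is built into the construction of the isomorphism $(\pi^{\adiv}_1(X,D))^{\vee} \xrightarrow{\cong} \Fil_D H^1(K)$ in \thmref{thm:Dual-D}, which was defined precisely through the chain $\Gal(K^{\ab}/K) \twoheadrightarrow \pi^{\ab}_1(U) \twoheadrightarrow \pi^{\adiv}_1(X,D)$ of \eqref{eqn:Fiber-functor-0}; passing to the direct limit over $D$ preserves this compatibility.

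The main obstacle here is essentially bookkeeping: one has to make sure that the identifications in \thmref{thm:Dual-D} are natural enough in $D$ for the resulting diagram to commute, so that the dual of our map really is identified with the colimit map of \propref{prop:Fil-Exhaustive}. Once this naturality is granted — and it is visible from the construction, since the isomorphism $(\pi^{\adiv}_1(X,D))^{\vee} \cong \Fil_D H^1(K)$ is defined functorially via the directed system $I^{\ab}_{D/K}$ of \lemref{lem:Fun-modulus} — the corollary follows immediately. There are no further analytic or arithmetic difficulties: everything reduces to formal properties of the duality between profinite and discrete torsion abelian groups.
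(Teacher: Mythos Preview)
Your argument is correct and follows exactly the same route as the paper's proof, which simply reads ``Combine Proposition~\ref{prop:Fil-Exhaustive}, \thmref{thm:Dual-D} and the Pontryagin duality.'' Your write-up merely spells out the bookkeeping (the use of \cite[Lemma~2.9.3]{Pro-fin} and the naturality in $D$) that the paper leaves implicit.
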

\begin{proof}
Combine Proposition~\ref{prop:Fil-Exhaustive}, \thmref{thm:Dual-D}
and the Pontryagin duality.
\end{proof}

\begin{remk}\label{remk:No-normal-Div}
The reader may note that $\Fil_D H^1(K)$ is meaningful as long as $X$
is integral and is regular at the generic points
of $|D|$. One can therefore define $\pi^{\adiv}_1(X,D)$ in this generality
by letting it be the Pontryagin dual of $\Fil_D H^1(K)$.
From this perspective, \thmref{thm:Dual-D} provides a Tannakian
interpretation of $\Fil_D H^1(K)$ if $X$ is normal.
\end{remk}

Using Remark~\ref{remk:Curve-defn}, \thmref{thm:Dual-D} and
\cite[Corollary~2.8]{Kerz-Saito-2}), we get the following.

\begin{cor}\label{cor:SNCD}
  If $X$ is smooth and $D_\red$ is a simple normal crossing divisor on $X$, then
  there is an isomorphism
  \[
    \pi^{\adiv}_1(X,D) \xrightarrow{\cong}   \pi^{\ab}_1(X,D).
  \]
\end{cor}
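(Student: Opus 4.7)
The plan is to translate the desired isomorphism into a comparison of characters via Pontryagin duality, reducing it to an already known statement of Kerz--Saito. By \thmref{thm:Dual-D}, the Pontryagin dual of $\pi^{\adiv}_1(X,D)$ is identified with the subgroup $\Fil_D H^1(K) \subset H^1(U)$ consisting of characters $\chi$ whose image $\chi_x$ in $H^1(K_x)$ lies in $\Fil^{\as}_{n_x} H^1(K_x)$ for every generic point $x$ of $C = D_\red$, with $n_x$ the multiplicity of $\ov{\{x\}}$ in $D$. Since $\pi^{\ab}_1(X,D)$ is likewise the Pontryagin dual of the subgroup $\Fil^{\mathrm{KS}}_D H^1(K) \subset H^1(U)$ of characters whose ramification along $D$, measured in the sense of Deligne--Laumon (i.e.\ by pulling back to curves), is bounded by $D$, it suffices to show $\Fil_D H^1(K) = \Fil^{\mathrm{KS}}_D H^1(K)$ as subgroups of $H^1(U)$.

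First I would spell out both filtrations explicitly at each generic point $x$ of $D_\red$. The Abbes--Saito side uses the condition $\chi_x \in \Fil^{\as}_{n_x} H^1(K_x)$, which by \thmref{thm:Fil-main}(1) coincides with the Matsuda filtration $\Fil^{\ms}_{n_x} H^1(K_x)$. The Deligne--Laumon side imposes a condition that can also be localized at each generic point of $D_\red$, and the agreement of the two local conditions, under the smoothness of $X$ and the hypothesis that $D_\red$ is a simple normal crossing divisor, is precisely the content of \cite[Corollary~2.8]{Kerz-Saito-2} (see also Remark~\ref{remk:Curve-defn}). Hence, character by character,
\[
\chi \in \Fil_D H^1(K) \ \iff \ \chi_x \in \Fil^{\ms}_{n_x} H^1(K_x) \ \forall\, x \in \Irr_C \ \iff \ \chi \in \Fil^{\mathrm{KS}}_D H^1(K),
\]
where the outer equivalences are by definition and the middle one is the Kerz--Saito comparison.

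Having established $\Fil_D H^1(K) = \Fil^{\mathrm{KS}}_D H^1(K)$, I would conclude using \lemref{lem:Pont-dual} and the Pontryagin duality for profinite groups: passing to duals gives a canonical isomorphism $\pi^{\adiv}_1(X,D) \xrightarrow{\cong} \pi^{\ab}_1(X,D)$, which, by naturality of the duality with respect to the canonical surjections $\pi^{\ab}_1(U) \twoheadrightarrow \pi^{\adiv}_1(X,D)$ and $\pi^{\ab}_1(U) \twoheadrightarrow \pi^{\ab}_1(X,D)$, is compatible with the quotient maps from $\pi^{\ab}_1(U)$.

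The only potentially delicate point is verifying that the curve-based ramification bound of Deligne--Laumon localizes to a condition purely at the generic points of $D_\red$ and that this condition matches the Matsuda filtration. This is exactly where the smoothness of $X$ and the SNC hypothesis on $D_\red$ enter: without these, the Deligne--Laumon filtration need not coincide with the Abbes--Saito/Matsuda filtration. Under the stated hypotheses, however, the required identification is the main theorem of Kerz--Saito cited above, so the remaining work is purely formal manipulation with Pontryagin duals.
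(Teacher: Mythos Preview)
Your proposal is correct and follows essentially the same approach as the paper: the paper's proof is a one-line reference to Remark~\ref{remk:Curve-defn}, \thmref{thm:Dual-D}, and \cite[Corollary~2.8]{Kerz-Saito-2}, and you have simply unpacked this into the explicit Pontryagin-duality argument comparing $\Fil_D H^1(K)$ with the Kerz--Saito filtration.
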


\subsection{Zariski-Nagata purity theorem for 
$\pi^{\adiv}_1(X,D)$}\label{sec:purity-D}
Using \thmref{thm:Dual-D}, we can extend the Zariski-Nagata purity theorem
for $\pi^{\ab}_1(U)$ to $\pi^{\adiv}_1(X,D)$ as follows.
This is not known for $\pi^{\ab}_1(X,D)$.
Let $A \subset X$ be a closed subset 
such that $\dim(A) \le \dim(X) - 2$. Let $X' = X \setminus A$.
Set $U' = U \cap X'$ and $D' = j^*(D)$, where $j \colon X' \inj X$
is the inclusion.

\begin{thm}\label{thm:ZN-purity}
Assume that $U$ is regular. Then the canonical map
$j_* \colon \pi^{\adiv}_1(X',D') \to \pi^{\adiv}_1(X,D)$ is an isomorphism.
\end{thm}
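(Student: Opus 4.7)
The plan is to reduce the assertion, via Pontryagin duality, to a comparison of the filtered first cohomology groups on both sides. By \thmref{thm:Dual-D}, taking Pontryagin duals of the homomorphism $j_* \colon \pi^{\adiv}_1(X',D') \to \pi^{\adiv}_1(X,D)$ identifies it (up to pro-finite duality) with the restriction map $\Fil_D H^1(K) \to \Fil_{D'} H^1(K)$, where we use that $X$ and $X'$ share the same function field $K$. Since Pontryagin duality is an exact anti-equivalence on $\pfd$ (\lemref{lem:Pont-dual}) and a continuous surjection of profinite groups is an isomorphism iff its dual is, it suffices to prove that the inclusion induces an equality $\Fil_D H^1(K) = \Fil_{D'} H^1(K)$ as subgroups of $H^1(K)$.

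The first step is to show $H^1(U) = H^1(U')$. This follows from the classical Zariski--Nagata purity theorem for the abelianized {\'e}tale fundamental group: since $U$ is regular by hypothesis and $A \cap U$ is closed of codimension $\ge 2$ in $U$, the map $\pi^{\ab}_1(U') \to \pi^{\ab}_1(U)$ is an isomorphism, and dualizing gives the desired equality on $H^1$. The second step is to compare the filtration conditions. Because $\codim_X(A) \ge 2$, no generic point of $C$ lies in $A$, so restriction to $X'$ yields a bijection $\Irr_C \xrightarrow{\cong} \Irr_{C'}$. For corresponding generic points $\lambda \leftrightarrow \lambda'$, the local rings $\sO_{X,\lambda}$ and $\sO_{X',\lambda'}$ coincide, hence so do their henselizations and the associated Henselian discrete valuation fields $K_\lambda = K_{\lambda'}$. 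Moreover the multiplicities defining $D$ and $D' = j^*(D)$ agree, i.e.\ $n_\lambda = n_{\lambda'}$.

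Combining these two observations, a character $\chi \in H^1(U) = H^1(U')$ satisfies the defining condition $\chi_\lambda \in \Fil^{\ms}_{n_\lambda} H^1(K_\lambda)$ at every $\lambda \in \Irr_C$ if and only if it satisfies the corresponding condition at every $\lambda' \in \Irr_{C'}$ (using \defref{defn:Fid_D}). This yields $\Fil_D H^1(K) = \Fil_{D'} H^1(K)$, and dualizing via \thmref{thm:Dual-D} completes the proof.

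I do not anticipate a serious obstacle here: the argument is essentially formal once the Pontryagin-dual description of $\pi^{\adiv}_1(X,D)$ from \thmref{thm:Dual-D} is in place. The only point that requires any care is verifying that the classical purity (which is about {\'e}tale covers of regular $U$) suffices---and it does, since the Abbes--Saito ramification filtration appearing in $\Fil_D H^1(K)$ depends only on the local fields at the codimension one points of $C$, which are left untouched by the removal of $A$.
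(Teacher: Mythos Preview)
Your proposal is correct and follows essentially the same route as the paper: reduce via \thmref{thm:Dual-D} to the equality $\Fil_D H^1(K) = \Fil_{D'} H^1(K)$, invoke the classical Zariski--Nagata purity theorem to get $H^1(U) = H^1(U')$, and then observe that no generic point of $C$ lies in $A$ so the local ramification conditions on both sides are literally the same. The paper's proof is terser but the ideas are identical.
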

\begin{proof}
By \thmref{thm:Dual-D}, it suffices to show that the inclusion
$\Fil_D H^1(K) \subset \Fil_{D'} H^1(K)$ is a bijection.
So let $\chi \in \Fil_{D'} H^1(K)$. Since $j^* \colon H^1(U) \to 
H^1(U')$ is an isomorphism by the Zariski-Nagata purity theorem for
{\'e}tale fundamental group and the Pontryagin duality, it follows that 
$\chi \in H^1(U)$. Since no generic point of $C = |D|$ can lie in $A$,
it is now clear that $\chi \in \Fil_D H^1(K)$.
\end{proof}

\section{The reciprocity map with modulus}\label{sec:R-fixed-D}
In this section, we shall prove \thmref{thm:Main-1}. We work with the following set-up.
Let $k$ be a finite field and $X \in \Sch_k$ an integral 
normal scheme of dimension $d \ge 1$. Let $D \subset X$ be a closed subscheme
of pure codimension one and $C = D_\red$. We let $U = X \setminus C$.
Let $\eta$ denote the generic point of $X$ and
let $K = k(\eta)$. 
The main result of this paper is the following theorem, which is more general
than \thmref{thm:Main-1} because it  does not assume $U$ to be regular. 
Moreover, it implies \thmref{thm:Main-1} in view of \thmref{thm:Kerz-global}.

\begin{thm}\label{thm:Rec-D-map}
There exists a continuous reciprocity homomorphism $\rho_{X|D} \colon C(X,D) 
\to \pi^{\adiv}_1(X,D)$ such that the diagram
\begin{equation}\label{eqn:Rec-D-map-0}
\xymatrix@C.8pc{
{C}_{U/X} \ar[r]^-{\rho_{U/X}} \ar@{->>}[d] & \pi^{\ab}_1(U) \ar@{->>}[d] \\
C(X,D) \ar[r]^-{\rho_{X|D}} & \pi^{\adiv}_1(X,D)}
\end{equation}
is commutative.
\end{thm}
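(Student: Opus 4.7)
The plan is to use the Pontryagin duality of Section~\ref{sec:PD} to convert the descent problem into a pairing-vanishing statement. By \thmref{thm:Dual-D} we have $(\pi^{\adiv}_1(X,D))^{\vee} \cong \Fil_D H^1(K)$, and by \lemref{lem:Density} the continuous homomorphism $\rho_{U/X} \colon C_{U/X} \to \pi^{\ab}_1(U)$ descends through $C(X,D) \to \pi^{\adiv}_1(X,D)$ if and only if every character $\chi \in \Fil_D H^1(K) \subset H^1(U)$ annihilates the image under $\rho_{U/X}$ of $\ker(I_{U/X} \to I(X,D))$. By \eqref{eqn:Idele-D}, this kernel is generated, as $P = (p_0,\ldots,p_d)$ runs over maximal Parshin chains on $(U \subset X)$, by the subgroups $K^M_d(\sO^h_{X,P'}, I_D)$. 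Hence it suffices to show that $\chi \circ \rho_{k(P)/U}$ annihilates $K^M_d(\sO^h_{X,P'}, I_D)$ for every such $P$ and every $\chi \in \Fil_D H^1(K)$.

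Using the decomposition $k(P) \cong \prod_{V \in \sV(P)} Q(V^h)$ of \lemref{lem:P-valuation} together with the definition of $\rho_{k(P)/U}$ via Kato's reciprocity maps $\rho_{Q(V^h)}$, and the fact (see \eqref{eqn:Pairing-3}) that Kato's pairing is adjoint to the cup product followed by the residue isomorphism, one has
\[
\chi\bigl(\rho_{k(P)/U}(\alpha)\bigr) \;=\; \sum_{V \in \sV(P)} \mathrm{Res}\bigl(\{\alpha_V,\chi_V\}\bigr) \in \Q/\Z,
\]
where $\alpha_V$ and $\chi_V$ are the images in $K^M_d(Q(V^h))$ and $H^1(Q(V^h))$ respectively. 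It therefore suffices to prove $\{\alpha_V,\chi_V\} = 0$ in $H^{d+1}(Q(V^h))$ for each $V \in \sV(P)$. By \thmref{thm:Filt-Milnor-*}(2), this will follow once we show that $\chi_V \in \Fil^{\ms}_{n_\lambda} H^1(Q(V^h))$ and $\alpha_V \in U'_{n_\lambda} K^M_d(Q(V^h))$, where $\lambda := p_{d-1}$.

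Since $P$ is a maximal Parshin chain on $(U \subset X)$, the point $p_{d-1}$ lies in $C$ and has codimension one in $X$, so it is a generic point $\lambda \in \Irr_C$ of an irreducible component of $C$ with assigned multiplicity $n_\lambda$. Because $X$ is normal, $\sO_{X,\lambda}$ is a DVR and coincides with $V_{d-1}$, so $K_\lambda = Q(V_{d-1}^h)$ embeds canonically in $Q(V^h) = Q(\wt{V}_{d-1})$ as the quotient field of a sub-Henselian DVR with the same uniformizer $\pi$. The hypothesis $\chi \in \Fil_D H^1(K)$ yields $\chi_\lambda \in \Fil^{\ms}_{n_\lambda} H^1(K_\lambda)$ by \defref{defn:Fid_D}. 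Since $K_\lambda \to Q(V^h)$ preserves the valuation of units (ramification index one) and sends $\pi$ to $\pi$, inspection of \eqref{eqn:DRC-4} and \eqref{eqn:DRC-5} shows that the induced map $W_r(K_\lambda) \to W_r(Q(V^h))$ sends $\Fil^{\bk}_m$ and $\Fil^{\ms}_m$ into their namesakes; as $\delta_r$ is functorial, we conclude $\chi_V \in \Fil^{\ms}_{n_\lambda} H^1(Q(V^h))$. For the Milnor side, any generating symbol $\{1+a, u_1, \ldots, u_{d-1}\}$ of $K^M_d(\sO^h_{X,P'}, I_D)$ with $a \in I_D \sO^h_{X,P'}$ and $u_i \in (\sO^h_{X,P'})^\times$ maps to $K^M_d(Q(V^h))$ to an element of $\{1 + \fm^{n_\lambda}_{\wt{V}_{d-1}}, \wt{V}_{d-1}^\times,\ldots, \wt{V}_{d-1}^\times\} = U'_{n_\lambda} K^M_d(Q(V^h))$, because $I_D \wt{V}_{d-1} = \fm^{n_\lambda}_{\wt{V}_{d-1}}$ as $\lambda = p_{d-1}$ is the unique component of $C$ lying below $\wt{V}_{d-1}$ with multiplicity $n_\lambda$. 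Summing $\{\alpha_V,\chi_V\} = 0$ over $V$ produces the required vanishing, so $\rho_{U/X}$ descends to $\rho_{X|D}$; the diagram \eqref{eqn:Rec-D-map-0} commutes by construction. Continuity of $\rho_{X|D}$ is automatic because $C(X,D)$ carries the discrete topology by \lemref{lem:Disc}.

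The main obstacle will be verifying rigorously both the functoriality of the Matsuda filtration along $K_\lambda \hookrightarrow Q(V^h)$ (using that this extension of Henselian DVFs has ramification index one and shares a uniformizer) and the image-containment in $U'_{n_\lambda}$ of the relative Milnor $K$-group; the latter requires some care since $\sO^h_{X,P'}$ is semilocal rather than local, so the surjectivity of \eqref{eqn:MPchain*-1} may need to be supplemented by a direct description of the kernel modulo higher symbols, but once the image of a generating symbol in each component $Q(V^h)$ is identified the inclusion in $U'_{n_\lambda}$ is immediate from the definition.
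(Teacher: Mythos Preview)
Your proposal is correct and follows essentially the same route as the paper: reduce via Pontryagin duality and \thmref{thm:Dual-D} to showing that every $\chi \in \Fil_D H^1(K)$ kills the image of $K^M_d(\sO^h_{X,P'}, I_D)$ for each maximal Parshin chain $P$, pass to each $d$-DV $V \in \sV(P)$, verify that $\chi_V \in \Fil^{\ms}_{n_\lambda} H^1(Q(V^h))$ and that the image lands in $U'_{n_\lambda} K^M_d(Q(V^h))$, and conclude by \thmref{thm:Filt-Milnor-*}(2). Your citation of \lemref{lem:Density} should really be to Pontryagin duality itself (an element of a profinite group vanishes iff every continuous character kills it), and the paper is terser than you about the functoriality of $\Fil^{\ms}_\bullet$ along $K_\lambda \hookrightarrow Q(V^h)$, but these are cosmetic differences.
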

\begin{proof}
Let $p_{X|D} \colon {C}_{U/X} \to C(X,D)$ and $q_{X|D} \colon
\pi^{\ab}_1(U) \to \pi^{\adiv}_1(X,D)$ denote the quotient maps.
Since $C(X,D)$ is a quotient of ${C}_{U/X}$ with quotient topology,
we only need to show that $q_{X|D} \circ \rho_{U/X}$ annihilates the
relative Milnor $K$-groups of maximal Parshin chains (with respect to the
canonical dimension function) on $(U \subset X)$
relative to the ideal of $D$, where 
$\rho_{U/X} \colon C_{U/X} \to \pi^{\ab}_1(U)$ is the reciprocity map
from \thmref{thm:Rec-main}.

Let $P = (p_0, \ldots , p_d)$ be a maximal Parshin chain on $(U \subset X)$. 
Let $I_D$ be the image of
the ideal sheaf $\sI_D$ in $\sO^h_{X,P'}$, where
recall that $P' = (p_0, \ldots , p_{d-1})$.
We let $x = \Spec(k(p_{d-1}))$ and $C_x = \ov{\{x\}}$ so that $C_x$ is an 
irreducible component of $C$. Let $n_x$ be the multiplicity of $C_x$ in $D$.
We then know that $I_D = \fm^{n_x}$, where $\fm$ is the Jacobson 
radical of $\sO^h_{X,P'}$ (see the last paragraph of \S~\ref{sec:Pchain}). 
We let $V \subset K$ be a $d$-DV dominating $P$ and let
$K'$ be the quotient field of $V^h$.
We have seen in \S~\ref{sec:valuation} that $K'$ is a $d$-dimensional Henselian
local field whose ring of integers $\sO_{K'}$ dominates $\sO^h_{X,x}$.
We have $I_D \sO_{K'} = \fm^{n_x}_{K'}$, where $\fm_{K'}$
is the maximal ideal of $\sO_{K'}$.
It suffices to show that the image of 
$K^M_d(\sO_{K'}, \fm^{n_x}_{K'}) = U'_{n_x}K^M_d(K')$ under the composite map
$K^M_d(K') \to I_{U/X} \xrightarrow{\rho_{U/X}} \pi^{\ab}_1(U)$
dies in $\pi^{\adiv}_1(X,D)$ (see \S~\ref{sec:Mat-0}).

Now, it follows from the construction of $\rho_{U/X}$ (see \S~\ref{sec:Rec-U})
that there is a diagram

\begin{equation}\label{eqn:Rec-D-map-3}
\xymatrix@C.8pc{
K^M_d(K') \ar[d] \ar[r]^-{\rho_{K'}} & \Gal({{K'}^{\ab}}/{K'}) \ar[d]^-{q_V} & \\ 
I_{U/X} \ar[r]_-{\rho_{U/X}} & \pi^{\ab}_1(U) \ar[r]^-{q_{X|D}} & 
\pi^{\adiv}_1(X,D),} 
\end{equation}
where the square on the left is commutative.
We need to show that $q_{X|D} \circ q_V \circ \rho_{K'}$ annihilates
$U'_{n_x}K^M_d(K')$. Using the Pontryagin duality for profinite groups,
this is equivalent to showing that every continuous character of
$\pi^{\adiv}_1(X,D)$ annihilates the image of $U'_{n_x}K^M_d(K')$.
By \thmref{thm:Dual-D}, this is equivalent to showing that
for every element $\chi \in \Fil_D H^1(K)$, the composite character 
$\chi \circ q_V$ annihilates the image of $U'_{n_x}K^M_d(K')$ in
$\Gal({{K'}^{\ab}}/{K'})$.

However, $\chi \in \Fil_D H^1(K) \subset H^1(U)$ implies that 
$\chi_x \in \Fil^{\ms}_{n_x} H^1(K_x)$. The factorization
$H^1(U) \to H^1(K_x) \to H^1(K')$
then tells us that $\chi \circ q_V \in \Fil^{\ms}_{n_x} H^1(K')$. 
But this forces $\chi \circ q_V$ to annihilate
$U'_{n_x}K^M_d(K')$ by \thmref{thm:Filt-Milnor-*}.
\end{proof}

\begin{cor}\label{cor:Rec-refined}
The reciprocity map $\rho_{U/X}$ factorizes into the composition of
continuous homomorphisms making the diagram 
\begin{equation}\label{eqn:Rec-refined-0}
\xymatrix@C.8pc{
C_{U/X} \ar[r] \ar@{->>}[dr] \ar@/^2pc/[rr]^-{\rho_{U/X}} & \wt{C}_{U/X}
\ar[r]^-{\wt{\rho}_{U/X}} \ar@{->>}[d]^-{p_{X|D}} & \pi^{\ab}_1(U) 
\ar@{->>}[d]^-{q_{X|D}} \\
& C(X,D) \ar[r]^-{\rho_{X|D}} & \pi^{\adiv}_1(X,D)}
\end{equation}
commutative.
\end{cor}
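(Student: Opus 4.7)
The plan is to define $\wt{\rho}_{U/X}$ as the inverse limit of the family $\{\rho_{X|D}\}_D$ supplied by \thmref{thm:Rec-D-map}, and then to invoke \corref{cor:Dual-D-0} to identify the target with $\pi^{\ab}_1(U)$.

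First, I would verify that $\{\rho_{X|D}\}_{D \in \Div_C(X)}$ is a morphism of projective systems. For $D \le D'$, write $\psi_{D,D'} \colon C(X,D') \surj C(X,D)$ for the transition map (Remark~\ref{remk:PF-mod-0}) and $\phi_{D,D'} \colon \pi^{\adiv}_1(X,D') \surj \pi^{\adiv}_1(X,D)$ for the canonical surjection induced by the inclusion of Galois categories $\sU_\et(X,D) \subseteq \sU_\et(X,D')$ via \lemref{lem:Fun-modulus}. By construction, $q_{X|D} = \phi_{D,D'} \circ q_{X|D'}$ and $p_{X|D} = \psi_{D,D'} \circ p_{X|D'}$. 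Applying \thmref{thm:Rec-D-map} to both $D$ and $D'$ yields
\[
\phi_{D,D'} \circ \rho_{X|D'} \circ p_{X|D'} = \phi_{D,D'} \circ q_{X|D'} \circ \rho_{U/X} = q_{X|D} \circ \rho_{U/X} = \rho_{X|D} \circ p_{X|D} = \rho_{X|D} \circ \psi_{D,D'} \circ p_{X|D'},
\]
and since $p_{X|D'}$ is surjective, I conclude $\phi_{D,D'} \circ \rho_{X|D'} = \rho_{X|D} \circ \psi_{D,D'}$.

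Second, I would pass to the inverse limit and define $\wt{\rho}_{U/X}$ as the composition
\[
\wt{C}_{U/X} = \varprojlim_D C(X,D) \xrightarrow{\varprojlim_D \rho_{X|D}} \varprojlim_D \pi^{\adiv}_1(X,D) \xrightarrow{\cong} \pi^{\ab}_1(U),
\]
where the last isomorphism comes from \corref{cor:Dual-D-0}. Continuity of $\wt{\rho}_{U/X}$ is automatic from the universal property of inverse limits in the category of topological groups, since each $\rho_{X|D}$ is continuous by \thmref{thm:Rec-D-map}.

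Finally, I would verify the commutativity of the two cells of the diagram. The right-hand square commutes by the very definition of $\wt{\rho}_{U/X}$ as an inverse limit: projecting onto the $D$-component reproduces $\rho_{X|D}$. For the upper triangle $\wt{\rho}_{U/X} \circ (C_{U/X} \to \wt{C}_{U/X}) = \rho_{U/X}$, it suffices (again by \corref{cor:Dual-D-0}) to check the equality after projecting to each $\pi^{\adiv}_1(X,D)$, where the statement reduces to the identity $\rho_{X|D} \circ p_{X|D} = q_{X|D} \circ \rho_{U/X}$ of \thmref{thm:Rec-D-map}. I do not foresee a genuine obstacle; the most delicate point is the compatibility check in the first step, which is really just bookkeeping given that each $\rho_{X|D}$ is itself a factorization of the single map $\rho_{U/X}$.
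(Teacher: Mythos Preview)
Your proposal is correct and is essentially a detailed unpacking of the paper's own proof, which consists of a single sentence citing the construction of $\rho_{U/X}$, the definition of $\wt{C}_{U/X}$, \corref{cor:Dual-D-0}, and \thmref{thm:Rec-D-map}. You have simply made explicit the inverse-limit construction and the compatibility check that the paper leaves implicit.
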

\begin{proof}
The factorization follows from the construction of $\rho_{U/X}$,
definition of $\wt{C}_{U/X}$, \corref{cor:Dual-D-0} and
\thmref{thm:Rec-D-map}.
\end{proof}

\subsection{Agreement with the Kato-Saito reciprocity map}
\label{sec:Agreement}
For every $L \in I^{\ab}_{D/K}$, it was shown by Kato-Saito in
\cite[\S~3.7]{Kato-Saito-2} that there exists a closed subscheme
$D' \subset X$ with support $C$ such that $D' \ge D$, and a reciprocity
map $\rho'_{D'|L} \colon C_{KS}(X,D') \to \Gal(L/K)$.

\begin{lem}\label{lem:Agreement*}
Assume that $U$ is regular. Then the diagram
\begin{equation}\label{eqn:Agreement*-0}
\xymatrix@C.8pc{
C(X,D') \ar@{->>}[r] \ar[d]_-{\psi_{X|D'}} & C(X,D) \ar[r]^-{\rho_{X|D}}
& \pi^{\adiv}_1(X,D) \ar@{->>}[d] \\
C_{KS}(X,D') \ar[rr]^-{\rho'_{D'|L}} & & \Gal(L/K)}
\end{equation}
is commutative. In particular, $\wt{\rho}_{U/X}$ agrees with
the Kato-Saito reciprocity map given by \cite[Theorem~9.1(3)]{Kato-Saito-2}.
\end{lem}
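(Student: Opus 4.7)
The plan is to reduce commutativity of \eqref{eqn:Agreement*-0} to an equality of two maps out of the Milnor $K$-group of a $d$-dimensional Henselian local field, where both sides are manifestly given by Kato's local reciprocity. First, I precompose the diagram with the canonical continuous surjection $C_{U/X} \surj C(X,D')$. Since this surjection has dense image and all four corners of the enlarged diagram are discrete Hausdorff groups (by \lemref{lem:Disc} and the profiniteness of $\Gal(L/K)$), it suffices to verify commutativity after this precomposition. By \corref{cor:Rec-refined}, the top composite then becomes $C_{U/X} \xrightarrow{\rho_{U/X}} \pi^{\ab}_1(U) \surj \pi^{\adiv}_1(X,D) \surj \Gal(L/K)$, while the bottom is $C_{U/X} \surj C(X,D') \xrightarrow{\psi_{X|D'}} C_{KS}(X,D') \xrightarrow{\rho'_{D'|L}} \Gal(L/K)$.

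Next, using the direct-sum topology on $I_{U/X}$, the universal property \lemref{lem:DST-univ}, and continuity of both composites, I reduce to checking equality on each summand $K^M_{d_X(P)}(k(P))$ indexed by a Parshin chain $P$ on $(U\subset X)$. For non-maximal $P$, the map to $I_{U/X}$ factors through residues to Milnor $K$-groups of maximal Parshin chains dominating $P$ in both constructions, so the non-maximal case reduces to the maximal one. For $P$ maximal, \lemref{lem:P-valuation} yields $k(P) \cong \prod_{V \in \sV(P)} Q(V^h)$, so I may fix a single $d$-DV $V$ and compare the two composites out of $K^M_d(Q(V^h))$.

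On this factor, the top composite is, by the very construction of $\rho_{U/X}$ in \S~\ref{sec:Rec-U}, Kato's reciprocity $\rho_{Q(V^h)}$ followed by the pushforward $\Gal(Q(V^h)^{\ab}/Q(V^h)) \to \pi^{\ab}_1(U) \to \Gal(L/K)$. The bottom composite, by Kato-Saito's construction of $\rho'_{D'|L}$ in \cite[\S~3.7]{Kato-Saito-2}, is likewise assembled from the same Kato reciprocity $\rho_{Q(V^h)}$ applied to the generators of $C_{KS}(X,D')$ indexed by maximal Parshin chains. Thus, provided Kerz's isomorphism $\psi_{X|D'}$ matches the Parshin-chain generator of $C(X,D')$ coming from $K^M_d(k(P))$ with the corresponding Parshin-chain generator of $C_{KS}(X,D')$, the two maps agree on the nose. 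This compatibility is built into the proof of \thmref{thm:Kerz-global}: the exact sequences \eqref{eqn:Kerz-global-0} and \eqref{eqn:Kerz-global-2} used there are compatible via the Cousin-type identifications on local cohomology supplied by \lemref{lem:Nice-Kerz}, which send the canonical generator on each side to the class of $1 \in K^M_0(k(x))$ for a closed point $x$. The final assertion of the lemma then follows by passing to the inverse limit over $L \in I^{\ab}_{U/K}$ and invoking \corref{cor:Dual-D-0}.

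The main obstacle is the identification in the previous paragraph: one must trace through Kerz's proof of \thmref{thm:Kerz-global} and Kato-Saito's construction in \cite[\S~3.7]{Kato-Saito-2} to confirm that $\psi_{X|D'}$ identifies the class of an element of $K^M_d(Q(V^h))$, viewed as a Parshin-chain generator of $C(X,D')$, with the corresponding class in $H^d_{x_P}(X_P, \sK^M_{d,(X_P, D_P)})$ used by Kato-Saito. This is essentially a diagram chase tying the Parshin-chain description of the idele class group to its coniveau description, but it must be carried out carefully because the matching of local generators is only implicit in both references.
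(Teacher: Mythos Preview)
The paper's route is different and much shorter. Rather than precomposing with $C_{U/X}$ and unwinding all Parshin-chain summands, the paper observes that the classes of closed points $x\in U$ already generate $C_{KS}(X,D')$ (this is \cite[Theorem~2.5]{Kato-Saito-2}), and that $\psi_{X|D'}$ is compatible with these $0$-chain generators by the commutative triangle \eqref{eqn:Global-0} built into \thmref{thm:Kerz-global}. It therefore suffices to compare the two composites on $K^M_0(k(x))$ for each closed $x\in U$; on such a point both reciprocity maps send $1$ to the Frobenius substitution (by the construction in \thmref{thm:Rec-main} on one side and \cite[Proposition~3.8]{Kato-Saito-2} on the other), and the lemma follows immediately.

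Your approach, by contrast, carries two problems. First, the claimed reduction ``non-maximal reduces to maximal via residues'' is backwards: residue maps go from the Milnor $K$-theory of a maximal chain to that of a shorter one, not the other way, and there is no reason why an arbitrary non-maximal Parshin-chain class in $C_{U/X}$ is a combination of maximal ones (the $Q$-chain relations do not give this, since a Parshin chain on $(U\subset X)$ requires all but the top point to lie in $C$). Second, even on maximal chains you correctly identify the ``main obstacle'': verifying that $\psi_{X|D'}$ matches the Parshin-chain generator of $C(X,D')$ with the corresponding local-cohomology class in $C_{KS}(X,D')$. This would require unwinding Kerz's inductive proof and is genuinely delicate. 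The paper sidesteps both issues by passing to the $0$-chains, where the compatibility of $\psi_{X|D'}$ is already part of the statement of \thmref{thm:Kerz-global} and both reciprocity maps are transparently the Frobenius.
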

\begin{proof}
It follows from the construction of the isomorphism $\psi_{X|D'}$ in
\thmref{thm:Kerz-global} that for every closed point $x \in U$, the diagram
\begin{equation}\label{eqn:Agreement*-0*}
\xymatrix@C.8pc{
K^M_0(k(x)) \ar[r] \ar[dr] & C(X,D') \ar[d]^-{\psi_{X|D'}}_-{\cong} \\
& C_{KS}(X,D')}
\end{equation}
is commutative,
where the top horizontal arrow is the canonical map from the 
Milnor $K$-theory of the Parshin chain $\{x\}$ and the
diagonal arrow is the forget support map
$H^d_x(X, \sK^M_{d, (X,D')}) \to H^d_\nis(X, \sK^M_{d, (X,D')})$.
It suffices therefore to show using \cite[Theorem~2.5]{Kato-Saito-2}
that for every closed point $x \in U$, 
the diagram ~\eqref{eqn:Agreement*-0} commutes if we replace
both $C(X,D')$ and $C_{KS}(X,D')$ by $K^M_0(k(x))$.
But both maps are simply the Frobenius substitution by 
the construction of $\rho_{U/X}$ in \thmref{thm:Rec-main}
and by \cite[Proposition~3.8]{Kato-Saito-2}.
\end{proof}


\subsection{The reciprocity map for degree zero subgroups}
\label{sec:Deg-0}
Assume that $X$ is proper over $k$ and $U$ is regular.
The structure map $X \to \Spec(k)$ defines natural maps
$\pi^{\ab}_1(U) \surj \pi^{\adiv}_1(X,D) \surj \pi^{\ab}_1(X) \to \Gal(\ov{k}/k) 
\cong \wh{\Z}$.
The latter map is surjective if $X$ is geometrically connected.
Let $\deg'$ denote the composite map.
We denote the composite 
map $\pi^{\adiv}_1(X,D) \surj \pi^{\ab}_1(X) \to \Gal(\ov{k}/k)$
also by $\deg'$.
We let $\pi^{\ab}_1(U)_0 = \Ker(\deg' \colon \pi^{\ab}_1(U) \to \wh{\Z})$
and $\pi^{\adiv}_1(X,D)_0 = \Ker(\deg' \colon \pi^{\adiv}_1(X,D) \to \wh{\Z})$. 
We also have the degree map
$\deg \colon C(X,D) \to \Z$ by \propref{prop:Degree}.

\begin{lem}\label{lem:Rec-proj}
There exists an integer $n \ge 1$
depending only on $U$ and a commutative diagram of exact 
sequences of topological abelian groups
\begin{equation}\label{eqn:Rec-proj-0}
\xymatrix@C1pc{
0 \ar[r] & C(X,D)_0 \ar[r] \ar[d] & C(X,D) \ar[r]^-{{\deg}/n} 
\ar[d]^-{\rho_{X|D}} & \Z  \ar@{^{(}->}[d]  \ar[r] & 0 \\
0 \ar[r] & \pi^{\adiv}_1(X,D)_0 \ar[r] & \pi^{\adiv}_1(X,D) \ar[r]^-{{\deg'}/n} & 
\wh{\Z} \ar[r] & 0,}
\end{equation}
where the right vertical arrow is the profinite completion map.
\end{lem}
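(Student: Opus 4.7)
The plan is to combine the degree sequence for $C(X,D)$ from \propref{prop:Degree}(4) with an analogous degree sequence for $\pi^{\adiv}_1(X,D)$, and then to verify that the reciprocity map from \thmref{thm:Rec-D-map} sends the cycle class of a closed point to the geometric Frobenius. First, \propref{prop:Degree}(4) furnishes the integer $n \ge 1$ (depending only on $U$) together with the top exact sequence, where $n$ is characterized as the positive generator of the image of ${\deg} \colon C_{U/X} \to \Z$; equivalently, $n = \gcd_{x \in U_{(0)}}[k(x):k]$.

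Next, I would establish the bottom exact sequence. The map $\deg' \colon \pi^{\adiv}_1(X,D) \to \wh{\Z}$ factors through the surjection $\pi^{\adiv}_1(X,D) \surj \pi^{\ab}_1(X)$, so its image equals the image of the composition $\pi^{\ab}_1(X) \to \Gal(\ov{k}/k) \cong \wh{\Z}$. Since the image of the Frobenius at any closed point $x \in X$ is $[k(x):k] \in \wh{\Z}$ and $\pi^{\ab}_1(X)$ is topologically generated by such Frobenii (Chebotarev), this image is the closure of the subgroup generated by $\{[k(x):k]\}_{x \in X_{(0)}}$ in $\wh{\Z}$, namely $m \wh{\Z}$ for $m = \gcd_{x \in X_{(0)}}[k(x):k]$. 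Because $U$ is a dense open in the integral scheme $X$, every closed point of $X$ specializes from generic points of $U$, and the standard Bertini-type or moving argument (already used implicitly in the proof of \propref{prop:Degree}) shows $m = n$. Thus ${\deg'}/n$ maps $\pi^{\adiv}_1(X,D)$ onto $\wh{\Z}$, and we define $\pi^{\adiv}_1(X,D)_0$ as its kernel to obtain the bottom exact sequence.

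For commutativity of the right square, I would reduce to the generating case of a cycle class $[x]$ for $x \in U$ a closed point. By the construction of $\rho_{U/X}$ in \S~\ref{sec:Rec-U} (see also the compatibility recorded in \lemref{lem:Agreement*} and \cite[Proposition~3.8]{Kato-Saito-2}), the image $\rho_{X|D}([x])$ in $\pi^{\adiv}_1(X,D)$ is the geometric Frobenius at $x$, whose degree in $\wh{\Z}$ is $[k(x):k]$. Meanwhile, ${\deg}/n ([x]) = [k(x):k]/n \in \Z$ maps under the profinite completion to the same element $[k(x):k]/n \in \wh{\Z}$. Since the cycle classes $\{[x]\}_{x \in U_{(0)}}$ generate $C(X,D)$ (this is immediate from the surjection $C_{U/X} \surj C(X,D)$ together with the description of $I_{U/X}$ in dimension zero), commutativity on the nose follows. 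The right vertical arrow is therefore the profinite completion of $\Z$, which is injective.

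The left vertical arrow is then defined as the restriction of $\rho_{X|D}$ to the kernels, and commutativity of the left square and exactness are formal consequences of the snake lemma. The main technical point, and the only one that is not purely formal, is the identification $m = n$ in the second step, i.e.~showing that the gcd of residue degrees of closed points is unchanged upon restricting from $X$ to the dense open $U$; this is where one uses that $X$ is integral and proper together with standard moving of $0$-cycles on projective varieties over a finite field.
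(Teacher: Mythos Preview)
Your overall strategy is sound and not far from the paper's, but there is one genuine gap and one unnecessary detour.

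\textbf{Gap: generation of $C(X,D)$ by cycle classes.} You assert that the classes $\{[x]\}_{x\in U_{(0)}}$ generate $C(X,D)$, claiming this is ``immediate from the surjection $C_{U/X}\surj C(X,D)$ together with the description of $I_{U/X}$ in dimension zero''. This is not immediate: the idele group $I_{U/X}$ is a sum over \emph{all} Parshin chains on $(U\subset X)$, not just the $0$-dimensional ones, so higher Milnor $K$-groups of chains of positive length contribute. The surjectivity of $Z_0(U)\to C(X,D)$ is a nontrivial input which the paper obtains by combining \thmref{thm:Kerz-global} (requiring $U$ regular) with \cite[Theorem~2.5]{Kato-Saito-2}. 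Without this, you cannot reduce the commutativity of the right square to the case of closed points.

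\textbf{Detour: identifying the image of $\deg'$.} You compute the image of $\deg'$ by passing to $\pi^{\ab}_1(X)$, obtaining $m\wh{\Z}$ with $m=\gcd_{x\in X_{(0)}}[k(x):k]$, and then claim $m=n$ via a ``Bertini-type or moving argument''. This is both vague and unnecessary. The paper proceeds differently: once the right square (undivided by $n$) is known to commute, the image $M'$ of $\deg'$ automatically contains $n\Z$, hence its closure $n\wh{\Z}$; for the reverse inclusion the paper invokes Chebotarev--Lang density of $\rho_{U/X}$ (so that $\deg(C(X,D))=n\Z$ is dense in $M'$) together with the fact that $n\wh{\Z}$ is closed. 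If you prefer to compute $M'$ directly, apply Chebotarev to $\pi^{\ab}_1(U)$ rather than $\pi^{\ab}_1(X)$: the Frobenii at closed points of $U$ are already dense, giving $M'=n\wh{\Z}$ with no need to compare gcd's over $U_{(0)}$ and $X_{(0)}$.
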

\begin{proof}
We need to show that the right square commutes.
We first show that this square commutes without dividing the
degree maps by any integer $n$.
Since the map $Z_0(U) \to C(X,D)$ is surjective
by \thmref{thm:Kerz-global} and \cite[Theorem~2.5]{Kato-Saito-2},
it suffices to show that for every closed point $x \in U$,
the diagram 
\begin{equation}\label{eqn:Rec-proj-1}
\xymatrix@C.8pc{
K_0(k(x)) \ar[d]_-{\rho_{k(x)}} \ar[r]^-{\deg} & \Z \ar[d] \\
\pi^{\ab}_1(k(x)) \ar[r]^-{q_*} & \wh{\Z}}
\end{equation}
commutes, where $q \colon \Spec(k(x)) \to \Spec(k)$ is the projection.  
But this is classical.

We now let $M$ and $M'$ be the images of the maps
$\deg \colon C_{U/X} \to \Z$ and $\deg' \colon \pi^{\ab}_1(U) \to \wh{\Z}$, 
respectively.
We must have $M = n\Z$ for some integer $n \ge 1$.
Moreover, $M$ and $M'$ must also be the images of
$\deg \colon C(X,D) \to \Z$ and 
$\pi^{\adiv}_1(X,D) \to \wh{\Z}$, respectively.
We thus get a commutative diagram of exact sequences
\begin{equation}\label{eqn:Rec-proj-2}
\xymatrix@C.8pc{
0 \ar[r] & M \ar[r] \ar[d] & \Z \ar[r] \ar[d] & {\Z}/n \ar[r] \ar[d] & 0 \\
0 \ar[r] & M' \ar[r] & \wh{\Z} \ar[r] & M'' \ar[r] & 0.}
\end{equation}

We identify $M$ with $\Z$ via the map $\Z\xrightarrow{n} M$.
By the functoriality of the profinite completion, there is a
commutative diagram
\[
\xymatrix@C.8pc{
\Z \ar[r]^-{n} \ar[d] & {\Z} \ar[d] \\
\wh{\Z} \ar[r]^-{n} & \wh{\Z},}
\]
where the vertical arrows are profinite completion maps.
Since ${\Z}/m \xrightarrow{n} {\Z}/m$ is continuous for all $m \ge 1$,
it follows that the bottom horizontal arrow is 
continuous. It follows from the universal property of profinite
completion and ~\eqref{eqn:Rec-proj-2}
that the image of this bottom horizontal arrow lies inside $M'$.
Since the image $n \wh{\Z}$ of this map is closed, we 
see that $\wh{\Z} \xrightarrow{n} \wh{\Z}$ is an embedding into
a closed subgroup contained in $M'$. 

On the other hand, the Chebotarev-Lang density theorem
(see \cite[Theorem~5.8.16]{Szamuely}) says that the image of
$\rho_{U/X}$ is dense. Hence,  the image of 
$C(X,D)$ is dense in $\pi^{\adiv}_1(X,D)$. Since the latter maps
onto $M'$ via $\deg'$, it follows that $M$ is dense in $M'$. 
Since $n \wh{\Z}$ contains $M$ and is closed,
it must contain $M'$.
We therefore conclude that $M' = n \wh{\Z}$. 
It also follows from this that the right vertical arrow of
~\eqref{eqn:Rec-proj-2} is an isomorphism.
\end{proof}

\begin{cor}\label{cor:Rec-proj-3}
The diagram ~\eqref{eqn:Rec-proj-0} induces a
commutative diagram of short exact sequences of discrete abelian groups
\begin{equation}\label{eqn:Rec-proj-4}
\xymatrix@C.8pc{
0 \ar[r] & (\wh{\Z})^\vee \ar[r] \ar[d]_-{\cong} & 
(\pi^{\adiv}_1(X,D))^\vee \ar[r] 
\ar[d]^-{\rho^{\vee}_{X|D}} & (\pi^{\adiv}_1(X,D)_0)^\vee \ar[r] 
\ar[d]^-{\rho^{\vee}_{X|D}} & 0 \\
0 \ar[r] & \Z^{\vee} \ar[r] & C(X,D)^\vee \ar[r] & (C(X,D)_0)^\vee \ar[r] & 0,}
\end{equation}
where the left vertical arrow is an isomorphism.
\end{cor}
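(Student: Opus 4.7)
The plan is to obtain the claimed diagram by applying the Pontryagin duality functor $(-)^\vee$ to the commutative diagram of \lemref{lem:Rec-proj}. First I would verify that this diagram lives entirely in the category $\pfd$: the entire top row consists of profinite groups, since $\wh{\Z}$ is profinite by definition and $\pi^{\adiv}_1(X,D)_0$, being the kernel of the continuous homomorphism $\deg'/n$, is a closed subgroup of the profinite group $\pi^{\adiv}_1(X,D)$; the entire bottom row consists of discrete groups, since $C(X,D)$ is discrete by \lemref{lem:Disc}, and $C(X,D)_0$ inherits the discrete topology as a subgroup. The vertical arrows are continuous by construction (the reciprocity map is continuous by \thmref{thm:Rec-D-map} and the profinite completion $\Z \inj \wh{\Z}$ is tautologically continuous).

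Having established that the diagram is a morphism of short exact sequences in $\pfd$, I would invoke \lemref{lem:Pont-dual}, which asserts that $(-)^\vee$ is an exact functor on $\pfd$. Applying $(-)^\vee$ to each row yields a short exact sequence, and applying it to each vertical arrow yields a continuous homomorphism in the reverse direction. Functoriality of the duality then gives the commutativity of the resulting squares. The dualized sequences are exact sequences of discrete abelian groups since $(-)^\vee$ sends profinite groups to discrete groups (the dual of a discrete torsion group is profinite and conversely), and the bottom row of the original diagram, being discrete, produces an exact sequence whose terms are the classical character groups equipped with the discrete topology after identifying them as subgroups of $C_{U/X}^\vee$-type objects (so in particular $\Z^\vee \cong \Q/\Z$ is discrete).

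The only non-formal point is that the left vertical arrow in the dual diagram is an isomorphism. For this I would argue that the left vertical arrow of \eqref{eqn:Rec-proj-0} is precisely the profinite completion map $\Z \inj \wh{\Z}$. Dualizing, we must show that the restriction map $\Hom_{\cont}(\wh{\Z}, \Q/\Z) \to \Hom(\Z, \Q/\Z)$ is an isomorphism. This is a consequence of the universal property of profinite completion: since $\Q/\Z$ is discrete torsion, every homomorphism $\Z \to \Q/\Z$ factors through a finite quotient $\Z/n$ and therefore extends uniquely to a continuous homomorphism $\wh{\Z} \to \Q/\Z$.

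I do not foresee any genuine obstacle here; the corollary is essentially a mechanical application of Pontryagin duality once \lemref{lem:Rec-proj} is in hand. The only thing requiring care is bookkeeping to confirm that each arrow in the original diagram is indeed continuous and that its source/target belong to $\pfd$, so that \lemref{lem:Pont-dual} is applicable to both rows simultaneously.
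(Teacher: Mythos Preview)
Your approach is correct and essentially identical to the paper's: both obtain the diagram by applying the exact duality functor of \lemref{lem:Pont-dual} to the diagram of \lemref{lem:Rec-proj}, and then verify separately that the dual of the profinite completion map $\Z \hookrightarrow \wh{\Z}$ is an isomorphism. You supply more detail (checking membership in $\pfd$, spelling out the universal-property argument for $(\wh{\Z})^\vee \cong \Z^\vee$), whereas the paper simply cites the two lemmas and declares the last point well known; note only that in your write-up you have interchanged ``top'' and ``bottom'' when describing the rows of \eqref{eqn:Rec-proj-0}, which is harmless.
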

\begin{proof}
We only need to show that the  left vertical arrow is an isomorphism as 
everything else follows from Lemmas~\ref{lem:Pont-dual} and
~\ref{lem:Rec-proj}.
However, it is well known that the profinite completion map
$\Z \inj \wh{\Z}$ induces the map
$\Hom_{\cont}(\wh{\Z}, {\Q}/{\Z}) \to \Hom_\Ab(\Z, {\Q}/{\Z})$ which is an
isomorphism.
\end{proof}



\noindent\emph{Acknowledgements.}
Gupta was supported by the
SFB 1085 \emph{Higher Invariants} (Universit\"at Regensburg).
He would like to thank TIFR, Mumbai for
invitation in Spring 2020, and Moritz Kerz for some fruitful 
discussion on this paper The authors would like to thank the referee for helpful
comments and suggestions.

\end{document}